\newtheorem{theorem}{Theorem}
\newtheorem{conjecture}[theorem]{Conjecture}
\newtheorem{thm}{Theorem}[section]
\newtheorem{lem}[thm]{Lemma}
\newtheorem{cor}[thm]{Corollary}
\newtheorem{prop}[thm]{Proposition}
\numberwithin{equation}{section}
\theoremstyle{definition}
\newtheorem{example}[thm]{Example}
\newtheorem{rem}[thm]{Remark}
\newcommand{\rankv}{\operatorname{\underline{rank}}}
\newcommand{\rank}{\operatorname{rank}}
\newcommand{\End}{\operatorname{End}}
\newcommand{\type}[1]{\operatorname{\widetilde{\mathbb{#1}}}}
\newcommand{\rep}{\operatorname{rep}}
\newcommand{\lfrep}{\operatorname{rep}_{\mathrm{lf}}}
\newcommand{\delE}{H'}
\newcommand{\val}{Q_{\mathrm{val}}(C)}
\renewcommand{\dim}{\operatorname{dim}}
\newcommand{\dimv}{\operatorname{\underline{dim}}}
\title[On affine type Geiss-Leclerc-Schr\"{o}er's conjecture]
{Affine root systems, stable tubes and a conjecture by Geiss-Leclerc-Schr\"{o}er}
\author{Zengqiang Lin}
\address{ Zengqiang Lin: School of Mathematical sciences, Huaqiao University,
Quanzhou\quad 362021,  China.}
\email{lzq134@163.com}
\author{Xiuping Su}
\address{Xiuping Su: Department of Mathematical Sciences, University of Bath, Bath BA2 7JY, UK.}
\email{xs214@bath.ac.uk}
\thanks{The first author was partially supported by  the National Natural Science Foundation of China (Grant No. 12371037).}
\subjclass[2010]{16G10, 16G20, 16G70}
\keywords{$\tau$-locally free module; stable tube; root.}
\begin{document}
\maketitle

\begin{abstract}
Associated to a  symmetrisable Cartan matrix $C$, Geiss-Lerclerc-Schr\"{o}er  constructed and studied a
class of Iwanaga-Gorenstein algebras
 $H$. They proved a generalised version of Gabriel's Theorem,  that is, the rank vectors of $\tau$-locally free $H$-modules are
the  positive roots of type $C$ when $C$ is of finite type, and conjectured that this is true for any $C$.
 In this paper, we look into this conjecture when $C$ is of affine type.
We construct explicitly stable tubes, some of which have rigid mouth modules, 
while others not. 
We deduce that  any positive root of type $C$ is the rank vector of
some $\tau$-locally free $H$-module. However, the converse is not true in general. Our construction shows  that
there are  $\tau$-locally free $H$-modules whose  rank vectors are not roots,
%Consequently,the conjecture fails
when $C$ is of type $\type{B}_n$, $\type{CD}_n$, $\type{F}_{41}$ and $\type{G}_{21}$,
and so the conjecture fails in these four types.
\end{abstract}

\setcounter{tocdepth}{1}
\tableofcontents

\section{Introduction}
 Gabriel's Theorem \cite{[Gab]}, a fundamental result in quiver representation theory, displays an
 important connection between the isomorphism classes of indecomposable representations of  connected Dynkin quivers and the positive roots of the corresponding Dynkin diagrams (or  complex simple Lie algebras). That is,
dimension vectors of indecomposable representations of a Dynkin quiver $Q$
are the positive roots of the underlying graph of $Q$, with simple representations corresponding to simple roots.
The result was later generalised to affine quivers by Nazarova \cite{[Naz]}, to
Dynkin or affine valued quivers by Dlab-Ringel \cite{[DR]} and to arbitrary quivers by Kac \cite{[K1]}.
Recently, Geiss-Lecerc-Schr\"{o}er  \cite{[GLS1]} extended the correspondence further
to a class of Iwanaga-Gorenstein algebras
$H=H(C, D, \Omega)$ associated to a symmetrisable Cartan matrix $C$ with symmetriser $D$
and any orientation $\Omega$ that has no oriented cycles.

One distinguished feature in Geiss-Lerclerc-Schr\"{o}er's work is that
they did not consider all  indecomposable $H$-modules, but only the so called $\tau$-locally free $H$-modules and
their rank vectors instead of dimension vectors.
When $C$ is of Dynkin type, they \cite[Theorem 1.3]{[GLS1]} showed that  the map $M\mapsto \rankv M$ gives a bijection between the isomorphism classes of $\tau$-locally free $H$-modules and the positive roots of the complex simple Lie algebra of type $C$.
In particular, when $C$ is symmetric and the symmetriser $D$ is minimal, they recovered
Gabriel's Theorem \cite{[Gab]}. They further proposed the following conjecture for the general case.

\vspace{2mm}\noindent
\begin{conjecture}\cite[Conjecture 5.3]{[GLS2]} \label{Conj:gls}
Let $H=H(C,D,\Omega)$ with the Cartan matrix $C$ of size $n\times n$ and let
$\alpha\in \mathbb{Z}_{\geq 0}^n$.
Then $\alpha$ is a root of the Kac-Moody Lie algebra of type $C$ if and only if $\alpha$ is   the rank vector of some
$\tau$-locally free $H$-module.
\end{conjecture}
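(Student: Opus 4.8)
The plan is to exploit the expected tame representation type of $H=H(C,D,\Omega)$ when $C$ is of affine type, organising the analysis around the Auslander-Reiten structure of the $\tau$-locally free modules. I would first establish that the indecomposable $\tau$-locally free modules split into three families governed by a defect function: a preprojective family, a preinjective family, and a family of regular components that are stable tubes closed under the AR translation $\tau$. Since $H$ is Iwanaga-Gorenstein rather than hereditary, this trichotomy is not automatic and would itself need justification, presumably by transporting the known structure of the associated modulated graph of type $C$ to the $\tau$-locally free situation. The target is then to match the rank vectors of these modules against the affine root system of type $C$, in which the positive roots are either real roots or positive integer multiples of the null root $\delta$.

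For the forward direction—that every positive root is a rank vector—I would treat the two kinds of roots separately. The real roots should be realised by the rigid indecomposables: the preprojective and preinjective $\tau$-locally free modules, together with the regular modules of small quasi-length near the mouth of each tube. The imaginary roots, i.e.\ the multiples $m\delta$, must come from the regular part, so the crux is to construct the stable tubes explicitly and to compute the rank vectors of their modules from the rank vectors of the mouth modules together with the tube combinatorics (quasi-length and $\tau$-periodicity). A module whose quasi-length is the tube rank, or an integer multiple of it, should carry rank vector a multiple of $\delta$, and varying the quasi-length then produces every $m\delta$.

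The reverse direction—that every rank vector of a $\tau$-locally free module is a root—is where I expect the genuine difficulty, and where I would proceed most cautiously. For the rigid modules I would compute the quadratic form $q_C(\rankv M)$ homologically and hope to obtain $q_C(\rankv M)=1$, certifying a real root; for the non-rigid regular modules I would aim to show $q_C(\rankv M)\le 0$ with the rank vector forced onto the ray $\mathbb{Z}_{\ge 0}\,\delta$. The main obstacle is precisely that this bookkeeping, which is transparent in the symmetric (simply-laced) setting where rank and dimension vectors essentially coincide, becomes delicate for a non-trivial symmetriser $D$: the rank-vector form need not agree with the Euler form of the module, so a rigid mouth module could in principle satisfy $q_C(\rankv M)=1$ yet fail to lie in the root system, or an indecomposable could have a rank vector sitting strictly between a real root and $\delta$. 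I would therefore carry out a careful type-by-type inspection of the mouth modules of the explicitly constructed tubes, paying particular attention to whether each tube has rigid mouth modules, and it is here—in the non-simply-laced types $\type{B}_n$, $\type{CD}_n$, $\type{F}_{41}$ and $\type{G}_{21}$—that I would expect the reverse implication, and hence the conjecture as stated, to be in danger.
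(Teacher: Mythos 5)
The statement you are trying to prove is a conjecture that this paper in fact \emph{disproves} for several affine types, so no proof strategy can succeed in full. Your forward direction is essentially the paper's Theorem \ref{thm3.1}: preprojective and preinjective real roots are realised by $\tau^{-r}P_{i_k}$ and $\tau^{s}I_{i_k}$, and the regular roots by explicitly constructed tubes (with the caveat that in types $\type{B}_n$ and $\type{G}_{21}$ some multiples of $\delta$ are only realised in \emph{homogeneous} tubes, Proposition \ref{Prop:homog}, not by climbing an inhomogeneous tube as your quasi-length argument suggests). That half of your plan is sound.

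The reverse direction contains a genuine and unfixable gap, and it is exactly where you yourself express unease. Your proposed certification scheme --- $q_C(\rankv M)=1$ forces a real root, $q_C(\rankv M)\le 0$ forces the ray $\mathbb{Z}_{\ge 0}\delta$ --- fails because the two cases are not exhaustive and the first implication is false: since $(\delta,-)_C=0$, one has $q_C(\delta+\alpha)=q_C(\alpha)$ for every $\alpha$, so a vector of the form $\delta+\alpha$ with $\alpha$ a real root passes your quadratic-form test without being a root. This is not a hypothetical. In types $\type{B}_n$, $\type{CD}_n$, $\type{F}_{41}$ and $\type{G}_{21}$ with minimal symmetriser, the paper (Theorem \ref{main2}) exhibits an inhomogeneous tube whose mouth module $Z$ is \emph{not} rigid and has $\rankv Z=\delta$, together with a rigid regular module $X$ from a good tube whose rank vector is a long root; the extension $0\to X\to Y\to Z\to 0$ has indecomposable middle term $Y$ (verified by computing $\End Y$ to be local), and Lemma \ref{lem5.2} shows $Y$ is $\tau$-locally free because both end terms are regular $\tau$-locally free. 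Then $\rankv Y=\delta+\rankv X$, which by the Dlab--Ringel description of affine regular roots (Proposition \ref{prop:roots}(3), where the step is $g\delta$ with $g>1$ in the relevant direction) is not a root. So the ``careful type-by-type inspection of mouth modules'' you propose would, if carried out, refute the statement rather than prove it; the correct conclusion is Theorem \ref{thmB}, not the conjecture.
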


\vspace{2mm}

For any $C$, Geiss-Leclerc-Schr\"{o}er \cite{[GLS6]}  proved that there is a bijection between  the rank vectors of rigid $\tau$-locally free  $H$-modules and  the real Schur roots. Recently,
Huang-Lin-Su \cite{[HLS]} showed that Conjecture 1 is true when $C$ is of type $\type{C}_n$ and the symmetriser $D$ is minimal.
In this paper, we  continue the investigation of the connection between representations and root systems displayed in Conjecture \ref{Conj:gls} and achieve  the following.

\vspace{2mm}
\begin{theorem}[Theorem \ref{thm3.1}] \label{thmA}
 Let $C$ be a symmetrisable Cartan matrix of affine type and $H=H(C,D,\Omega)$.  Then for any positive root $\alpha$
of type $C$, there exists a $\tau$-locally free $H$-module $M$ such that $\rankv M=\alpha$.
\end{theorem}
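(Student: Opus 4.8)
The plan is to realize every positive root of the affine type as a rank vector by exploiting the structure theory of the representation category, in close analogy with the classical approach for tame hereditary algebras. Since $C$ is of affine type, the positive roots split into the real roots and the imaginary roots (the nonnegative integer multiples of the null root $\delta$, together with real roots shifted by multiples of $\delta$). The strategy is to treat these classes separately and, crucially, to use the stable tubes that the paper constructs. First I would recall that for real Schur roots the result is already available: by the theorem of Geiss--Leclerc--Schr\"{o}er \cite{[GLS6]}, the rank vectors of rigid $\tau$-locally free $H$-modules are exactly the real Schur roots, so those roots are covered immediately. The remaining real roots and all the imaginary roots are where the tube constructions do the work.

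The key geometric input is the explicit construction of stable tubes in the category of $\tau$-locally free modules. For each tube I would identify the rank vectors of the modules lying on it: the mouth (quasi-simple) modules give a collection of rank vectors, and climbing up a ray of the tube adds these rank vectors in a controlled way, so that the quasi-length-$\ell$ module on a ray has a rank vector equal to a sum of $\ell$ consecutive mouth ranks. The central claim to verify is that the ranks of the mouth modules of a suitable tube sum to the null root $\delta$, so that modules of quasi-length equal to the rank of the tube and its multiples produce the rank vectors $m\delta$ for all $m\geq 1$. To then capture the roots of the form $\beta+m\delta$ with $\beta$ a real root, I would select a module whose rank is the real root $\beta$ sitting appropriately relative to a tube and form extensions (or read off modules on rays) so that the rank vector picks up the extra $m\delta$; concretely, one wants a $\tau$-locally free module whose rank is $\beta$ and which can be regularly extended by modules of rank $\delta$, with $\tau$-local-freeness preserved under the extension.

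The heart of the argument, and the step I expect to be the main obstacle, is ensuring that the modules produced along the tubes are genuinely $\tau$-locally free and that their rank vectors equal the intended sums rather than something smaller. In the symmetric (equally-valued) case this is automatic because rank equals dimension appropriately normalized, but for a general symmetrisable $C$ with a non-minimal symmetriser $D$ the rank vector is a coarser invariant than the dimension vector, and $\tau$-local freeness is a delicate condition: an extension of $\tau$-locally free modules need not be $\tau$-locally free, and the rank vector of a module on a tube need not be additive in the naive way unless one knows the relevant $\Ext$ and $\Hom$ spaces vanish on the locally free locus. I would therefore spend the bulk of the proof verifying, tube by tube and type by type, that the constructed mouth modules are $\tau$-locally free with the correct ranks, that the short exact sequences defining the higher quasi-length modules stay within the $\tau$-locally free subcategory, and that the rank-vector additivity holds along each ray. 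This is precisely what the explicit tube constructions elsewhere in the paper are designed to provide.

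Once these facts are in place the proof assembles quickly: given an arbitrary positive root $\alpha$ of type $C$, I would split into cases according to whether $\alpha$ is real or imaginary. If $\alpha$ is a real Schur root, invoke \cite{[GLS6]}; if $\alpha$ is a real root that is not Schur, write it as $\beta + m\delta$ (or $\beta$ reflected into a tube) and realize it on the appropriate ray; and if $\alpha = m\delta$ is imaginary, realize it as the rank vector of a quasi-length-$m r$ module on a homogeneous or suitable tube of rank $r$ whose mouth ranks sum to $\delta$. In every case the constructed module is $\tau$-locally free with $\rankv M = \alpha$, which is exactly the assertion of Theorem \ref{thm3.1}. The converse direction is deliberately \emph{not} claimed here, since the abstract promises that it fails in four types; the present statement is only the ``every root is a rank vector'' half, and the tube constructions suffice for it.
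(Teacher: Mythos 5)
Your proposal follows essentially the same route as the paper: reduce to the regular roots, realize them via stable tubes of $\tau$-locally free modules whose mouth rank vectors match the mouth dimension vectors of the corresponding tubes for the Dlab--Ringel valued quiver, and use additivity of rank vectors along rays (which is legitimate here because a component containing one regular $\tau$-locally free module consists entirely of locally free modules by Proposition \ref{lem5}, so rank vectors are additive on the exact sequences inside the tube). However, three points in your sketch need repair before it yields the full statement. First, the theorem is asserted for an \emph{arbitrary} orientation $\Omega$, while the explicit tubes are built only for the Dlab--Ringel orientation; the paper bridges this with reflection functors (Proposition \ref{prop:indoforiet}), using that $Q^0$ is a tree outside type $\type{A}_n$ so that any two orientations are linked by admissible reflections. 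Second, the theorem is asserted for an \emph{arbitrary} symmetriser $D$, while the case-by-case constructions use minimal $D$; the paper's Lemma \ref{lem0.2} transports a good tube from $D$ to $mD$ via the rigidity of its mouth modules and Theorem \ref{thm:Schurroot}, and this step is not automatic. Third, your ``central claim'' that the mouth ranks of each relevant tube sum to $\delta$ is false in general: for the inhomogeneous tubes of the non-simply-laced types they sum to $g\delta$ with $g$ up to $3$ (consistent with Proposition \ref{prop:roots}(3), where the regular roots are of the form $x_0+rg\delta$, so the ray increments still land exactly on roots), and in types $\type{B}_n$ and $\type{G}_{21}$ some positive multiples of $\delta$ occur in no inhomogeneous tube at all, which is precisely why the paper separately constructs homogeneous tubes with mouth rank vector $\delta$ (Proposition \ref{Prop:homog}). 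None of these defects is fatal to your plan, but each requires an argument you have not supplied, and the third would, if taken literally, lead you to look for tubes that do not exist.
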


\vspace{2mm}

 \begin{theorem} [Theorem \ref{main2}] \label{thmB}
Let $C$ be a symmetrisable Cartan matrix of type $\type{B}_n$, $\type{CD}_n$, $\type{F}_{41}$ or $\type{G}_{21}$, $D$  a minimal symmetriser and $H=H(C,D,\Omega)$.
\begin{itemize}
\item[(1)] The AR-quiver $\Gamma_H$ has an inhomogeneous tube of $\tau$-locally free modules,
whose mouth modules are not rigid and have the minimal positive imaginary root as their rank vectors.

\item[(2)] There exist  $\tau$-locally free $H$-modules such that  their rank vectors
 are not roots. Consequently, Conjecture 1 fails in these four types.
 \end{itemize}
 \end{theorem}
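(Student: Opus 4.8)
The plan is to prove both parts by explicit construction, specialising the stable tubes of $\tau$-locally free modules built in the previous sections to the four Cartan matrices $\type{B}_n$, $\type{CD}_n$, $\type{F}_{41}$ and $\type{G}_{21}$ equipped with a minimal symmetriser $D=\operatorname{diag}(d_1,\dots,d_n)$. Among the tubes attached to the points of $\mathbb{P}^1$ I would first single out the one distinguished tube $\mathcal T$ whose mouth carries the null root $\delta=\sum_i a_i\alpha_i$; for the two infinite families $\type{B}_n$ and $\type{CD}_n$ I would describe $\mathcal T$ uniformly as a family in $n$, leaving $\type{F}_{41}$ and $\type{G}_{21}$ as two finite checks.

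For part~(1) I would write down the mouth modules $E_1,\dots,E_r$ of $\mathcal T$ as explicit representations of $H$, giving the spaces $e_iE_j$, the nilpotent operators $\epsilon_i$ and the structure maps along $\Omega$. I would then check that each $e_iE_j$ is free over $K[\epsilon_i]/(\epsilon_i^{d_i})$, so that $E_j$ is locally free with $\dim_K e_iE_j=d_ia_i$ and therefore $\rankv E_j=\delta$. Inhomogeneity is a computation: from the construction one reads off $\tau E_j$ and checks that $\tau^r E_j\cong E_j$ for some minimal $r>1$ with the $E_j$ pairwise non-isomorphic, the non-triviality of the minimal symmetriser being what prevents $\mathcal T$ from degenerating to a homogeneous tube; verifying that every $\tau$-translate stays locally free then shows $\mathcal T$ consists of $\tau$-locally free modules. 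Non-rigidity is the clean point: by \cite{[GLS6]} the rank vector of any rigid $\tau$-locally free module is a real Schur root, whereas $\delta$ is the minimal \emph{imaginary} root, hence not a real Schur root; so no $E_j$ can be rigid.

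For part~(2) I would exhibit an explicit indecomposable $\tau$-locally free module $N$ with $\rankv N$ not a root. The crucial structural fact making this possible is that $\rankv$ is additive only on short exact sequences whose three terms are all locally free; an indecomposable $N$ that is itself locally free but admits a filtration with non-locally-free subquotients can therefore carry a rank vector that escapes the root set. Concretely I would look among the modules produced by the construction --- extensions built from the mouth of $\mathcal T$ together with pieces of the remaining tubes or of the (pre)projective and (pre)injective components --- for such an $N$, compute $\rankv N$ directly, and verify it is not a root: it is not a multiple of $\delta$, and subtracting the appropriate multiple of $\delta$ does not leave a finite root of type $C$, so it lies in neither $\Delta^{\mathrm{im}}$ nor $\Delta^{\mathrm{re}}$. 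Combined with Theorem~\ref{thmA}, which gives the reverse inclusion, this shows the inclusion of roots into rank vectors is strict, so the only-if direction of Conjecture~1 fails in these four types.

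The main obstacle is the $\tau$-local-freeness verification: one must control the Auslander--Reiten translates $\tau^k N$ at \emph{every} vertex and show that $e_i\tau^k N$ stays free over $K[\epsilon_i]/(\epsilon_i^{d_i})$ for all $k$, which needs a workable description of $\tau$ on the relevant part of $\Gamma_H$ rather than on the underlying hereditary species, where the rank vector would behave additively and no non-root could appear. A secondary difficulty is keeping the mouth modules and the module $N$ uniform across the families $\type{B}_n$ and $\type{CD}_n$, and confirming from the explicit root-system data that the computed rank vector really fails to be a root.
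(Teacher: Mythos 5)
Your part~(1) follows essentially the paper's route: an explicit inhomogeneous tube is constructed in each of the four types whose mouth modules $Z$ satisfy $\rankv Z=\delta$ and $\tau^rZ\cong Z$, verified by computing projective resolutions and applying $\nu$. Your non-rigidity argument (rigid $\tau$-locally free modules have real Schur roots as rank vectors by \cite{[GLS6]}, and $\delta$ is imaginary) is a valid alternative to the paper's computation $\langle Z,Z\rangle=0$. Two caveats: the algebras $H$ are not hereditary and no $\mathbb{P}^1$-parametrisation of tubes is available, so you cannot ``single out'' the tube carrying $\delta$ --- its existence is exactly what must be constructed; and you omit the argument that the modules with rank vector $\delta$ really sit at the mouth (the paper rules out proper regular $\tau$-locally free submodules because their rank vectors would be real Schur roots, which occur only in the good tubes already constructed).

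The genuine gap is in part~(2). You locate the source of non-root rank vectors in the failure of additivity of $\rankv$ along filtrations with non-locally-free subquotients. That is not the mechanism, and a search guided by it would miss the actual counterexamples. The modules $Y$ the paper produces are extensions $0\to X\to Y\to Z\to 0$ in which \emph{all three terms are locally free} (indeed $\tau$-locally free and regular: $Z$ is a non-rigid mouth module with $\rankv Z=\delta$ and $X$ is a rigid mouth module whose rank vector is a \emph{long} real root), so $\rankv Y=\delta+\rankv X$ is perfectly additive; the point is that in these non-simply-laced affine types the sum of $\delta$ and a long root is simply not a root (Proposition \ref{prop:roots}(3)), yet an indecomposable $\tau$-locally free module with that rank vector exists. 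Moreover, the ``main obstacle'' you flag --- controlling $\tau^kY$ for all $k$ --- is resolved in the paper not by vertexwise bookkeeping but by a key lemma you do not supply: for regular $\tau$-locally free $X$ and $Z$ one has $\Hom_H(\tau^kX,H)=0=\Hom_H(DH,\tau^kZ)$, so $\tau$ preserves the exactness of the sequence and every indecomposable summand of the middle term is automatically $\tau$-locally free (Lemmas \ref{lem5.1} and \ref{lem5.2}). This also shows that your proposed search among extensions involving preprojective or preinjective pieces is off target: the argument requires both end terms to be regular. Without the correct choice of end terms and this lemma, your plan for (2) does not go through.
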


\vspace{2mm}

The remaining part of this paper is organised as follows. In \S  \ref{sec2}, we recall some basic facts on the Iwanaga-Gorenstein algebras $H=H(C,D,\Omega)$, $\tau$-locally free $H$-modules and roots.
In \S \ref{sec3}, we
 show that Conjecture \ref{Conj:gls} is independent of orientations and
start the proof of  Theorem \ref{thmA}. We complete the proof
in \S \ref{Sec:homogtubes} and \S \ref{sec4}.
In particular, in \S \ref{Sec:homogtubes}, we construct homogeneous tubes.
In  \S \ref{sec4}, we construct inhomogeneous tubes with rigid mouth modules. Consequently, the rank vectors of the mouth modules are real Schur roots. In \S \ref{sec5}, we prove Theorem \ref{thmB}. We construct  
inhomogeneous tubes with non-rigid mouth modules, when $C$ is of type  $\type{B}_n$, $\type{CD}_n$, $\type{F}_{41}$ or $\type{G}_{21}$, and $D$ is minimal.
In particular, we find $\tau$-locally free modules, whose rank vectors are not roots.

\section{The Iwanaga-Gorenstein algebras $H$ and $\tau$-locally free $H$-modules}
\label{sec2}
%\danger{Throughout let $K$ be an algebraically closed field.}
We recall some basic definitions and facts on the Iwanaga-Gorenstein algebras $H=H(C, D, \Omega)$ constructed by Geiss-Leclerc-Schr\"{o}er, $\tau$-locally free $H$-modules  and their rank vectors. We refer the reader
to \cite{[GLS1]} for more details.

\subsection{The algebra $H(C,D,\Omega)$}
Let $C=(c_{ij})\in M_n(\mathbb{Z})$ be  a {\em symmetrisable generalised Cartan matrix}, which we often simply refer
to as a {\em Cartan matrix}, and let $D=\text{diag}(d_1,...,d_n)$ be a symmetriser of $C$.
%This means that $c_i\in\mathbb{Z}_{>0}$, $c_{ii}=2$, $c_{ij}\leq 0$ for $i\neq j$, and $c_ic_{ij}=c_j c_{ji}$.
%
 When $c_{ij}<0$, define \[g_{ij}=|\text{gcd}(c_{ij},c_{ji})| \text{ and } f_{ij}=|c_{ij}|/g_{ij}.\]
An {\em orientation} of $C$ is a subset $\Omega\subset\{1,2,\cdots,n\}\times\{1,2,\cdots,n\}$ such that
%\[\{(i,j),(j,i)\}\cap\Omega=\{(i,j)\} \text{ or }\{(j,i)\} \text{ if and only if } c_{ij}<0.\]
\begin{itemize}
\item[(O1)] $\{(i,j),(j,i)\}\cap\Omega\neq\emptyset$ if and only if $c_{ij}<0$;
\item[(O2)] If $((i_1,i_2),(i_2,i_3),\cdots,(i_t,i_{t+1}))$ is a sequence with $t\geq 1$ and $(i_s,i_{s+1})\in\Omega$ for all $1\leq s\leq t$, then $i_1\neq i_{t+1}$.
\end{itemize}

For an orientation $\Omega$ of $C$, let $Q=Q(C,\Omega)=(Q_0, Q_1)$ be the  quiver  with  vertices \[Q_0=\{1,2,\cdots,n\}\] 
and arrows
$$Q_1=\{\alpha_{ij}^{(g)}\colon j\rightarrow i|(i,j)\in\Omega,1\leq g\leq g_{ij}\}\cup
\{\varepsilon_i \colon i\rightarrow i|1\leq i\leq n\}.$$
 If $g_{ij}=1$, we write $\alpha_{ij}$ instead of $\alpha_{ij}^{(1)}$. Let $Q^0=Q^0(C,\Omega)$ be the quiver obtained from $Q$ by deleting all loops $\varepsilon_i$.
(O2) implies that $Q^0$ is an acyclic quiver.

\vspace{1mm}
Let $H=H(C,D,\Omega)=KQ/I$, where $KQ$ is the path algebra of $Q$ over a field $K$,
and $I$ is the ideal of $KQ$ defined by the following relations:
\begin{itemize}
\item[(H1)] For each $i\in Q_0$ we have $\varepsilon_i^{d_i}=0$;

\item[(H2)] For each $(i,j)\in\Omega$ and each $1\leq g\leq g_{ij}$ we have $\varepsilon_i^{f_{ji}}\alpha_{ij}^{(g)}=\alpha_{ij}^{(g)}\varepsilon_j^{f_{ij}}$.
\end{itemize}
\noindent Let $H_i$ be the subalgebra of $H$ generated by $\varepsilon_i$. Then $H_i$ is isomorphic to the truncated polynomial algebra $K[x]/(x^{d_i})$.

\subsection{$\tau$-locally free modules}
Let $M=(M_i,M(\varepsilon_i),M(\alpha^{(g)}_{ij}))_{i\in Q_0,(i,j)\in\Omega}$ be a representation of $H$.
That is, $M_i$ is a finite-dimensional $K$-vector space, $M(\varepsilon_i):M_i\rightarrow M_i$ and $M(\alpha^{(g)}_{ij}): M_j\rightarrow M_i$ are $K$-linear maps satisfying the relations (H1) and (H2).
(H1) implies that each $M_i$ is an $H_i$-module.
A (left) $H$-module $M$ is said to be {\em locally free} if $M_i$ is a free $H_i$-module for each $i$, and in this case
we define the rank vector of $M$ as\[\rankv M=(\rank M_1, \dots, \rank M_n), \]
where $\rank M_i$ is the rank of $M_i$ as an $H_i$-module.
Denote by $\rep(H)$ the category of (finite dimensional) $H$-modules and by $\lfrep(H)$ the full subcategory of locally free
$H$-modules.
Recall that the homological bilinear form of  locally free $H$-modules,
$$\langle M,N\rangle= \text{dim Hom}_H(M,N)-\text{dim Ext}^1_H(M,N).$$

\begin{prop} \cite[Proposition 4.1]{[GLS1]} \label{lem2.2}
Let $M$ and $N$ be locally free $H$-modules. Then
\begin{equation}\label{eq:bil}\langle M,N\rangle= \Sigma_{i=1}^n d_ia_ib_i-\Sigma_{(j,i)\in\Omega}d_i |c_{ij}| a_ib_j,
\end{equation}
where $\rankv M=(a_1,\cdots, a_n)$ and $\rankv N=(b_1,\cdots, b_n)$.
\end{prop}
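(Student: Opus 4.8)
The plan is to compute $\dim\Hom_H(M,N)$ and $\dim\Ext^1_H(M,N)$ simultaneously by producing a two-term projective resolution of $M$ and taking the Euler characteristic of the resulting $\Hom$-complex. The starting point is that $H$ is the tensor algebra $T_S(B)$ of an $S$-bimodule $B$ over the base ring $S=\prod_{i=1}^n H_i$: the relations (H1) are absorbed into $S$ and the relations (H2) into the bimodule structure of $B=\bigoplus_{(i,j)\in\Omega}B_{ij}$, where $B_{ij}$ is the $H_i$-$H_j$-bimodule on generators $\alpha_{ij}^{(g)}$ ($1\le g\le g_{ij}$) subject to $\varepsilon_i^{f_{ji}}\alpha_{ij}^{(g)}=\alpha_{ij}^{(g)}\varepsilon_j^{f_{ij}}$. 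Since $Q^0$ is acyclic, there are no further relations among paths and $T_S(B)$ is finite-dimensional.

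For any $H$-module $M$ the tensor algebra carries the standard exact sequence
\[
0\to H\otimes_S B\otimes_S M\to H\otimes_S M\to M\to 0,
\]
with the right map the action $t\otimes m\mapsto tm$ and the left map $t\otimes b\otimes m\mapsto tb\otimes m-t\otimes bm$; its exactness uses that $B$ is projective as a right $S$-module. First I would check that when $M$ is locally free this is a \emph{projective} resolution. Indeed, the locally free condition says exactly that each $M_i$ is free over $H_i=K[x]/(x^{d_i})$, i.e.\ the restriction $M|_S$ is projective over $S$ (over the self-injective local ring $H_i$, projective and free coincide); hence both $H\otimes_S M$ and $H\otimes_S B\otimes_S M$ are projective $H$-modules and $\dim_K$ considerations give $\mathrm{pd}_H M\le 1$. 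In particular $\Ext^{\geq 2}_H(M,N)=0$, so $\langle M,N\rangle$ is the full Euler characteristic of $\Hom_H(-,N)$ applied to the resolution.

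Applying $\Hom_H(-,N)$ and the induction-restriction adjunction $\Hom_H(H\otimes_S X,N)\cong\Hom_S(X,N)$ collapses everything to the base ring:
\[
\langle M,N\rangle=\dim_K\Hom_S(M,N)-\dim_K\Hom_S(B\otimes_S M,N).
\]
I would then evaluate both terms factor by factor over $S=\prod_i H_i$. Writing $M_i\cong H_i^{a_i}$ and $N_i\cong H_i^{b_i}$ and using $\dim_K\End_{H_i}(H_i)=d_i$ gives $\dim_K\Hom_S(M,N)=\sum_i d_ia_ib_i$. For the second term the key local computation is that $B_{ij}$ is free as a left $H_i$-module of rank $|c_{ij}|$: relation (H2) lets one reduce the right $\varepsilon_j$-exponent modulo $f_{ij}$, leaving the $|c_{ij}|=g_{ij}f_{ij}$ elements $\alpha_{ij}^{(g)}\varepsilon_j^{r}$ ($0\le r<f_{ij}$) as an $H_i$-basis. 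Hence $B_{ij}\otimes_{H_j}M_j\cong H_i^{\,|c_{ij}|a_j}$ and $\dim_K\Hom_{H_i}(B_{ij}\otimes_{H_j}M_j,N_i)=d_i|c_{ij}|a_jb_i$. Summing over $(i,j)\in\Omega$ produces $\sum_{(i,j)\in\Omega}d_i|c_{ij}|a_jb_i$, which after swapping the names $i\leftrightarrow j$ and invoking the symmetriser identity $d_j|c_{ji}|=d_i|c_{ij}|$ becomes $\sum_{(j,i)\in\Omega}d_i|c_{ij}|a_ib_j$, giving \eqref{eq:bil}.

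The main obstacle I anticipate is the structural first step rather than the bookkeeping: one must justify carefully that $H$ really is the tensor algebra $T_S(B)$ with no hidden relations among the arrows of $Q^0$, and that the standard sequence is exact with projective outer terms precisely under the locally free hypothesis. Once that is in place, the two dimension counts are routine, the only delicate points being the correct identification of $B_{ij}$ as a free left $H_i$-module of rank $|c_{ij}|$ and the final re-indexing via $d_i|c_{ij}|=d_j|c_{ji}|$.
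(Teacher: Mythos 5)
Your argument is correct and is essentially the proof of the cited result: this proposition is quoted from \cite[Proposition 4.1]{[GLS1]} without reproof in the present paper, and the argument there proceeds exactly as you describe, by realising $H$ as the tensor algebra $T_S(B)$ over $S=\prod_i H_i$, using the standard two-term sequence $0\to H\otimes_S B\otimes_S M\to H\otimes_S M\to M\to 0$ (whose outer terms are projective precisely because local freeness makes $M$ projective over $S$), and counting ranks of the bimodules $B_{ij}$ as free left $H_i$-modules of rank $|c_{ij}|$. Your dimension counts and the final re-indexing via $d_i|c_{ij}|=d_j|c_{ji}|$ both check out against the stated formula.
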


An indecomposable
$H$-module $M$ is called {\em $\tau$-locally free}, if $\tau_H^k(M)$ is locally free for all $k\in\mathbb{Z}$, where $\tau_H$ is the Auslander-Reiten translation. %If there is no confusion,  we denote $\tau_H$ simply by $\tau$.

\begin{prop} \cite[Proposition 11.4]{[GLS1]} \label{lem2.1}
Let $M$ be an indecomposable locally free and rigid $H$-module. Then $M$ is $\tau$-locally free and $\tau^k(M)$ is rigid for all $k\in\mathbb{Z}$.
\end{prop}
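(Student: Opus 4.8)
The plan is to prove the single-step statement that if $M$ is indecomposable, locally free, rigid and non-projective, then $\tau_H M$ is again indecomposable, locally free and rigid, together with the dual statement for $\tau_H^{-1}$; the full proposition then follows by induction on $|k|$. Indecomposability of $\tau_H M$ is automatic, since $\tau_H$ preserves indecomposability of non-projective modules. The two real points are therefore (i) that local freeness is preserved and (ii) that rigidity is preserved, and I expect (i) to be the main obstacle.

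The structural input I would use from the Iwanaga--Gorenstein property of $H$ is that every locally free module has projective and injective dimension at most one. This has two consequences I need. First, $\Ext^i_H(X,Y)=0$ for all $i\ge 2$ whenever $X,Y$ are locally free, so the bilinear form of Proposition \ref{lem2.2} is the genuine Euler form and, in particular, $\langle X,Y\rangle$ depends only on the rank vectors $\rankv X$ and $\rankv Y$. Second, each non-projective locally free $M$ admits a two-term minimal projective resolution $0\to P_1\to P_0\to M\to 0$, so that $\tau_H M=D\operatorname{Tr}M$ is computed from the single cokernel $\operatorname{coker}(P_0^{\ast}\to P_1^{\ast})$.

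For step (i) I would proceed in one of two ways. The direct route is to read off $\tau_H M$ from the two-term resolution above and to check, vertex by vertex, that each space $(\tau_H M)_i$ is free over $H_i\cong K[x]/(x^{d_i})$; here the relations (H1) and (H2) control exactly how $\varepsilon_i$ acts on the projectives $P_0,P_1$ and hence on the cokernel. The more conceptual route is to use the almost split sequence $0\to\tau_H M\to E\to M\to 0$ and to argue by contradiction: if some $(\tau_H M)_i$ failed to be free over $H_i$, then a comparison of the Euler form on the terms of this sequence (using Proposition \ref{lem2.2}) with the value $\langle M,M\rangle=\dim\End_H(M)>0$ forced by rigidity would be violated, so rigidity of $M$ is precisely what rules this out. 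Either way, the same argument applied to $\tau_H^{-1}$ shows that $\tau_H^{-1}M$ is locally free, and iterating gives that $M$ is $\tau$-locally free.

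Once local freeness is known to propagate, step (ii) is essentially formal. By the Auslander--Reiten formula,
$$\Ext^1_H(\tau_H^k M,\tau_H^k M)\cong D\,\underline{\Hom}_H(\tau_H^k M,\tau_H^{k+1} M),$$
and since $\tau_H$ induces isomorphisms on the relevant stable Hom-spaces it carries this space isomorphically back to $\underline{\Hom}_H(M,\tau_H M)$, whence
$$\Ext^1_H(\tau_H^k M,\tau_H^k M)\cong D\,\underline{\Hom}_H(M,\tau_H M)\cong\Ext^1_H(M,M)=0.$$
Equivalently, one may note that $\rankv\tau_H M$ is obtained from $\rankv M$ by the Coxeter transformation attached to the Euler form, which is an isometry, so $\langle\tau_H^k M,\tau_H^k M\rangle=\langle M,M\rangle=\dim\End_H(M)$; combined with the invariance of $\dim\End_H$ under $\tau_H$ on non-projective, non-injective indecomposables, this again forces $\Ext^1_H(\tau_H^k M,\tau_H^k M)=0$. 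Thus every $\tau_H^k M$ is rigid, which completes the argument. The whole difficulty is concentrated in the preservation of local freeness in step (i), where the hypothesis that $M$ is rigid is indispensable.
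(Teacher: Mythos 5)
This proposition is not proved in the paper at all: it is imported verbatim from \cite[Proposition 11.4]{[GLS1]}, so the only meaningful comparison is with the argument given there, which runs through the reflection-functor machinery. On locally free modules $\tau\cong TC^{+}$ (Proposition \ref{lem:TC}), $C^{+}$ is a composition of functors $F_{i}^{+}$ at sinks, and for a \emph{rigid} locally free $M$ each $F_{i}^{+}$ preserves local freeness because rigidity forces the structure map into the sink $i$ to be surjective; a surjection between free modules over the self-injective algebra $H_i\cong K[x]/(x^{d_i})$ splits, so its kernel is again free. The same functors transport rigidity, which gives the second assertion.

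Measured against any complete argument, your proposal has a genuine gap exactly where you yourself locate the difficulty: the preservation of local freeness. Route (a) (``check vertex by vertex that $(\tau M)_i$ is free; the relations (H1), (H2) control the action of $\varepsilon_i$'') makes no use of rigidity, and without rigidity the conclusion is false --- there exist indecomposable locally free $H$-modules whose translate is not locally free, which is the entire reason the subclass of $\tau$-locally free modules is introduced. So no bookkeeping of the relations alone can succeed; you must say where $\Ext^1_H(M,M)=0$ enters, and you do not. Route (b) is circular: Proposition \ref{lem2.2} computes $\langle-,-\rangle$ from rank vectors \emph{only for locally free modules}, so neither $\langle\tau M,\tau M\rangle$ nor the form on the middle term of the AR-sequence can be evaluated before local freeness of those terms is established; asserting that a comparison ``would be violated'' is not a derivation of a contradiction. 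The rigidity step also leans on a false general principle: $\dim\End_H$ is not invariant under $\tau$ for non-hereditary algebras (only the projectively and injectively stable endomorphism algebras correspond), and your AR-formula chain silently identifies the two flavours of stable Hom. That part is repairable --- for instance via the fact that the reflection functors induce isomorphisms on $\Hom$ and $\Ext^1$ between the relevant subcategories, or by first proving $\Hom_H(M,H)=0=\Hom_H(DH,M)$ in the regular case --- but as written neither of the ``two real points'' is actually proved.
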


Let $i\in Q_0$.  Denote by $P_{i}$ (resp. $I_{i})$ the indecomposable projective (resp. injective) $H$-module associated with $i$, and by $E_{i}$  the regular representation $H_{i}$,
which is called the {\em generalised simple} $H$-module at vertex $i$. By Proposition \ref{lem2.1},
we know that $P_i$ and $I_i$ are $\tau$-locally free, since they are locally free and rigid. The next example shows that all the generalised simple
modules are also $\tau$-locally free.

\begin{example}\label{ex:gensimple}
Observe that $\dim \End E_i=d_i$ and $\langle E_i,E_i\rangle=d_i$. Therefore, by Proposition \ref{lem2.2},
$E_i$ is rigid and thus $\tau$-locally free by Proposition \ref{lem2.1}.
\end{example}

\subsection{The root system $\Delta$ of type $C$}
%Let $C$  be a symmetrisable Cartan $n\times n$ matrix of affine type and $D$ be any symmetriser of $C$.
Let $\alpha_1,\alpha_2,\cdots,\alpha_n$ be a list of  positive simple roots of the Kac-Moody algebra $g(C)$ associates with $C$. We identify {\em $\alpha_i$ with the rank vector of the generalised simple $H$-module
$E_i$ at vertex $i$}, and
also say that positive roots of $g(C)$ with respect to the fixed simple roots are {\em positive roots of type} $C$.

For any $1\leq i\leq  n$, the simple reflection associated to $i$ is defined by
$$s_i:\mathbb{Z}^{n}\longrightarrow \mathbb{Z}^{n}, \; \alpha_j\mapsto \alpha_j-c_{ij}\alpha_i,$$
where $\mathbb{Z}^{n}=\sum_{i=1}^{n}\mathbb{Z}\alpha_i$ is the root lattice of $g(C)$.
The {\em Weyl group} $W$ is the subgroup of $\textup{Aut}(\mathbb{Z}^{n})$ generated by $s_1,s_2,\cdots,s_n$.
Denote by $\Delta_{\textup{re}}=\cup_{i=1}^{n}W(\alpha_i)$  the set of {\em real roots}, and by
 $\Delta_{\textup{im}}$  the set of {\em imaginary roots}. When
 $C$ is of affine type, \[\Delta_{\textup{im}}=\mathbb{Z}\delta,\]
 where $\delta$ is the unique minimal positive imaginary root determined by the Cartan matrix
 $C$.
The {\em root system of type} $C$ is
 \[\Delta=\Delta_{\textup{re}}\cup\Delta_{\textup{im}}.\]

For an orientation $\Omega$ of $C$ and an {\em admissible vertex} $i$ in $Q^0(C,\Omega)$,
that is, $i$ is either a sink or a source vertex, let
$$s_i(\Omega)=\{(r,s)\in\Omega\mid i\notin\{r,s\}\}\cup\{(s,r) \mid i\in\{r,s\},(r,s)\in\Omega\}.$$
Then $s_i(\Omega)$ is again an orientation of $C$.
A sequence $\mathbf{i}=(i_1,i_2,\cdots,i_n)$ is called a {\em $+$-admissible sequence} for $(C,\Omega)$ if $\{i_1,i_2,\cdots,i_n\}=\{1,2,\cdots,n\}$, $i_1$ is a sink in $Q^0(C,\Omega)$ and $i_k$ is a sink in $Q^0(C,s_{i_{k-1}}\cdots s_{i_1}(\Omega))$ for $2\leq k\leq n$.

Now we fix a $+$-admissible sequence $\mathbf{i}=(i_1,i_2,\cdots,i_n)$.
Define $$\beta_{\mathbf{i},k}=\beta_k=\left\{\begin{array}{ll}
                \alpha_{i_1}, & \text{if}\ k=1; \\
                s_{i_1}s_{i_2}\cdots s_{i_{k-1}}(\alpha_{i_{k}}), & \text{if}\ 2\leq k\leq n.
              \end{array}
\right.$$
Similarly, define
$$\gamma_{\mathbf{i},k}=\gamma_k=\left\{\begin{array}{ll}  \alpha_{i_n}, & \text{if}\ k=n; \\
                s_{i_n}s_{i_{n-1}}\cdots s_{i_{k+1}}(\alpha_{i_k}), & \text{if}\ 1\leq k\leq n-1. \\
                      \end{array}
\right.$$

\begin{lem} \cite[Lemma 3.2, Lemma 3.3]{[GLS1]} \label{lem:rkvpreprojinj}
The following are true.
\[\rankv E_{i_k}=\alpha_{i_k}, ~\rankv P_{i_k}=\beta_k \text{ and }
\rankv I_{i_k}=\gamma_k.\]
\end{lem}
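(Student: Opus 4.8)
The plan is to treat the three identities separately. The first, $\rankv E_{i_k}=\alpha_{i_k}$, holds by the very identification of $\alpha_i$ with $\rankv E_i$ made before the statement, so there is nothing to prove there; the content lies in the formulas for $P_{i_k}$ and $I_{i_k}$, which I would deduce from a homological characterisation of their rank vectors.

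For the projective, I would use that $P_{i_k}$ is locally free (as recalled just above) and projective. For any locally free $N$ with $\rankv N=(b_1,\dots,b_n)$ we then have $\Ext^1_H(P_{i_k},N)=0$ and $\Hom_H(P_{i_k},N)\cong N_{i_k}$, whence $\langle P_{i_k},N\rangle=\dim_K N_{i_k}=d_{i_k}b_{i_k}$, the last step because $N_{i_k}$ is free of rank $b_{i_k}$ over $H_{i_k}\cong K[x]/(x^{d_{i_k}})$. Writing $\langle a,b\rangle$ for the bilinear form of Proposition \ref{lem2.2}, which depends only on the rank vectors, and taking $N=E_m$ gives $\langle\rankv P_{i_k},\alpha_m\rangle=d_{i_k}\delta_{i_k m}$ for all $m$. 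Collecting the coefficient of $b_m$ in Proposition \ref{lem2.2} shows that $a\mapsto(\langle a,\alpha_m\rangle)_m$ has diagonal entries $d_m$ and an off-diagonal entry in position $(m,l)$ that is nonzero only when $(m,l)\in\Omega$; since $Q^0(C,\Omega)$ is acyclic this matrix is triangular for any topological order of the vertices, hence invertible. Thus $\rankv P_{i_k}$ is the unique $a$ with $\langle a,\alpha_m\rangle=d_{i_k}\delta_{i_k m}$, and it suffices to check that $\beta_k$ solves this system. Dually, $I_{i_k}$ is locally free and injective, $\langle N,I_{i_k}\rangle=\dim_K N_{i_k}=d_{i_k}b_{i_k}$, and the transposed (still triangular) system identifies $\rankv I_{i_k}$ as the unique $a$ with $\langle\alpha_m,a\rangle=d_{i_k}\delta_{i_k m}$, so it suffices to check that $\gamma_k$ solves it.

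It then remains to prove $\langle\beta_k,\alpha_m\rangle=d_{i_k}\delta_{i_k m}$, which I would do by induction on $k$. For $k=1$ we have $\beta_1=\alpha_{i_1}$, and substituting into Proposition \ref{lem2.2} gives $\langle\alpha_{i_1},\alpha_m\rangle=d_m\delta_{i_1 m}-\sum_{l\,:\,(m,l)\in\Omega}d_l|c_{lm}|\delta_{i_1 l}$; the sum can be nonzero only if $(m,i_1)\in\Omega$, i.e. only if some arrow leaves $i_1$, which is impossible as $i_1$ is a sink of $Q^0(C,\Omega)$, so the expression equals $d_{i_1}\delta_{i_1 m}$ as wanted. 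For $k\geq 2$ I would reflect at the sink $i_1$: for $s_{i_1}\Omega$ the sequence $(i_2,\dots,i_n,i_1)$ is $+$-admissible, its preprojective vector at position $k-1$ is $s_{i_2}\cdots s_{i_{k-1}}(\alpha_{i_k})$, and $\beta_k=s_{i_1}\big(s_{i_2}\cdots s_{i_{k-1}}(\alpha_{i_k})\big)$. Using the Geiss-Leclerc-Schr\"oer reflection functor at $i_1$, which acts as $s_{i_1}$ on rank vectors and carries $P^\Omega_{i_k}$ to $P^{s_{i_1}\Omega}_{i_k}$ for $k\geq 2$, the induction hypothesis at position $k-1$ then yields $\rankv P^\Omega_{i_k}=s_{i_1}\rankv P^{s_{i_1}\Omega}_{i_k}=\beta_k$. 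The injective identity $\langle\alpha_m,\gamma_k\rangle=d_{i_k}\delta_{i_k m}$ follows symmetrically, reflecting at the source $i_n$ and running the reversed sequence $(i_n,\dots,i_1)$.

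I expect the genuine obstacle to be this last, orientation-tracking step: the form $\langle-,-\rangle$ is neither symmetric nor orientation-independent, so one must control precisely how it transforms under reflection at a sink and ensure the reflection stays within the locally free locus where it realises $s_{i_1}$. A cleaner alternative that avoids the functorial bookkeeping is to notice that the rank-vector form of Proposition \ref{lem2.2} coincides with the Euler form of the hereditary species of type $C$ with orientation $\Omega$. Since the homological characterisation above uses only this form, $\rankv P_{i_k}$ and $\rankv I_{i_k}$ must equal the dimension vectors of the indecomposable projective and injective modules over that species, which are the classical vectors $\beta_k$ and $\gamma_k$ of Dlab-Ringel; this transfer is what I would ultimately write down.
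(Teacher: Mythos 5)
The paper offers no proof of this lemma: it is imported directly from \cite[Lemmas 3.2, 3.3]{[GLS1]}, where it is established by an explicit analysis of the modules $He_i$ and $D(e_iH)$. So your argument is by construction a different route, and its core is sound. The identity $\rankv E_{i_k}=\alpha_{i_k}$ is indeed just the identification made in the text. The homological characterisation is correct: $\langle\rankv P_{i_k},\alpha_m\rangle=\dim\Hom_H(P_{i_k},E_m)=d_{i_k}\delta_{i_k m}$ because $\Hom_H(He_{i_k},E_m)\cong(E_m)_{i_k}$, dually $\langle\alpha_m,\rankv I_{i_k}\rangle=d_{i_k}\delta_{i_k m}$, and the matrices of $a\mapsto(\langle a,\alpha_m\rangle)_m$ and $a\mapsto(\langle\alpha_m,a\rangle)_m$ read off from (\ref{eq:bil}) are triangular with diagonal $(d_m)$ in any topological order of the acyclic quiver $Q^0(C,\Omega)$, hence invertible; the base cases at the sink $i_1$ and the source $i_n$ also check out. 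What this buys over the citation is a self-contained proof using only Proposition \ref{lem2.2} and standard properties of projectives and injectives.

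The one step you must not leave as asserted is that $F^+_{i_1}$ carries $P^{\Omega}_{i_k}$ to $P^{s_{i_1}\Omega}_{i_k}$ for $k\geq 2$. In particular you cannot quote Proposition \ref{lem 2.1}(1) of this paper for it: that proposition is proved by computing rank vectors via $\rankv P_{i_k}=\beta_k$, i.e.\ via the very lemma at hand, so the argument would become circular. The fact is provable independently (e.g.\ from the tilting-module description of the reflection functor), but the cleaner repair, entirely inside your framework, avoids modules altogether: verify from (\ref{eq:bil}) that $\langle s_{i_1}a,s_{i_1}b\rangle_{s_{i_1}\Omega}=\langle a,b\rangle_{\Omega}$ for the sink $i_1$, and then induct on $k$ purely on the bilinear-form identity $\langle\beta_{\mathbf{i},k},\alpha_m\rangle_{\Omega}=\langle\beta_{\mathbf{i}',k-1},s_{i_1}\alpha_m\rangle_{s_{i_1}\Omega}=d_{i_k}\delta_{i_k m}$, which follows from the inductive hypothesis by linearity since $i_k\neq i_1$; the dual computation handles $\gamma_k$. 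Your fallback via Dlab--Ringel is also legitimate, since the form (\ref{eq:bil}) is the Euler form of the corresponding species, but it merely relocates the computation of $\beta_k$ and $\gamma_k$ to \cite{[DR]} and is thus again a citation rather than a proof.
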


\subsection{Coxeter transformations, reflection functors and AR-translation}
Assume that $\mathbf{i}=(i_1,i_2,\cdots,i_n)$ is a $+$-admissible sequence  for $(C,~\Omega)$. Let
 \[c_{\mathbf{i}}=s_{i_n}s_{i_{n-1}}\cdots s_{i_1}:\mathbb{Z}^{n}\rightarrow\mathbb{Z}^{n}.\]
Then $c_\mathbf{i}^{-1}=s_{i_1}s_{i_{2}}\cdots s_{i_n}$.
We call $c_\mathbf{i}$ and $c_\mathbf{i}^{-1}$ {\em Coxeter transformations}  associated to $\mathbf{i}$.
If there is no confusion,  we denote $c_\mathbf{i}$ simply by $c$.

Note that the rotated sequence
\[\mathbf{i}'=(i_2,i_3,\cdots,i_n, i_1)\]
is also a $+$-admissible sequence for $(C,s_{i_1}(\Omega))$, and $c_{\mathbf{i'}}=s_{i_1}s_{i_n}\cdots s_{i_{3}} s_{i_2}$ and $c_{\mathbf{i'}}^{-1}=s_{i_2}s_{i_3}\cdots s_{i_{n}} s_{i_1}$ are the Coxeter transformations associated to $\mathbf{i'}$.

Let $i$ be a sink (resp. source) vertex in $Q^0$,  let
$s_{i}(H)=H(C,D,s_{i}(\Omega))$ and denote by
\[F^+_{i} (\text{resp. } F^-_i): \text{rep}(H)\rightarrow \text{rep}(s_{i}(H))\]
the {\em reflection functor} at  $i$.

\begin{prop} (see \cite[Proposition 9.4, Proposition 11.8]{[GLS1]}) \label{lem6}
Let $i$ be a sink (resp. source) vertex and
$M$ a $\tau$-locally free  $H$-module such that  $M\not\cong E_i$.
Then $F^+_{i}(M)$ (resp. $F^-_{i}(M)$) is a $\tau$-locally free $s_{i}(H)$-module.
Moreover,
\[ F^{-}_{i}F^+_{i}(M)\cong M  (\text{ resp. }\ F^{+}_{i}F^-_{i}(M)\cong M)\] and
$\rankv F^{+}_{i}(M)=s_{i}(\rankv M)$ (resp. $\rankv F^{-}_{i}(M)=s_{i}(\rankv M)$).
\end{prop}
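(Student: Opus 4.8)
The plan is to recover this from the explicit form of the reflection functors, adapting the Bernstein--Gelfand--Ponomarev construction to the modulated setting of $H$. Fix a sink $i$ and write $a_k=\rank M_k$. The functor $F^+_i$ leaves the spaces $M_k$ and all structure maps away from $i$ untouched, replaces $M_i$ by the kernel of the combined incoming map, and reverses the arrows at $i$ (so that $i$ becomes a source for $s_i(H)$). Thus the object to analyse is the $H_i$-linear map
\[
h_i\colon B:=\bigoplus_{(i,j)\in\Omega}{}_iB_j\otimes_{H_j}M_j\longrightarrow M_i,
\]
assembled from the maps $M(\alpha^{(g)}_{ij})$, where ${}_iB_j$ is the $(H_i,H_j)$-bimodule attached to the arrows from $j$ to $i$; relation (H2) is precisely what makes $h_i$ a homomorphism of $H_i$-modules, and $(F^+_iM)_i=\ker h_i$. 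The three assertions to be proved are then: the rank formula $\rankv F^+_iM=s_i(\rankv M)$, local freeness of $F^+_iM$, and the inverse relation $F^-_iF^+_iM\cong M$, after which $\tau$-local freeness must be upgraded from local freeness.

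The cleanest starting point, and the part I would do first, is to deduce the rank formula and freeness at the vertex $i$ simultaneously, the key structural input being that $H_i\cong K[x]/(x^{d_i})$ is a Frobenius algebra and hence self-injective. Using the symmetriser identity $d_i|c_{ij}|=d_j|c_{ji}|$ one checks that each ${}_iB_j\otimes_{H_j}M_j$ is free over $H_i$ of rank $|c_{ij}|a_j$, so $B$ is free of rank $\sum_{(i,j)\in\Omega}|c_{ij}|a_j$. Granting that $h_i$ is surjective, we obtain a short exact sequence of $H_i$-modules
\[
0\longrightarrow(F^+_iM)_i\longrightarrow B\xrightarrow{\ h_i\ }M_i\longrightarrow 0 .
\]
Since $M$ is locally free, $M_i$ is a free, hence injective, $H_i$-module, so this sequence splits; therefore $(F^+_iM)_i$ is a direct summand of the free module $B$ and is itself free, of rank $-a_i+\sum_{(i,j)\in\Omega}|c_{ij}|a_j$. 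As the ranks at the other vertices are unchanged, this is exactly the $i$-th coordinate of $s_i(\rankv M)$ for the action $s_i(\alpha_j)=\alpha_j-c_{ij}\alpha_i$ recorded in \S\ref{sec2}, so local freeness of $F^+_iM$ and the rank formula fall out together. The isomorphism $F^-_iF^+_iM\cong M$ is then the usual BGP duality: at $i$ the source functor $F^-_i$ forms the cokernel of the now-outgoing maps, and evaluated on a module with no $E_i$-summand it returns the module up to a canonical isomorphism built from the splitting above; the source case is dual.

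What this argument does \emph{not} give for free, and where I expect the hypotheses to be genuinely needed, is the surjectivity of $h_i$. Its cokernel $C$ defines a quotient representation of $M$ supported at $i$, and by Frobenius self-duality of $H_i$ one has $\Hom_H(M,E_i)\cong\Hom_{H_i}(C,H_i)\cong DC$; hence $h_i$ is surjective if and only if $\Hom_H(M,E_i)=0$. This vanishing is not automatic for an arbitrary indecomposable \emph{locally} free module --- it is exactly the condition that rules out the wrong cokernel --- so it is here that $\tau$-local freeness together with $M\not\cong E_i$ must be used, and I would establish it through Auslander--Reiten theory for $H$ rather than from the single sequence above.

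The step I expect to be the main obstacle is upgrading ``$F^+_iM$ is locally free'' to ``$F^+_iM$ is $\tau$-locally free'', that is, controlling every translate $\tau^k(F^+_iM)$ at once, which cannot be read off from one short exact sequence. The clean route is to relate the reflection functors to the Auslander--Reiten translation via the Coxeter functor $F^+_{i_n}\cdots F^+_{i_1}$ attached to a $+$-admissible sequence, and to use that on $\tau$-locally free modules --- away from the generalised simples $E_i$ --- this composite realises $\tau$. Granting that compatibility, applying $F^+_i$ merely shifts $M$ along a $\tau$-orbit consisting of locally free modules, so the property is preserved; this, together with the $\Hom_H(M,E_i)=0$ vanishing, is precisely the technical heart supplied by \cite[Proposition 9.4, Proposition 11.8]{[GLS1]}, and is what I would isolate as the crux of the proof.
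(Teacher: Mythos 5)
The paper gives no proof of this proposition: it is quoted directly from \cite{[GLS1]} (Propositions 9.4 and 11.8), so there is no in-paper argument to compare against, and your sketch is in substance a faithful reconstruction of the argument given there. The parts you work out are correct --- the rank formula and the freeness of $(F_i^+M)_i$ do follow from the splitting of $0\to\ker h_i\to B\to M_i\to 0$ over the self-injective local algebra $H_i$ once $h_i$ is surjective --- and the two steps you explicitly defer, namely the vanishing $\Hom_H(M,E_i)=0$ for $\tau$-locally free $M\not\cong E_i$ at a sink $i$ and the compatibility $\tau_{s_i(H)}^k F_i^+\cong F_i^+\tau_H^k$ obtained by rotating the admissible sequence in the Coxeter functor, are precisely the content of the cited propositions; so your outline identifies the crux correctly, though those two steps remain unproved in your write-up, as you acknowledge.
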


Given a $+$-admissible sequence $\mathbf{i}=(i_1,i_2,\cdots,i_n)$  for $(C,~\Omega)$, let 
$$C^+=F^+_{i_n}\cdots F^+_{i_2}F^+_{i_1}:\text{rep}(H)\rightarrow \text{rep}(H).$$ 
Dually, one defines 
$$C^-=F^-_{j_n}\cdots F^-_{j_2}F^-_{j_1}:\text{rep}(H)\rightarrow \text{rep}(H)$$
for a $-$-admissible sequence  $\mathbf{j}=(j_1,j_2,\cdots,j_n)$.
 We call $C^+$ and $C^-$ {\em Coxeter functors}. Let  $T:\text{rep}(H)\longrightarrow \text{rep}(H)$ be the twist automorphism induced from the algebra automorphism $H\rightarrow H$ defined by $\varepsilon_i \mapsto \varepsilon_i$ and $\alpha_{ij}^{(g)}\mapsto -\alpha_{ij}^{(g)}$.

\begin{prop} \cite[Proposition 11.9]{[GLS1]} \label{lem:TC}
Let $M$ be an $H$-module. The following are equivalent.
\begin{itemize}
\item[(1)] $M$ is locally free.

\item[(2)] $\tau (M)\cong TC^+(M)$.

\item[(3)] $\tau^-(M) \cong TC^-(M)$.
\end{itemize}
\end{prop}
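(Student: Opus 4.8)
The plan is to prove the three conditions equivalent by treating $(1)\Rightarrow(2)$ as the main computational input, deducing $(1)\Rightarrow(3)$ by the dual argument with source reflections, and then closing the circle with a short argument for the two converses. Since $\tau$, the twist $T$ and every reflection functor $F^{\pm}_i$ commute with finite direct sums, I would first reduce to the case that $M$ is indecomposable, and dispose of the projectives separately: for $M=P_i$ one has $\tau M=0$, while the composite $C^+$, processed in the $+$-admissible order, annihilates every projective (exactly as the Bernstein--Gelfand--Ponomarev functor does in the hereditary case), so $TC^+M=0$ and $(2)$ holds. The content is therefore concentrated in the indecomposable non-projective locally free modules.

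For such an $M$ the task is to show that $TC^+$ computes the Auslander--Reiten translate. Writing $C^+=F^+_{i_n}\cdots F^+_{i_1}$ for the fixed sequence $\mathbf i=(i_1,\dots,i_n)$, the strategy is to analyse one sink reflection $F^+_i$ at a time and identify the full composite, performed in the admissible order so that each $i_k$ is a genuine sink at its stage, with a single application of $\tau$. Concretely I would take a minimal projective presentation of $M$, record how each $F^+_{i_k}$ transforms the structure maps using only the defining relations (H1) and (H2), and match the result with a minimal projective presentation of $\tau M$. As a guiding consistency check, Proposition~\ref{lem6} gives $\rankv F^+_{i_k}N=s_{i_k}(\rankv N)$, so iterating yields $\rankv C^+M=c(\rankv M)$ with $c=s_{i_n}\cdots s_{i_1}$; since $T$ fixes each loop $\varepsilon_i$ and hence preserves rank vectors, $\rankv TC^+M=c(\rankv M)$, which is consistent with the known action of the Coxeter transformation on the rank lattice.

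The step I expect to be the main obstacle is the precise matching of $F^+_i$ with the relevant almost-split construction together with the exact bookkeeping of the sign twist $T$. In the hereditary (Dlab--Ringel) setting this is the classical identity $\tau\cong C^+$, but here the loops $\varepsilon_i$ and the commutation relations (H2) make the reflection functors more delicate, and one must verify that the signs appearing in their construction are accounted for exactly by $T$, which negates every arrow $\alpha^{(g)}_{ij}$ while fixing the $\varepsilon_i$. I would carry this out by describing $F^+_i$ as an explicit kernel/cokernel over the subalgebra $H_i\cong K[x]/(x^{d_i})$ and comparing the resulting maps with those of $\tau M$ arrow by arrow.

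Finally, for the converse implications I would exploit that the forward direction holds in both its $+$ and $-$ forms. For a non-projective $M$ satisfying $(2)$ I would argue that $\tau M$ is locally free and then apply $(1)\Rightarrow(3)$ to it, recovering $M\cong\tau^-\tau M\cong TC^-(\tau M)$ as a locally free module. The point that needs care here is to show $TC^+M$ is locally free, equivalently (since $T$ fixes the loops and so preserves the $H_i$-module structure at each vertex) that $C^+M$ is locally free; I would establish this from the same explicit kernel/cokernel description of the reflection functors. Granting it, $(2)\Rightarrow(1)$ follows, the projective case being trivial since projectives are locally free, and $(3)\Rightarrow(1)$ is the exact dual using $C^+$, $\tau$ and $(1)\Rightarrow(2)$. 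This completes the equivalence of all three conditions.
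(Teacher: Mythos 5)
There is nothing in this paper to compare your argument against: Proposition~\ref{lem:TC} is quoted verbatim from \cite[Proposition 11.9]{[GLS1]} and no proof is given here, so I can only assess your outline on its own terms. Your treatment of the forward implication $(1)\Rightarrow(2)$ is the right strategy (the BGP-style comparison of a minimal locally free projective presentation of $M$ with the presentation of $\tau M$ obtained from the transpose, one sink reflection at a time, with $T$ absorbing the signs), and your disposal of the projectives is correct since $P_{i_k}$ becomes $E_{i_k}$ at the $k$-th stage and $F^+_{i_k}E_{i_k}=0$. But this is only a plan: the step you yourself flag as the main obstacle is the entire content of the proposition, and the consistency check via Proposition~\ref{lem6} does not apply as stated, since that proposition concerns $\tau$-locally free modules, not merely locally free ones.

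The genuine gap is in your converse $(2)\Rightarrow(1)$. You propose to show that $TC^+M$, hence $\tau M$, is locally free, and then to recover $M\cong TC^-(\tau M)$ as a locally free module. Both load-bearing claims are false in this setting: the reflection functors do \emph{not} preserve local freeness (at a sink $i$, $(F_i^+M)_i$ is a kernel inside a free $H_i$-module, and submodules of free $K[x]/(x^{d_i})$-modules need not be free), and $\tau M$ need not be locally free even when $M$ is locally free and $(2)$ holds --- this is precisely why the notion of $\tau$-locally free module exists and why Conjecture~\ref{Conj:gls} is nontrivial. If your route worked, every indecomposable locally free module would be $\tau$-locally free, which is known to fail. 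The actual proof of $(2)\Rightarrow(1)$ must instead extract, from the explicit comparison of $C^+M$ with the transpose construction of $\tau M$, the obstruction whose vanishing forces each $M_i$ to be $H_i$-free; your outline does not contain that idea. (A smaller issue: your reduction to indecomposable summands is harmless for $(1)\Rightarrow(2)$ but is not automatic for $(2)\Rightarrow(1)$, since an isomorphism $\tau M_1\oplus\tau M_2\cong TC^+M_1\oplus TC^+M_2$ need not respect the summands.)
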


\begin{prop} \cite[Proposition 11.5]{[GLS1]}\label{Lem:tauc}
Let $M\in \textup{rep}(H)$ be a $\tau$-locally free module. If $\tau^k(M)\neq 0$, where $k\in \mathbb{Z}$, then
$\rankv \tau^k(M)=c^k(\rankv M)$.
\end{prop}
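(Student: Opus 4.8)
The plan is to reduce the general statement to the cases $k=\pm1$ and then induct, using the factorisation of the Auslander--Reiten translation through the Coxeter functors supplied by Proposition \ref{lem:TC}. First I would record the elementary observation that the twist $T$ preserves rank vectors: since $T$ only negates the maps $M(\alpha^{(g)}_{ij})$ and leaves each $M(\varepsilon_i)$ unchanged, the $H_i$-module structure of each $M_i$ is untouched, so $\rankv T(N)=\rankv N$ for every locally free $N$. Next, since $M$ is $\tau$-locally free and $\tau^k(M)\neq0$, every $\tau^j(M)$ with $0\le j\le k$ (for $k>0$) is nonzero, indecomposable and locally free, and moreover $\tau^j(M)$ is non-projective for $0\le j\le k-1$ (otherwise $\tau^{j+1}(M)$, and hence $\tau^k(M)$, would vanish). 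This is exactly the setting in which Proposition \ref{lem:TC}(2) applies to each $N:=\tau^j(M)$, giving $\tau(N)\cong TC^+(N)$.

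The core computation is then $\rankv C^+(N)=c(\rankv N)$ for a non-projective locally free $N$. I would establish this by applying the reflection functors in the composite $C^+=F^+_{i_n}\cdots F^+_{i_2}F^+_{i_1}$ one at a time: each $i_k$ is a sink in the appropriate reflected quiver by the $+$-admissibility of $\mathbf{i}$, so Proposition \ref{lem6} gives, at every stage, a $\tau$-locally free module whose rank vector is obtained by applying the simple reflection $s_{i_k}$. Composing, $\rankv C^+(N)=s_{i_n}\cdots s_{i_1}(\rankv N)=c(\rankv N)$. Combined with the $T$-invariance of rank vectors and $\tau(N)\cong TC^+(N)$, this yields $\rankv\tau(N)=c(\rankv N)$, i.e.\ the case $k=1$. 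Iterating over $j=0,1,\dots,k-1$ then gives $\rankv\tau^k(M)=c^k(\rankv M)$.

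The one point requiring care---and the main obstacle---is that Proposition \ref{lem6} only governs a reflection functor $F^+_{i}$ on a module not isomorphic to the generalised simple $E_{i}$, since $F^+_{i}(E_{i})=0$; so I must rule out any intermediate module in the composite $C^+$ being such a generalised simple. Here non-projectivity of $N$ is decisive: by Proposition \ref{lem:TC}(2) we have $C^+(N)\cong T^{-1}\tau(N)$, which is nonzero precisely because $N$ is not projective, and since a reflection functor at a sink annihilates a $\tau$-locally free indecomposable exactly when it is the generalised simple at that sink, the non-vanishing of the full composite forces every intermediate module to avoid the forbidden generalised simples; hence Proposition \ref{lem6} does apply at each step. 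For $k<0$ I would run the dual argument, using Proposition \ref{lem:TC}(3) together with $\rankv C^-(N)=c^{-1}(\rankv N)$ (the composite of the $F^-_{j}$ along a $-$-admissible sequence realises $c^{-1}=s_{i_1}\cdots s_{i_n}$) and the non-injectivity of the relevant $\tau^{-j}(M)$. The case $k=0$ is trivial, and induction in both directions completes the proof.
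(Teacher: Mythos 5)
This proposition is not proved in the paper at all: it is quoted verbatim from \cite[Proposition 11.5]{[GLS1]}, so there is no internal argument to compare against. Your proposal is, in substance, a correct reconstruction of the Geiss--Leclerc--Schr\"{o}er argument from the ingredients the paper does restate: the factorisation $\tau\cong TC^{+}$ of Proposition \ref{lem:TC}, the rank-vector formula $\rankv F^{+}_{i}(N)=s_{i}(\rankv N)$ of Proposition \ref{lem6}, the observation that $T$ fixes rank vectors, and induction on $|k|$ with the dual argument via $C^{-}$ for $k<0$. Your handling of the delicate point is also the right one: the only obstruction to iterating Proposition \ref{lem6} through the composite $F^{+}_{i_n}\cdots F^{+}_{i_1}$ is that some intermediate indecomposable could be the generalised simple at the next sink, and you correctly exclude this from the non-vanishing of $C^{+}(N)\cong T^{-1}\tau(N)$ for non-projective $N$.

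The one step you use that the paper never states is that $F^{+}_{i}(E_{i})=0$ (so that an offending intermediate module would force the whole composite to vanish). Proposition \ref{lem6} as quoted only gives the converse direction, namely that $F^{+}_{i}(N)\neq 0$ when $N\not\cong E_{i}$, via $F^{-}_{i}F^{+}_{i}(N)\cong N$. The vanishing $F^{+}_{i}(E_{i})=0$ is true and standard --- it is part of the construction of the reflection functors in \cite[Section 9]{[GLS1]}, exactly as the BGP functor at a sink kills the simple there --- but if you want the proof to be self-contained relative to this paper you should cite or verify that fact explicitly rather than treat it as implied by Proposition \ref{lem6}.
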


\begin{prop} \cite[Proposition 11.6]{[GLS1]}\label{lem4}
Let $M$ be an indecomposable preprojective or preinjective $H$-module.
\begin{itemize}
\item[(1)] $M$ is $\tau$-locally free and rigid.

\item[(2)] $\rankv M \in \Delta^+_{\textup{re}}$.

\item[(3)] If  $N$ is also an indecomposable preprojective or preinjective $H$-module with
$\rankv M=\rankv N,$ then $M\cong N$.
\end{itemize}
\end{prop}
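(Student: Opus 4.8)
The plan is to establish all three parts by reducing to the well-understood projective and injective modules via the Coxeter functors $C^+$ and $C^-$, exploiting the fact that over a representation-finite-behaved preprojective/preinjective component these functors interleave with the AR-translation $\tau$. The starting point is the structure theory already assembled above: every indecomposable preprojective module is obtained from an indecomposable projective $P_{i}$ by applying $(C^-)^k$ for some $k\geq 0$ (equivalently lies in the $\tau^{-}$-orbit of a projective), and dually every indecomposable preinjective module lies in the $\tau$-orbit of some injective $I_{i}$. Since $P_i$ and $I_i$ are locally free and rigid --- hence $\tau$-locally free with all $\tau$-translates rigid by Proposition \ref{lem2.1} --- part (1) should follow once I check that the Coxeter functors preserve the relevant properties along the orbit.

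First I would prove (1). Using Proposition \ref{lem:TC}, for a locally free $M$ one has $\tau(M)\cong TC^+(M)$ and $\tau^-(M)\cong TC^-(M)$; since the twist $T$ is an automorphism of $\mathrm{rep}(H)$ that visibly preserves local freeness (it only negates the arrow maps $\alpha_{ij}^{(g)}$ and fixes each $M_i$ as an $H_i$-module), it suffices to track local freeness under $C^{\pm}$. By Proposition \ref{lem6}, each reflection functor $F^+_i$ (resp.\ $F^-_i$) sends a $\tau$-locally free module not isomorphic to $E_i$ to a $\tau$-locally free module, and is invertible on such modules. Starting from a projective $P_{i}$, which is $\tau$-locally free and rigid, I would argue inductively down its $\tau^-$-orbit that no $E_i$ is ever encountered (an indecomposable preprojective that is not projective cannot be a generalised simple, since $E_i$ sits in a homogeneous or inhomogeneous tube, i.e.\ in the regular part, not the preprojective/preinjective components), so the functors apply at each step and local freeness and rigidity propagate along the whole orbit. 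The same argument run through $C^-$ and the injectives handles the preinjective case.

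Next, part (2) is essentially a rank-vector bookkeeping statement. By Proposition \ref{Lem:tauc}, whenever $\tau^k(M)\neq 0$ we have $\rankv \tau^k(M)=c^k(\rankv M)$, where $c$ is the Coxeter transformation, an element of the Weyl group $W$. For a preprojective $M$ we have $\rankv M = c^{-k}(\rankv P_{i})$ for a suitable $k\ge 0$, and by Lemma \ref{lem:rkvpreprojinj} the rank vector of $P_{i}$ is $\beta_{\mathbf i, \ell}=s_{i_1}\cdots s_{i_{\ell-1}}(\alpha_{i_\ell})$, which is a real root by definition of $\Delta_{\mathrm{re}}=\bigcup_i W(\alpha_i)$. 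Since $c\in W$ and $W$ permutes $\Delta_{\mathrm{re}}$, applying any power of $c$ keeps us inside $\Delta_{\mathrm{re}}$; combined with positivity of the rank vector (every $\rank M_i\geq 0$, and not all zero), we land in $\Delta^+_{\mathrm{re}}$. The preinjective case is symmetric, using $\gamma_{\mathbf i, k}$ and $c^{-1}$.

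Finally, for part (3) I would use the injectivity of the rank-vector map on each orbit together with the bijective behaviour of the Coxeter functors. If $M,N$ are both preprojective (say) with $\rankv M=\rankv N$, then applying $C^+$ repeatedly --- equivalently applying $\tau$ via Proposition \ref{lem:TC}, together with the rank formula of Proposition \ref{Lem:tauc} --- transports both modules toward the projective modules while their rank vectors evolve by the \emph{same} element $c^k\in W$, hence remain equal at every stage. Once both reach projectives, equal rank vectors force the same projective $P_{i}$ by Lemma \ref{lem:rkvpreprojinj} (the $\beta_k$ are distinct), whence $M\cong N$ after inverting the functors. The mixed case, where one module is preprojective and the other preinjective, would require noting that a real root cannot simultaneously be $c^{-k}(\rankv P)$ and $c^{m}(\rankv I)$ with the two modules landing in different components; here I expect the \textbf{main obstacle}, since ruling out a coincidence between a preprojective and a preinjective rank vector is not purely formal --- it relies on the structure of the Coxeter orbit on affine (or general) root systems, and may need the observation that under $\langle\,\cdot\,,\,\cdot\,\rangle$ of Proposition \ref{lem2.2} preprojectives and preinjectives are distinguished by the asymptotic behaviour of $c^k$ (defect, or the sign of $\langle\rankv M,\rankv M\rangle$ growth), so that no single real root can be realised in both families simultaneously.
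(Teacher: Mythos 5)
The paper offers no proof of this statement: it is quoted verbatim from \cite[Proposition 11.6]{[GLS1]}, so there is nothing internal to compare against, and your outline must be judged on its own. Parts (1) and (2) are essentially fine, and for (1) the reflection-functor detour is unnecessary: since $M\cong\tau^{-r}(P_j)$ by definition of preprojective, Proposition \ref{lem2.1} applied to the locally free rigid module $P_j$ already says that every $\tau$-translate of $P_j$ is locally free and rigid, which is the whole of (1). The justification you insert in that detour is moreover false: a generalised simple $E_i$ need \emph{not} lie in the regular part. For the first sink $i_1$ of the admissible sequence one has $E_{i_1}\cong P_{i_1}$ (the paper uses exactly this identification in Proposition \ref{lem 2.1}), and dually $E_i$ can be injective; so you cannot clear the hypothesis $M\not\cong E_{i_1}$ of Proposition \ref{lem6} by claiming $E_{i_1}$ is regular. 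The correct observation is simply that $\tau^{-r}(P_j)$ with $r\geq 1$ is not projective, hence not isomorphic to $E_{i_1}=P_{i_1}$.

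The genuine gap is in (3). First, even in the unmixed case you need that $M$ and $N$ become projective after the \emph{same} number of applications of $\tau$; this follows from the sign change $c(\beta_j)=-\gamma_j<0$ used in the proof of Proposition \ref{rem1}, but it is not automatic from "equal rank vectors at every stage" and should be argued. Second, the mixed case is left open, and the fix you gesture at (defect, growth of $c^k$) only makes sense in the affine setting via $c^{iN}d=d+ir\delta$, whereas the proposition is stated for arbitrary $C$. A uniform way to close it: if $M$ is preprojective and $N$ preinjective with common rank vector $\alpha$ and they do not lie in a common component, then $\Hom_H(N,M)=0$ (the preprojective component is closed under predecessors) and $\Ext^1_H(M,N)\cong D\overline{\Hom}_H(N,\tau M)=0$, so $\langle\alpha,\alpha\rangle=\langle M,N\rangle\geq 0$ while $\langle\alpha,\alpha\rangle=\langle N,M\rangle\leq 0$, forcing $\langle\alpha,\alpha\rangle=0$ and contradicting part (2), which makes $\alpha$ real with $\langle\alpha,\alpha\rangle=d_i>0$; the remaining case, where the two modules share a component, reduces to the unmixed argument. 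With those two repairs the proposal is sound.
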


\begin{thm}\cite[Theorem 1.2 (a)]{[GLS6]}\label{thm:Schurroot}
The map $M\mapsto \rankv M$ induces a one-to-one correspondence between
the isomorphism classes of rigid $\tau$-locally free $H$-modules and real Schur roots associated to
the admissible sequence $\mathbf{i}$.
\end{thm}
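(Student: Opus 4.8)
The plan is to break the asserted bijection $M\mapsto\rankv M$ into three tasks, well-definedness, surjectivity and injectivity, and to transport everything between the module side and the combinatorial side using the reflection functors $F^{\pm}_i$ of Proposition~\ref{lem6}. For well-definedness, let $M$ be a rigid $\tau$-locally free $H$-module. Rigidity means $\Ext^1_H(M,M)=0$, so Proposition~\ref{lem2.2} gives $\langle M,M\rangle=\dim\End_H(M)>0$. Since $D$ is a symmetriser, $DC$ is symmetric and the symmetrisation of the form in \eqref{eq:bil} is the standard $D$-normalised quadratic form of type $C$, under which $(\rankv M,\rankv M)=2\langle M,M\rangle>0$. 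In affine type this form vanishes on $\Delta_{\mathrm{im}}=\mathbb{Z}\delta$, so the strict positivity forces $\rankv M\in\Delta^+_{\mathrm{re}}$; that this real root is moreover a Schur root is witnessed by the exceptional module $M$ itself, so the map lands in the set of real Schur roots associated to $\mathbf{i}$.

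For surjectivity I would induct on the length of a reflection word reducing a given real Schur root $\beta$ to a simple root. The base case $\beta=\alpha_i$ is realised by the generalised simple $E_i$, which is rigid and $\tau$-locally free by Example~\ref{ex:gensimple}. For the inductive step, a non-simple real Schur root $\beta$ admits an admissible vertex $i$ (a sink or source of the relevant orientation) with $s_i(\beta)$ a strictly shorter positive real Schur root; by induction $s_i(\beta)=\rankv N$ for a rigid $\tau$-locally free $s_i(H)$-module $N$, and since $\beta\neq\alpha_i$ we have $N\not\cong E_i$, so Proposition~\ref{lem6} applies. Applying the reflection functor carrying $\rep(s_i(H))$ to $\rep(H)$ produces a $\tau$-locally free $H$-module with rank vector $s_i(s_i(\beta))=\beta$; rigidity is preserved because the reflection functor restricts to an equivalence on the subcategories avoiding the generalised simple, along which $\Ext^1$ is computed, and this fact I would quote from \cite{[GLS1]}. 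Thus every real Schur root associated to $\mathbf{i}$ is a rank vector of a rigid $\tau$-locally free module.

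The remaining, and I expect hardest, task is injectivity. Suppose $M,M'$ are rigid $\tau$-locally free with $\rankv M=\rankv M'=\beta$. Two routes are available. The geometric route observes that rigidity makes the $GL$-orbits of $M$ and $M'$ open in the variety of locally free $H$-modules of rank vector $\beta$; if this locus is irreducible, two open orbits must meet and hence coincide, giving $M\cong M'$. The obstacle here is genuinely establishing irreducibility (and the right scheme structure) of the locally free part of the representation variety cut out by (H1)--(H2), which is not automatic as it was in the hereditary case. I would therefore favour the functorial route: peel off admissible vertices by applying $F^{\pm}_i$, reducing $\beta$ step by step toward a simple root $\alpha_j$, so that both $M$ and $M'$ are sent to $E_j$; since $F^{-}_iF^{+}_i\cong\mathrm{id}$ (and dually) by Proposition~\ref{lem6}, reversing the reductions yields $M\cong M'$. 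The one point needing care, and the true crux of this route, is verifying that at every intermediate stage the reduct is not isomorphic to a generalised simple $E_i$, for only then does Proposition~\ref{lem6} guarantee that the functors are mutually inverse; this holds because at each stage the current rank vector is a non-simple positive real root, so $i$ can be chosen with the reduced vector still positive and distinct from $\alpha_i$. Matching this functorial set of roots with the combinatorial definition of real Schur roots associated to $\mathbf{i}$ then completes the correspondence.
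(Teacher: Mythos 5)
This statement is imported verbatim from \cite[Theorem 1.2(a)]{[GLS6]}; the paper contains no proof of it, so there is no internal argument to compare your proposal against, and I can only assess it on its own terms. On those terms it has two genuine gaps.

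The first is in your well-definedness step. The inference ``$(\rankv M,\rankv M)_C=2\langle M,M\rangle>0$, the form vanishes on $\mathbb{Z}\delta$, hence $\rankv M\in\Delta^+_{\textup{re}}$'' is invalid: positivity of the quadratic form at a vector does not place that vector in the root system, and the paper's own Section 5 supplies counterexamples to exactly this inference. The modules $Y$ constructed there have $\rankv Y=\delta+\alpha$ with $\alpha$ a long real root, so $(\rankv Y,\rankv Y)_C=(\alpha,\alpha)_C>0$, yet $\rankv Y$ is not a root. (Those $Y$ are not rigid, so they do not contradict the theorem, but they show your positivity argument says nothing about membership in $\Delta$.) Proving that the rank vector of a rigid $\tau$-locally free module is a root at all is a substantial part of the theorem's content and is simply missing. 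Likewise, ``that this real root is moreover a Schur root is witnessed by the exceptional module $M$ itself'' is circular: real Schur roots associated to $\mathbf{i}$ are defined combinatorially in \cite{[GLS6]}, and identifying which real roots are realized by rigid modules is precisely what is to be proved.

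The second gap is the reduction underlying both your surjectivity and injectivity arguments: you assert that every non-simple positive real Schur root can be strictly decreased by a reflection at a vertex that is admissible (a sink or source) for the \emph{current} orientation, iterated all the way down to a simple root. This is not automatic. Each reflection changes the orientation and hence constrains which vertices are admissible next, and for regular real Schur roots in affine type the full admissible Coxeter sequence merely permutes the mouth roots of a tube rather than decreasing them, so one must argue that a simple root is reached at some intermediate stage. This combinatorial fact is the crux of the functorial route and needs a proof or a precise citation. In addition, preservation of rigidity under $F^\pm_i$ is used in both directions, whereas Proposition \ref{lem6} as quoted only controls $\tau$-local freeness and rank vectors; you would need the stronger statement that the reflection functors induce equivalences on the subcategories obtained by removing $\mathrm{add}\, E_i$ which are compatible with $\Ext^1$.
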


\section{Rank vectors of  $\tau$-locally free $H$-modules and positive roots}
\label{sec3}
\subsection{Regular components of AR-quivers}
Let $A$ be a finite dimensional $K$-algebra and let $\Gamma_A$ be
the Auslander-Reiten quiver of $A$.
An indecomposable  $A$-module $M$  is called {\em regular} if $\tau^k(M)\neq 0$ for all $k\in\mathbb{Z}$. A connected component of $\Gamma_A$ is called {\em regular} if it only consists of regular modules.
We recall two facts on regular components of AR-quivers.

\begin{thm}(see \cite[Theorem XX.1.2]{[SS]}) \label{lem1}
Let $A$ be a finite dimensional $K$-algebra and $\mathcal{C}$ a regular component of $\Gamma_A$.
\begin{itemize}
\item[(1)]
$\mathcal{C}$ contains an oriented cycle if and only if it is a stable tube.

\item[(2)] $\mathcal{C}$ is acyclic if and only if it is of the form $\mathbb{Z}\Delta$ for some locally finite acyclic quiver
$\Delta$.
\end{itemize}
\end{thm}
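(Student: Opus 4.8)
The plan is to deduce both statements from Riedtmann's structure theorem for stable translation quivers together with the Happel--Preiser--Ringel classification of periodic components. First I would note that a regular component $\mathcal{C}$ is a \emph{stable} translation quiver: by definition $\tau^k M \neq 0$ for every vertex $M$ and every $k \in \mathbb{Z}$, so $\mathcal{C}$ contains no projective or injective vertices and $\tau$ restricts to a bijection on $\mathcal{C}$ compatible with the meshes. Riedtmann's theorem then yields an isomorphism of translation quivers
\[
\mathcal{C} \;\cong\; \mathbb{Z}T/\Pi ,
\]
where $T$ is a directed tree, the \emph{tree class} of $\mathcal{C}$, and $\Pi \leq \operatorname{Aut}(\mathbb{Z}T)$ is an admissible group acting freely on vertices. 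Since $\mathcal{C}$ is a component of the AR-quiver of a finite-dimensional algebra it is locally finite, and hence so is $T$.

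Next I would isolate where oriented cycles come from. The universal cover $\mathbb{Z}T$ is acyclic for every tree $T$, indeed for every acyclic quiver, because along any arrow the $\mathbb{Z}$-coordinate is non-decreasing and stays constant only along arrows lying in a single copy of the acyclic quiver $T$. Consequently an oriented cyclic path in $\mathcal{C}$ lifts to a path in $\mathbb{Z}T$ from a vertex $x$ to $g\cdot x$ for some $g \in \Pi$, forcing $g \neq 1$; conversely a nontrivial $g$ produces such a descending path and hence a cycle in the quotient. This gives
\[
\mathcal{C} \text{ contains an oriented cycle} \iff \Pi \neq 1 .
\]
Part (2) follows at once: if $\mathcal{C}$ is acyclic then $\Pi = 1$, so $\mathcal{C} \cong \mathbb{Z}T$ with $\Delta := T$ a locally finite acyclic quiver, while $\mathbb{Z}\Delta$ is acyclic whenever $\Delta$ is. This reduces part (1) to showing that $\Pi \neq 1$ forces $\mathcal{C}$ to be a stable tube, that is $T \cong A_\infty$ and $\Pi = \langle \tau^r\rangle$ for some $r \geq 1$.

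This last implication is the heart of the matter and the step I expect to be the main obstacle, since it cannot be settled by the formal translation-quiver bookkeeping used above. The plan is to extract from a nontrivial $g \in \Pi$ a $\tau$-periodic vertex $M$ of $\mathcal{C}$, and then exploit that the dimension function $M \mapsto \dim M$ is a positive subadditive function on the tree class $T$, subadditivity coming from the almost split sequences and periodicity upgrading it to an additive function. By the Happel--Preiser--Ringel theorem, whose engine is precisely this subadditive-function analysis together with Vinberg's lemma, a connected stable translation quiver admitting such a function and containing a periodic vertex has tree class a finite or infinite Dynkin diagram; regularity ($\tau^k M \neq 0$ for all $k$) excludes the finite Dynkin cases, which would yield finite, non-stable components meeting projectives or injectives, and the remaining infinite diagrams carry no positive additive function except $A_\infty$. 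Its only admissible quotients with $\Pi \neq 1$ are the stable tubes $\mathbb{Z}A_\infty/\langle\tau^r\rangle$, which completes part (1). The genuinely hard points are the clean extraction of a periodic vertex from the mere presence of a cycle and the Vinberg-type ruling out of all infinite tree classes other than $A_\infty$; everything preceding is a combinatorial consequence of Riedtmann's theorem.
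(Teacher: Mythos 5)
First, a point of comparison: the paper does not prove Theorem \ref{lem1} at all. It quotes the statement from Simson--Skowro\'nski and remarks explicitly that it is a summary of results proved independently by Liu and by Zhang. So your proposal is not competing with an argument in the paper but with those two substantial papers, and it should be judged on whether it could actually replace them.

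Your reduction via Riedtmann's structure theorem is fine as far as it goes, but the proposal has a genuine gap exactly where you locate ``the heart of the matter'', and the gap cannot be closed with the tools you list. The step ``an oriented cycle in $\mathcal{C}$ yields a $\tau$-periodic vertex'' is not a consequence of Riedtmann's theorem together with the Happel--Preiser--Ringel subadditive-function machinery. Concretely, take $\Gamma=\mathbb{Z}A_\infty^\infty/\langle g\rangle$ with $g(n,i)=(n+2,i+1)$, where $\mathbb{Z}A_\infty^\infty$ has vertices $(n,i)$, arrows $(n,i)\to(n,i+1)$ and $(n,i+1)\to(n+1,i)$, and $\tau(n,i)=(n-1,i)$. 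One checks that $g$ is an admissible automorphism commuting with $\tau$, that $\Gamma$ is a connected stable translation quiver containing oriented cycles (the image of the path $(0,0)\to(0,1)\to(0,2)\to(0,3)\to(1,2)\to(2,1)=g(0,0)$ is one), that $\Gamma$ has no $\tau$-periodic vertex, that $\Gamma$ is not a stable tube, and that the constant function $1$ is a positive additive function on $\Gamma$ whose induced subadditive function on the tree class $A_\infty^\infty$ is perfectly admissible for Happel--Preiser--Ringel. So the class of stable translation quivers satisfying every hypothesis your argument exploits strictly contains non-tubes with oriented cycles and without periodic points. What is missing is representation-theoretic input showing that such a $\Gamma$ cannot occur as a component of $\Gamma_A$; this is precisely where Liu's theory of degrees of irreducible maps (respectively Zhang's analysis of walks), building on the Bautista--Smal\o{} theorem that $\Gamma_A$ carries no sectional oriented cycles, enters, and none of it is reproduced or even invoked in your sketch. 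A smaller inaccuracy: the asserted equivalence ``$\mathcal{C}$ has an oriented cycle iff $\Pi\neq 1$'' is false, since $\mathbb{Z}\widetilde{A}_{p,q}$ is an acyclic quotient of $\mathbb{Z}A_\infty^\infty$ by a nontrivial admissible group; this does not sink part (2), whose conclusion is still of the form $\mathbb{Z}\Delta$ for an acyclic (not necessarily tree) quiver $\Delta$, but it shows that $\Pi=1$ versus ``tube'' is not the correct dividing line even at the combinatorial level.
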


We remark that Theorem \ref{lem1} is Simson-Skowro\'{n}ski's summary of some results proved independently
by Liu \cite{[Liu1]} and Zhang \cite{[Zhang]}.
Tubes to be considered in this paper are all stable. So later on we will just say a tube instead of a stable tube.  

\begin{prop}\cite[Corollary 3.13]{[Liu1]}\label{Liu1} Let $\mathcal{C}$ be a regular component of $\Gamma_A$.
Assume that there exist a module $X\in \mathcal{C}$ and a constant $x$ such that there are infinitely
many $r$ with  $\dim\tau^{r}X$ bounded by $x$. Then $\mathcal{C}$ is either a tube or of the form
$\mathbb{Z}\mathbb{A}_{\infty}$.
\end{prop}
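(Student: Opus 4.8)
The plan is to combine the structural dichotomy of Theorem \ref{lem1} with a growth analysis of the dimension function along $\tau$-orbits. First I would invoke Theorem \ref{lem1}: a regular component $\mathcal{C}$ is either a stable tube, in which case we are done, or it is acyclic and hence isomorphic to $\mathbb{Z}\Delta$ for some connected, locally finite, acyclic (valued) quiver $\Delta$. So I may assume $\mathcal{C}\cong\mathbb{Z}\Delta$ and must show that the boundedness hypothesis forces $\Delta=\mathbb{A}_\infty$. The crucial observation is that, because $\mathcal{C}$ is regular, it contains no projective and no injective module; therefore every $M\in\mathcal{C}$ sits in an Auslander--Reiten sequence $0\to\tau M\to E\to M\to 0$ all of whose middle summands, being linked to $M$ by irreducible maps, again lie in $\mathcal{C}$. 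Consequently the dimension function $d(M)=\dim M$ is a positive \emph{mesh-additive} function on $\mathbb{Z}\Delta$, that is, $d(\tau M)+d(M)=\sum_{L\to M}d(L)$ with the sum over the arrows of $\mathcal{C}$ ending at $M$.

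Next I would translate mesh-additivity into a linear recursion governed by the combinatorics of $\Delta$ alone. Fixing a slice identified with $\Delta$, encode $d$ on the slice $\tau^{-r}(\Delta)$ by a vector $u_r\in\mathbb{Z}_{>0}^{\Delta_0}$; the mesh relations then express $u_{r+1}$ in terms of $u_r$ and $u_{r-1}$ through the (valued) adjacency matrix of $\Delta$, so that the growth of $\dim\tau^rX=u_r(v)$ is controlled by the spectral behaviour of the Coxeter transformation $\Phi_\Delta$ of the shape $\Delta$. Since $d$ is a positive additive function, the Happel--Preiser--Ringel classification of graphs carrying such a function applies and restricts $\Delta$ to being either a Euclidean diagram or $\mathbb{A}_\infty$ (the remaining infinite diagrams $\mathbb{A}^\infty_\infty$, $\mathbb{B}_\infty$, $\mathbb{C}_\infty$, $\mathbb{D}_\infty$ admit only \emph{strictly} subadditive positive functions, while Dynkin diagrams admit none, their form being positive definite).

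It then remains to eliminate the Euclidean case. Here $\Phi_\Delta$ has $1$ as a non-semisimple eigenvalue, with the positive null root spanning the radical of a positive semidefinite form; the resulting Jordan block forces the norms $\|u_r\|$, and hence $\dim\tau^rX$, to grow without bound as $r\to\pm\infty$. Thus only finitely many $r$ can satisfy $\dim\tau^rX\leq x$, contradicting the hypothesis. The same unboundedness holds a fortiori for any wild shape, so the only possibility compatible with infinitely many bounded values $\dim\tau^rX$ is $\Delta=\mathbb{A}_\infty$, giving $\mathcal{C}\cong\mathbb{Z}\mathbb{A}_\infty$. I expect the main obstacle to lie precisely in this last growth estimate: one must show that for every shape other than $\mathbb{A}_\infty$ the dimensions are unbounded along the orbit in both directions --- handling in particular the non-simply-laced diagrams through the appropriate valued Coxeter transformation --- so that the weaker hypothesis ``infinitely many bounded $r$'' (rather than ``all large $r$'') still pins down $\mathbb{A}_\infty$.
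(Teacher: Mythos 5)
The paper does not prove this statement at all: it is quoted verbatim as Corollary~3.13 of Liu's paper [Liu1], whose actual proof rests on the theory of degrees of irreducible maps developed there, not on additive functions. So your proposal must stand on its own, and as written it has a genuine gap at its central step. After reducing to $\mathcal{C}\cong\mathbb{Z}\Delta$ and observing that $\dim$ is mesh-additive (both fine), you invoke Happel--Preiser--Ringel to conclude that $\Delta$ is Euclidean or $\mathbb{A}_\infty$, on the grounds that $\mathbb{A}_\infty^\infty$, $\mathbb{B}_\infty$, $\mathbb{C}_\infty$, $\mathbb{D}_\infty$ ``admit only strictly subadditive positive functions.'' That claim is false: the constant function $1$ is additive on $\mathbb{A}_\infty^\infty$ and on $\mathbb{B}_\infty$, $\mathbb{C}_\infty$, and $(1,1,2,2,2,\dots)$ is a bounded positive additive function on $\mathbb{D}_\infty$. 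The HPR classification in the additive case yields precisely the Euclidean diagrams \emph{together with} these four infinite diagrams, and these four are not exotic possibilities — $\mathbb{Z}\mathbb{A}_\infty^\infty$ and $\mathbb{Z}\mathbb{D}_\infty$ genuinely occur as regular components of Artin algebras. They are exactly the tree classes that the boundedness hypothesis on $\dim\tau^rX$ must be used to exclude, and your argument never engages with them. (You also do not explain how an additive function on the translation quiver $\mathbb{Z}\Delta$ induces a subadditive function on the valued tree $\Delta$, which is the nontrivial reduction that makes HPR applicable in the first place.)

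The elimination of the Euclidean case is likewise not sound as stated: $\mathbb{Z}\widetilde{\Delta}$ carries the bounded positive additive function that is constantly equal to the null root $\delta$ on every $\tau$-orbit, so a ``Jordan block at eigenvalue $1$ forces unbounded growth'' argument cannot apply to an arbitrary positive additive function — it only gives growth when the initial slice data has a nonzero component along the non-semisimple direction, which you would have to verify for $\dim$ specifically (or else rule out $\mathbb{Z}\widetilde{\Delta}$ as a regular component by an entirely separate argument). Finally, you correctly flag at the end that the hypothesis only gives infinitely many bounded values of $\dim\tau^rX$ rather than boundedness for all large $r$, but you do not actually close that issue. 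The statement is true, but the working proof (Liu's) shows via degrees of irreducible maps that whenever the tree class is not $\mathbb{A}_\infty$, all but finitely many modules in each $\tau$-orbit exceed any given length bound; I would either reproduce that argument or simply cite [Liu1] as the paper does.
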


\subsection{ More on $\tau$-locally free  modules}
In the remaining of this paper,
we assume $C$ is of affine type, and we fix
a  $+$-admissible sequence $\mathbf{i}=(i_1,i_2,\cdots,i_n)$.

\begin{prop}\label{lem3}
Let $M$ be a $\tau$-locally free regular $H$-module. Then there exists some positive integer $N$ such that
$c^N(\rankv M)=\rankv M$.
\end{prop}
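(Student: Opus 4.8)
The plan is to translate the claim into a statement about the $c$-orbit of $\rankv M$ and then exploit the special behaviour of the Coxeter transformation in affine type. Because $M$ is regular, $\tau^k(M)\neq 0$ for every $k\in\mathbb{Z}$, and since $M$ is $\tau$-locally free each $\tau^k(M)$ is locally free, so $\rankv\tau^k(M)\in\mathbb{Z}_{\geq 0}^n$. By Proposition \ref{Lem:tauc} we have $c^k(\rankv M)=\rankv\tau^k(M)$ for all $k$, so the whole orbit $O=\{c^k(\rankv M):k\in\mathbb{Z}\}$ lies in $\mathbb{Z}_{\geq 0}^n$. It therefore suffices to prove that $O$ is finite, since a finite orbit under the cyclic group $\langle c\rangle$ forces $c^N(\rankv M)=\rankv M$ for some $N\geq 1$.

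Next I would invoke the classical spectral description of the Coxeter element $c=s_{i_n}\cdots s_{i_1}$ of the Weyl group of the affine Kac-Moody algebra $g(C)$. On $\mathbb{R}^n$ every eigenvalue of $c$ has modulus $1$; the fixed space equals $\mathbb{R}\delta$, where the minimal imaginary root $\delta=\sum_i a_i\alpha_i$ has all $a_i>0$ (note $s_i(\delta)=\delta$ for each $i$, since $C$ annihilates the positive null vector $\delta$, whence $c(\delta)=\delta$); the eigenvalue $1$ sits in a single Jordan block of size $2$; and $c$ is semisimple of finite order, say $m$, on the complementary $c$-invariant subspace $V$. Writing $\rankv M=\alpha u+\beta\delta+w$ with $cu=u+\lambda\delta$, $\lambda\neq0$, and $w\in V$, one computes $c^k(\rankv M)=\alpha u+(\beta+k\alpha\lambda)\delta+c^k(w)$, in which $\alpha u$ is constant and $c^k(w)$ ranges over a finite, hence bounded, set.

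The crux is to show $\alpha=0$. If $\alpha\neq0$, then since $\delta$ is strictly positive while $\alpha u+c^k(w)$ stays bounded, letting $k\to-\infty$ or $k\to+\infty$ according to the sign of $\alpha\lambda$ drives the coefficient $\beta+k\alpha\lambda$ to $-\infty$ and hence forces some coordinate of $c^k(\rankv M)$ to become negative, contradicting $O\subset\mathbb{Z}_{\geq 0}^n$. Thus $\alpha=0$, so $\rankv M=\beta\delta+w$ and $c^m(\rankv M)=\beta\delta+c^m(w)=\rankv M$; taking $N=m$ completes the proof. I expect the main obstacle to be the second step, namely marshalling the precise affine spectral structure of $c$ — at-most-linear growth of $c^k$ with its linear part confined to the strictly positive direction $\delta$ — since it is exactly this positivity, forcing the defect to vanish on nonnegative orbits, that fails outside affine type.
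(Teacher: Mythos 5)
Your proof is correct and follows essentially the same route as the paper, which simply delegates the argument to Lemma 7.1 of Crawley-Boevey's notes: the "key fact" quoted there, that $c^{iN}d=d+ir\delta$ with $r$ the defect of $d$, is exactly the consequence of the affine spectral structure of $c$ (size-two Jordan block at eigenvalue $1$ spanned by $\delta$, finite order on a complement) that you derive explicitly, and both arguments then kill the defect by the same observation that the $c$-orbit of $\rankv M$ stays in $\mathbb{Z}_{\geq 0}^n$ while $\delta$ is strictly positive. The only difference is that you supply the details the paper outsources to the reference.
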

Recall from Proposition \ref{Lem:tauc} that
$\rankv \tau^k(M)=c^k(\rankv M)$. The proof of  Lemma 7.1 in \cite{[CB]} can be
 applied to prove Proposition \ref{lem3}.
 The key fact is that there exists some integer $N>0$ and $r$
such that
\begin{equation}\label{eq:c} c^{iN} d=d + ir\delta \end{equation}
for all $d\in \mathbb{Z}^{n}$ and $i\in \mathbb{Z}$.
We refer the reader to \cite{[CB]} for the details.

We say that a vector $\alpha$ is {\em $c$-periodic with $c$-period $r$} if $r$ is the minimal positive integer such that $c^r\alpha=\alpha$.
Similarly, we say that a  $\tau$-locally free $H$-module $M$ is {\em $\tau$-periodic with $\tau$-period $r$} if $r$ is the minimal positive integer such that $\tau^rM\cong M$.

\begin{prop} \cite[Proposition 11.14]{[GLS1]}\label{lem5}
Let $\mathcal{C}$ be a connected component of $\Gamma_H$, then
$\mathcal{C}$ contains a $\tau$-locally free regular module if and only if $\mathcal{C}$ only consists
of  $\tau$-locally free regular modules.
\end{prop}

\begin{prop}\label{cor}
Let $\mathcal{C}$ be a connected component of $\Gamma_H$ that contains a regular $\tau$-locally free $H$-module.
Then $\mathcal{C}$ is either a tube or of the form
$\mathbb{Z}\mathbb{A}_{\infty}$. Furthermore, if  $\mathcal{C}$  contains a $\tau$-periodic module, then
 $\mathcal{C}$  is a tube.
\end{prop}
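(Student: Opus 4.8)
The plan is to combine the two facts recalled just before the statement: Proposition~\ref{Liu1}, which forces a regular component to be either a tube or of the form $\mathbb{Z}\mathbb{A}_\infty$ once we exhibit a module in it whose AR-translates stay bounded in dimension along an infinite sequence, and Proposition~\ref{lem3}, which tells us the rank vectors are eventually periodic under the Coxeter transformation $c$. By Proposition~\ref{lem5}, the hypothesis that $\mathcal{C}$ contains one regular $\tau$-locally free module means every module in $\mathcal{C}$ is regular and $\tau$-locally free, so Proposition~\ref{Liu1} is applicable to $\mathcal{C}$ as soon as we verify its boundedness condition.

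First I would take any module $X\in\mathcal{C}$ and show that dimensions of the $\tau$-orbit of $X$ are bounded along infinitely many indices. Since $X$ is $\tau$-locally free, every $\tau^k(X)$ is locally free, and by Proposition~\ref{Lem:tauc} we have $\rankv\tau^k(X)=c^k(\rankv X)$. By Proposition~\ref{lem3} there is a positive integer $N$ with $c^N(\rankv X)=\rankv X$; iterating gives $c^{kN}(\rankv X)=\rankv X$ for all $k$. Hence the rank vectors of $\tau^{kN}(X)$, $k\in\mathbb{Z}$, all coincide with $\rankv X$. Because these modules are locally free, $\dim\tau^{kN}(X)=\sum_i d_i\cdot\rank(\tau^{kN}(X))_i$ is determined by the rank vector and is therefore constant equal to $\sum_i d_i a_i$, where $\rankv X=(a_1,\dots,a_n)$. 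This supplies an infinite family of translates of bounded (indeed constant) dimension, so the hypothesis of Proposition~\ref{Liu1} is met with $x=\sum_i d_i a_i$, and we conclude that $\mathcal{C}$ is a tube or of the form $\mathbb{Z}\mathbb{A}_\infty$.

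For the second assertion, suppose $\mathcal{C}$ contains a $\tau$-periodic module $M$, so $\tau^r M\cong M$ for some $r>0$. Then $\mathcal{C}$ contains the oriented cycle $M\to\cdots\to\tau^r M\cong M$ through the AR-translation, so $\mathcal{C}$ is not acyclic; by Theorem~\ref{lem1}(2) a component of the form $\mathbb{Z}\mathbb{A}_\infty$ is acyclic, so this possibility is excluded and $\mathcal{C}$ must be a tube. Alternatively and more directly, Theorem~\ref{lem1}(1) states that a regular component contains an oriented cycle if and only if it is a stable tube, and a $\tau$-periodic module furnishes exactly such an oriented cycle.

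I expect the only real subtlety to lie in the first part, namely making rigorous the passage from equality of rank vectors to boundedness of dimensions; but this is immediate once one uses that all the modules involved are locally free, so that dimension is the $D$-weighted sum of the rank vector and hence a function of the rank vector alone. Everything else is a direct citation of the quoted dichotomy results, so the argument is short and the main point is simply to check that the quoted propositions apply, which they do by Proposition~\ref{lem5}.
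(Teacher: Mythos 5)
Your argument is correct and follows essentially the same route as the paper's proof: both use Proposition~\ref{lem5} to reduce to a regular component, Proposition~\ref{lem3} together with Proposition~\ref{Lem:tauc} to get bounded dimensions along the $\tau$-orbit, Proposition~\ref{Liu1} for the dichotomy, and Theorem~\ref{lem1} for the tube conclusion in the periodic case. You merely spell out the step (constancy of dimension from $c$-periodicity of the rank vector, via $\dim = \sum_i d_i a_i$ for locally free modules) that the paper leaves implicit.
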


\begin{proof} Let $M$ be a regular  $\tau$-locally free module contained in  $\mathcal{C}$.
By Proposition \ref{lem3},  $\rankv M$  is $c$-periodic. Consequently,  $\text{dim} \tau^rM$ for all $r$
are bounded by a constant. By Proposition \ref{lem5}, we know that $\mathcal{C}$ is a regular component and
so by Proposition \ref{Liu1}, the component $\mathcal{C}$ is as claimed.

If $\mathcal{C}$ in addition  contains a $\tau$-periodic module, by Theorem \ref{lem1},  $\mathcal{C}$  is a tube.
\end{proof}

\begin{prop}\label{rem1}
Let $M$ be a $\tau$-locally free $H$-module.
\begin{itemize}
\item[(1)]  $M$ is preprojective if and only if $c^i(\rankv M)< 0$, when $i$ is large enough.

\item[(2)] $M$ is preinjective if and only if $c^{-i}(\rankv M)< 0$, when $i$ is large enough.

\item[(3)] $M$ is regular if and only if $\rankv M$ is $c$-periodic.
\end{itemize}
\end{prop}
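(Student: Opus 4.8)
The plan is to reduce all three statements to the sign of a single integer $r$ attached to $d := \rankv M$, and then to read that sign off from the homological type of $M$. Since $M$ is $\tau$-locally free it is indecomposable, so exactly one of the three alternatives --- preprojective, regular, preinjective --- holds; moreover $d \ge 0$ and $d \ne 0$, as ranks are nonnegative and $M \ne 0$. By \eqref{eq:c} we may write $c^{N}d = d + r\delta$ for an integer $r = r(d)$ (the coefficient is integral because $c^{N}d - d \in \mathbb{Z}\delta$), and since $c$ fixes $\delta$ this iterates to $c^{iN}d = d + ir\delta$ for all $i \in \mathbb{Z}$.

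First I would record how the sign of $r$ controls the $c$-orbit. If $r < 0$, then writing any large $k$ as $k = iN + s$ with $0 \le s < N$ gives $c^{k}d = c^{s}d + ir\delta$, and since $\delta$ has all entries positive while $\{c^{s}d : 0 \le s < N\}$ is a fixed finite set, every coordinate of $c^{k}d$ tends to $-\infty$; hence $c^{i}d < 0$ for $i \gg 0$, whereas the backward iterates $c^{-iN}d = d - ir\delta$ stay nonnegative and unbounded. The case $r > 0$ is symmetric with the roles of $c$ and $c^{-1}$ exchanged, and when $r = 0$ the orbit is finite and $c$-periodic, revisiting $d \ge 0$, so it is eventually negative in neither direction. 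In particular the three conditions on the right-hand sides of (1)--(3) are mutually exclusive and correspond exactly to $r < 0$, $r > 0$ and $r = 0$.

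Next I would identify the sign of $r$ from the type of $M$, using Proposition \ref{Lem:tauc}, which gives $\rankv \tau^{k}M = c^{k}d$ whenever $\tau^{k}M \ne 0$. If $M$ is regular, then Proposition \ref{lem3} yields $c^{N'}d = d$ for some $N' > 0$; combined with \eqref{eq:c} (take $i = N'$ there) this forces $N'r\delta = 0$, hence $r = 0$. If $M$ is preprojective, then $\tau^{-k}M \ne 0$ for all $k \ge 0$ and these indecomposables are pairwise non-isomorphic, so by Proposition \ref{lem4}(3) the vectors $c^{-k}d = \rankv \tau^{-k}M$ are pairwise distinct and nonnegative, forming an unbounded sequence in the nonnegative cone. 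By the orbit analysis this excludes $r = 0$ (bounded orbit) and $r > 0$ (backward iterates $d - ir\delta$ eventually leave the nonnegative cone), leaving $r < 0$. The preinjective case is dual, forcing $r > 0$ via the forward iterates $c^{k}d = \rankv \tau^{k}M \ge 0$.

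Finally I would combine the two trichotomies. The implications just established (preprojective $\Rightarrow r<0$, regular $\Rightarrow r=0$, preinjective $\Rightarrow r>0$) match a partition of the indecomposables against a partition of the signs of $r$, so each is in fact an equivalence; feeding these back through the orbit analysis of the second paragraph converts each sign condition into the stated condition on $c^{\pm i}d$, yielding (1), (2) and (3) at once. The step requiring the most care --- and the only genuine obstacle --- is distinguishing $r<0$ from $r>0$ in the preprojective/preinjective case: unboundedness of the orbit alone does not separate these, and one must use that the actual rank vectors $\rankv \tau^{\mp k}M$ stay in the nonnegative cone, together with the injectivity of the rank vector on preprojective (resp. preinjective) indecomposables provided by Proposition \ref{lem4}(3).
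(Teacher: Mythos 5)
Your proof is correct, and for the substantive halves of (1) and (2) it takes a genuinely different route from the paper. The paper handles the preprojective case by a direct Coxeter computation: for $M=\tau^{-r}P_{i_k}$ it uses the identity $c(\beta_k)=-\gamma_k$ to get $c^{r+1}(\rankv M)=-\gamma_k<0$ and then $c^{s}(\rankv M)=-\rankv\tau^{s-r-1}I_{i_k}<0$ for all $s>r$; the preinjective case is dual, the regular case is Proposition \ref{lem3} (exactly as in your argument), and the converses are dismissed as self-evident via the same trichotomy you spell out at the end. You replace the explicit computation by the coset structure $c^{iN}d=d+ir(d)\delta$ of equation (\ref{eq:c}) together with an elimination argument: nonnegativity and pairwise distinctness of the vectors $\rankv\tau^{-k}M$ rule out $r=0$ and $r>0$. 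This is softer --- it never needs the formula $c\beta_k=-\gamma_k$ or the $\beta_k,\gamma_k$ at all --- at the price of invoking Proposition \ref{lem4}(3) and the fact that the backward $\tau$-orbit of a preprojective module is infinite. That last fact ($\tau^{-k}M\neq 0$ for all $k\geq 0$, and dually $\tau^{k}I_{j}\neq 0$) is asserted rather than proved in your write-up; the paper's own proof leans on the dual statement just as silently, and both can be justified by induction from the disjointness of the preprojective and preinjective root sets in Proposition \ref{prop:roots} (if some $\tau^{-m}P_j$ were injective, its rank vector $c^{-m}\beta_j$ would equal some $\gamma_l$). One further small point worth a line: in passing from $c^{iN}d=d+ir\delta$ to $c^{k}d=c^{s}d+ir\delta$ for $k=iN+s$ you are using that $r(c^{s}d)=r(d)$, which holds because $c^{N}(c^{s}d)=c^{s}(c^{N}d)=c^{s}d+r(d)\delta$.
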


\begin{proof}
Suppose that $M$ is preprojective, that is, $M=\tau^{-r}(P_{i_k})$ for some $r\geq 0$.
Then by Proposition \ref{Lem:tauc} and  straightforward computation,
 \[c^{r+1}(\rankv M)=c(\beta_k)=-\gamma_k<0,\]
 and so
 \[c^{s}(\rankv M)=-\rankv \tau^{s-r-1}I_{i_k}<0\]
 for any $s> r$.
Similarly, when $M$ is preinjective, then
$c^{-i}(\rankv M)< 0$, when $i$ is large enough.
When $M$ is regular, the $c$-periodicity of $\rankv M$ follows from Lemma \ref{lem3}.
Finally, the converses are self-evident, as the rank vectors of preprojective, preinjective
and regular modules behave differently under the Coxeter transformation.
\end{proof}

As a consequence of (\ref{eq:c}) and Proposition \ref{rem1}, we have the following.
\begin{cor} Let $j$ and $v$ be any vertices of $Q$.
\[\lim_{i\to \infty} (\rankv \tau^{-i}P_j)_v=\infty \text{ and }
\lim_{i\to \infty} (\rankv \tau^{i}I_j)_v=\infty.
\]
\end{cor}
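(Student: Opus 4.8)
The plan is to translate both limits into statements about the Coxeter transformation $c$ acting on $\mathbb{Z}^{n}$, and then to read off the growth from the shift-by-$\delta$ formula \eqref{eq:c} together with the positivity of $\delta$. First I would record that, writing $j=i_k$, the modules $\tau^{-i}P_j$ and $\tau^{i}I_j$ are nonzero and $\tau$-locally free for every $i\ge 0$ (because $P_j$ is preprojective and $I_j$ is preinjective, and an indecomposable preprojective module is never injective, and dually), so Proposition \ref{lem4} applies. Consequently, by Proposition \ref{Lem:tauc} and Lemma \ref{lem:rkvpreprojinj},
\[\rankv \tau^{-i}P_j=c^{-i}(\beta_k)\quad\text{and}\quad \rankv \tau^{i}I_j=c^{i}(\gamma_k),\]
so it suffices to show that every entry of $c^{-i}(\beta_k)$ and of $c^{i}(\gamma_k)$ tends to $\infty$ as $i\to\infty$.

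Next I would run Euclidean division against the period $N$ of \eqref{eq:c}. For the preprojective case, write $i=qN+s$ with $0\le s<N$. Applying \eqref{eq:c} with $d=\beta_k$ and exponent $-q$, and using that each simple reflection fixes $\delta$, so that $c^{-s}(\delta)=\delta$, the linearity of $c^{-s}$ gives
\[c^{-i}(\beta_k)=c^{-s}\bigl(\beta_k-qr\delta\bigr)=c^{-s}(\beta_k)-qr\delta,\]
where $r$ is the constant attached to $\beta_k$ in \eqref{eq:c}. The sign of $r$ is decided by Proposition \ref{rem1}(1): since $P_j$ is preprojective, $c^{mN}(\beta_k)=\beta_k+mr\delta$ is negative for $m$ large; as $\beta_k$ is fixed and $\delta$ has strictly positive entries, this forces $r<0$ (in particular $r\neq 0$, matching the fact that $\beta_k$ is not $c$-periodic). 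Dually, writing $i=qN+s$ in the preinjective case yields $c^{i}(\gamma_k)=c^{s}(\gamma_k)+qr'\delta$, where Proposition \ref{rem1}(2) forces the corresponding constant $r'$ to be strictly positive.

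Finally I would conclude by positivity. Since $\delta$ is the minimal positive imaginary root of affine type, all of its entries are at least $1$, so the vectors $q|r|\delta$ and $qr'\delta$ grow without bound in every coordinate as $q\to\infty$, that is, as $i\to\infty$. The remainder terms $c^{-s}(\beta_k)$ and $c^{s}(\gamma_k)$ range over the finitely many values $s\in\{0,1,\dots,N-1\}$ and are therefore bounded. Hence $(\rankv \tau^{-i}P_j)_v=(c^{-i}\beta_k)_v\to\infty$ and $(\rankv \tau^{i}I_j)_v=(c^{i}\gamma_k)_v\to\infty$ for every vertex $v$, which is the claim. The only genuinely delicate step is fixing the sign of the shift coefficient and checking that it is nonzero; this is exactly what Proposition \ref{rem1} supplies by separating preprojective from preinjective behaviour under $c$, while the componentwise positivity of $\delta$ is standard for affine root systems.
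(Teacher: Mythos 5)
Your argument is correct and is exactly the paper's intended one: the Corollary is stated there as an immediate consequence of \eqref{eq:c} and Proposition \ref{rem1}, and you have simply filled in the routine details (Euclidean division by $N$, the sign of the shift coefficient forced by Proposition \ref{rem1}, and the full support of $\delta$). The only point worth flagging is that you correctly read the constant $r$ in \eqref{eq:c} as depending on the vector $d$ (it is essentially the defect of $d$), which is the accurate form of Crawley-Boevey's lemma even though the paper's phrasing suggests a single constant.
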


\subsection{Conjecture \ref{Conj:gls} and the orientation $\Omega$}

Let $M$ be an indecomposable module in a  tube $\mathcal{C}$.
Recall that a {\em mouth module} is a module in a tube with the middle term
in the AR-sequence starting from (or ending at)
that module  indecomposable.

Recall that $\mathbf{i}=(i_1,i_2,\cdots,i_n)$ is a $+$-admissible sequence  for $(C,~\Omega)$. Let $H'=s_{i_1}(H)=H(C,D,s_{i_1}(\Omega))$.
%We denote by $\tau_H$ (resp. $\tau_{H'}$) the Auslander-Reiten translation functor of $H$ (resp. %$H'$).
 We denote by $P'_{i_k}$ (resp. $I'_{i_k}$) the indecomposable projective (resp. injective)  $H'$-module associated to $i_k$.

\begin{prop}\label{lem 2.1}
Let $M$ be a $\tau$-locally free $H$-module.

\begin{itemize}
\item[(1)] If $M=\tau_H^{-r}(P_{i_k})$ is preprojective and $M\not\cong P_{i_1}$, which is also $E_{i_1}$, then $F^+_{i_1}(M)$ is also preprojective and $$F^+_{i_1}(M)\cong\left\{ \begin{array}{ll}
                                                           \tau^{-r}_{H'}P'_{i_{k}}, & \text{ if }~ 2\leq k\leq n, r\geq0; \\ \\
                                                           \tau^{-r+1}_{H'}P'_{i_k}, &  \text{ if }~  k=1,r>0.
                                                         \end{array}\right.
$$

\item[(2)] If $M=\tau_H^{s}(I_{i_k})$ is preinjective, then $F^+_{i_1}(M)$ is also preinjective and  $$F^+_{i_1}(M)\cong \left\{ \begin{array}{ll}
                                                           \tau_{H'}^{s}I'_{i_{k}}, & \text{ if }~ 2\leq k\leq n, s\geq0; \\\\
                                                           \tau_{H'}^{s+1}I'_{i_k}, & \text{ if }~ k=1,s\geq0.
                                                         \end{array}\right.
$$

\item[(3)] If $M$ is regular, then $F^+_{i_1}(M)$ is  $\tau$-locally free regular.

\item[(4)] If $M$ is contained in  a  tube of rank $r$, then so is $F^+_{i_1}(M)$. Moreover,
if $M$ is a mouth module, then so is  $F^+_{i_1}(M)$.
\end{itemize}
\end{prop}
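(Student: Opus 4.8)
The plan is to use the reflection functor $F^+_{i_1}$ together with the identities already established for rank vectors and preprojective/preinjective modules. Since $i_1$ is a sink in $Q^0$ by the $+$-admissible hypothesis, Proposition~\ref{lem6} applies: for any $\tau$-locally free $M\not\cong E_{i_1}$, the module $F^+_{i_1}(M)$ is again $\tau$-locally free over $H'=s_{i_1}(H)$, with $\rankv F^+_{i_1}(M)=s_{i_1}(\rankv M)$, and $F^-_{i_1}F^+_{i_1}(M)\cong M$. The guiding principle throughout is Proposition~\ref{rem1}: a $\tau$-locally free module is preprojective, preinjective or regular exactly according to the asymptotic sign behaviour (resp.\ periodicity) of $c^i$ applied to its rank vector. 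So each part reduces to tracking how the classification invariant transforms under $s_{i_1}$.

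For part (1), I would start from $M=\tau_H^{-r}(P_{i_k})$ and compute $\rankv F^+_{i_1}(M)=s_{i_1}(\rankv M)=s_{i_1}c^{-r}(\beta_k)$ using Lemma~\ref{lem:rkvpreprojinj} and Proposition~\ref{Lem:tauc}. The key algebraic fact is the relation between the two Coxeter transformations: with $c=c_{\mathbf i}=s_{i_n}\cdots s_{i_1}$ and the rotated sequence $\mathbf i'=(i_2,\dots,i_n,i_1)$ for $(C,s_{i_1}(\Omega))$, one has $c_{\mathbf i'}=s_{i_1}c\,s_{i_1}^{-1}$, so $s_{i_1}$ conjugates $c$ into the Coxeter transformation for $H'$. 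Combining this with $s_{i_1}(\beta_k)=\rankv_{H'}P'_{i_k}$ for $k\ge 2$ (a direct consequence of the definition of $\beta_k$ and Lemma~\ref{lem:rkvpreprojinj} applied to $\mathbf i'$) pins down the rank vector of $F^+_{i_1}(M)$ as that of $\tau_{H'}^{-r}P'_{i_k}$; since both modules are preprojective with equal rank vectors, Proposition~\ref{lem4}(3) forces them isomorphic. The case $k=1$, $r>0$ needs the extra observation that applying $s_{i_1}$ to the leftmost simple reflection in $c^{-r}$ effectively cancels one factor, shifting the index by one and producing $\tau_{H'}^{-r+1}P'_{i_k}$; this is exactly the index bookkeeping that makes the piecewise formula appear.

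Part (2) is dual: I would apply the same conjugation identity but track the preinjective side via $\gamma_k$ and $I'_{i_k}$, using that the corresponding $-$-admissible data transforms compatibly, with Proposition~\ref{lem4}(3) again upgrading the rank-vector equality to an isomorphism. Part (3) is immediate from Proposition~\ref{rem1}(3): regularity is equivalent to $c$-periodicity of the rank vector, and since $s_{i_1}$ conjugates $c$ to $c_{\mathbf i'}$, it sends $c$-periodic vectors to $c_{\mathbf i'}$-periodic vectors of the same period, so $F^+_{i_1}(M)$ is regular; that it is $\tau$-locally free is part of Proposition~\ref{lem6}. For part (4), I would argue that $F^+_{i_1}$ and its quasi-inverse $F^-_{i_1}$ restrict to mutually inverse equivalences on the subcategories of regular $\tau$-locally free modules (by Proposition~\ref{lem6}), hence induce an isomorphism of the corresponding regular components of the AR-quivers; an equivalence of this kind preserves AR-sequences, so it maps a tube of rank $r$ to a tube of rank $r$ and carries mouth modules (characterised by indecomposability of the middle term of the AR-sequence) to mouth modules. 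The main obstacle I anticipate is the careful index/sign bookkeeping in the boundary cases $k=1$ of parts (1) and (2): one must verify precisely when a leading $s_{i_1}$ cancels against the Coxeter word versus when it merely conjugates, and confirm that the exceptional module $E_{i_1}=P_{i_1}$ excluded by Proposition~\ref{lem6} is exactly the module omitted in the statement (the $k=1$, $r=0$ case). Everything else is a clean transport of structure along the reflection functor.
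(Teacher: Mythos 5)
Your parts (1)--(3) are correct and follow the paper's own argument essentially verbatim: the conjugation identity $s_{i_1}c_{\mathbf i'}=c_{\mathbf i}s_{i_1}$, the rank-vector computation via Lemma \ref{lem:rkvpreprojinj} and Proposition \ref{Lem:tauc}, Proposition \ref{rem1} to identify the class of $F^+_{i_1}(M)$, and Proposition \ref{lem4}(3) to upgrade equality of rank vectors to an isomorphism; your index bookkeeping in the $k=1$ cases matches the paper's computation. (For (3) you pass through $c$-periodicity of the rank vector where the paper uses the commutation $F^+_{i_1}\tau_H^m\cong\tau_{H'}^m F^+_{i_1}$ directly; both are valid.)

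Part (4), however, has a genuine gap at its central step. You assert that since $F^+_{i_1}$ and $F^-_{i_1}$ restrict to mutually inverse equivalences on the regular $\tau$-locally free subcategories, they ``preserve AR-sequences'' and hence mouth modules. But an almost split sequence ending at $F^+_{i_1}(M)$ is almost split with respect to morphisms from \emph{all} indecomposable $H'$-modules, not only the regular ones, and the equivalence induced by $F^+_{i_1}$ is only between $\lfrep(H)\setminus\mathrm{add}\,E_{i_1}$ and $\lfrep(H')\setminus\mathrm{add}\,E'_{i_1}$ --- the object $E'_{i_1}$ lies outside its image. So after applying $F^+_{i_1}$ to the AR-sequence $0\to\tau M\to X\to M\to 0$ one must still verify that every non-retraction $E'_{i_1}\to F^+_{i_1}(M)$ factors through the middle term; this is exactly the point your argument skips. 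The paper closes it by observing that $i_1$ is a source for $s_{i_1}(\Omega)$, so $E'_{i_1}$ is an injective $H'$-module, and by \cite[Lemma 11.7]{[GLS1]} any morphism from an injective module to a $\tau$-locally free regular module is zero. Without this input, ``mouth module goes to mouth module'' is unproven. (The tube-rank statement in (4) does not need the equivalence-of-components claim at all: the commutation $F^+_{i_1}\tau_H^m\cong\tau_{H'}^m F^+_{i_1}$ on regular modules shows $\tau$-periods are preserved, and Proposition \ref{cor} then places $F^+_{i_1}(M)$ in a tube of the same rank, which is the lighter route the paper takes.)
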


\begin{proof}
Note that $s_{i_1}c_{\mathbf{i}'}=c_{\mathbf{i}}s_{i_1}$, $s_{i_1}c^{-1}_{\mathbf{i}}=c^{-1}_{\mathbf{i}'}s_{i_1}$ and $\beta_{\mathbf{i},1}=\gamma_{\mathbf{i}',n}=\alpha_{i_1}$ by definition.

(1) If $M=\tau_H^{-r}(P_{i_k})$ is preprojective and $M\not\cong P_{i_1}$, then
$$\rankv F^+_{i_1}(M) =s_{i_1}(c_\mathbf{i}^{-r}(\beta_{\mathbf{i},k}))=\left\{\begin{array}{ll}
                                                     c_{\mathbf{i}'}^{-r}(\beta_{\mathbf{i}',k-1})=\rankv\tau^{-r}_{H'}P'_{i_{k}}, &
                                                      \text{ if }  2\leq k\leq n, r\geq 0; \\ \\
                                                      c_{\mathbf{i}'}^{-r+1}(\beta_{\mathbf{i}',n})=\rankv \tau^{-r+1}_{H'}P'_{i_1}, &
                                                       \text{ if } k=1, r>0.
\end{array}\right.
$$
Note that following Proposition \ref{lem6}, $F^+_{i_1}(M)$ is $\tau$-locally free.
Therefore, $F^+_{i_1}(M)$ is  preprojective by Proposition \ref{rem1},
since its rank vector is the rank vector of a preprojective module.
Furthermore, by Proposition \ref{lem4} (3) that any two indecomposable preprojective $H$-modules with the same
rank vector are isomorphic,
$$F^+_{i_1}(M)\cong\left\{ \begin{array}{ll}
      \tau^{-r}_{H'}P'_{i_{k}}, & \text{ if } 2\leq k\leq n, r\geq0; \\\\
      \tau^{-r+1}_{H'}P'_{i_1}, &  \text{ if } k=1,r>0.
     \end{array}\right.
$$

(2) can be proved by similar arguments, since
$$s_{i_1}(c_\mathbf{i}^{s}(\gamma_{\mathbf{i},k}))=\left\{\begin{array}{ll}
                                                     c_{\mathbf{i}'}^{s}(\gamma_{\mathbf{i}',k-1}), &  \text{ if } 2\leq k\leq n,  s\geq0;\\\\
                                                      c_{\mathbf{i}'}^{s+1}(\gamma_{\mathbf{i}',n}), &  \text{ if }  k=1, s\geq0.
\end{array}\right.
$$

(3)
As $M$ is regular, for any $m\in\mathbb{Z}$, $\tau_{H}^{m}(M)$ is regular and so is not isomorphic to $E_{i_1}$. We have
\begin{equation}\label{eq:taurefl} F^+_{i_1}(\tau_{H}^{m}(M))=\tau_{H'}^{m}(F^+_{i_1}(M)).\end{equation}
 Consequently,
$ \tau_{H'}^{m}(F^+_{i_1}(M))$ is non-zero for any $m\in \mathbb{Z}$. Therefore $F^+_{i_1}(M)$ is regular, and further
$\tau$-locally free by Proposition \ref{lem6}.

(4) Following (\ref{eq:taurefl}), we have
\[
\tau_{H'}^{m}(F^+_{i_1}(M))\cong F^+_{i_1}(M) ~\text{ if and only if }~
\tau^{m}_H(M)\cong M.
\]
Consequently, if $M$ is contained in a  tube of rank $r$, so is $F^+_{i_1}(M)$.

Now assume that $M$ is a mouth module. Consider the AR-sequence
\[
\begin{tikzpicture}[scale=0.6,
 arr/.style={black, -angle 60}]
 \path (-1,1) node (c1) {$\mathcal{E}\colon$};
\path (3,1) node (c1) {$\tau M$};
\path (7,1) node (c2) {$X$};
\path (11,1) node (c3) {$M$};
\path (14, 1) node (c4) {$0.$};
\path (0, 1) node (c0) {$0$};

\draw[->] (c1) edge node[auto]{} (c2);
\draw[->] (c2) edge node[auto]{}  (c3);
\draw[->] (c0) edge (c1);
\draw[->] (c3) edge (c4);
\end{tikzpicture}\]
By assumption,  $\tau M$, $X$, $M$ are indecomposable regular modules and
so none of them is isomorphic to $E_{i_1}$. Therefore,
\[
\begin{tikzpicture}[scale=0.6,
 arr/.style={black, -angle 60}]
 \path (-1.4,1) node (c1) {$F_{i_1}^+\mathcal{E}\colon$};
\path (3,1) node (c1) {$F_{i_1}^+ \tau M$};
\path (7,1) node (c2) {$F_{i_1}^+ X$};
\path (11,1) node (c3) {$F_{i_1}^+ M$};
\path (14, 1) node (c4) {$0$};
\path (0, 1) node (c0) {$0$};

\draw[->] (c1) edge node[auto]{} (c2);
\draw[->] (c2) edge node[auto]{$f$}  (c3);
\draw[->] (c0) edge (c1);
\draw[->] (c3) edge (c4);
\end{tikzpicture}\]
is exact and all the three modules are indecomposable. Furthermore,  $F^+_{i_1}\mathcal{E}$
is an AR-sequence and so $F^+_{i_1}M$ is a mouth module. Indeed,  by the fact that $F^+_{i_1}$
induces an equivalence from $\mathrm{rep}_{\mathrm{lf}}H \backslash \mathrm{add} E_{i_1}$ to
$\mathrm{rep}_{\mathrm{lf}}H'\backslash \mathrm{add} E'_{i_1}$,
we need only to show that any map from the generalised simple $H'$-module $E'_{i_1}$ to $F^+_{i_1}M$ factors
through $f$, but this is trivial, since, by Lemma \cite[Lemma 11.7]{[GLS1]}, any map from
an injective module to a $\tau$-locally free regular module is zero. 
Note that $i_1$ is now a source with respect to  $s_{i_1}(\Omega)$ and so $E'_{i_1}$ is an injective 
$H'$-module.
This completes the proof.
\end{proof}

By similar computation as in Proposition  \ref{lem 2.1}, we have the following.

\begin{prop}\label{dualcor2}  \qquad
\begin{itemize}
\item[(1)] The reflection functor $F^-_{i_1}$ maps indecomposable preprojective or preinjective $H'$-modules 
that are not isomorphic to $E'_{i_1}$ to  indecomposable preprojective or preinjective $H$-modules that 
are not isomorphic to  $E_{i_1}$.

\item[(2)] The reflection functor $F^-_{i_1}$ maps $\tau$-locally free regular $H'$-modules to
$\tau$-locally free regular $H$-modules. Moreover, $F^-_{i_1}$ preserves tubes and mouth modules.
\end{itemize}
\end{prop}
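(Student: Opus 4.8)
The plan is to dualise the proof of Proposition \ref{lem 2.1} line by line. First I would fix the set-up: since $i_1$ is a sink in $Q^0(C,\Omega)$, it is a source in $Q^0(C,s_{i_1}(\Omega))$, so $F^-_{i_1}\colon \rep(H')\to\rep(H)$ is the reflection functor attached to a source, and $s_{i_1}(H')=H$. By Proposition \ref{lem6}, for every $\tau$-locally free $H'$-module $N\not\cong E'_{i_1}$ the module $F^-_{i_1}(N)$ is $\tau$-locally free, $F^+_{i_1}F^-_{i_1}(N)\cong N$, and $\rankv F^-_{i_1}(N)=s_{i_1}(\rankv N)$. I would reuse the commutation identities from the proof of Proposition \ref{lem 2.1}, namely $s_{i_1}c_{\mathbf{i}'}=c_{\mathbf{i}}s_{i_1}$ and $s_{i_1}c^{-1}_{\mathbf{i}}=c^{-1}_{\mathbf{i}'}s_{i_1}$, together with $\beta_{\mathbf{i},1}=\gamma_{\mathbf{i}',n}=\alpha_{i_1}$; since $s_{i_1}^2=\mathrm{id}$ these give $c_{\mathbf{i}}=s_{i_1}c_{\mathbf{i}'}s_{i_1}$ and hence $s_{i_1}c_{\mathbf{i}'}^{\,m}=c_{\mathbf{i}}^{\,m}s_{i_1}$ for all $m\in\mathbb{Z}$.

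For part (1), let $N$ be an indecomposable preprojective (resp. preinjective) $H'$-module with $N\not\cong E'_{i_1}$, so that $\rankv N=c_{\mathbf{i}'}^{-r}(\beta_{\mathbf{i}',k})$ (resp. $c_{\mathbf{i}'}^{s}(\gamma_{\mathbf{i}',k})$) for suitable $k$ and $r,s\geq0$. Applying $\rankv F^-_{i_1}(N)=s_{i_1}(\rankv N)$ and pushing $s_{i_1}$ through the Coxeter transformation by the identities above yields exactly the rank vector of a preprojective (resp. preinjective) $H$-module; this is the inverse of the matching carried out in Proposition \ref{lem 2.1}(1)--(2), so I would only cite that computation rather than repeat it. Since $F^-_{i_1}(N)$ is $\tau$-locally free by Proposition \ref{lem6}, Proposition \ref{rem1} forces it to be preprojective (resp. preinjective), and Proposition \ref{lem4}(3) identifies it up to isomorphism with the corresponding indecomposable. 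Finally $F^-_{i_1}(N)\not\cong E_{i_1}$: this is immediate from the fact, recorded in the proof of Proposition \ref{lem 2.1}(4), that $F^+_{i_1}$ and $F^-_{i_1}$ are mutually quasi-inverse equivalences between $\lfrep H'\setminus\operatorname{add}E'_{i_1}$ and $\lfrep H\setminus\operatorname{add}E_{i_1}$ (equivalently, $F^-_{i_1}(N)\cong E_{i_1}$ would give $N\cong F^+_{i_1}F^-_{i_1}(N)\cong F^+_{i_1}(E_{i_1})=0$, a contradiction).

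For part (2), I would transcribe the arguments of Proposition \ref{lem 2.1}(3)--(4) in the opposite direction. If $N$ is $\tau$-locally free regular, then $\tau_{H'}^{m}(N)$ is regular, hence $\not\cong E'_{i_1}$, for every $m$, so the identity dual to (\ref{eq:taurefl}), namely $F^-_{i_1}(\tau_{H'}^{m}N)\cong\tau_{H}^{m}(F^-_{i_1}N)$, shows $\tau_H^{m}(F^-_{i_1}N)\neq0$ for all $m$; thus $F^-_{i_1}(N)$ is regular and $\tau$-locally free by Proposition \ref{lem6}. The same identity gives $\tau_H^{m}(F^-_{i_1}N)\cong F^-_{i_1}N$ if and only if $\tau_{H'}^{m}N\cong N$, so $F^-_{i_1}$ preserves tube rank. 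To handle mouth modules I would apply $F^-_{i_1}$ to the almost split sequence \emph{starting} from a mouth module $N$ of an $H'$-tube; all three terms are regular and therefore avoid $E'_{i_1}$, so the functor keeps the sequence short exact with indecomposable terms, and it remains to show it is almost split.

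This last point is the main obstacle. Via the equivalence $F^-_{i_1}\colon \lfrep H'\setminus\operatorname{add}E'_{i_1}\to\lfrep H\setminus\operatorname{add}E_{i_1}$, checking that the image sequence is almost split reduces to verifying that every map from $F^-_{i_1}N$ into the generalised simple $E_{i_1}$ factors through the relevant term. The crucial subtlety is that the direction here is dual to that in Proposition \ref{lem 2.1}(4): because $i_1$ is a sink for $\Omega$, the module $E_{i_1}\cong P_{i_1}$ is \emph{projective} rather than injective, and the vanishing needed is the dual of \cite[Lemma 11.7]{[GLS1]}, namely that every homomorphism from a $\tau$-locally free regular module to a projective module is zero. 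This makes the factorisation automatic and shows that $F^-_{i_1}$ carries the almost split sequence to an almost split sequence, so $F^-_{i_1}N$ is again a mouth module, completing the proof.
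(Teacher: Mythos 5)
Your proposal is correct and takes essentially the same route as the paper, whose entire proof of this proposition is the one-line remark ``by similar computation as in Proposition \ref{lem 2.1}''; your line-by-line dualisation is exactly what that remark presupposes. In particular you correctly locate the one non-mechanical point — that $E_{i_1}$ is now projective rather than injective, so the factorisation for the mouth-module claim rests on the vanishing of $\Hom$ from a $\tau$-locally free regular module to a projective one (the dual of \cite[Lemma 11.7]{[GLS1]}, i.e.\ \cite[Corollary 11.3]{[GLS1]}, which the paper itself invokes in the proof of Lemma \ref{lem5.2}).
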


\begin{prop}\label{prop:indoforiet}
Let  $\Omega'$ be obtained from $\Omega$ by reflection at an admissible vertex $i$, $H=H(C,D,\Omega)$ and
$H'=H(C,D,\Omega')$.
\begin{itemize}
\item[(1)] Suppose that positive roots of type $C$ are rank vectors of some $\tau$-locally free $H$-modules.
Then so is true for $H'$.
\item[(2)] Suppose that  rank vectors of $\tau$-locally free $H$-modules are positive roots of type $C$. Then so is true for $H'$.
\end{itemize}
Consequently, if Conjecture \ref{Conj:gls} holds for $H$, then so it does  for $H'$.
\end{prop}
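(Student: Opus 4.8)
The plan is to reduce everything to the behaviour of the reflection functors $F^+_{i}$ and $F^-_{i}$ at an admissible vertex, which have already been analysed in Propositions \ref{lem 2.1} and \ref{dualcor2}. The key observation is that $\Omega' = s_i(\Omega)$, so $H' = s_i(H)$, and we may assume without loss of generality that $i$ is a sink in $Q^0(C,\Omega)$ (the source case is dual, using $F^-_i$ in place of $F^+_i$). The whole argument rests on the fact that $F^+_i$ induces an equivalence between $\mathrm{rep}_{\mathrm{lf}}H \setminus \mathrm{add}\, E_i$ and $\mathrm{rep}_{\mathrm{lf}}H' \setminus \mathrm{add}\, E'_i$, together with the rank-vector identity $\rankv F^+_i(M) = s_i(\rankv M)$ from Proposition \ref{lem6}.

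\medskip

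\textbf{Proof of (1).} Suppose every positive root $\alpha$ of type $C$ is the rank vector of some $\tau$-locally free $H$-module $M$. I would argue by cases on whether $\alpha = \alpha_i$. If $\alpha = \alpha_i$, then $\alpha = \rankv E'_i$ and $E'_i$ is itself a $\tau$-locally free $H'$-module by Example \ref{ex:gensimple}, so there is nothing to prove. Otherwise, since $\Delta^+ \setminus \{\alpha_i\}$ is permuted by $s_i$, the vector $s_i(\alpha)$ is again a positive root of type $C$. By hypothesis there is a $\tau$-locally free $H$-module $N$ with $\rankv N = s_i(\alpha)$; moreover $N \not\cong E_i$, because $\rankv N = s_i(\alpha) \neq \alpha_i = \rankv E_i$ (as $s_i(\alpha_i) = -\alpha_i$ and $\alpha \neq \alpha_i$). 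Then Proposition \ref{lem6} guarantees that $F^+_i(N)$ is a $\tau$-locally free $H'$-module with
\[
\rankv F^+_i(N) = s_i(\rankv N) = s_i(s_i(\alpha)) = \alpha,
\]
which is exactly what is required.

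\medskip

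\textbf{Proof of (2).} Conversely, suppose every rank vector of a $\tau$-locally free $H$-module is a positive root of type $C$, and let $M'$ be a $\tau$-locally free $H'$-module; I must show $\rankv M'$ is a positive root. If $M' \cong E'_i$ then $\rankv M' = \alpha_i$ is a simple root, done. Otherwise I apply the inverse reflection functor $F^-_i$ (note that $i$ is a source for $\Omega'$, so $F^-_i\colon \mathrm{rep}(H') \to \mathrm{rep}(H)$ is the relevant functor). By Proposition \ref{lem6}, $M := F^-_i(M')$ is a $\tau$-locally free $H$-module with $\rankv M = s_i(\rankv M')$, and by hypothesis $\rankv M$ is a positive root. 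Since the real and imaginary roots are both stable under the Weyl group action, $s_i(\rankv M) = \rankv M'$ is again a root of type $C$; it is positive because $\rankv M' \in \mathbb{Z}_{\geq 0}^n \setminus \{0\}$ and the only positive root sent to a negative root by $s_i$ is $\alpha_i$, which is the excluded case. Hence $\rankv M'$ is a positive root of type $C$, as desired.

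\medskip

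The final consequence follows immediately: if Conjecture \ref{Conj:gls} holds for $H$, then both directions of the if-and-only-if statement transfer to $H'$ by (1) and (2). I do not anticipate a genuine obstacle here, since the heavy lifting on reflection functors is already done; the only point requiring care is the bookkeeping of which vertices must be excluded (precisely the generalised simple modules $E_i$ and $E'_i$) and checking that the Weyl-group action preserves positivity off the single simple root $\alpha_i$. That latter fact is the standard property that $s_i$ permutes $\Delta^+ \setminus \{\alpha_i\}$, valid for any Kac-Moody root system, and it is what makes the two cases ($\alpha = \alpha_i$ versus $\alpha \neq \alpha_i$) exhaustive and correct.
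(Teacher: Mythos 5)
Your proposal is correct and follows essentially the same route as the paper's own proof: reduce to the case of a sink, treat $\alpha=\alpha_i$ (resp.\ $M'\cong E'_i$) separately via the generalised simple modules, and otherwise apply $F^+_i$ (resp.\ $F^-_i$) together with the identity $\rankv F^{\pm}_i(M)=s_i(\rankv M)$ from Proposition \ref{lem6}. The only difference is that you spell out the positivity check in part (2) slightly more explicitly than the paper does; no substantive divergence.
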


\begin{proof}
Without loss of generality we may assume that the admissible vertex is  a sink.
Recall the simple fact  that 
\begin{equation}\label{eq:simple}\rankv E_{i}=\alpha_{i}=\rankv E'_{i}.
\end{equation}

(1) Let $\alpha$ be a positive root. By (\ref{eq:simple}),  we may assume that $\alpha\not=\alpha_i$. 
Then $s_i(\alpha)$ is also a positive root. 
By assumption, there exists a $\tau$-locally free $H$-module $M$ such that $\rankv M=s_i(\alpha)$. 
Note that $s_i(\alpha)$ is different from $\alpha_i$ and so $M\not\cong E_i$. 
Now by Proposition~\ref{lem6}, $F_i^+(M)$ is a $\tau$-locally free  $H'$-module with 
\[\rankv F_i^+(M)=s_i(\rankv M)=s_i(s_i(\alpha))=\alpha.\]
This proves (1).

(2) Let $L$ be a $\tau$-locally free $H'$-module. By  (\ref{eq:simple}) and the fact that
$E'_i$ is $\tau$-locally free, we may assume that $L\not\cong E'_i$. By 
Proposition~\ref{lem6}, $F_i^-L$ is a $\tau$-locally free $H$-module with 
 $\rankv  F_i^-L=s_i(\rankv L)$, which is  a root by assumption. 
 Therefore, \[\rankv L=s_i(\rankv F_i^-L)\] is also a root. This proves (2).
\end{proof}

\subsection{Conjecture \ref{Conj:gls}$\colon$the forward implication}
Let $A=KQ/I$, and $a,b\in Q_0$. Denote by $r_{\rho}: Ae_a\rightarrow Ae_b$ (resp. $l_{\rho}: e_bA\rightarrow e_aA$)
the morphism of projective left (resp. right) $A$-modules given by the right (resp. left) multiplication by a path $\rho$ from $b$ to $a$. Recall that
$$\nu P(a)=I(a),$$
where $\nu=D\text{Hom}_A(-,A)$ is the Nakayama functor.
Since the assignment $f\mapsto f(e_a)$ induces  an isomorphism $\eta_a: \text{Hom}_A(Ae_a,A)\cong e_aA$ of right $A$-modules, we have the following  commutative diagram.

$$\xymatrixcolsep{5pc}\xymatrixrowsep{2pc}\xymatrix{
\text{Hom}_A(Ae_b,A)\ar[r]^{\text{Hom}_A(r_{\rho},A)}\ar[d]_{\cong}^{\eta_b} & \text{Hom}_A(Ae_a,A) \ar[d]_{\cong}^{\eta_a} \\
e_bA \ar[r]^{l_{\rho}} & e_aA\\
}$$
Therefore, we can identify the morphism $\nu(r_{\rho}): I(a)\rightarrow I(b)$ as $D(l_{\rho})=l_{\rho}^*$.
Note that for any $x\in e_aA$, we denote by $x^*\in I(a)$ the dual of $x$.
\begin{lem} The map $l_{\rho}^*$ can be described as follows.
\begin{align}\label{eqn:1}  l_{\rho}^*(x^*)=\left\{\begin{array}{ll}
     y^*, & \text{if}\ x=\rho y, x\in e_aA \text{ and } y\in e_b A; \\
     0, & \text{otherwise}.
    \end{array}\right.
\end{align}
\end{lem}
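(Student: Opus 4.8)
The plan is to read the formula off the identification $\nu(r_\rho)=D(l_\rho)=l_\rho^*$ established by the commutative diagram above, by dualising $l_\rho$ with respect to the natural path bases. First I would fix a $K$-basis of $e_aA$ (resp. of $e_bA$) given by the residues modulo $I$ of the paths with target $a$ (resp. $b$); such a path basis is available for the algebras $A=KQ/I$ under consideration. Dualising produces the basis $\{x^*\}$ of $I(a)=D(e_aA)$ (resp. $\{w^*\}$ of $I(b)=D(e_bA)$), where $x^*$ is the functional sending the basis path $x$ to $1$ and every other basis path to $0$; this is precisely the element denoted $x^*$ in the statement.

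Next I would use that the $K$-dual of a linear map acts by precomposition. Thus for every $\phi\in I(a)$ and every $w\in e_bA$,
$$(l_\rho^*\phi)(w)=\phi(l_\rho(w))=\phi(\rho w).$$
Specialising to $\phi=x^*$ and expanding the result in the dual basis of $I(b)$ gives
$$l_\rho^*(x^*)=\sum_{w}(l_\rho^*(x^*))(w)\,w^*=\sum_{w}x^*(\rho w)\,w^*,$$
the sum being taken over the basis paths $w$ of $e_bA$. Hence the whole computation reduces to evaluating the scalars $x^*(\rho w)$.

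The step I expect to be the main obstacle is controlling these products. The point is that, for each basis path $w$, the product $\rho w$ is again (the residue of) a single path with target $a$, so it is either $0$ or a basis element of $e_aA$; this relies on the explicit monomial relations (H1) and commutativity relations (H2) defining $I$, which never turn a product of two paths into a genuine linear combination. Consequently $x^*(\rho w)\in\{0,1\}$, equal to $1$ exactly when $\rho w=x$. Combined with left cancellation, which guarantees that $w$ is uniquely determined by the equation $\rho w=x$, the sum collapses to a single term: if $x$ admits a factorisation $x=\rho y$ with $y$ a basis path in $e_bA$ then $l_\rho^*(x^*)=y^*$, and if no such factorisation exists then every summand vanishes and $l_\rho^*(x^*)=0$. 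This is exactly the formula (\ref{eqn:1}).
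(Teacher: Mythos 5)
Your proof is correct and follows essentially the same route as the paper, whose entire argument is the one-line computation $l_{\rho}^*(x^*)(y)=x^*l_{\rho}(y)=x^*(\rho y)$. You simply make explicit the two points the paper leaves implicit — that the relations (H1), (H2) identify products of basis paths with basis paths or zero, and that the factorisation $x=\rho y$ determines $y$ uniquely — which is a reasonable filling-in rather than a different argument.
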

\begin{proof}
Note that $l_{\rho}^*(x^*)(y)=x^*l_{\rho}(y)=x^*(\rho y)$ and so the lemma follows.
\end{proof}

The following useful fact for computing $\tau M$ is an immediate consequence of (\ref{eqn:1}).
\begin{prop}\label{Prop:tau}
Let
%$$0\rightarrow \oplus_{s=1}^mP_{i_s}\xrightarrow{r} \oplus_{t=1}^nP_{j_t}\rightarrow M\rightarrow 0$$
\[
\begin{tikzpicture}[scale=0.7,
 arr/.style={black, -angle 60}]
\path (11,1) node (c3) {$M$};
\path (7,1) node (c2) {$\oplus_{t=1}^nP_{j_t}$};
\path (3,1) node (c1) {$\oplus_{s=1}^mP_{i_s}$};
\path (14, 1) node (c4) {$0$};
\path (0, 1) node (c0) {$0$};

\draw[->] (c1) edge node[auto]{$r$} (c2);
\draw[->] (c2) edge node[auto]{}  (c3);
\draw[->] (c0) edge (c1);
\draw[->] (c3) edge (c4);
\end{tikzpicture}\]
 be a  projective resolution of an  $A$-module $M$ with $r=(r_{\rho_{st}})$.  Then
 %$$0\rightarrow \tau(M)\rightarrow \oplus_{s=1}^mI_{i_s}\xrightarrow{\nu(r)} \oplus_{t=1}^nI_{j_t}\rightarrow 0$$
 \[
\begin{tikzpicture}[scale=0.7,
 arr/.style={black, -angle 60}]
\path (11,1) node (c3) {$\oplus_{t=1}^nI_{j_t}$};
\path (7,1) node (c2) {$ \oplus_{s=1}^mI_{i_s}$};
\path (3,1) node (c1) {$\tau(M)$};
\path (14, 1) node (c4) {$0$};
\path (0, 1) node (c0) {$0$};

\draw[->] (c1) edge node[auto]{} (c2);
\draw[->] (c2) edge node[auto]{$\nu(r)$}  (c3);
\draw[->] (c0) edge (c1);
\draw[->] (c3) edge (c4);
\end{tikzpicture}\]
 is an  injective resolution of $\tau(M)$ %If the $(t,s)$-entry of $r$ is $r_{\rho_{st}}$, then the
with $\nu(r)=(l_{\rho_{st}}^*)$.
\end{prop}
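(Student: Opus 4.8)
The plan is to deduce the statement from the standard description of the Auslander--Reiten translate through the transpose, $\tau=D\operatorname{Tr}$, feeding into it the block-wise identification $\nu(r_{\rho})=l_{\rho}^{*}$ already established via the commutative square with the isomorphisms $\eta_{a}$. We may assume the given resolution is minimal, so that it is the minimal projective presentation of $M$ and $\operatorname{Tr}M$ is the genuine transpose. First I would apply the contravariant, left exact functor $(-)^{*}=\Hom_{A}(-,A)$ to
\[0\to\oplus_{s=1}^{m}P_{i_{s}}\xrightarrow{\;r\;}\oplus_{t=1}^{n}P_{j_{t}}\to M\to 0.\]
Since $(-)^{*}$ carries finitely generated projective left modules to projective right modules and $\Ext^{1}_{A}(\oplus_{t}P_{j_{t}},A)=0$, this yields the exact sequence of right $A$-modules
\[0\to M^{*}\to (\oplus_{t}P_{j_{t}})^{*}\xrightarrow{\;r^{*}\;}(\oplus_{s}P_{i_{s}})^{*}\to \operatorname{Tr}M\to 0,\]
with $\operatorname{Tr}M=\operatorname{coker}(r^{*})$.

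Next I would apply the exact contravariant duality $D$. Using $\tau=D\operatorname{Tr}$, the additivity of $\nu$, and $\nu P(a)=I(a)$, this gives the exact sequence of left $A$-modules
\[0\to\tau(M)\to\oplus_{s=1}^{m}I_{i_{s}}\xrightarrow{\;\nu(r)\;}\oplus_{t=1}^{n}I_{j_{t}}\to \nu M\to 0,\]
so that $\tau(M)=\ker\nu(r)$ and the two middle terms are injective. This already realises the asserted injective copresentation of $\tau(M)$. To pin down the map I would argue block by block: the $(t,s)$ entry of $\nu(r)$ is $\nu(r_{\rho_{st}})\colon I(i_{s})\to I(j_{t})$, and this was identified above with $l_{\rho_{st}}^{*}=D(l_{\rho_{st}})$, acting on dual elements by \eqref{eqn:1}. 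Hence $\nu(r)=(l^{*}_{\rho_{st}})$, which is the explicit content that makes the proposition useful for computing $\tau(M)$.

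The one delicate point, and the step I expect to be the main obstacle, is the surjectivity of $\nu(r)$, that is, that the copresentation is a genuine two-term injective resolution ending in $0$ rather than merely a copresentation. The four-term sequence above identifies $\operatorname{coker}\nu(r)=\nu M=D\Hom_{A}(M,A)$, so surjectivity is equivalent to $\Hom_{A}(M,A)=0$. This is precisely where the nature of $M$ must be invoked: for the modules to which the proposition is applied, namely regular $\tau$-locally free modules, every homomorphism into a projective vanishes --- dually to the cited fact that every map from an injective module to a $\tau$-locally free regular module is zero --- whence $\Hom_{A}(M,A)=0$ and the resolution terminates. I would therefore either record this hypothesis on $M$ explicitly, or, if the statement is wanted for arbitrary $M$, weaken the conclusion to an injective copresentation; in either case the computation $\tau(M)=\ker(l^{*}_{\rho_{st}})$ is unaffected, and this kernel is what the subsequent arguments require.
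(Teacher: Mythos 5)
Your argument is correct and is exactly the route the paper has in mind: the paper offers no proof beyond calling the proposition an immediate consequence of (4.1), and the intended justification is precisely the standard four-term exact sequence $0\to\tau M\to\nu P_1\to\nu P_0\to\nu M\to 0$ obtained by applying $\Hom_A(-,A)$ and then $D$ to the presentation, together with the block-wise identification $\nu(r_{\rho})=l_{\rho}^{*}$. Your observation that exactness at the right-hand end requires $\Hom_A(M,A)=0$ is a genuine, if minor, refinement of the statement as printed; this hypothesis does hold for every module to which the proposition is applied (regular $\tau$-locally free modules, for which $\Hom_H(M,H)=0$ by the cited Corollary 11.3 of Geiss--Leclerc--Schr\"oer), and in any case only the identification $\tau(M)=\ker(l^{*}_{\rho_{st}})$ is ever used. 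One small simplification: the minimality assumption you impose is unnecessary, since a non-minimal projective resolution differs from the minimal one by a split trivial complex $P\xrightarrow{\,1\,}P$, which $\nu$ carries to an isomorphism and which therefore contributes nothing to the kernel or the cokernel of $\nu(r)$.
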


We say that  {\em a root is preprojective, preinjective} and {\em regular} if it is the rank vector of a preprojective, preinjective and regular module, respectively.

\begin{prop}\cite[Proposition 1.9]{[DR]} \label{prop:roots} Let $\mathbf{i}=(i_1, \dots, i_n)$ be a
$+$-admissible sequence with respect to  $(C,\Omega)$.
The set of  positive roots of type $C$ is  the disjoint union of preprojective, preinjective and regular roots as follows.
\begin{itemize}
\item[(1)] $\{c_\mathbf{i}^{-r}(\beta_{\mathbf{i},k})\mid r\in\mathbb{Z}_{\geq 0},1\leq k\leq n\}.$

\vspace{1mm}

\item[(2)] $\{c_\mathbf{i}^{s}(\gamma_{\mathbf{i},k})\mid s\in\mathbb{Z}_{\geq 0},1\leq k\leq n\}.$

\vspace{1mm}

\item[(3)] $\{x_0+ rg\delta\mid x_0 \textup{ is  a positive root }\leq g\delta, ~
r\in\mathbb{Z}_{\geq 0}\}$,
where  $1\leq g\leq 3$ is a constant and $x_0$ can be deduced from Table 6 in \cite{[DR]}.
\end{itemize}
\end{prop}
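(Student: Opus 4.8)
The plan is to separate the three families by a single linear invariant, the \emph{defect}, and then to match the trichotomy ``negative / positive / zero defect'' with the trichotomy ``preprojective / preinjective / regular'' already recorded in Proposition~\ref{rem1}. Concretely, equation~(\ref{eq:c}) gives an $N>0$ with $c^{N}d-d\in\mathbb{Z}\delta$ for every $d$; I would write $c^{N}d=d+\partial(d)\,\delta$ and call $\partial(d)$ the defect of $d$. Since $c\delta=\delta$, the functional $\partial$ is linear, $c$-invariant (so $\partial(cd)=\partial(d)$) and satisfies $\partial(\delta)=0$, and iterating yields $c^{iN}d=d+i\,\partial(d)\,\delta$ for all $i\in\mathbb{Z}$. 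Because $\delta$ is a strictly positive vector dominating the fixed part, $c^{iN}\alpha$ becomes negative for $i\gg 0$ exactly when $\partial(\alpha)<0$, becomes negative for $i\ll 0$ exactly when $\partial(\alpha)>0$, and stays equal to $\alpha$ (so $\alpha$ is $c$-periodic) exactly when $\partial(\alpha)=0$. Comparing with Proposition~\ref{rem1}, a positive root is preprojective, preinjective or regular according as its defect is negative, positive or zero, and this splits the positive roots into three disjoint defect classes.

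For disjointness I would just locate the three families in these classes. By Lemma~\ref{lem:rkvpreprojinj}, $\beta_k=\rankv P_{i_k}$ and $\gamma_k=\rankv I_{i_k}$; since $P_{i_k}$ is preprojective and $I_{i_k}$ preinjective, the defect characterisation above gives $\partial(\beta_k)<0$ and $\partial(\gamma_k)>0$. By $c$-invariance every root in family~(1) has defect $\partial(\beta_k)<0$ and every root in family~(2) has defect $\partial(\gamma_k)>0$; and since $\partial$ is linear with $\partial(\delta)=0$, every vector $x_0+rg\delta$ in family~(3) has defect $\partial(x_0)$, which is $0$ for the representatives listed in Table~6 of~\cite{[DR]} (these are precisely the defect-zero positive roots $\le g\delta$). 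As the three defect signs are distinct, the families are pairwise disjoint.

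The substantive part is exhaustiveness, and here I would argue by a ``first exit'' descent. Let $\alpha$ be a positive root. If $\partial(\alpha)<0$, iterate $c$ and take $r\ge 0$ maximal with $c^{r}\alpha\ge 0$; this exists because $c^{m}\alpha$ is eventually negative. Then $c^{r}\alpha\ge 0$ but $c^{r+1}\alpha<0$, and the key combinatorial fact — valid because $\mathbf{i}$ is $+$-admissible, so $c=s_{i_n}\cdots s_{i_1}$ is a reduced word of length $n$ — is that the positive roots sent to a negative root by the single step $c$ are \emph{exactly} $\beta_1,\dots,\beta_n$ (its inversion set); this is consistent with the relation $c(\beta_k)=-\gamma_k$ extracted in the proof of Proposition~\ref{rem1}. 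Hence $c^{r}\alpha=\beta_k$ for some $k$, so $\alpha=c^{-r}\beta_k$ lies in family~(1). The case $\partial(\alpha)>0$ is dual, using $c^{-1}$, whose inversion set is $\{\gamma_1,\dots,\gamma_n\}$, and lands $\alpha$ in family~(2). Finally, if $\partial(\alpha)=0$ then $\alpha$ is $c$-periodic: the imaginary roots $m\delta$ are visibly of the form $x_0+rg\delta$ with $x_0\in\{\delta,\dots,g\delta\}$, and a $c$-periodic real root reduces modulo $\delta$ to the finite root system underlying $C$, where $\overline{c}$ has finite order and the orbits are finite, which one records as the families $x_0+rg\delta$.

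I expect the main obstacle to be precisely this last, regular, case: showing that the defect-zero positive roots are organised into finitely many $c$-orbits $\{x_0+rg\delta\}$ with a \emph{single} constant $g\in\{1,2,3\}$ and with $x_0$ ranging over the explicit list of Table~6 in~\cite{[DR]}. This is where the affine type genuinely enters — the value of $g$ and the choice of minimal representatives $x_0\le g\delta$ reflect the non-simply-laced folding and are type-dependent rather than formal, so the verification that these orbits exhaust the defect-zero roots is a case-by-case check rather than a uniform argument. By contrast the preprojective and preinjective cases reduce cleanly to the standard inversion-set description of a Coxeter element and need only the admissibility of $\mathbf{i}$.
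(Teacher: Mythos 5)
The paper does not actually prove this proposition --- it is quoted verbatim from \cite[Proposition 1.9]{[DR]} --- so there is no in-paper argument to measure you against; I will assess your sketch on its own. Your treatment of families (1), (2) and of pairwise disjointness is the standard argument and is essentially sound. Reading (\ref{eq:c}) correctly as $c^{N}d=d+\partial(d)\delta$ with $\partial$ a linear, $c$-invariant functional (the paper's literal phrasing with a single universal $r$ is loose), the sign of $\partial$ does separate the three classes, and since $c=s_{i_n}\cdots s_{i_1}$ is a reduced word of length $n$, the set of positive real roots made negative by $c$ is exactly $\{\beta_1,\dots,\beta_n\}$, so your first-exit descent is valid; the dual argument with $c^{-1}$ and $\{\gamma_1,\dots,\gamma_n\}$ handles (2). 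One small point worth making explicit in the descent: $c^{r}\alpha$ is automatically a \emph{real} root there, because imaginary roots have defect zero and are never sent to negative roots by $W$, so the inversion-set fact applies. Also note that, as literally written, ``$x_0$ a positive root $\leq g\delta$'' would admit roots of nonzero defect (e.g.\ a simple root in type $\type{A}_{11}$), so the clause ``deduced from Table 6'' is doing real work; you read it correctly as restricting $x_0$ to the defect-zero representatives.

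The genuine gap is part (3). You correctly locate it as the substantive, type-dependent content, but you do not prove it: your observation that a defect-zero real root is $c$-periodic and ``reduces modulo $\delta$ to the finite root system'' only shows that the regular positive roots fall into finitely many $c$-orbits; it does not show that each such root has the arithmetic form $x_0+rg\delta$ with a \emph{single} tier number $g\in\{1,2,3\}$ and $x_0$ in the explicit list of Table 6. That statement is exactly the classification of affine real roots in the non-simply-laced/twisted cases, where the admissible multiples of $\delta$ that may be added to a real root depend on its length --- this is where $g$ comes from --- and it is established in \cite{[DR]} by explicit case analysis. Since the paper later uses precisely this explicit form of (3) (e.g.\ to conclude that $\delta+\alpha_2$ and $\delta+\rankv X$ are not roots in the proof of Theorem \ref{main2}), deferring (3) to ``a case-by-case check'' leaves the one part of the proposition that carries real content unproved. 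As a justification for citing \cite{[DR]} your sketch is fine; as a standalone proof it is incomplete at (3).
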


We remark that the set of roots in Proposition \ref{prop:roots} (3) contains all the positive imaginary
roots $r\delta$, where $r\geq 1$.

We say that a vector appearing in a component of an AR-quiver, if it is the dimension or rank vector
of an indecomposable representation contained in the component.
By saying that $$\xymatrix{M_r& \cdots & M_2&M_1}$$ are {\em the mouth modules of a tube of rank} $r$, we mean that
they are  non-isomorphic mouth modules,
$\tau M_i\cong M_{i+1}$ for any $1\leq i\leq r-1$ and $\tau M_r\cong M_1$.

\begin{prop}\label{Prop:typeA}
Let  $C$ be the Cartan matrix of type $\type A_n$. Then
for any positive root $\alpha$ of  type $C$ and any symmetriser $D$,
there exists a $\tau$-locally free $H$-module $M$ such that $\rankv M=\alpha$.
\end{prop}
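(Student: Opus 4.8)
The plan is to organise the positive roots of type $\type{A}_n$ by the trichotomy of Proposition~\ref{prop:roots} and to realise each class separately, exploiting the cyclic shape of the underlying graph. Since $C$ is symmetric, every symmetriser has the form $D=d\cdot\mathrm{Id}$ with $d\ge 1$, the relations (H2) collapse to $\varepsilon_i\alpha_{ij}=\alpha_{ij}\varepsilon_j$, and a locally free $H$-module is nothing but a representation of the cyclic quiver $Q^0$ by \emph{free} modules over $R=K[x]/(x^{d})$ (with each $\varepsilon_i$ acting as multiplication by $x$), the rank vector being the tuple of $R$-ranks. By Proposition~\ref{prop:indoforiet} the assertion only depends on the reflection class of the orientation, so I am free to fix whichever admissible orientation of the cycle is most convenient; in any case the constructions below make sense for an arbitrary one. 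It then remains to realise, as rank vectors of $\tau$-locally free modules, the preprojective, the preinjective, and the regular roots of Proposition~\ref{prop:roots}.

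The non-regular roots are immediate. By Proposition~\ref{prop:roots}(1),(2) every preprojective (resp. preinjective) root equals $c^{-r}(\beta_{\mathbf i,k})$ (resp. $c^{s}(\gamma_{\mathbf i,k})$), which by Lemma~\ref{lem:rkvpreprojinj} and Proposition~\ref{Lem:tauc} is $\rankv\tau^{-r}P_{i_k}$ (resp. $\rankv\tau^{s}I_{i_k}$); these modules are $\tau$-locally free by Proposition~\ref{lem4}. For the regular \emph{real} roots I plan to use the rigid theory already available. Each such root is, by Proposition~\ref{prop:roots}(3), of the form $x_0+r\delta$, where $x_0$ ranges over the finitely many quasi-simple roots cut out by the exceptional tubes; these $x_0$ are real Schur roots, so Theorem~\ref{thm:Schurroot} provides a rigid $\tau$-locally free module $X$ with $\rankv X=x_0$. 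Since $x_0$ is $c$-periodic, $X$ is regular by Proposition~\ref{rem1}(3), hence lies in a tube by Proposition~\ref{cor}, and Proposition~\ref{lem5} promotes the \emph{entire} tube to $\tau$-locally free regular modules. Reading off rank vectors along the tube via Proposition~\ref{Lem:tauc} then yields every $x_0+r\delta$, so all regular real roots are realised at one stroke.

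The genuinely new case is that of the imaginary roots $k\delta=(k,\dots,k)$, whose modules are non-rigid and therefore fall outside Theorem~\ref{thm:Schurroot}. Here the plan is to build one homogeneous family by hand: take the module $M$ given, in the $R$-representation picture, by $M_i=R$ for every $i$ with all cyclic structure maps isomorphisms whose composite (the monodromy around the cycle) is multiplication by a fixed $\lambda\in K^{\times}$. This $M$ is indecomposable and locally free with $\rankv M=\delta$; as its rank vector is imaginary and hence lies in neither the preprojective nor the preinjective family of Proposition~\ref{prop:roots}, $M$ is regular. To see that $M$ is $\tau$-locally free I would compute $\tau M$ directly from a projective resolution using Proposition~\ref{Prop:tau} (equivalently, compute the reflection functors on $M$ and use the identification $\tau M\cong TC^{+}M$ of Proposition~\ref{lem:TC}), checking that $\tau M\cong M$; as the homogeneous tube has rank one, this single isomorphism forces every $\tau^{k}M$ to be locally free. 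Proposition~\ref{lem5} then upgrades the whole homogeneous tube, whose modules of quasi-length $k$ have rank vector $k\delta$, so every imaginary root is realised.

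The hard part will be this last paragraph: establishing $\tau M\cong M$ (equivalently, the local freeness of the homogeneous $\tau$-orbit) for general $d$ requires an explicit resolution computation over the truncated ring $R=K[x]/(x^{d})$ rather than over a field, together with a check of the indecomposability of the band module $M$ in that setting. A secondary point to pin down is the matching between the exceptional tubes of $H$ and the Dlab--Ringel list in Proposition~\ref{prop:roots}(3), i.e. that the quasi-simple rank vectors $x_0$ are exactly the real Schur roots arising there and that their tubes exhaust the regular real roots; this is where the special combinatorics of the cycle, and the fact that the $\tau$-locally free AR-structure mirrors the hereditary affine case, must be used.
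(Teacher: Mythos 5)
Your treatment of the preprojective and preinjective roots coincides with the paper's, and your homogeneous band module for the imaginary roots is essentially the paper's construction in \S\ref{Sec:homogtubes}; but there is a genuine gap in your paragraph on the regular \emph{real} roots. Theorem \ref{thm:Schurroot} can at best supply the quasi-simple roots $x_0$: the roots $x_0+r\delta$ with $r\geq 1$ are \emph{not} real Schur roots (already in the hereditary case $d=1$ the rigid regular indecomposables have quasi-length strictly less than the tube rank, so their dimension vectors never contain $\delta$, and real Schur roots do not depend on the symmetriser), so the bijection cannot produce them; and your claim that ``reading off rank vectors along the tube via Proposition \ref{Lem:tauc}'' yields every $x_0+r\delta$ is false as stated, because Proposition \ref{Lem:tauc} only controls the $\tau$-orbit of $X$, whose rank vectors are the $c^k(x_0)$ --- never $x_0+r\delta$. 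To reach $x_0+r\delta$ you must climb quasi-lengths, and for that you need to know that $X$ is a \emph{mouth} module and that the mouth rank vectors of its tube sum to $\delta$. Neither is automatic in this non-hereditary setting: the paper's own Theorem \ref{main2} exhibits tubes in which \emph{every} mouth module has rank vector $\delta$, so the mouth vectors sum to $n\delta$; an unidentified tube around your abstract $X$ could a priori behave like that. This mouth identification is exactly the hard core of the paper's proof of the proposition: it constructs the rigid interval modules $M(i,j)$ explicitly, checks rigidity with the Euler form (Proposition \ref{lem2.2}), and excludes smaller quasi-socles by noting that every proper locally free submodule of $M(i,j)$ is again some $M(i',j')$, whose smaller real Schur root cannot occur in the same tube. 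You defer this matching as ``a secondary point to pin down,'' but it is the main point; likewise your assertion that the $x_0$ are real Schur roots for arbitrary $d$ is itself unproved (the paper gets it for free, in the opposite direction, from the explicit rigid modules).

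Two smaller problems. First, ``$C$ symmetric, hence $D=d\cdot\mathrm{Id}$'' silently excludes the $n=1$ case $\type{A}_{11}$, where $C=\left(\begin{smallmatrix}2&-4\\-1&2\end{smallmatrix}\right)$ is not symmetric; the paper's proposition covers it (its $n=1$ case runs through Proposition \ref{Prop:homog}, whose $\type{A}_{11}$ computation in \S\ref{sec:a11} is genuinely asymmetric and is not produced by your band picture over $K[x]/(x^d)$). Second, on a cycle the appeal to Proposition \ref{prop:indoforiet} buys nothing: admissible reflections preserve the number of clockwise arrows, so the orientations of $\type{A}_n$ fall into several reflection classes --- this is precisely why the proof of Theorem \ref{thm3.1} excludes $\type{A}_n$ from its reflection reduction. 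This is harmless only because your constructions, like the paper's, work for an arbitrary acyclic orientation, but you cannot ``fix whichever orientation is most convenient.'' Finally, note that the paper's route makes your self-declared ``hard part'' unnecessary for $n\geq 2$: once the inhomogeneous tubes are pinned down via the $M(i,j)$, they already contain modules with rank vector $k\delta$ (quasi-length a multiple of the tube rank), so the explicit homogeneous-tube computation is needed only at $n=1$.
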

\begin{proof} First note that, by Lemma \ref{lem:rkvpreprojinj} and Proposition \ref{Lem:tauc},
\[
\rankv \tau^{-r} P_{i_k}=c^{-r} \beta_{k} \text{ and } \rankv \tau^{r} I_{i_k}=c^{r} \gamma_{k}.
\]
So by Proposition \ref{prop:roots}, we only need to consider the case where $\alpha$ is a regular root.
When $n=1$, the only regular roots are the positive multiples of $\delta$.
By duality, we may assume that the orientation $\Omega =\{(2, 1)\}$. In this case,
the result follows from Proposition \ref{Prop:homog}.

Next assume that $n>1$. Let $D=(m, \dots, m)$.
Assume that the vertices $1, \dots, n$, are clock-wise cyclically ordered.
Let $M(i, j)$ be the locally free representation supported at the cyclic interval $[i, j]$ with $H_i$
at the vertices in $[i, j]$ and  the identity map on each arrow within the support, and let
 $r_{ij}=\rankv M(i, j)$.
Note that $H_i\cong K[x]/(x^m)$. Hence \[\dim \End M(i, j)=m,\] and so
when $j\not=i-1$, i.e. $[i, j]\not=Q_0$, by Proposition \ref{lem2.2}, $M(i, j)$ is rigid.
Consequently, $M(i, j)$ is $\tau$-locally free,
and so by Theorem \ref{thm:Schurroot},
$r_{ij}$ is a real Schur root  and $M(i, j)$ is the unique $\tau$-locally free module
with $r_{ij}$ as the rank vector.

The AR-quiver $\Gamma_{KQ^0}$ has one or two inhomogeneous tubes, depending on the
orientation. Let $\mathcal{C}$ be such a tube.
The dimension vectors of its mouth modules are
roots of the form $r_{ij}$ with  $[i, j]\not=Q_0$ and assume that they are
$r_{i_sj_s}, \dots, r_{i_1j_1}$. Since
\[
%\rankv \tau M=c\rankv M \text{ and so }
\rankv \tau^s M(i_1, j_1)=c^s(r_{i_1 j_1})=r_{i_1 j_1}=\rankv M(i_1, j_1),\]
by Theorem \ref{thm:Schurroot}, we have \[\tau^s M(i_1, j_1)\cong M(i_1, j_1).\]
So  $M(i_s, j_s), \dots, M(i_1,  j_1)$ are
contained in a tube $\mathcal{T}$ of $\Gamma_H$ by Theorem \ref{lem1}, and they form a $\tau$-orbit.
Note  that the minimal regular real Schur roots appear either at the bottom of $\mathcal{C}$ or
the other inhomogeneous tube of $\Gamma_{kQ^0}$ if it exists,
and that any proper nonzero submodule of $M(i, j)$ is again of the form $M(i', j')$, whose
rank vector is then also a real Schur root and is smaller than $r_{ij}$. So  such a submodule
 $M(i', j')$ can not appear in $\mathcal{T}$.
Therefore,
$M(i_s, j_s), ~\dots, ~M(i_2, j_2), ~ M(i_1, j_1)$
%$$\xymatrix{M(i_s, j_s)&\dots &M(i_2, j_2)& M(i_1, j_1)}$$
are the mouth modules of
the rank $s$ tube.
Hence $\mathcal{T}$ is  an inhomogeneous tube such that the
rank vectors appearing in it are the same as the roots appearing in $\mathcal{C}$.
As any regular root appears in an inhomogeneous tube, for any given regular root $\alpha$,
there exists a $\tau$-locally free $H$-module such that its rank vector is $\alpha$.
This completes the proof.
\end{proof}

\begin{lem}\label{lem0}
 %$c^r(\alpha_i)=\alpha_i$ for some minimal positive integer $r$ and some 
If the $c$-period of $\alpha_i$ is $r$ for $i\in Q_0$, then
$$\xymatrix{\tau^{r-1}E_i &\cdots &\tau E_i & E_i}$$ are rigid $\tau$-locally free $H$-modules and the mouth modules of
a rank $r$ inhomogeneous tube.
\end{lem}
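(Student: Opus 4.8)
The plan is to deduce the $\tau$-periodicity of $E_i$ from the $c$-periodicity of $\alpha_i$ using the injectivity of the rank-vector map, and then to read off the tube structure from standard tube theory. First I would pin down the $\tau$-orbit of $E_i$. By Example~\ref{ex:gensimple}, $E_i$ is indecomposable, locally free and rigid, so Proposition~\ref{lem2.1} gives that $E_i$ is $\tau$-locally free and that every $\tau^k E_i$ is rigid. By Proposition~\ref{Lem:tauc} we have $\rankv \tau^k E_i = c^k(\alpha_i)$. Since the $c$-period of $\alpha_i$ is $r$, the vectors $\alpha_i, c(\alpha_i), \dots, c^{r-1}(\alpha_i)$ are pairwise distinct, so $E_i, \tau E_i, \dots, \tau^{r-1}E_i$ are pairwise non-isomorphic rigid $\tau$-locally free modules. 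Moreover $\rankv \tau^r E_i = c^r(\alpha_i) = \alpha_i = \rankv E_i$, and since both $\tau^r E_i$ and $E_i$ are rigid $\tau$-locally free, the injectivity part of Theorem~\ref{thm:Schurroot} forces $\tau^r E_i \cong E_i$. A short divisibility argument (any $p$ with $\tau^p E_i \cong E_i$ satisfies $c^p(\alpha_i) = \alpha_i$, whence $r \mid p$) then shows that $E_i$ is $\tau$-periodic with $\tau$-period exactly $r$.

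Next I would locate $E_i$ inside a tube. As $\alpha_i$ is $c$-periodic, Proposition~\ref{rem1}(3) shows $E_i$ is regular; hence by Proposition~\ref{lem5} the component $\mathcal{C}$ of $\Gamma_H$ containing $E_i$ consists entirely of $\tau$-locally free regular modules, and since it contains the $\tau$-periodic module $E_i$, Proposition~\ref{cor} shows $\mathcal{C}$ is a tube. The rank of a stable tube equals the common $\tau$-period of its modules, which here is $r$. The tube cannot be homogeneous: the mouth module $M$ of a rank-one tube sits in its own AR-sequence $0 \to M \to E \to M \to 0$, which is non-split, so $M$ has a self-extension and is not rigid, contradicting the rigidity of every module of $\mathcal{C}$. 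Hence $r \ge 2$ and $\mathcal{C}$ is inhomogeneous.

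It then remains to check that $E_i$ itself sits on the mouth. Here I would use that all modules of $\mathcal{C}$ are locally free, so $\rankv$ is additive on short exact sequences inside $\mathcal{C}$; consequently an indecomposable of quasi-length $\ell$ in $\mathcal{C}$ has rank vector equal to the sum of the rank vectors of its $\ell$ mouth quasi-composition factors, each of which is a nonzero vector in $\mathbb{Z}_{\ge 0}^n$ with entry sum at least $1$. Since $\rankv E_i = \alpha_i$ has entry sum $1$, the module $E_i$ must have quasi-length $1$, that is, it is a mouth module. The $r$ mouth modules form a single $\tau$-orbit of size $r$, which by the first paragraph is exactly $\{E_i, \tau E_i, \dots, \tau^{r-1}E_i\}$; listing them as $\tau^{r-1}E_i, \dots, \tau E_i, E_i$ matches the convention $\tau M_i \cong M_{i+1}$ and $\tau M_r \cong M_1$, giving the mouth modules of the rank-$r$ tube. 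I expect the main obstacle to be this last step: one must correctly invoke the tube's filtration by quasi-composition factors together with the local freeness of the whole component in order to turn the minimality of the simple root $\alpha_i$ into the assertion that $E_i$ is quasi-simple. The periodicity argument, by contrast, is a clean application of Theorem~\ref{thm:Schurroot}.
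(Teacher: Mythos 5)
Your argument is correct and follows essentially the same route as the paper: rigidity of the whole $\tau$-orbit together with Theorem~\ref{thm:Schurroot} applied to $\rankv \tau^r E_i = c^r\alpha_i = \alpha_i$ yields $\tau^r E_i \cong E_i$, and the component is then a tube of rank $r$. The paper merely asserts the mouth-module and inhomogeneity conclusions, whereas you justify them (via additivity of $\rankv$ on the locally free component and the minimality of the simple root $\alpha_i$, plus the non-rigidity of a homogeneous mouth module); this fills in a step the paper leaves implicit rather than taking a different approach.
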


\begin{proof}
First note that the modules are rigid, since each $E_i$ is rigid and $\tau$ preserves rigidity.
Next,
\[\rankv \tau^r E_i=c^r\alpha_i=\alpha_i=\rankv E_i,\]
and so $\tau^r E_i\cong E_i$ by Theorem \ref{thm:Schurroot}. 
Therefore, $\tau^{r-1} E_i, ~ \dots, ~ \tau E_i, ~ E_i$
%\[\xymatrix{\tau^{r-1} E_i& \dots & \tau E_i& E_i}\]
are the mouth modules of an inhomogeneous tube of $\tau$-locally free modules.
\end{proof}

Note that any proper subquiver of $Q^0$ is a Dynkin quiver.
Let $e\in Q_0$ such that $Q^0\backslash \{e\}$ is connected. In particular,
there is exactly one arrow incident at $e$ (assuming $Q_0$ has at least two vertices).
We call such an $e$ a {\em boundary vertex}.
 Denote the quotient algebra $H/(e)$ by $\delE$ and
the AR-translation in $\rep (\delE)$ by $\tau'$,
 where $(e)$ is the ideal generated by $e$.

\begin{lem}\label{lem0.1}
Let $M$ be a $\tau$-locally free $H$-module with $M_e=0$ for a boundary vertex $e$. Then  $M$ is also a $\tau$-locally free $\delE$-module. Moreover,
$\rankv M$ is a positive root of type $C$.
\end{lem}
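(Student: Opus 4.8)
The plan is to recognise $M$ as a module over the smaller, Dynkin-type algebra $\delE$, transport $\tau$-local freeness across this restriction, and then invoke the Dynkin case of \cite[Theorem 1.3]{[GLS1]}.

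First I would dispose of the easy reductions. Since $e$ is a boundary vertex and $M_e=0$, every structure map of $M$ attached to $e$ (the loop $\varepsilon_e$ and the unique arrow at $e$) vanishes, so the $H$-action factors through $\delE=H/(e)$ and $M$ becomes a $\delE$-module. It is locally free over $\delE$ (free of the same rank at each vertex $j\neq e$ and of rank $0$ at $e$), and it is indecomposable over $\delE$ because $\End_{\delE}(M)=\End_H(M)$ is local. Note that $\delE=H(C',D',\Omega')$, where $C'$ is the Cartan matrix of the quiver $Q^0\setminus\{e\}$; as this is a proper connected subquiver it is a Dynkin quiver, so $\delE$ is of finite type.

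The heart of the argument is to show that $M$ is $\tau_{\delE}$-locally free. Fix a $+$-admissible sequence $\mathbf{i}'=(i_1',\dots,i_{n-1}')$ for $(C',\Omega')$; all of its entries are vertices $\neq e$. The key observation is that for $k\neq e$ the reflection functor $F^+_k$ over $H$ only alters the $k$-component using the arrows incident to $k$, and for a module supported off $e$ the arrows at $e$ contribute nothing; hence on the subcategory of locally free modules supported off $e$ the functor $F^+_k$ preserves this subcategory and agrees with the reflection functor over $\delE$. Now, starting from the $\tau_H$-locally free module $M$, I would apply $F^+_{i_1'},F^+_{i_2'},\dots$ one at a time. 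By Proposition~\ref{lem6} each application sends a $\tau$-locally free module (not isomorphic to the relevant generalised simple) to a $\tau$-locally free module over the successive algebra $s_{i_t'}\cdots s_{i_1'}H$, and each intermediate module stays supported off $e$. Consequently the composite $C^+_{\delE}M$, computed over $\delE$, is locally free, and Proposition~\ref{lem:TC} (applied over $\delE$) gives $\tau_{\delE}M\cong TC^+_{\delE}M$, which is therefore locally free. Iterating this, and running the dual argument with the functors $F^-_k$ and the Coxeter functor $C^-_{\delE}$ in the negative direction, shows that $\tau_{\delE}^{k}M$ is locally free for every $k\in\mathbb{Z}$, i.e. $M$ is $\tau_{\delE}$-locally free. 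Whenever the reflection process reaches a generalised simple $E_{i_t'}$, the module $M$ lies in the $\delE$-reflection orbit of the $\tau_{\delE}$-locally free module $E_{i_t'}$ (Example~\ref{ex:gensimple}), hence is itself $\tau_{\delE}$-locally free by Proposition~\ref{lem6}; moreover $\rankv M$ is then a $W$-translate of the simple root $\alpha_{i_t'}$ and so a real root, so this case is harmless.

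Finally, for the ``moreover'' part I would feed $M$ into the Dynkin case of Geiss--Leclerc--Schr\"{o}er's theorem: since $M$ is $\tau_{\delE}$-locally free over the finite-type algebra $\delE$, its rank vector is a positive root of type $C'$. Because $Q^0\setminus\{e\}$ is a full subdiagram of $Q^0$, the root system of type $C'$ embeds into that of type $C$ as the roots supported on $Q_0\setminus\{e\}$, and positive roots stay positive; as $\rankv M$ vanishes at $e$ it is one of these, so it is a positive root of type $C$. The main obstacle is the middle paragraph: establishing $\tau_{\delE}$-local freeness, which amounts to comparing the Auslander--Reiten translations of $H$ and of $\delE$ on modules supported off $e$. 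The two delicate points are (i) checking rigorously that the $H$- and $\delE$-reflection functors coincide on this subcategory and preserve it, so that Proposition~\ref{lem6} can be leveraged through the whole chain of reflected algebras, and (ii) controlling the exceptional steps at which a generalised simple appears, where the finite-type behaviour of the reflection process in the preprojective and preinjective directions must be tracked to guarantee local freeness of all translates.
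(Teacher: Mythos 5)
Your proposal is correct and follows essentially the same route as the paper: the paper picks a $+$-admissible sequence for $(C,\Omega)$ ending at the boundary vertex $e$, identifies $\tau_{\delE}$ with the twist of the composite $F^+_{i_{n-1}}\cdots F^+_{i_1}$ restricted to modules supported off $e$, propagates local freeness through the chain via Proposition~\ref{lem6} (allowing the translate to become zero at a generalised simple), and then invokes the Dynkin-type theorem of Geiss--Leclerc--Schr\"{o}er for the root statement. Your only deviations are cosmetic: you start from an admissible sequence for $\delE$ rather than one for $H$ ending at $e$, and you handle the exceptional generalised-simple steps by a reflection-orbit argument where the paper simply permits $(\tau_{\delE})^kM=0$.
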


\begin{proof}
As $e$ is boundary, it is always admissible. By duality,  we  may assume that $\mathbf{i}=(i_1, i_2, \dots, i_{n-1},i_n=e)$
is a $+$-admissible sequence.  Following Proposition \ref{lem:TC},
\[
\tau'M\cong T' F^+_{i_{n-1}}\dots F^+_{i_2} F^+_{i_1}M,
\]
where $F^+_{i_{n-1}}\dots F^+_{i_2} F^+_{i_1}$ is restricted to $\text{rep}(H')$ and $T'$ is the twist automorphism.
 Now by Proposition \ref{lem6},  $(\tau')^k M$ is either zero or
(non-zero) locally free for any $k\in\mathbb{Z}$.  So $M$ is a $\tau$-locally free $H'$-module.  Thus  its  rank vector is  a root
of the Dynkin subquiver $Q^0\backslash \{e\}$, by \cite[Theorem 1.3]{[GLS1]},  and so  is a root of type $C$.
\end{proof}

For any $C$, denote by $\val$ the corresponding valued quiver in \cite[Section 6]{[DR]}.
We say that an inhomogeneous tube $\mathcal{T}$ of rank $r$ is {\em good} if the mouth modules are rigid.  

\begin{lem}\label{lem0.2}
Let  $H=H(C,D,\Omega)$ and $H'=H(C,mD,\Omega)$, where $D$ is minimal, $m>1$ and the orientation $\Omega$
is the same as that of $\val$.
Suppose that for any inhomogeneous tube $\mathcal{C}$ of rank $r$ in the AR-quiver of $\val$,
there exists a good tube $\mathcal{T}$ of $\tau$-locally free $H$-modules of the same rank such that
\begin{equation}\label{eq:rankmouth}
\{\rankv M| \ M\in \mathcal{T} \text{ is a mouth module}\}=
\{\dimv M| \ M \in \mathcal{C} \text{ is a mouth module}\}.
\end{equation}
Then
there exists a good tube $\mathcal{T}'$ of  $\tau$-locally free $H'$-modules that satisfies (\ref{eq:rankmouth}).
\end{lem}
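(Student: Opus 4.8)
The plan is to lift the good tube from $H$ to $H'$ by exploiting that the real Schur roots associated to $\mathbf{i}$ depend only on $(C,\mathbf{i})$ and not on the symmetriser, so that Theorem \ref{thm:Schurroot} can transport the mouth modules. The hypothesis provides a good tube $\mathcal{T}$ of $\tau$-locally free $H$-modules whose mouth modules $M_1,\dots,M_r$ are rigid with $\{\rankv M_k\}$ equal to the set of dimension vectors of the mouth modules of $\mathcal{C}$. By Theorem \ref{thm:Schurroot} applied to $H$, each $\rankv M_k$ is a real Schur root associated to $\mathbf{i}$; these are also real Schur roots for $H'$, so Theorem \ref{thm:Schurroot} applied to $H'$ furnishes rigid $\tau$-locally free $H'$-modules $M'_1,\dots,M'_r$ with $\rankv M'_k=\rankv M_k$.

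First I would organise these into a tube. By Proposition \ref{Lem:tauc}, $\rankv \tau_{H'} M'_k=c(\rankv M'_k)=c(\rankv M_k)=\rankv \tau_H M_k=\rankv M_{k+1}=\rankv M'_{k+1}$ (indices read modulo $r$), and by Proposition \ref{lem2.1} the module $\tau_{H'}M'_k$ is again rigid and $\tau$-locally free; the uniqueness in Theorem \ref{thm:Schurroot} then gives $\tau_{H'}M'_k\cong M'_{k+1}$. As the $M'_k$ have pairwise distinct rank vectors (being the distinct mouth ranks of a rank $r$ tube), they form a single $\tau$-orbit of size $r$, hence are $\tau$-periodic and regular, so by Proposition \ref{cor} they lie in a tube $\mathcal{T}'$, necessarily of rank $r$. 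Since $\mathcal{C}$ is inhomogeneous we have $r\ge 2$, so the mouth modules of $\mathcal{T}'$ are rigid.

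The hard part will be to show that the $M'_k$ actually sit at the mouth of $\mathcal{T}'$ rather than deeper in it: a priori they must occupy a common quasi-length $\ell$ (since $\tau$ preserves quasi-length and they form a full orbit), and if $\ell\ge 2$ the genuine mouth modules of $\mathcal{T}'$ would have strictly smaller rank vectors and (\ref{eq:rankmouth}) would fail. To rule this out I would use additivity of rank vectors along the quasi-composition series inside a tube: writing $w_1,\dots,w_r$ for the rank vectors of the mouth of $\mathcal{T}'$, one has $\rankv M'_k=w_{i_k}+\cdots+w_{i_k+\ell-1}$, and summing over the full orbit (so each $w_j$ is counted $\ell$ times) gives $\sum_k\rankv M'_k=\ell\sum_j w_j$. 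Since $c$ cyclically permutes $w_1,\dots,w_r$, the sum $\sum_j w_j$ is $c$-invariant, and as the $c$-invariants in affine type are spanned by $\delta$ it equals $p\delta$ for some integer $p\ge 1$. On the other hand $\sum_k\rankv M'_k=\sum_k\rankv M_k$ equals the sum of the dimension vectors of the mouth modules of $\mathcal{C}$, which is the minimal imaginary root $\delta$ by the standard Dlab--Ringel fact that the quasi-simple modules of a non-homogeneous tube sum to $\delta$. Comparing the two expressions gives $\ell p=1$, forcing $\ell=1$ (and $p=1$).

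Hence the $M'_k$ are precisely the mouth modules of $\mathcal{T}'$; they are rigid, and $\{\rankv M'_k\}=\{\rankv M_k\}$ is exactly the set of dimension vectors of the mouth modules of $\mathcal{C}$, so $\mathcal{T}'$ is a good tube satisfying (\ref{eq:rankmouth}). I expect the only delicate point to be the identification of the mouth, that is, the quasi-length computation above; the input I must be careful to justify is that the mouth of the inhomogeneous tube $\mathcal{C}$ sums to $\delta$ rather than a larger multiple of $\delta$, which is precisely where the restriction to non-homogeneous tubes is essential.
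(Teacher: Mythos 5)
Your proof is correct in substance but identifies the mouth of $\mathcal{T}'$ by a genuinely different mechanism than the paper. The paper transports only \emph{one} mouth root $\alpha$, chosen to be supported on a Dynkin subquiver, and argues by contradiction: a non-mouth position would force a $\tau$-locally free mouth submodule $N'$ whose rank vector, via Lemma \ref{lem0.1}, would be a smaller real Schur root with the same $c$-period; the existence of such a root is then excluded by inspecting the Dlab--Ringel tables case by case (only types $\type{BD}_3$, $\type{CD}_3$, $\type{D}_n$, $\type{E}_6$ need attention, where several inhomogeneous tubes share a rank). You instead transport the entire mouth orbit and run a counting argument: the $M'_k$ form a full $\tau$-orbit at a common quasi-length $\ell$, additivity of rank vectors along AR-sequences in the stable tube gives $\sum_k\rankv M'_k=\ell\sum_j w_j$, the mouth sum $\sum_j w_j$ is $c$-invariant hence a positive multiple $p\delta$, and $\sum_k\rankv M_k=\delta$ forces $\ell p=1$. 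This is uniform across all affine types, needs no appeal to boundary vertices or to the tables, and simultaneously identifies \emph{all} mouth modules of $\mathcal{T}'$ (the paper recovers the rest of the mouth by applying $\tau$ afterwards); what it costs is reliance on three standard facts you should cite or prove: that $\ker(c-1)=\mathbb{Q}\delta$ in affine type (it follows from the linear independence of the simple roots via the usual fixed-point lemma for products of reflections, or from equation (\ref{eq:c})), that the dimension vectors of the regular simples in an inhomogeneous tube of $\val$ sum to exactly $\delta$ and not a higher multiple (this is in Dlab--Ringel), and the quasi-length additivity formula in a stable tube. Two small repairs: before invoking Proposition \ref{Lem:tauc} you must note $\tau_{H'}M'_k\neq 0$, which follows from Proposition \ref{rem1} since $\rankv M'_k=\rankv M_k$ is $c$-periodic and hence $M'_k$ is neither projective nor preprojective; and the sentence ``since $\mathcal{C}$ is inhomogeneous we have $r\ge 2$, so the mouth modules of $\mathcal{T}'$ are rigid'' is a non sequitur at that point in your argument --- rigidity of the mouth only follows at the end, once the $M'_k$ are shown to \emph{be} the mouth --- but this does not affect the validity of the conclusion.
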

\begin{proof} When $C$ is of type $\type{A}_n$, the result follows from Proposition \ref{Prop:typeA}. So below we only consider
those $C$ that are of other types.
There exists at least one root $\alpha$ that appears at the bottom of  $\mathcal{C}$ and
is supported at a Dynkin subquiver. It follows that $\alpha$ is a real Schur root by (\ref{eq:rankmouth}) and  Theorem \ref{thm:Schurroot}.
Again by (\ref{eq:rankmouth}) and Theorem \ref{thm:Schurroot}, for the symmetriser $mD$, there exists a unique rigid $\tau$-locally free $H'$-module
$M'$ such that $\rankv M'=\alpha$, and $M'$ is $\tau$-periodic with the $\tau$-period $r$. So by Proposition \ref{cor}, $M'$ is contained in a tube $\mathcal{T}'$ of rank $r$.
We claim that $M'$ is a mouth module.
Indeed, otherwise $M'$ would have a  $\tau$-locally free mouth submodule $N'$ in $\mathcal{T}'$, 
whose rank vector
would then also be a real Schur root $\beta$, by Lemma \ref{lem0.1}. Moreover, $r$ would be the $c$-period of $\beta$ and
$\beta<\alpha$.
Therefore, $\beta$ must be from another inhomogeneous tube of the same rank  in the list of \cite[Section 6]{[DR]}. We check the existence of $\beta$ and we only need to do that when $C$ is of type $\type{BD}_3$, $\type{CD}_3$, $\type{D}_n$ and $\type{E}_6$, since the ranks of distinct inhomogeneous tubes are different for other types. If $C$ is of type $\type{E}_6$,  there are two inhomogeneous tubes of rank 3 in $\Gamma_H$, whose mouth modules are all 4-dimensional. So no such $\beta$ exists. If $C$ is of type $\type{D}_n$ for $n\neq 4$, similarly no such $\beta$ exists. If $C$ is of type $\type{D}_4$,  there are three inhomogeneous tubes of rank 2.
 For the other two types, there are two inhomogeneous tubes of rank 2.
 %where the real Schur roots only occur at the bottom.
Observe that for all the three cases, there exists  a real Schur root $\gamma$ such that $\gamma<\alpha$, but such a $\gamma$ is not the rank vector of any $\tau$-locally free regular submodule of $M'$. So  such a submodule $N'$ does not exist.  Therefore, $M'$ is a mouth module, and so $\mathcal{T}'$ satisfies (\ref{eq:rankmouth}).
\end{proof}

\begin{thm}\label{thm3.1}
Let $C$ be a Cartan matrix of affine type and $D$ an arbitrary symmetriser of $C$. %and $H=H(C,D,\Omega)$.
Then for any positive root $\alpha$ of $g(C)$,
there exists a $\tau$-locally free $H$-module $M$ such that $\rankv M=\alpha$.
\end{thm}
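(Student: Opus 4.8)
The plan is to realize each positive root of type $C$ separately, following the partition of positive roots into preprojective, preinjective and regular ones supplied by Proposition \ref{prop:roots}. First I would reduce to a convenient orientation: Proposition \ref{prop:indoforiet} shows that the property ``every positive root is the rank vector of some $\tau$-locally free module'' is preserved under reflection at an admissible vertex, and since $Q^0$ is a connected acyclic quiver one passes from any orientation to a fixed one by finitely many such reflections. Hence it suffices to treat a single $\Omega$, which I would take to be the orientation of the Dlab--Ringel valued quiver $\val$. I would also dispose of type $\type{A}_n$ at once by Proposition \ref{Prop:typeA}, so that in the remaining argument $C$ is of one of the other affine types.

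The preprojective and preinjective roots are the easy part. By Proposition \ref{prop:roots}(1),(2) these are precisely the vectors $c^{-r}(\beta_{\mathbf{i},k})$ and $c^{s}(\gamma_{\mathbf{i},k})$ with $r,s\ge 0$ and $1\le k\le n$. By Proposition \ref{lem4} the indecomposable modules $\tau^{-r}P_{i_k}$ and $\tau^{s}I_{i_k}$ are $\tau$-locally free, and by Lemma \ref{lem:rkvpreprojinj} together with Proposition \ref{Lem:tauc} their rank vectors are exactly $c^{-r}(\beta_{\mathbf{i},k})$ and $c^{s}(\gamma_{\mathbf{i},k})$. Since these modules and their rank vectors do not depend on the choice of symmetriser, every preprojective and preinjective root is realized for any $D$.

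The regular roots carry the real content. By Proposition \ref{prop:roots}(3) every regular root has the form $x_0+rg\delta$ with $r\ge 0$, $1\le g\le 3$ and $x_0$ a positive root $\le g\delta$; in particular this family contains all imaginary roots $r\delta$. I would realize the imaginary roots $r\delta$ by the homogeneous tubes constructed in \S\ref{Sec:homogtubes}, whose modules at successive layers have rank vectors $\delta,2\delta,3\delta,\dots$. The remaining real regular roots I would realize by the inhomogeneous tubes with rigid mouth modules constructed in \S\ref{sec4}: inside such a tube of rank $p$ the partial cyclic sums of the mouth rank vectors, shifted by multiples of $g\delta$, are meant to account for the roots $x_0+rg\delta$, while by Theorem \ref{thm:Schurroot} the rigid mouth modules correspond to the real Schur roots appearing at the bottom of the tube. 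The periodicity and tube structure needed here come from Proposition \ref{cor}, and rigidity of the intended mouth modules would be checked via the bilinear form of Proposition \ref{lem2.2}.

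Finally I would pass from the minimal symmetriser to an arbitrary one. For connected $C$ every symmetriser is an integer multiple $mD$ of the minimal symmetriser $D$, so after establishing the real regular case for minimal $D$ through the tubes of \S\ref{sec4}, Lemma \ref{lem0.2} upgrades each good (rigid-mouthed) tube to a good tube of $\tau$-locally free $H(C,mD,\Omega)$-modules with the same mouth rank vectors, delivering the real regular roots for all symmetrisers. The main obstacle is squarely the explicit construction and verification of the tubes in \S\ref{Sec:homogtubes} and \S\ref{sec4}: one must exhibit concrete locally free representations, prove the intended mouth modules are rigid, deduce $\tau$-periodicity and the tube shape via Proposition \ref{cor} and Theorem \ref{thm:Schurroot}, and confirm type by type that the resulting rank vectors exhaust every regular root of $\val$ listed in Dlab--Ringel's Table~6.
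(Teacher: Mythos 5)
Your proposal follows essentially the same route as the paper: reduction to the Dlab--Ringel orientation via Proposition \ref{prop:indoforiet} after disposing of type $\type{A}_n$ by Proposition \ref{Prop:typeA}, preprojective and preinjective roots from $\tau$-shifts of projectives and injectives, regular roots from the tubes of \S\ref{Sec:homogtubes}--\S\ref{sec4} matched against Dlab--Ringel's tubes, and Lemma \ref{lem0.2} to pass to non-minimal symmetrisers. The only (harmless) bookkeeping difference is that the paper obtains the imaginary roots $r\delta$ from the inhomogeneous tubes in all types except $\type{B}_n$ and $\type{G}_{21}$, where the homogeneous tubes are genuinely needed, rather than invoking homogeneous tubes uniformly as you do.
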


\begin{proof} By Proposition \ref{Prop:typeA}, we may assume that $C$ is of any affine type but
$\widetilde{\mathbb{A}}_n$ and so $Q^0$ is a tree. This means that any two orientations
can be obtained from each other by reflections (see  \cite[Lemma 5.2]{ASS}).
Now by Proposition \ref{prop:indoforiet} (1), we need only to prove the theorem for a fixed orientation. We
will use the same orientations as $\val$ in \cite{[DR]}, where Dlab-Ringel proved that
there is a bijection between  dimension vectors of indecomposable representations of
 % a valued quiver
$\val$ and positive roots of  $g(C)$.

As in type $\type{A}_n$,  to prove the theorem, it suffices to
show that for each regular root in Proposition \ref{prop:roots} (3), there exists a $\tau$-locally free module such that the rank vector is the root.
Observe from the lists in \cite[Section 6]{[DR]} and Proposition \ref{prop:roots} (3) that  all the regular roots appear in an inhomogeneous tube when $C$ is not of type $\type B_n$ and
$\type G_{21}$, while for these two types so do all the regular roots but some positive multiples of $\delta$,
which  however appear in some homogeneous tube, following Proposition \ref{Prop:homog}.
So it remains to prove the following.

{\bf Claim}: {\em for each inhomogeneous tube $\mathcal{C}$ in \cite[Section 6]{[DR]}, there is a  good tube of $\tau$-locally free $H$-modules of the same rank such that the rank vectors of the mouth modules are exactly the same as the dimension vectors of the mouth modules of $\mathcal{C}$.}

Let $r$ be the rank of $\mathcal{C}$.
We prove the claim in the following few cases.

(1) The tube $\mathcal{C}$ contains a simple representation $S_i$ of the valued quiver
$\val$.  Then
$c^r\alpha_i=\alpha_i.$
By Lemma \ref{lem0},
$\tau^{r-1} E_i, ~ \dots, ~\tau E_i, ~ E_i$
%\[\xymatrix{\tau^{r-1} E_i & \dots &\tau E_i & E_i}\]
are the mouth modules of a rank $r$ tube of $\tau$-locally free $H$-modules.
The rank vectors of the mouth modules
are exactly the same as the dimension vectors of the mouth modules in $\mathcal{C}$.
Thus the claim holds for such a tube $\mathcal{C}$.

(2) The Cartan matrix $C$ is of simply laced type  $\type{D}_n$, $\type{E}_{6}$, $\type{E}_{7}$ and $\type{E}_{8}$. When $D$ is
minimal, $H$ is the path algebra of the corresponding quiver.
From the representation theory of  affine quivers, we know that all the inhomogeneous tubes in $\Gamma_H$ are good.
%(see also Example \ref{ex:DRtubes}).
So by Lemma \ref{lem0.2}, the claim holds in this case.

(3) Finally, for the remaining inhomogeneous tubes, by Lemma \ref{lem0.2},  the tubes constructed in Propositions \ref{typeBD2}, \ref{typeCD2} to \ref{typeG2} complete the proof of the claim.
\end{proof}

\section{Homogeneous tubes of type $\type{A}_{11}, \type{A}_{12}$,
$\type{B}_{n}$ and $\type{G}_{21}$}\label{Sec:homogtubes}

In this section, $C$ is of type $\type{A}_{11}, \type{A}_{12}$, $\type{B}_{n}$ and $\type{G}_{21}$.
For each type, we construct an indecomposable $\tau$-locally free $H$-module $M$ such that
\begin{equation}\label{eq:homog}
\tau M=M ~\text{ and }~ \rankv M=\delta.
\end{equation}
Consequently, by Theorem \ref{lem1},
such a module  is contained in a homogenous tube.
Recall from  \cite[Corollary 4.3]{[GLS1]} that
the bilinear form $(-, -)_C$ defined by $C$ is the symmetrisation of the one in (\ref{eq:bil}) and
that   by \cite[Lemma 1.3]{[DR]} and the discussion after it,
\[\{d\in \mathbb{Z}^{n}| (d, y)_C=0\ \text{for\ any}  ~y \in \mathbb{Z}^n\}=\mathbb{Z}\delta.\]
  Therefore $M$ has
no non-zero proper submodules that are in the same tube as $M$, and so $M$ is a mouth module.
Consequently, we have the following.

\begin{prop}\label{Prop:homog}
Any positive multiple of $\delta$ appears in a homogeneous tube of $\tau$-locally free $H$-modules.
\end{prop}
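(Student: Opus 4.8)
The plan is to realise every positive multiple $k\delta$ as a rank vector by climbing up the homogeneous tube $\mathcal{T}$ that contains the mouth module $M$ already constructed for each type, which satisfies $\tau M \cong M$ and $\rankv M = \delta$. Since $\tau M \cong M$, the $\tau$-orbit of $M$ is a single point, so $\mathcal{T}$ carries an oriented cycle and is a stable tube of rank one by Theorem \ref{lem1}; that is, $\mathcal{T}$ is a homogeneous tube with mouth $M$.

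First I would invoke the standard structure of a homogeneous (rank-one) stable tube: for each integer $k \geq 1$ there is a unique indecomposable module $M[k]$ of quasi-length $k$ lying in $\mathcal{T}$, with $M[1] = M$, fitting into a short exact sequence
\[
0 \to M[k-1] \to M[k] \to M \to 0,
\]
and each $M[k]$ is $\tau$-periodic with $\tau M[k] \cong M[k]$. Since $\mathcal{T}$ is a connected component of $\Gamma_H$ containing the $\tau$-locally free regular module $M$, Proposition \ref{lem5} guarantees that \emph{every} module in $\mathcal{T}$, and in particular each $M[k]$, is $\tau$-locally free, hence locally free.

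Next I would prove by induction on $k$ that $\rankv M[k] = k\delta$. The base case $k = 1$ is the defining property $\rankv M = \delta$. For the inductive step, observe that in the displayed short exact sequence all three terms are locally free, so restricting to a vertex $i$ yields a short exact sequence of free $H_i$-modules $0 \to M[k-1]_i \to M[k]_i \to M_i \to 0$; comparing $K$-dimensions and dividing by $d_i$ gives $\rank M[k]_i = \rank M[k-1]_i + \rank M_i$. Hence $\rankv M[k] = \rankv M[k-1] + \rankv M = (k-1)\delta + \delta = k\delta$.

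The conclusion is then immediate: for every $k \geq 1$ the module $M[k]$ is a $\tau$-locally free $H$-module lying in the homogeneous tube $\mathcal{T}$ with $\rankv M[k] = k\delta$, so every positive multiple of $\delta$ is realised. I expect the only genuine difficulty to reside not in this proposition but in the type-by-type construction of the mouth module $M$ carried out earlier in the section; once $M$ is in hand, the passage to $k\delta$ is formal, depending only on the rank-one tube structure supplied by Theorem \ref{lem1} together with the additivity of rank vectors for locally free modules.
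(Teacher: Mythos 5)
Your proposal is correct and follows essentially the same route as the paper: the paper constructs, type by type, an indecomposable $M$ with $\tau M\cong M$ and $\rankv M=\delta$, shows it is a mouth module of a homogeneous tube, and then asserts the proposition with the tube-climbing step left implicit in the word ``Consequently''. You simply make that implicit step explicit, via the standard quasi-length filtration of a rank-one stable tube, Proposition \ref{lem5} to see that every module in the tube is $\tau$-locally free, and the additivity of $\rankv$ on short exact sequences of locally free modules.
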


\subsection{Type $\type{A}_{11}$}\label{sec:a11}
In this case,  $C=\left(
\begin{array}{cc}
2 & -4\\
-1 & 2
\end{array}
\right)$, $D=\text{diag}(m,4m)$. Let $\Omega ={(2,1)}$. Then  $H=H(C,D,\Omega)$ is given by the quiver
\[\begin{tikzpicture}[scale=1.1]
\node (-20) at (0,0) {$1$};
\node (-20) at (1.5,0) {$2$};
\draw [->] (0.25,0) -- (1.25,0);
\node (-20) at (0.75,-0.2) {$a$};
\node (-20) at (1.5,0.8) {$\varepsilon_2$};
\node (-20) at (0,0.8) {$\varepsilon_{1}$};
\draw[-latex] (-0.2,0.1) .. controls (-0.5,0.7) and (0.5,0.7) .. (0.2,0.05);
\draw[-latex] (1.3,0.1) .. controls (1,0.7) and (2,0.7) .. (1.7,0.05);
\end{tikzpicture}\]
with relations $\varepsilon_1^m=0$, $\varepsilon_2^{4m}=0$ and $a\varepsilon_1=\varepsilon_2^4a$.
The minimal positive imaginary root $\delta=(2, 1)$.

The following  module $M$ is indecomposable and  satisfies (\ref{eq:homog}).
$$\xymatrixcolsep{1pc}\xymatrixrowsep{1pc}\xymatrix{
&1\ar[dd]\ar[rrrr]&&&& 1\ar[dd]\ar[rr]& & \cdots \ar[rr]&& 1\ar[dd]\ar[rrrr]&&&&1\ar[dd]\\
1\ar[d]\ar[rrrr]&&&& 1\ar[d]\ar[rr]& & \cdots \ar[rr]&& 1\ar[d]\ar[rrrr]&&&&1\ar[d] \\
2\ar[r] & 2\ar[r] & 2\ar[r] & 2\ar[r] & 2\ar[r] & 2\ar[r] & \cdots \ar[r] & 2\ar[r] & 2\ar[r] &2\ar[r] & 2\ar[r]& 2\ar[r] & 2\ar[r] &2\ar[r] & 2\ar[r]&2\\
}$$
Indeed, first by observation $\End M\cong K$ and so $M$ is indecomposable. Next consider
the following projective resolution of $M$,
\[
\begin{tikzpicture}[scale=0.6,
 arr/.style={black, -angle 60}]
 \path (0, 1) node (c0) {$0$};
\path (3,1) node (c1) {$P_2$};
\path (7,1) node (c2) {$P_1\oplus P_1$};
\path (11,1) node (c3) {$M$};
\path (14, 1) node (c4) {$0$.};

\draw[->] (c1) edge node[auto]
{$\left(\begin{smallmatrix}	r_a\\ -r_{\varepsilon_2a}\\ \end{smallmatrix}\right)$} (c2);
\draw[->] (c2) edge node[auto]{}  (c3);
\draw[->] (c0) edge (c1);
\draw[->] (c3) edge (c4);
\end{tikzpicture}\]	
 Then by Proposition \ref{Prop:tau},
\[
\begin{tikzpicture}[scale=0.6,
 arr/.style={black, -angle 60}]
 \path (0, 1) node (c0) {$0$};
\path (3,1) node (c1) {$\tau M$};
\path (7,1) node (c2) {$I_2$};
\path (11,1) node (c3) {$I_1\oplus I_1$};
\path (14, 1) node (c4) {$0$};

\draw[->] (c1) edge node[auto]{} (c2);
\draw[->] (c2) edge node[auto]{$\left(\begin{smallmatrix} l^*_a\\ -l_{\varepsilon_2a}^*\\
\end{smallmatrix}\right)$}  (c3);
\draw[->] (c0) edge (c1);
\draw[->] (c3) edge (c4);
\end{tikzpicture}\]
is a short exact sequence. By direct computation following (\ref{eqn:1}), we have  $\tau M\cong M$. So 
$M$ is $\tau$-locally free with $\rankv M=\delta$. 

\subsection{Type {$\type{A}_{12}$}}\label{sec:a12}
In this case $C=\left(
\begin{array}{cc}
2 & -2\\
-2 & 2
\end{array}
\right)$, $D=\text{diag}(m,m)$. Let $\Omega ={(2,1)}$. Then  $H=H(C,D,\Omega)$ is given by the quiver
\[\begin{tikzpicture}[scale=1.1]
\node (-20) at (0,0) {$1$};
\node (-20) at (1.5,0) {$2$};
\draw [->] (0.3,0.1) -- (1.25,0.1);
\draw [->] (0.3,-0.1) -- (1.25,-0.1);
\node (-20) at (0.75,0.25) {$a_1$};
\node (-20) at (0.75,-0.25) {$a_2$};
\node (-20) at (1.5,0.8) {$\varepsilon_2$};
\node (-20) at (0,0.8) {$\varepsilon_{1}$};
\draw[-latex] (-0.2,0.1) .. controls (-0.5,0.7) and (0.5,0.7) .. (0.2,0.05);
\draw[-latex] (1.3,0.1) .. controls (1,0.7) and (2,0.7) .. (1.7,0.05);
\end{tikzpicture}\]
with relations $\varepsilon_i^m=0$ and $a_i\varepsilon_1=\varepsilon_2a_i$ for $i=1,2$.
We have $\delta=(1, 1)$.

The following  module $M$ is  indecomposable and satisfies  (\ref{eq:homog}).

\[\begin{tikzpicture}
\node (-20) at (0,0) {$1$}; \node (-20) at (1.5,0) {$1$}; \node (-20) at (3,0) {$\cdots$};
\node (-20) at (4.5,0) {$1$}; \node (-20) at (6,0) {$1$};
\node (-20) at (0,-1.2) {$2$}; \node (-20) at (1.5,-1.2) {$2$}; \node (-20) at (3,-1.2) {$\cdots$};
\node (-20) at (4.5,-1.2) {$2$}; \node (-20) at (6,-1.2) {$2$};
\draw [->] (0.3,0) -- (1.25,0); \draw [->] (1.8,0) -- (2.65,0);
\draw [->] (3.4,0) -- (4.25,0); \draw [->] (4.8,0) -- (5.75,0);
\draw [->] (0.3,-1.2) -- (1.25,-1.2); \draw [->] (1.8,-1.2) -- (2.65,-1.2);
\draw [->] (3.4,-1.2) -- (4.25,-1.2); \draw [->] (4.8,-1.2) -- (5.75,-1.2);
\draw [->] (-0.07,-0.25) -- (-0.07,-0.95); \draw [->] (0.07,-0.25) -- (0.07,-0.95);
\draw [->] (1.43,-0.25) -- (1.43,-0.95); \draw [->] (1.57,-0.25) -- (1.57,-0.95);
\draw [->] (4.43,-0.25) -- (4.43,-0.95); \draw [->] (4.57,-0.25) -- (4.57,-0.95);
\draw [->] (5.93,-0.25) -- (5.93,-0.95); \draw [->] (6.07,-0.25) -- (6.07,-0.95);
\node (-20) at (-0.2,-0.6) {$0$}; \node (-20) at (0.2,-0.6) {$1$};
\node (-20) at (1.3,-0.6) {$0$}; \node (-20) at (1.7,-0.6) {$1$};
\node (-20) at (5.8,-0.6) {$0$}; \node (-20) at (6.2,-0.6) {$1$};
\node (-20) at (4.3,-0.6) {$0$}; \node (-20) at (4.7,-0.6) {$1$};
\end{tikzpicture}\]
\noindent
Indeed, observe that $\End M\cong K[x]/(x^m)$ is a local ring and so $M$ is indecomposable.
Moreover, $M$ has the following projective resolution,
\[
\begin{tikzpicture}[scale=0.6,
 arr/.style={black, -angle 60}]
 \path (0, 1) node (c0) {$0$};
\path (3,1) node (c1) {$P_2$};
\path (7,1) node (c2) {$P_1$};
\path (11,1) node (c3) {$M$};
\path (14, 1) node (c4) {$0$.};

\draw[->] (c1) edge node[auto]
{$r_{a_1}$} (c2);
\draw[->] (c2) edge node[auto]{}  (c3);
\draw[->] (c0) edge (c1);
\draw[->] (c3) edge (c4);
\end{tikzpicture}\]
 By Proposition \ref{Prop:tau} and  (\ref{eqn:1})
\[\tau M= \ker l^*_{a_1}\cong M.\]
Therefore $M$ is a module as claimed.
%locally free module with $\tau M\cong M$ and $\rankv M=\delta$.

\subsection{Type {$\type{B}_{n}$}}\label{sec:homogBn} In this case, $C$ is the following $(n+1)\times (n+1)$ matrix,
 $$\left(
                \begin{array}{ccccccc}
                  2 & -2 &  &  & & \\
                  -1 & 2 & -1 &  &  &\\
               %   & -1 & 2 & -1 & & \\
                 & & \ddots & \ddots & \ddots & \\
             %   &  & & -1 & 2 & -1 & \\
                 & & &  & -1 & 2 & -1 \\
                 & &  &  &  & -2 & 2 \\
                \end{array}
              \right)$$
and $D=\textup{diag}(m, ~2m, \cdots, ~2m, ~m)$. Let $\Omega=\{(2,1),(3,2),\cdots,(n+1,n)\}$.
Then $H=H(C,D,\Omega)$  is given by the quiver \[\begin{tikzpicture}[scale=1.1]
%\node (-20) at (-1,0) {$Q:$};
\node (-20) at (0,0) {$1$};
\node (-20) at (1.5,0) {$2$};
\node (-20) at (3,0) {$3$};
\node (-20) at (4.5,0) {$\cdots$};
\node (-20) at (6,0) {$n$};
\node (-20) at (7.8,-0.02) {$n+1$};

\node (-20) at (0.75,-0.2) {$a_{1}$};
\node (-20) at (2.25,-0.2) {$a_{2}$};
\node (-20) at (3.75,-0.2) {$a_{3}$};
\node (-20) at (5.25,-0.2) {$a_{n-1}$};
\node (-20) at (6.7,-0.2) {$a_{n}$};

\draw [->] (0.25,0) -- (1.25,0);
\draw [->] (1.75, 0) -- (2.75,0);
\draw [->] (3.25,0) -- (4.25,0);
\draw [->] (4.75, 0) -- (5.8,0);
\draw [->] (6.25, 0) -- (7.3,0);

\draw[-latex] (-0.2,0.1) .. controls (-0.5,0.7) and (0.5,0.7) .. (0.2,0.05);
\draw[-latex] (1.3,0.1) .. controls (1,0.7) and (2,0.7) .. (1.7,0.05);
\draw[-latex] (2.8,0.1) .. controls (2.5,0.7) and (3.5,0.7) .. (3.2,0.07);
\draw[-latex] (5.8,0.1) .. controls (5.5,0.7) and (6.5,0.7) .. (6.2,0.05);
\draw[-latex] (7.5,0.1) .. controls (7.2,0.7) and (8.2,0.7) .. (7.9,0.05);

\node (-20) at (0,0.8) {$\varepsilon_1$};
\node (-20) at (3,0.8) {$\varepsilon_3$};
\node (-20) at (1.5,0.8) {$\varepsilon_2$};
\node (-20) at (6,0.8) {$\varepsilon_n$};
\node (-20) at (7.7,0.8) {$\varepsilon_{n+1}$};
\end{tikzpicture}\]
with relations
$\varepsilon_i^{m}=0$ for $i=1,n+1$, $\varepsilon_j^{2m}=0$ for $2\leq j\leq n$, $a_1\varepsilon_1=\varepsilon_2^2a_1$, $a_n\varepsilon_n^2=\varepsilon_{n+1}a_n$ and $a_{k}\varepsilon_k=\varepsilon_{k+1}a_{k}$ for $2\leq k\leq n-1.$
We have $\delta=(1, 1, \dots, 1, 1)$ with all the entries equal to 1.

The projective module $P_1$ and the injective module $I_{n+1}$ are as follows,
and the other indecomposable projective and injective modules are submodules and quotient modules of
$P_1$ and $I_{n+1}$, respectively.

\[\begin{tikzpicture}[scale=1.1]
\node (-20) at (-1,0) {$P_1:$};
\node (-20) at (0,0) {$1$};
\node (-20) at (1,0) {$2$};
\node (-20) at (2,0) {$3$};
\node (-20) at (3.1,0) {$\cdots$};
\node (-20) at (4.5,0) {$n-1$};
\node (-20) at (5.7,0) {$n$};
\node (-20) at (7.05,0) {$n+1$};

\node (-20) at (0,-1.2) {$1$};
\node (-20) at (1,-0.6) {$2$};
\node (-20) at (2,-0.6) {$3$};
\node (-20) at (3.1,-0.6) {$\cdots$};
\node (-20) at (4.5,-0.6) {$n-1$};
\node (-20) at (5.7,-0.6) {$n$};
\node (-20) at (7.05,-1.2) {$n+1$};
\node (-20) at (7.8,-0.6) {$n+1$};

\node (-20) at (1,-1.2) {$2$};
\node (-20) at (2,-1.2) {$3$};
\node (-20) at (3.1,-1.2) {$\cdots$};
\node (-20) at (4.5,-1.2) {$n-1$};
\node (-20) at (5.7,-1.2) {$n$};
\node (-20) at (7.05,-1.2) {$n+1$};

\draw [->] (0.2,0) -- (0.8,0);
\draw [->] (1.2, 0) -- (1.8,0);
\draw [->] (2.2,0) -- (2.8,0);
\draw [->] (3.35, 0) -- (4,0);
\draw [->] (4.9,0) -- (5.4,0);
\draw [->] (5.95, 0) -- (6.5,0);

\draw [->] (1.2, -0.6) -- (1.8,-0.6);
\draw [->] (2.2,-0.6) -- (2.8,-0.6);
\draw [->] (3.35,-0.6) -- (4,-0.6);
\draw [->] (4.9,-0.6) -- (5.4,-0.6);
\draw [->] (5.9,-0.6) -- (7.3,-0.6);

\draw [->] (0.2,-1.2) -- (0.8,-1.2);
\draw [->] (1.2, -1.2) -- (1.8,-1.2);
\draw [->] (2.2,-1.2) -- (2.8,-1.2);
\draw [->] (3.35,-1.2) -- (4,-1.2);
\draw [->] (4.9,-1.2) -- (5.4,-1.2);
\draw [->] (5.95, -1.2) -- (6.5,-1.2);

\draw [->] (0,-0.2) -- (0,-1);
\draw [->] (1, -0.2) -- (1,-0.4);
\draw [->] (2,-0.2) -- (2,-0.4);
\draw [->] (4.5,-0.2) -- (4.5,-0.4);
\draw [->] (5.7,-0.2) -- (5.7,-0.4);
\draw [->] (7.05,-0.2) -- (7.05,-1);

\draw [->] (1, -0.8) -- (1,-1);
\draw [->] (2,-0.8) -- (2,-1);
\draw [->] (4.5,-0.8) -- (4.5,-1);
\draw [->] (5.7,-0.8) -- (5.7,-1);
\draw [->] (7.8,-0.8) -- (7.8,-1.6);

\draw [->] (0,-1.4) -- (0,-1.7);
\draw [->] (1, -1.4) -- (1,-1.7);
\draw [->] (2,-1.4) -- (2,-1.7);
\draw [->] (4.5,-1.4) -- (4.5,-1.7);
\draw [->] (5.7,-1.4) -- (5.7,-1.7);
\draw [->] (7.05,-1.4) -- (7.05,-1.7);

\node (-20) at (0,-1.85) {$\vdots$};
\node (-20) at (1,-1.85) {$\vdots$};
\node (-20) at (2,-1.85) {$\vdots$};
\node (-20) at (4.5,-1.85) {$\vdots$};
\node (-20) at (5.7,-1.85) {$\vdots$};
\node (-20) at (7.05,-1.85) {$\vdots$};
\node (-20) at (7.8,-1.85) {$\vdots$};

\draw [->] (0,-2.2) -- (0,-2.5);
\draw [->] (1, -2.2) -- (1,-2.5);
\draw [->] (2,-2.2) -- (2,-2.5);
\draw [->] (4.5,-2.2) -- (4.5,-2.5);
\draw [->] (5.7,-2.2) -- (5.7,-2.5);
\draw [->] (7.05,-2.2) -- (7.05,-2.5);
\draw [->] (7.8,-2.2) -- (7.8,-3);

\node (-20) at (0,-2.7) {$1$};
\node (-20) at (1,-2.7) {$2$};
\node (-20) at (2,-2.7) {$3$};
\node (-20) at (3.1,-2.7) {$\cdots$};
\node (-20) at (4.5,-2.7) {$n-1$};
\node (-20) at (5.7,-2.7) {$n$};
\node (-20) at (7.05,-2.7) {$n+1$};

\node (-20) at (0,-3.9) {$1$};
\node (-20) at (1,-3.3) {$2$};
\node (-20) at (2,-3.3) {$3$};
\node (-20) at (3.1,-3.3) {$\cdots$};
\node (-20) at (4.5,-3.3) {$n-1$};
\node (-20) at (5.7,-3.3) {$n$};
\node (-20) at (7.8,-3.3) {$n+1$};

\node (-20) at (1,-3.9) {$2$};
\node (-20) at (2,-3.9) {$3$};
\node (-20) at (3.1,-3.9) {$\cdots$};
\node (-20) at (4.5,-3.9) {$n-1$};
\node (-20) at (5.7,-3.9) {$n$};
\node (-20) at (7.05,-3.9) {$n+1$};
\node (-20) at (7.8,-4.5) {$n+1$};

\draw [->] (0.2,-2.7) -- (0.8,-2.7);
\draw [->] (1.2,-2.7) -- (1.8,-2.7);
\draw [->] (2.2,-2.7) -- (2.8,-2.7);
\draw [->] (3.35, -2.7) -- (4,-2.7);
\draw [->] (4.9,-2.7) -- (5.4,-2.7);
\draw [->] (5.95, -2.7) -- (6.5,-2.7);

\draw [->] (1.2, -3.3) -- (1.8,-3.3);
\draw [->] (2.2,-3.3) -- (2.8,-3.3);
\draw [->] (3.35,-3.3) -- (4,-3.3);
\draw [->] (4.9,-3.3) -- (5.4,-3.3);
\draw [->] (5.9,-3.3) -- (7.3,-3.3);

\draw [->] (0.2,-3.9) -- (0.8,-3.9);
\draw [->] (1.2, -3.9) -- (1.8,-3.9);
\draw [->] (2.2,-3.9) -- (2.8,-3.9);
\draw [->] (3.35,-3.9) -- (4,-3.9);
\draw [->] (4.9,-3.9) -- (5.4,-3.9);
\draw [->] (5.95, -3.9) -- (6.5,-3.9);

\draw [->] (0,-2.9) -- (0,-3.7);
\draw [->] (1, -2.9) -- (1,-3.1);
\draw [->] (2,-2.9) -- (2,-3.1);
\draw [->] (4.5,-2.9) -- (4.5,-3.1);
\draw [->] (5.7,-2.9) -- (5.7,-3.1);
\draw [->] (7.05,-2.9) -- (7.05,-3.7);

\draw [->] (1, -3.5) -- (1,-3.7);
\draw [->] (2,-3.5) -- (2,-3.7);
\draw [->] (4.5,-3.5) -- (4.5,-3.7);
\draw [->] (5.7,-3.5) -- (5.7,-3.7);
\draw [->] (7.8,-3.5) -- (7.8,-4.2);

\node (-20) at (1,-4.5) {$2$};
\node (-20) at (2,-4.5) {$3$};
\node (-20) at (3.1,-4.5) {$\cdots$};
\node (-20) at (4.5,-4.5) {$n-1$};
\node (-20) at (5.7,-4.5) {$n$};
\draw [->] (1, -4.1) -- (1,-4.3);
\draw [->] (2,-4.1) -- (2,-4.3);
\draw [->] (4.5,-4.1) -- (4.5,-4.3);
\draw [->] (5.7,-4.1) -- (5.7,-4.3);
\draw [->] (1.2, -4.5) -- (1.8,-4.5);
\draw [->] (2.2,-4.5) -- (2.8,-4.5);
\draw [->] (3.35,-4.5) -- (4,-4.5);
\draw [->] (4.9,-4.5) -- (5.4,-4.5);
\draw [->] (5.95, -4.5) -- (7.25,-4.5);
\end{tikzpicture}\]

\[\begin{tikzpicture}[scale=1.1]
\node (-20) at (-2,0.6) {$I_{n+1}:$};
\node (-20) at (1,0.6) {$2$};
\node (-20) at (2,0.6) {$3$};
\node (-20) at (3.1,0.6) {$\cdots$};
\node (-20) at (4.5,0.6) {$n-1$};
\node (-20) at (5.7,0.6) {$n$};
\draw [->] (-0.4, 0.6) -- (0.8,0.6);
\draw [->] (1.2, 0.6) -- (1.8,0.6);
\draw [->] (2.2,0.6) -- (2.8,0.6);
\draw [->] (3.35,0.6) -- (4,0.6);
\draw [->] (4.9,0.6) -- (5.4,0.6);
\draw [->] (1, 0.4) -- (1,0.2);
\draw [->] (2,0.4) -- (2,0.2);
\draw [->] (4.5,0.4) -- (4.5,0.2);
\draw [->] (5.7,0.4) -- (5.7,0.2);

\node (-20) at (-0.6,0.6) {$1$}; \draw [->] (-0.6,0.4) -- (-0.6,-0.4);
\node (-20) at (-0.6,-0.6) {$1$};
\draw [->] (-0.4,-0.6) -- (0.8,-0.6);
\draw [->] (-0.6,-0.8) -- (-0.6,-1.7);
\node (-20) at (-0.6,-1.8) {$\vdots$};
\draw [->] (-0.6,-2.2) -- (-0.6,-3.1);
\node (-20) at (-0.6,-3.3) {$1$};
\draw [->] (-0.4,-3.3) -- (0.8,-3.3);

\node (-20) at (0,0) {$1$};
\node (-20) at (1,0) {$2$};
\node (-20) at (2,0) {$3$};
\node (-20) at (3.1,0) {$\cdots$};
\node (-20) at (4.5,0) {$n-1$};
\node (-20) at (5.7,0) {$n$};
\node (-20) at (7.05,0) {$n+1$};

\node (-20) at (0,-1.2) {$1$};
\node (-20) at (1,-0.6) {$2$};
\node (-20) at (2,-0.6) {$3$};
\node (-20) at (3.1,-0.6) {$\cdots$};
\node (-20) at (4.5,-0.6) {$n-1$};
\node (-20) at (5.7,-0.6) {$n$};
\node (-20) at (7.05,-1.2) {$n+1$};

\node (-20) at (1,-1.2) {$2$};
\node (-20) at (2,-1.2) {$3$};
\node (-20) at (3.1,-1.2) {$\cdots$};
\node (-20) at (4.5,-1.2) {$n-1$};
\node (-20) at (5.7,-1.2) {$n$};
\node (-20) at (7.05,-1.2) {$n+1$};

\draw [->] (0.2,0) -- (0.8,0);
\draw [->] (1.2, 0) -- (1.8,0);
\draw [->] (2.2,0) -- (2.8,0);
\draw [->] (3.35, 0) -- (4,0);
\draw [->] (4.9,0) -- (5.4,0);
\draw [->] (5.95, 0) -- (6.5,0);

\draw [->] (1.2, -0.6) -- (1.8,-0.6);
\draw [->] (2.2,-0.6) -- (2.8,-0.6);
\draw [->] (3.35,-0.6) -- (4,-0.6);
\draw [->] (4.9,-0.6) -- (5.4,-0.6);

\draw [->] (0.2,-1.2) -- (0.8,-1.2);
\draw [->] (1.2, -1.2) -- (1.8,-1.2);
\draw [->] (2.2,-1.2) -- (2.8,-1.2);
\draw [->] (3.35,-1.2) -- (4,-1.2);
\draw [->] (4.9,-1.2) -- (5.4,-1.2);
\draw [->] (5.95, -1.2) -- (6.5,-1.2);

\draw [->] (0,-0.2) -- (0,-1);
\draw [->] (1, -0.2) -- (1,-0.4);
\draw [->] (2,-0.2) -- (2,-0.4);
\draw [->] (4.5,-0.2) -- (4.5,-0.4);
\draw [->] (5.7,-0.2) -- (5.7,-0.4);
\draw [->] (7.05,-0.2) -- (7.05,-1);

\draw [->] (1, -0.8) -- (1,-1);
\draw [->] (2,-0.8) -- (2,-1);
\draw [->] (4.5,-0.8) -- (4.5,-1);
\draw [->] (5.7,-0.8) -- (5.7,-1);

\draw [->] (0,-1.4) -- (0,-1.7);
\draw [->] (1, -1.4) -- (1,-1.7);
\draw [->] (2,-1.4) -- (2,-1.7);
\draw [->] (4.5,-1.4) -- (4.5,-1.7);
\draw [->] (5.7,-1.4) -- (5.7,-1.7);
\draw [->] (7.05,-1.4) -- (7.05,-1.7);

\node (-20) at (0,-1.85) {$\vdots$};
\node (-20) at (1,-1.85) {$\vdots$};
\node (-20) at (2,-1.85) {$\vdots$};
\node (-20) at (4.5,-1.85) {$\vdots$};
\node (-20) at (5.7,-1.85) {$\vdots$};
\node (-20) at (7.05,-1.85) {$\vdots$};

\draw [->] (0,-2.2) -- (0,-2.5);
\draw [->] (1, -2.2) -- (1,-2.5);
\draw [->] (2,-2.2) -- (2,-2.5);
\draw [->] (4.5,-2.2) -- (4.5,-2.5);
\draw [->] (5.7,-2.2) -- (5.7,-2.5);
\draw [->] (7.05,-2.2) -- (7.05,-2.5);

\node (-20) at (0,-2.7) {$1$};
\node (-20) at (1,-2.7) {$2$};
\node (-20) at (2,-2.7) {$3$};
\node (-20) at (3.1,-2.7) {$\cdots$};
\node (-20) at (4.5,-2.7) {$n-1$};
\node (-20) at (5.7,-2.7) {$n$};
\node (-20) at (7.05,-2.7) {$n+1$};

\node (-20) at (0,-3.9) {$1$};
\node (-20) at (1,-3.3) {$2$};
\node (-20) at (2,-3.3) {$3$};
\node (-20) at (3.1,-3.3) {$\cdots$};
\node (-20) at (4.5,-3.3) {$n-1$};
\node (-20) at (5.7,-3.3) {$n$};
\node (-20) at (7.05,-3.9) {$n+1$};

\node (-20) at (1,-3.9) {$2$};
\node (-20) at (2,-3.9) {$3$};
\node (-20) at (3.1,-3.9) {$\cdots$};
\node (-20) at (4.5,-3.9) {$n-1$};
\node (-20) at (5.7,-3.9) {$n$};

\draw [->] (0.2,-2.7) -- (0.8,-2.7);
\draw [->] (1.2,-2.7) -- (1.8,-2.7);
\draw [->] (2.2,-2.7) -- (2.8,-2.7);
\draw [->] (3.35, -2.7) -- (4,-2.7);
\draw [->] (4.9,-2.7) -- (5.4,-2.7);
\draw [->] (5.95, -2.7) -- (6.5,-2.7);

\draw [->] (1.2, -3.3) -- (1.8,-3.3);
\draw [->] (2.2,-3.3) -- (2.8,-3.3);
\draw [->] (3.35,-3.3) -- (4,-3.3);
\draw [->] (4.9,-3.3) -- (5.4,-3.3);

\draw [->] (0.2,-3.9) -- (0.8,-3.9);
\draw [->] (1.2, -3.9) -- (1.8,-3.9);
\draw [->] (2.2,-3.9) -- (2.8,-3.9);
\draw [->] (3.35,-3.9) -- (4,-3.9);
\draw [->] (4.9,-3.9) -- (5.4,-3.9);
\draw [->] (5.95, -3.9) -- (6.5,-3.9);

\draw [->] (0,-2.9) -- (0,-3.7);
\draw [->] (1, -2.9) -- (1,-3.1);
\draw [->] (2,-2.9) -- (2,-3.1);
\draw [->] (4.5,-2.9) -- (4.5,-3.1);
\draw [->] (5.7,-2.9) -- (5.7,-3.1);
\draw [->] (7.05,-2.9) -- (7.05,-3.7);

\draw [->] (1, -3.5) -- (1,-3.7);
\draw [->] (2,-3.5) -- (2,-3.7);
\draw [->] (4.5,-3.5) -- (4.5,-3.7);
\draw [->] (5.7,-3.5) -- (5.7,-3.7);
\end{tikzpicture}\]

The following module $M(\lambda)$ ($\lambda\not=0$) is indecomposable and satisfies (\ref{eq:homog}).

\[\begin{tikzpicture}[scale=1.3]
\node (-20) at (0,0) {$1$};
\node (-20) at (1,0) {$2$};
\node (-20) at (2,0) {$3$};
\node (-20) at (3.1,0) {$\cdots$};
\node (-20) at (4.5,0) {$n-1$};
\node (-20) at (5.7,0) {$n$};
\node (-20) at (7.05,0) {$n+1$};

\node (-20) at (0,-1.2) {$1$};
\node (-20) at (1,-0.6) {$2$};
\node (-20) at (2,-0.6) {$3$};
\node (-20) at (3.1,-0.6) {$\cdots$};
\node (-20) at (4.5,-0.6) {$n-1$};
\node (-20) at (5.7,-0.6) {$n$};
\node (-20) at (7.05,-1.2) {$n+1$};
%\node (-20) at (7.8,-0.6) {$n+1$};
\node (-20) at (6.4,-0.6) {$\lambda$};
\node (-20) at (6.4,-3.3) {$\lambda$};
\node (-20) at (6.4,-4.5) {$\lambda$};

\node (-20) at (1,-1.2) {$2$};
\node (-20) at (2,-1.2) {$3$};
\node (-20) at (3.1,-1.2) {$\cdots$};
\node (-20) at (4.5,-1.2) {$n-1$};
\node (-20) at (5.7,-1.2) {$n$};
\node (-20) at (7.05,-1.2) {$n+1$};

\draw [->] (0.2,0) -- (0.8,0);
\draw [->] (1.2, 0) -- (1.8,0);
\draw [->] (2.2,0) -- (2.8,0);
\draw [->] (3.35, 0) -- (4,0);
\draw [->] (4.9,0) -- (5.4,0);
\draw [->] (5.95, 0) -- (6.5,0);

\draw [->] (1.2, -0.6) -- (1.8,-0.6);
\draw [->] (2.2,-0.6) -- (2.8,-0.6);
\draw [->] (3.35,-0.6) -- (4,-0.6);
\draw [->] (4.9,-0.6) -- (5.4,-0.6);
\draw [->] (5.9,-0.6) -- (6.7,-0.2);

\draw [->] (0.2,-1.2) -- (0.8,-1.2);
\draw [->] (1.2, -1.2) -- (1.8,-1.2);
\draw [->] (2.2,-1.2) -- (2.8,-1.2);
\draw [->] (3.35,-1.2) -- (4,-1.2);
\draw [->] (4.9,-1.2) -- (5.4,-1.2);
\draw [->] (5.95, -1.2) -- (6.5,-1.2);

\draw [->] (0,-0.2) -- (0,-1);
\draw [->] (1, -0.2) -- (1,-0.4);
\draw [->] (2,-0.2) -- (2,-0.4);
\draw [->] (4.5,-0.2) -- (4.5,-0.4);
\draw [->] (5.7,-0.2) -- (5.7,-0.4);
\draw [->] (7.05,-0.2) -- (7.05,-1);

\draw [->] (1, -0.8) -- (1,-1);
\draw [->] (2,-0.8) -- (2,-1);
\draw [->] (4.5,-0.8) -- (4.5,-1);
\draw [->] (5.7,-0.8) -- (5.7,-1);
%\draw [->] (7.8,-0.8) -- (7.8,-1.6);

\draw [->] (0,-1.4) -- (0,-1.7);
\draw [->] (1, -1.4) -- (1,-1.7);
\draw [->] (2,-1.4) -- (2,-1.7);
\draw [->] (4.5,-1.4) -- (4.5,-1.7);
\draw [->] (5.7,-1.4) -- (5.7,-1.7);
\draw [->] (7.05,-1.4) -- (7.05,-1.7);

\node (-20) at (0,-1.85) {$\vdots$};
\node (-20) at (1,-1.85) {$\vdots$};
\node (-20) at (2,-1.85) {$\vdots$};
\node (-20) at (4.5,-1.85) {$\vdots$};
\node (-20) at (5.7,-1.85) {$\vdots$};
\node (-20) at (7.05,-1.85) {$\vdots$};
%\node (-20) at (7.8,-1.85) {$\vdots$};

\draw [->] (0,-2.2) -- (0,-2.5);
\draw [->] (1, -2.2) -- (1,-2.5);
\draw [->] (2,-2.2) -- (2,-2.5);
\draw [->] (4.5,-2.2) -- (4.5,-2.5);
\draw [->] (5.7,-2.2) -- (5.7,-2.5);
\draw [->] (7.05,-2.2) -- (7.05,-2.5);
%\draw [->] (7.8,-2.2) -- (7.8,-3);

\node (-20) at (0,-2.7) {$1$};
\node (-20) at (1,-2.7) {$2$};
\node (-20) at (2,-2.7) {$3$};
\node (-20) at (3.1,-2.7) {$\cdots$};
\node (-20) at (4.5,-2.7) {$n-1$};
\node (-20) at (5.7,-2.7) {$n$};
\node (-20) at (7.05,-2.7) {$n+1$};

\node (-20) at (0,-3.9) {$1$};
\node (-20) at (1,-3.3) {$2$};
\node (-20) at (2,-3.3) {$3$};
\node (-20) at (3.1,-3.3) {$\cdots$};
\node (-20) at (4.5,-3.3) {$n-1$};
\node (-20) at (5.7,-3.3) {$n$};
%\node (-20) at (7.8,-3.3) {$n+1$};

\node (-20) at (1,-3.9) {$2$};
\node (-20) at (2,-3.9) {$3$};
\node (-20) at (3.1,-3.9) {$\cdots$};
\node (-20) at (4.5,-3.9) {$n-1$};
\node (-20) at (5.7,-3.9) {$n$};
\node (-20) at (7.05,-3.9) {$n+1$};
%\node (-20) at (7.8,-4.5) {$n+1$};

\draw [->] (0.2,-2.7) -- (0.8,-2.7);
\draw [->] (1.2,-2.7) -- (1.8,-2.7);
\draw [->] (2.2,-2.7) -- (2.8,-2.7);
\draw [->] (3.35, -2.7) -- (4,-2.7);
\draw [->] (4.9,-2.7) -- (5.4,-2.7);
\draw [->] (5.95, -2.7) -- (6.5,-2.7);

\draw [->] (1.2, -3.3) -- (1.8,-3.3);
\draw [->] (2.2,-3.3) -- (2.8,-3.3);
\draw [->] (3.35,-3.3) -- (4,-3.3);
\draw [->] (4.9,-3.3) -- (5.4,-3.3);
\draw [->] (5.9,-3.3) -- (6.7,-2.9);

\draw [->] (0.2,-3.9) -- (0.8,-3.9);
\draw [->] (1.2, -3.9) -- (1.8,-3.9);
\draw [->] (2.2,-3.9) -- (2.8,-3.9);
\draw [->] (3.35,-3.9) -- (4,-3.9);
\draw [->] (4.9,-3.9) -- (5.4,-3.9);
\draw [->] (5.95, -3.9) -- (6.5,-3.9);

\draw [->] (0,-2.9) -- (0,-3.7);
\draw [->] (1, -2.9) -- (1,-3.1);
\draw [->] (2,-2.9) -- (2,-3.1);
\draw [->] (4.5,-2.9) -- (4.5,-3.1);
\draw [->] (5.7,-2.9) -- (5.7,-3.1);
\draw [->] (7.05,-2.9) -- (7.05,-3.7);

\draw [->] (1, -3.5) -- (1,-3.7);
\draw [->] (2,-3.5) -- (2,-3.7);
\draw [->] (4.5,-3.5) -- (4.5,-3.7);
\draw [->] (5.7,-3.5) -- (5.7,-3.7);
%\draw [->] (7.8,-3.5) -- (7.8,-4.2);

\node (-20) at (1,-4.5) {$2$};
\node (-20) at (2,-4.5) {$3$};
\node (-20) at (3.1,-4.5) {$\cdots$};
\node (-20) at (4.5,-4.5) {$n-1$};
\node (-20) at (5.7,-4.5) {$n$};
\draw [->] (1, -4.1) -- (1,-4.3);
\draw [->] (2,-4.1) -- (2,-4.3);
\draw [->] (4.5,-4.1) -- (4.5,-4.3);
\draw [->] (5.7,-4.1) -- (5.7,-4.3);
\draw [->] (1.2, -4.5) -- (1.8,-4.5);
\draw [->] (2.2,-4.5) -- (2.8,-4.5);
\draw [->] (3.35,-4.5) -- (4,-4.5);
\draw [->] (4.9,-4.5) -- (5.4,-4.5);
\draw [->] (5.95, -4.5) -- (6.7,-4.1);
\end{tikzpicture}\]
Indeed, observe that $\End M(\lambda)\cong K$ and so $M$ is indecomposable. Moreover,
$M(\lambda)$ has the following projective resolution,
\[
\begin{tikzpicture}[scale=0.6,
 arr/.style={black, -angle 60}]
 \path (0, 1) node (c0) {$0$};
\path (3,1) node (c1) {$P_{n+1}$};
\path (7,1) node (c2) {$P_1$};
\path (11,1) node (c3) {$M(\lambda)$};
\path (14, 1) node (c4) {$0$};

\draw[->] (c1) edge node[auto]
{$\begin{smallmatrix}\lambda r_{\rho}-r_{\rho'}\end{smallmatrix}$} (c2);
\draw[->] (c2) edge node[auto]{}  (c3);
\draw[->] (c0) edge (c1);
\draw[->] (c3) edge (c4);
\end{tikzpicture}\]
%is a projective resolution of $M(\lambda)$,
where $\rho=a_{n}a_{n-1}\dots a_2a_1$ and $\rho'=a_{n}\epsilon_na_{n-1}\dots a_2a_1$.
By Proposition \ref{Prop:tau} and direct computation following (\ref{eqn:1}),

\[\tau M=\ker (\lambda l^*_{\rho}-l^*_{\rho'})=N(\lambda)\] is the module below.
$$\xymatrixcolsep{1.5pc}\xymatrixrowsep{1pc}\xymatrix{
 & 2\ar[r]\ar[d] & 3\ar[r]\ar[d] & \cdots \ar[r] & n-1\ar[r]\ar[d] & n\ar[d]\\
1\ar[r]\ar[dd]\ar[ru]^{\lambda}& 2 \ar[r]\ar[d] & 3\ar[r]\ar[d] & \cdots \ar[r] & n-1\ar[r]\ar[d] & n\ar[d]\ar[r]&n+1\ar[dd]\\
& 2\ar[r]\ar[d] & 3\ar[r]\ar[d] & \cdots \ar[r] & n-1\ar[r]\ar[d] & n\ar[d]\\
1\ar[r]\ar[d]\ar[ru]^{\lambda} & 2\ar[r]\ar[d]& 3\ar[r]\ar[d] & \cdots \ar[r] & n-1\ar[r]\ar[d] & n\ar[d] \ar[r]&n+1\ar[d]\\
\vdots\ar[dd] &\vdots\ar[d] & \vdots\ar[d] &  & \vdots \ar[d] & \vdots\ar[d] & \vdots\ar[dd]\\
& 2\ar[r]\ar[d] & 3\ar[r]\ar[d] & \cdots \ar[r] & n-1\ar[r]\ar[d] & n\ar[d]\\
1\ar[r]\ar[dd]\ar[ru]^{\lambda}& 2 \ar[r]\ar[d] & 3\ar[r]\ar[d] & \cdots \ar[r] & n-1\ar[r]\ar[d] & n\ar[d]\ar[r]&n+1\ar[dd]\\
 & 2\ar[r]\ar[d] & 3\ar[r]\ar[d] & \cdots \ar[r] & n-1\ar[r]\ar[d] & n\ar[d]\\
1\ar[r]\ar[ru]^{\lambda}&2\ar[r]&3\ar[r]&\cdots\ar[r]&n-1\ar[r]&n\ar[r] & n+1\\
}$$
By change of basis, we see that $M(\lambda)\cong N(\lambda)$, that is, $\tau M(\lambda)\cong M(\lambda)$.
By construction, $\rankv M=\delta$. Therefore
$M$ is a module as claimed.

\subsection{Type $\type{G}_{21}$}\label{sec:homogG21}
 In this case, $C=\left(\begin{array}{ccc}
                           2 & -1 & 0 \\
                           -1 & 2 & -3 \\
                           0 & -1 & 2 \\
                         \end{array}
                       \right)
$, $D=\text{diag}(m, m, 3m)$. Let $\Omega=\{(2,1),(3,2)\}$. Then  $H=H(C,D,\Omega)$ is given by the quiver \[\begin{tikzpicture}[scale=1.1]
%\node (-20) at (-1,0) {$Q:$};
\node (-20) at (0,0) {$1$};
\node (-20) at (1.5,0) {$2$};
\node (-20) at (3,0) {$3$};

\node (-20) at (0.75,-0.2) {$a_{1}$};
\node (-20) at (2.25,-0.2) {$a_{2}$};

\draw [->] (0.25,0) -- (1.25,0);
\draw [->] (1.75, 0) -- (2.75,0);

\draw[-latex] (-0.2,0.1) .. controls (-0.5,0.7) and (0.5,0.7) .. (0.2,0.05);
\draw[-latex] (1.3,0.1) .. controls (1,0.7) and (2,0.7) .. (1.7,0.05);
\draw[-latex] (2.8,0.1) .. controls (2.5,0.7) and (3.5,0.7) .. (3.2,0.05);

\node (-20) at (1.5,0.8) {$\varepsilon_2$};
\node (-20) at (0.0,0.8) {$\varepsilon_1$};
\node (-20) at (3.0,0.8) {$\varepsilon_3$};
\end{tikzpicture}\]
with relation $\varepsilon_i^m=0$ for $i=1,2$, $\varepsilon_3^{3m}=0$, $a_1\varepsilon_1=\varepsilon_2a_1$ and $a_2\varepsilon_2=\varepsilon_3^{3}a_2$. We have $\delta=(1, 2, 1)$.

The following module $M$ is indecomposable and satisfies (\ref{eq:homog}).
$$\xymatrixcolsep{1.5pc}\xymatrixrowsep{1pc}\xymatrix{
1\ar[rrr]\ar[dd]&&& 1\ar[rr]\ar[dd]&&\cdots\ar[r]& 1\ar[rrr]\ar[dd]&&&1\ar[dd]\\
&2\ar[rrr]\ar[dd] &&& 2\ar[r]\ar[dd] & \cdots \ar[rr] &&2\ar[rrr]\ar[dd]&&&2\ar[dd]\\
2\ar[rrr]\ar[d] &&&2\ar[rr]\ar[d] & &\cdots\ar[r] & 2\ar[rrr]\ar[d]&&&2\ar[d]\\
3\ar[r] & 3\ar[r] & 3\ar[r] & 3\ar[r] & 3\ar[r] & \cdots\ar[r] & 3 \ar[r] & 3\ar[r] & 3\ar[r] &3\ar[r] & 3\ar[r] & 3 \\
}$$
Indeed, again by observation, we have $ \End M\cong K$ and so $M$ is indecomposable. Moreover,
$M$ has the following projective resolution,
\[
\begin{tikzpicture}[scale=0.6,
 arr/.style={black, -angle 60}]
 \path (0, 1) node (c0) {$0$};
\path (3,1) node (c1) {$P_{3}$};
\path (7,1) node (c2) {$P_1\oplus P_2$};
\path (11,1) node (c3) {$M$};
\path (14, 1) node (c4) {$0$.};

\draw[->] (c1) edge node[auto]
{$\left(\begin{smallmatrix}r_{\varepsilon_3a_2a_1}\\-r_{a_2}\end{smallmatrix}\right)$} (c2);
\draw[->] (c2) edge node[auto]{}  (c3);
\draw[->] (c0) edge (c1);
\draw[->] (c3) edge (c4);
\end{tikzpicture}\]
By Proposition \ref{Prop:tau} and direct computation following (\ref{eqn:1}), we have
\[\tau M=\ker \left(\begin{smallmatrix}l^*_{\varepsilon_3a_2a_1}\\-l^*_{a_2}\end{smallmatrix}\right)\cong M.\]
Therefore, $M$ is a module as claimed.
%$M$ is a $\tau$-locally free module with $\tau M \cong M$ and $\rankv M=\delta$.

\section{Inhomogeneous tubes with rigid mouth modules}\label{sec4}
In this section, let $C$ be a non-simply laced Cartan matrix of affine type,
$D$  a minimal symmetriser of $C$,
 and  $\Omega$ the same orientation as that of $\val$ in \cite[Section 6]{[DR]}.
Recall that a tube is called good if the mouth modules are rigid. 
For each inhomogeneous tube $\mathcal{C}$ in the AR-quiver of $\val$, we construct a
good tube $\mathcal{T}$ of $\tau$-locally free $H$-modules of the same rank such that 
the rank vectors of the mouth modules are the same as the dimension vectors 
of the mouth modules of $\mathcal{C}$.

\subsection{Type $\type{B}_n$} In this case, we refer the reader to \S \ref{sec:homogBn}
for the definition of $H$ and examples of indecomposable projective and injective
$H$-modules, where $m=1$. The following proposition is a finer version of Lemma \ref{lem0} for this case.
\begin{prop}\label{typeB}
The AR-quiver $\Gamma_H$ has a good tube %$\mathcal{T}_{\widetilde{\mathbb{B}}}$
of rank $n$ with the mouth modules,
$$\xymatrix{
 M_{\type B} & E_n  & \cdots & E_3 & E_2,
}$$
where
$$\xymatrix{
&1\ar[r] &2\ar[r]\ar[d] & 3 \ar[r]\ar[d] & \cdots\ar[r] &n-1 \ar[r]\ar[d] & n \ar[r]\ar[d]& n+1\\
M_{\type B}: & 1\ar[r] &2\ar[r] & 3\ar[r] &\cdots\ar[r] & n-1\ar[r] & n\ar[r] & n+1.}$$
%Moreover, the tube is good.
\end{prop}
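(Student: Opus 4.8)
The plan is to deduce the statement as a sharpening of Lemma~\ref{lem0} applied at the vertex $i=2$. For the orientation in question the quiver $Q^0$ is the linear chain $1\to 2\to\cdots\to n+1$, so $(n+1,n,\dots,1)$ is a $+$-admissible sequence and the associated Coxeter transformation is $c=s_1s_2\cdots s_{n+1}$. First I would compute the $c$-orbit of the simple roots: a short induction on the reflections, using only the entries of the Cartan matrix, gives $c\alpha_i=\alpha_{i+1}$ for $2\le i\le n-1$, together with $c\alpha_n=(2,1,\dots,1,2)$ and $c(2,1,\dots,1,2)=\alpha_2$. Thus $\alpha_2$ is $c$-periodic of period exactly $n$, with full $c$-orbit $\{\alpha_2,\dots,\alpha_n,(2,1,\dots,1,2)\}$.

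This periodicity is precisely the hypothesis of Lemma~\ref{lem0}, which then yields that $E_2,\tau E_2,\dots,\tau^{n-1}E_2$ are rigid $\tau$-locally free modules forming the mouth of a rank-$n$ inhomogeneous tube; since all these mouth modules are rigid, the tube is automatically good. It remains only to identify the $n$ modules with those listed. For $1\le k\le n-2$, Proposition~\ref{Lem:tauc} gives $\rankv\tau^kE_2=c^k\alpha_2=\alpha_{2+k}=\rankv E_{2+k}$; since $\tau^kE_2$ is rigid $\tau$-locally free (by Lemma~\ref{lem0}) and $E_{2+k}$ is rigid $\tau$-locally free (by Example~\ref{ex:gensimple}), the bijection of Theorem~\ref{thm:Schurroot} forces $\tau^kE_2\cong E_{2+k}$. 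For the last orbit member, $\rankv\tau^{n-1}E_2=c^{n-1}\alpha_2=(2,1,\dots,1,2)$, so it suffices to exhibit a rigid $\tau$-locally free module with this rank vector: $M_{\type B}$ will be that module, and Theorem~\ref{thm:Schurroot} then gives $\tau^{n-1}E_2\cong M_{\type B}$, matching the stated ordering of the mouth.

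To handle $M_{\type B}$ itself I would read off from the diagram that it is locally free with $\rankv M_{\type B}=(2,1,\dots,1,2)$ (each interior vertex carries a free $H_i$-module of rank one, while vertices $1$ and $n+1$ carry two-dimensional free modules), and check the defining relations --- the only nonformal ones being the end relations $a_1\varepsilon_1=\varepsilon_2^2a_1$ and $a_n\varepsilon_n^2=\varepsilon_{n+1}a_n$, both of whose sides vanish on $M_{\type B}$. Next I would compute $\End M_{\type B}$: commutation with $a_1$ and $H_2$-linearity at vertex $2$ force any endomorphism to restrict at vertex $1$ to a matrix of the shape $a\cdot\mathrm{id}+b\cdot\psi$, where $\psi$ carries the top generator to the bottom one, and these two parameters then propagate unambiguously along both horizontal chains. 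Hence $\End M_{\type B}\cong K[\psi]/(\psi^2)$ is local of dimension $2$, so $M_{\type B}$ is indecomposable. Substituting the rank vector into the bilinear form~(\ref{eq:bil}) of Proposition~\ref{lem2.2} yields $\langle M_{\type B},M_{\type B}\rangle=2=\dim\End M_{\type B}$, whence $\Ext^1_H(M_{\type B},M_{\type B})=0$; thus $M_{\type B}$ is rigid and locally free, and therefore $\tau$-locally free by Proposition~\ref{lem2.1}.

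The main obstacle is this endomorphism computation, and the subtle point is at the two boundary vertices $1$ and $n+1$, where $\varepsilon$ acts as zero and the top and bottom chain-ends are a priori unlinked, so one must rule out endomorphisms mixing the two ends independently. The mechanism is that commutation with the horizontal arrows $a_1,\dots,a_n$ propagates any choice made at vertex $1$, while $H_i$-linearity at each interior vertex pins down the off-diagonal behaviour, leaving exactly the two-dimensional algebra above. Everything else in the argument is then either the routine root-system calculation of the $c$-orbit or a formal application of Theorem~\ref{thm:Schurroot}.
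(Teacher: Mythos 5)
Your proof is correct and shares the paper's skeleton --- the $c$-orbit of $\alpha_2$ together with Lemma~\ref{lem0} --- but it identifies the last mouth module by a genuinely different mechanism. The paper computes $\tau E_n$ head-on: it writes the minimal projective resolution $0\to P_{n+1}\oplus P_{n+1}\to P_n\to E_n\to 0$, applies the Nakayama functor as in Proposition~\ref{Prop:tau}, and reads off $M_{\type{B}}$ as $\ker\left(\begin{smallmatrix}l^*_{a_n}& -l^*_{a_n\varepsilon_n}\end{smallmatrix}\right)$; rigidity and the tube structure then come from Lemma~\ref{lem0} exactly as in your argument. You instead verify independently that $M_{\type{B}}$ is an indecomposable rigid locally free module with $\rankv M_{\type{B}}=(2,1,\dots,1,2)=c^{n-1}\alpha_2$ --- via $\End M_{\type{B}}\cong K[\psi]/(\psi^2)$ and $\langle M_{\type{B}},M_{\type{B}}\rangle=2$ from Proposition~\ref{lem2.2} --- hence $\tau$-locally free by Proposition~\ref{lem2.1}, and then invoke the uniqueness in Theorem~\ref{thm:Schurroot} to force $\tau^{n-1}E_2\cong M_{\type{B}}$. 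Both of your computations check out: the Euler form gives $(4+2(n-1)+4)-(4+2(n-2)+4)=2$, and the two endomorphism parameters do propagate along the chain as you describe, the boundary vertices being pinned down because $a_1$ and $a_n$ are bijective there. What the paper's route buys is an explicit construction of $\tau E_n$ (the same Nakayama-functor technique is reused throughout \S4--\S5); what your route buys is avoiding that computation entirely, at the cost of leaning on the Schur-root bijection of [GLS6] and an endomorphism-ring calculation. Your identification of $\tau^kE_2\cong E_{2+k}$ via the same bijection is exactly the argument underlying the paper's Lemma~\ref{lem0}.
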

\begin{proof}
Observe from the list of type $\type{B}_n$ in \cite[Section 6]{[DR]}, we have $$c\alpha_2=\alpha_3,\cdots,c^{n-2}\alpha_2=\alpha_{n},c^{n-1}\alpha_2=(2,1,1,\cdots,1,1,2), c^n\alpha_2=\alpha_2.$$
Therefore,
\[
\tau^i E_2\cong E_{2+i} \text{ for any } 1\leq i \leq n-2, \text{ and }\ \tau^nE_2\cong E_2.
\]
We compute $\tau E_n$. $E_n$ has the following minimal projective resolution,
\[
\begin{tikzpicture}[scale=0.6,
 arr/.style={black, -angle 60}]
 \path (-3, 1) node (c0) {$0$};
\path (1,1) node (c1) {$P_{n+1}\oplus P_{n+1}$};
\path (7,1) node (c2) {$P_n$};
\path (11,1) node (c3) {$E_n$};
\path (14, 1) node (c4) {$0$.};

\draw[->] (c1) edge node[auto]
{$\left(\begin{smallmatrix}r_{a_n }& -r_{a_n\varepsilon_n}\end{smallmatrix}\right)$} (c2);
\draw[->] (c2) edge node[auto]{}  (c3);
\draw[->] (c0) edge (c1);
\draw[->] (c3) edge (c4);
\end{tikzpicture}\]
 So by Proposition \ref{Prop:tau}, $M_{\type B}=
 \ker \left(\begin{matrix}l^*_{a_n} & -l^*_{a_n\varepsilon_n}\end{matrix}\right)$, which is a module as claimed. The modules
 are all rigid by Lemma \ref{lem0}.
\end{proof}

\subsection{Type $\type C_n$}
In this case,
$$C=\left(
                \begin{array}{ccccccc}
                  2 & -1 &  &  & & \\
                  -2 & 2 & -1 &  &  &\\
                  & -1 & 2 & -1 & & \\
                 & & \ddots & \ddots & \ddots & \\
              %  &  & & -1 & 2 & -1 & \\
                 & & &  & -1 & 2 & -2\\
                 & &  &  &  & -1 & 2 \\
                \end{array} \right),$$
 $D=\textup{diag}(2, ~1, ~1, \cdots,~1, ~1, ~2)$ and $\Omega=\{(2,1),(3,2),\cdots,(n+1,n)\}$.
Then $H=H(C,D,\Omega)$  is given by the quiver \[\begin{tikzpicture}[scale=1.1]
%\node (-20) at (-1,0) {$Q:$};
\node (-20) at (0,0) {$1$};
\node (-20) at (1.5,0) {$2$};
\node (-20) at (3,0) {$\cdots$};
\node (-20) at (4.5,0) {$n$};
\node (-20) at (6.3,-0.02) {$n+1$};

\node (-20) at (0.75,-0.2) {$a_{1}$};
\node (-20) at (2.25,-0.2) {$a_{2}$};
\node (-20) at (3.75,-0.2) {$a_{n-1}$};
\node (-20) at (5.2,-0.2) {$a_{n}$};

\draw [->] (0.25,0) -- (1.25,0);
\draw [->] (1.75, 0) -- (2.75,0);
\draw [->] (3.25,0) -- (4.25,0);
\draw [->] (4.75, 0) -- (5.8,0);

\draw[-latex] (-0.2, 0.1) .. controls (-0.5,0.7) and (0.5,0.7) .. (0.2,0.05);
%\draw[-latex] (1.3, 0.1) .. controls (1,0.7) and (2,0.7) .. (1.7,0.05);
\draw[-latex] (6.1,0.1) .. controls (5.8,0.7) and (6.8,0.7) .. (6.5,0.07);
%\draw[-latex] (4.3,0.1) .. controls (4,0.7) and (5,0.7) .. (4.7,0.05);

\node (-20) at (0,0.8) {$\varepsilon_1$};
%\node (-20) at (1.5,0.8) {$\varepsilon_2$};
%\node (-20) at (4.5,0.8) {$\varepsilon_n$};
\node (-20) at (6.3,0.8) {$\varepsilon_{n+1}$};
\end{tikzpicture}\]
with relations
$\varepsilon_1^{2}=\varepsilon_{n+1}^{2}=0$.

\begin{prop}\cite{[HLS]}\label{typeC}
The AR-quiver $\Gamma_H$ has a good tube of rank $n$  
with the  mouth modules,
$$\xymatrix{M_{\type C}&  E_n &  \cdots &E_3& E_2},
$$
where
$$
\xymatrixcolsep{1.5pc}\xymatrixrowsep{1pc}\xymatrix{
&& 1\ar[d]&&&&&& n+1\\
&M_{\type C}\colon &1\ar[r]&2\ar[r]&3\ar[r]&\cdots\ar[r]&n-1\ar[r]&n \ar[r]&n+1. \ar[u] \\
%&&&&&&&&n+1\\
}$$
%Moreover, the tube is good.
\end{prop}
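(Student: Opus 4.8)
The plan is to mimic the type $\type B_n$ argument of Proposition~\ref{typeB} and realise all $n$ mouth modules as the single $\tau$-orbit of a generalised simple module. The input is the Coxeter combinatorics. Taking the $+$-admissible sequence $\mathbf{i}=(n+1,n,\dots,2,1)$, so that $c=s_1s_2\cdots s_{n+1}$, a direct computation (equivalently, the type $\type C_n$ list in \cite[Section 6]{[DR]}) gives
$$c^{k}\alpha_2=\alpha_{2+k}\ (0\le k\le n-2),\qquad c^{n-1}\alpha_2=(1,1,\dots,1),\qquad c^{n}\alpha_2=\alpha_2.$$
As these $n$ vectors are pairwise distinct, $\alpha_2$ is $c$-periodic with $c$-period exactly $n$. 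Lemma~\ref{lem0} then produces at once a rank $n$ inhomogeneous tube whose mouth modules $E_2,\tau E_2,\dots,\tau^{n-1}E_2$ are rigid and $\tau$-locally free; in particular the tube is good, which is the substance of the statement.

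It remains to match these modules with the ones named in the proposition. For $1\le i\le n-2$ the module $\tau^{i}E_2$ is rigid and $\tau$-locally free with $\rankv\tau^{i}E_2=c^{i}\alpha_2=\alpha_{2+i}=\rankv E_{2+i}$, hence $\tau^{i}E_2\cong E_{2+i}$ by the bijection of Theorem~\ref{thm:Schurroot}. The one remaining module is $\tau^{n-1}E_2=\tau E_n$, which I claim is $M_{\type C}$. Since $d_n=1$, the generalised simple $E_n$ is the ordinary simple at $n$ and $\operatorname{rad}P_n\cong P_{n+1}=E_{n+1}$, so $E_n$ has the length-one projective resolution
$$0\to P_{n+1}\xrightarrow{\,r_{a_n}\,}P_n\to E_n\to 0.$$
Feeding this into Proposition~\ref{Prop:tau} yields the injective copresentation
$$0\to \tau E_n\to I_{n+1}\xrightarrow{\,l^*_{a_n}\,}I_n\to 0,\qquad \tau E_n=\ker\bigl(l^*_{a_n}\colon I_{n+1}\to I_n\bigr),$$
and evaluating $l^*_{a_n}$ on the dual path bases via~(\ref{eqn:1}) exhibits this kernel as the string module $M_{\type C}$. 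A dimension count supports this: $\rankv\tau E_n=c\alpha_n=(1,\dots,1)$ forces $\dimv\tau E_n=(2,1,\dots,1,2)=\dimv I_{n+1}-\dimv I_n$.

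The one genuinely computational point, and the main obstacle, is the explicit identification of $\ker l^*_{a_n}$ with the drawn module $M_{\type C}$, for which one must write $I_{n+1}$ and $I_n$ out as representations (interval-type modules carrying the nilpotent actions $\varepsilon_1^2=0$ and $\varepsilon_{n+1}^2=0$) and track the dual bases under~(\ref{eqn:1}). This bookkeeping can be bypassed. The representation $M_{\type C}$ as drawn is manifestly locally free (each $M_j$ is free over $H_j$, with $\varepsilon_1$ and $\varepsilon_{n+1}$ acting by a single Jordan block) and one checks directly that it is a brick, so $\dim\End M_{\type C}=1$; since formula~(\ref{eq:bil}) with rank vector $(1,\dots,1)$ gives
$$\langle M_{\type C},M_{\type C}\rangle=\sum_{j=1}^{n+1}d_j-\sum_{k=1}^{n}d_{k+1}|c_{k+1,k}|=(n+3)-(n+2)=1,$$
it follows that $\Ext^1_H(M_{\type C},M_{\type C})=0$, so $M_{\type C}$ is rigid and hence $\tau$-locally free by Proposition~\ref{lem2.1}. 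Having rank vector $(1,\dots,1)=\rankv\tau E_n$, it then coincides with $\tau E_n$ by the uniqueness in Theorem~\ref{thm:Schurroot}. Either route completes the identification of the mouth modules, and the tube is good by Lemma~\ref{lem0}.
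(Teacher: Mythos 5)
Your proof is correct, but it is worth flagging that the paper itself gives no argument for Proposition \ref{typeC}: the statement is imported wholesale from \cite{[HLS]}, with only the remark that $H$ is here a string algebra whose AR-sequences are computed explicitly in that reference. What you have done is supply a self-contained proof by running the paper's own template for the sibling cases (Proposition \ref{typeB} together with Lemma \ref{lem0}): the orbit $c^k\alpha_2$ has period exactly $n$, Lemma \ref{lem0} yields the rank-$n$ good tube, Theorem \ref{thm:Schurroot} identifies $\tau^iE_2$ with $E_{2+i}$ for $1\le i\le n-2$, and the remaining mouth module $\tau E_n$ is matched with $M_{\type{C}}$. Your second route for that last step is the cleaner one and it checks out: $M_{\type{C}}$ is locally free, $\End M_{\type{C}}\cong K$, and the bilinear form gives $\langle M_{\type{C}},M_{\type{C}}\rangle=(n+3)-(n+2)=1$, so $M_{\type{C}}$ is a rigid brick, hence $\tau$-locally free by Proposition \ref{lem2.1}, and the uniqueness in Theorem \ref{thm:Schurroot} forces $M_{\type{C}}\cong\tau E_n$ since both are rigid $\tau$-locally free with rank vector $(1,\dots,1)=c^{n-1}\alpha_2$; the dimension count $\dimv I_{n+1}-\dimv I_n=(2,1,\dots,1,2)$ is also right. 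The trade-off is that the citation to \cite{[HLS]} buys a complete description of all AR-sequences of this string algebra (far more than the proposition needs), whereas your argument uses only the Coxeter combinatorics and the Schur-root bijection and is uniform with the treatment of types $\type{B}_n$, $\type{BC}_n$, etc.\ elsewhere in \S\ref{sec4}.
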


We remark that in this case $H$ is a string algebra and AR-sequences
are explicitly described in  \cite{[HLS]}.

\subsection{Type $\type{BC}_n$} In this case,
 $$C=\left(
                         \begin{array}{cccccccc}
                           2 & -2 & 0 & 0 & \cdots & 0 & 0 & 0 \\
                           -1 & 2 & -1 & 0 & \cdots &0 &0 & 0 \\
                           0 & -1 & 2 & -1& \cdots &0 & 0& 0\\
                           \vdots & \vdots &\vdots &\vdots &  & \vdots & \vdots & \vdots\\
                            0 & 0 &0 &0 &\cdots &2 &-1 &0 \\
                            0 & 0 &0 & 0&\cdots &-1 &2 & -2\\
                           0 & 0 & 0 &0 &\cdots& 0 &-1 & 2\\
                         \end{array}
                       \right),$$
 $D=\text{diag}(1,2,2,\cdots,2,2,4)$  and $\Omega=\{(2,1),(3,2),(4,3),\cdots, (n+1,n)\}.$
Then  $H=H(C,D,\Omega)$ is given by the quiver \[\begin{tikzpicture}[scale=1.1]
%\node (-20) at (-2.5,0) {$Q:$};
\node (-20) at (-1.5,0.0) {$1$};
\node (-20) at (0,0) {$2$};
\node (-20) at (1.5,0) {$3$};
\node (-20) at (3,0) {$\cdots$};
\node (-20) at (4.5,0) {$n$};
\node (-20) at (6.3,0) {$n+1$};

\node (-20) at (-0.75,-0.2) {$a_{1}$};
\node (-20) at (0.75,-0.2) {$a_{2}$};
\node (-20) at (2.25,-0.2) {$a_{3}$};
\node (-20) at (3.75,-0.2) {$a_{n-1}$};
\node (-20) at (5.25,-0.2) {$a_{n}$};

\draw [->] (-1.25,0) -- (-0.25,0);
\draw [->] (0.25,0) -- (1.25,0);
\draw [->] (1.75, 0) -- (2.75,0);
\draw [->] (3.25,0) -- (4.25,0);
\draw [->] (4.75, 0) -- (5.8,0);

%\draw[-latex] (-1.7,0.1) .. controls (-2,0.7) and (-1,0.7) .. (-1.3,0.7);
\draw[-latex] (-0.2,0.1) .. controls (-0.5,0.7) and (0.5,0.7) .. (0.2,0.05);
\draw[-latex] (1.3,0.1) .. controls (1,0.7) and (2,0.7) .. (1.7,0.05);
\draw[-latex] (4.3,0.1) .. controls (4,0.7) and (5,0.7) .. (4.7,0.05);
\draw[-latex] (6.1,0.1) .. controls (5.8,0.7) and (6.8,0.7) .. (6.5,0.05);

\node (-20) at (1.5,0.8) {$\varepsilon_3$};
\node (-20) at (0,0.8) {$\varepsilon_{2}$};
%\node (-20) at (-1.5,0.8) {$\varepsilon_{1}$};
\node (-20) at (4.5,0.8) {$\varepsilon_n$};
\node (-20) at (6.3,0.8) {$\varepsilon_{n+1}$};
\end{tikzpicture}\]
with relations
$$\varepsilon_{i}^2=0,~ \varepsilon_{n+1}^4=0, ~\varepsilon_{i+1}a_{i}=a_{i}\varepsilon_i, ~\varepsilon_{n+1}^2a_{n}=a_{n}\varepsilon_n$$
for $2\leq i\leq n$.

\begin{prop}\label{typeBC}
The AR-quiver  $\Gamma_H$ has a good  tube %$\mathcal{T}_{\type {BC}}$
 of rank $n$ with the mouth modules
$$\xymatrix{  M_{\type {BC}} & E_n &  \cdots &E_3 & E_2,}
$$
where
$$
\xymatrixcolsep{1.5pc}\xymatrixrowsep{1pc}\xymatrix{
  & 1\ar[r]&2\ar[dd]\ar[r]&3\ar[r]\ar[dd]&\cdots\ar[r]&n-1\ar[r]\ar[dd]&n \ar[r]\ar[dd]&n+1\ar[d]\\
 &&&&&&&n+1\ar[d]\\
M_{\type{BC}}\colon &1\ar[r]&2\ar[r]&3\ar[r]&\cdots\ar[r]&n-1\ar[r]&n\ar[r]&n+1\ar[d]\\
&&&&&&&n+1.\\
}$$
%Moreover, the tube is good.
\end{prop}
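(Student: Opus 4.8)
The plan is to identify the $\tau$-orbit of $E_2$ and show it consists of the $n$ claimed mouth modules, exactly as in the finer version Proposition \ref{typeB}. First I would read off from the list for $\type{BC}_n$ in \cite[Section 6]{[DR]} the action of the Coxeter transformation on $\alpha_2$, namely $c^{k}\alpha_2=\alpha_{2+k}$ for $1\le k\le n-2$, $c^{n-1}\alpha_2=\rankv M_{\type{BC}}$ (which one reads from the displayed diagram as $(2,1,\dots,1)$), and $c^{n}\alpha_2=\alpha_2$, so that $\alpha_2$ is $c$-periodic with $c$-period $n$. Since each $E_i$ is rigid and $\tau$-locally free by Example \ref{ex:gensimple}, Proposition \ref{Lem:tauc} gives $\rankv\tau^{k}E_i=c^{k}\alpha_i$, and the uniqueness in Theorem \ref{thm:Schurroot} upgrades these equalities of rank vectors to isomorphisms $\tau E_i\cong E_{i+1}$ for $2\le i\le n-1$ and $\tau^{n}E_2\cong E_2$.

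The crux, and what I expect to be the main obstacle, is the direct computation of $\tau E_n$. Because of the relation $\varepsilon_{n+1}^{2}a_n=a_n\varepsilon_n$ together with $d_{n+1}=4$, the element $a_n\varepsilon_n=\varepsilon_{n+1}^{2}a_n$ already lies in $Ha_n$, so (unlike in type $\type{B}_n$, where a second generator is forced) the kernel of the projective cover $P_n\to E_n$ is the single free module $Ha_n\cong P_{n+1}$. This yields the minimal projective resolution
\[
0 \to P_{n+1} \xrightarrow{\,r_{a_n}\,} P_n \to E_n \to 0.
\]
Applying Proposition \ref{Prop:tau} and formula (\ref{eqn:1}) then gives $\tau E_n\cong\ker(l_{a_n}^{*}\colon I_{n+1}\to I_n)$, and the real work is to describe $I_{n+1}$ and $I_n$ explicitly and read off the kernel. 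The bookkeeping is heavier than in Proposition \ref{typeB} precisely because $I_{n+1}$ carries the length-four $\varepsilon_{n+1}$-chains; carrying this out and matching the kernel with the module $M_{\type{BC}}$ displayed in the statement is where the effort lies. As a consistency check, $\rankv\tau E_n=c\alpha_n=c^{n-1}\alpha_2=\rankv M_{\type{BC}}$, and the cycle closes automatically since $\tau M_{\type{BC}}=\tau^{2}E_n=\tau^{n}E_2\cong E_2$.

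Finally I would assemble the tube. As $E_n$ is rigid and $\tau$-locally free, Proposition \ref{lem2.1} shows $M_{\type{BC}}=\tau E_n$ is again rigid, so every module in the orbit is rigid and the resulting tube is good. The $n$ modules $E_2,\dots,E_n,M_{\type{BC}}$ are pairwise non-isomorphic (distinct rank vectors) and form a single $\tau$-orbit, hence a $\tau$-periodic family of period $n$; by Proposition \ref{cor} the component containing them is a tube, and since every $\tau$-orbit in a tube of rank $r$ has exactly $r$ elements, the rank is $n$. It remains to see these are mouth modules: the only proper nonzero submodule of $E_2$ is the simple $S_2$, which is not free over $H_2\cong K[x]/(x^{2})$ and hence not locally free, so $E_2$ has no proper regular submodule and is quasi-simple. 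Since $\tau$ preserves the mouth of a tube, every module in the orbit, in particular $M_{\type{BC}}$, is a mouth module. This exhibits the desired good tube of rank $n$ with the stated mouth modules.
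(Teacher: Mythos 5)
Your proposal is correct and follows essentially the same route as the paper, which likewise reduces everything to Lemma \ref{lem0} applied to $E_2$ together with the explicit computation of $\tau E_n$ via Proposition \ref{Prop:tau} (the paper simply skips the details of that computation). Your observation that the relation $a_n\varepsilon_n=\varepsilon_{n+1}^2a_n$ forces a single copy of $P_{n+1}$ in the syzygy of $E_n$ (in contrast to the two copies in type $\type{B}_n$) is accurate and is exactly the point where the computation differs from Proposition \ref{typeB}.
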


As the proof of Proposition \ref{typeB} (see also Lemma \ref{lem0}), the key step of the proof is to show
that $\tau E_n=M_{\type{BC}}$, which can be computed following Proposition \ref{Prop:tau} and
is indeed as claimed above. We skip the details of the computation. Similarly, we have
Propositions \ref{typeBD1} and \ref{typeCD1}.

\subsection{Type $\type{BD}_n$}
In this case,  $$C=\left(
                         \begin{array}{cccccccc}
                           2 & 0 & -1 & 0 & \cdots & 0 & 0 & 0 \\
                           0 & 2 & -1 & 0 & \cdots &0 &0 & 0 \\
                           -1 & -1 & 2 & -1& \cdots &0 & 0& 0\\
                           \vdots & \vdots &\vdots &\vdots &  & \vdots & \vdots & \vdots\\
                            0 & 0 &0 &0 &\cdots &2 &-1 &0 \\
                            0 & 0 &0 & 0&\cdots &-1 &2 & -1\\
                           0 & 0 & 0 &0 &\cdots& 0 &-2 & 2\\
                         \end{array}
                       \right),$$
 $D=\text{diag}(2,2,2,\cdots,2,2,1)$ and $\Omega=\{(3,1),(3,2),(4,3),\cdots, (n+1,n)\}.$
Then  $H=H(C,D,\Omega)$ is given by the quiver

\[\begin{tikzpicture}[scale=1.1]
%\node (-20) at (-2.5,0) {$Q:$};
\node (-20) at (-1.5,0.75) {$1$};
\node (-20) at (-1.5,-0.75) {$2$};
\node (-20) at (0,0) {$3$};
\node (-20) at (1.5,0) {$4$};
\node (-20) at (3,0) {$\cdots$};
\node (-20) at (4.5,0) {$n$};
\node (-20) at (6.3,0) {$n+1$};

\node (-20) at (-0.8,0.2) {$a_{1}$};
\node (-20) at (-0.7,-0.7) {$a_{2}$};
\node (-20) at (0.75,-0.2) {$a_{3}$};
\node (-20) at (2.25,-0.2) {$a_{4}$};
\node (-20) at (3.75,-0.2) {$a_{n-1}$};
\node (-20) at (5.35,-0.2) {$a_{n}$};

\draw [->] (-1.3,0.75) -- (-0.3,0.2);
\draw [->] (-1.3,-0.75) -- (-0.2,-0.2);
\draw [->] (0.25,0) -- (1.25,0);
\draw [->] (1.75, 0) -- (2.75,0);
\draw [->] (3.25,0) -- (4.25,0);
\draw [->] (4.75, 0) -- (5.75,0);

\draw[-latex] (-1.7,0.85) .. controls (-2,1.45) and (-1,1.45) .. (-1.3,0.8);
\draw[-latex] (-1.7,-0.65) .. controls (-2,-0.05) and (-1,-0.05) .. (-1.3,-0.7);
\draw[-latex] (-0.2,0.1) .. controls (-0.5,0.7) and (0.5,0.7) .. (0.2,0.05);
\draw[-latex] (1.3,0.1) .. controls (1,0.7) and (2,0.7) .. (1.7,0.05);
\draw[-latex] (4.3,0.1) .. controls (4,0.7) and (5,0.7) .. (4.7,0.05);

\node (-20) at (1.5,0.8) {$\varepsilon_4$};
\node (-20) at (0,0.8) {$\varepsilon_{3}$};
\node (-20) at (-1.5,1.55) {$\varepsilon_{1}$};
\node (-20) at (-1.5,0.0) {$\varepsilon_{2}$};
\node (-20) at (4.5,0.8) {$\varepsilon_n$};
\end{tikzpicture}\]
with relation $\varepsilon_{i}^2=0$ for $1\leq i\leq n$, $\varepsilon_3a_{1}=a_{1}\varepsilon_1$ and $\varepsilon_{i+1}a_{i}=a_{i}\varepsilon_i$ for $2\leq i\leq n-1$.

\begin{prop}\label{typeBD1}
The AR-quiver  $\Gamma_H$ has a good tube %$\mathcal{T}^1_{\type {BD}}$ 
of rank $n-1$ with the mouth modules
$$ \xymatrix{M^1_{\type {BD}} &  E_n&  \cdots &E_4  &E_3,}
$$
where
$$
\xymatrixcolsep{1.5pc}\xymatrixrowsep{1pc}\xymatrix{
 M^1_{\type{BD}}\colon &1\ar[rd]\ar[d]&&&&&&\\
&1\ar[rd]&3\ar[d]\ar[r]&4\ar[r]\ar[d]&\cdots\ar[r]&n\ar[r]\ar[d]&n+1\\
&2\ar[ru]\ar[d]&3\ar[r]&4\ar[r]&\cdots\ar[r]&n\ar[r]&n+1.\\
&2\ar[ru]&&&&&\\
}$$ 
\end{prop}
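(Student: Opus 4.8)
The plan is to mirror the proof of Proposition~\ref{typeB}. The generalised simples $E_i$ are rigid and $\tau$-locally free (Example~\ref{ex:gensimple}), and $\tau$ preserves rigidity (Proposition~\ref{lem2.1}); so once I know the $c$-period of $\alpha_3$ is $n-1$, Lemma~\ref{lem0} immediately yields that $\tau^{n-2}E_3,\dots,\tau E_3, E_3$ are rigid mouth modules of a good tube of rank $n-1$. The substantive work is then purely to \emph{identify} these modules.

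First I would read off from the $\type{BD}_n$ list in \cite[Section 6]{[DR]} the $c$-orbit of $\alpha_3$, namely
\[
c\,\alpha_3=\alpha_4,\ \dots,\ c^{\,n-3}\alpha_3=\alpha_n,\qquad c^{\,n-2}\alpha_3=\rankv M^1_{\type{BD}},\qquad c^{\,n-1}\alpha_3=\alpha_3,
\]
which exhibits the $c$-period as $n-1$. By Proposition~\ref{Lem:tauc} we have $\rankv\tau^k E_3=c^k\alpha_3$, so for $0\le k\le n-3$ this rank vector equals $\alpha_{3+k}=\rankv E_{3+k}$; the uniqueness in Theorem~\ref{thm:Schurroot} then forces $\tau^k E_3\cong E_{3+k}$. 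Equivalently $\tau E_j\cong E_{j+1}$ for $3\le j\le n-1$, which accounts for all but one of the mouth modules.

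The remaining identification $\tau^{n-2}E_3\cong M^1_{\type{BD}}$ is the crux. Since $\tau^{n-3}E_3\cong E_n$, this is the same as $\tau E_n\cong M^1_{\type{BD}}$. I would compute $\tau E_n$ directly: write down the minimal projective presentation of the generalised simple $E_n=H_n$ (its projective cover is $P_n$, and the syzygy is dictated by the arrow $a_{n-1}\colon n-1\to n$ and the loop $\varepsilon_n$ with $\varepsilon_n^2=0$), apply Proposition~\ref{Prop:tau} to realise $\tau E_n$ as the kernel of the dualised map $\nu(r)=(l^*_{\rho})$, and evaluate it using the explicit description (\ref{eqn:1}) of $l_\rho^*$. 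Matching generators across the maps then yields the representation $M^1_{\type{BD}}$ displayed in the statement. This completes the identification of the full $\tau$-orbit, and closing it via $c^{\,n-1}\alpha_3=\alpha_3$ confirms $\tau M^1_{\type{BD}}\cong E_3$.

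The main obstacle is precisely this last step. Whereas the interior modules are pinned down cost-free by their rank vectors and Theorem~\ref{thm:Schurroot}, identifying $\tau E_n$ as a module requires genuinely tracking the dual-path maps $l^*_\rho$ through the branch at vertices $1$ and $2$ (both mapping into vertex $3$) and through the variation in the symmetriser entries, where $d_i=2$ for $1\le i\le n$ but $d_{n+1}=1$. This bookkeeping---which the paper defers for type $\type{BC}_n$ with the remark that the details are skipped---is where the argument is most delicate, though it is entirely mechanical once the presentation of $E_n$ is fixed.
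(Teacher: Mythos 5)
Your proposal follows the paper's own route exactly: the paper proves Proposition~\ref{typeBD1} by the same argument as Proposition~\ref{typeB}, namely Lemma~\ref{lem0} applied to the $c$-orbit of $\alpha_3$ (read off from Dlab--Ringel's tables) together with an explicit computation of $\tau E_n$ via Proposition~\ref{Prop:tau} and (\ref{eqn:1}), whose details the paper itself skips. The only blemish is a harmless indexing slip: the syzygy of $E_n$ inside $P_n=He_n$ is generated by the arrow leaving $n$, i.e.\ $a_n\colon n\to n+1$ (together with $a_n\varepsilon_n$), not by $a_{n-1}\colon n-1\to n$, which instead governs the socle side of the computation.
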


\begin{prop}\label{typeBD2}
The AR-quiver  $\Gamma_H$ has a good tube %$\mathcal{T}^2_{\type {BD}}$
 of rank $2$ with the two mouth modules,
$$\xymatrixcolsep{1.5pc}\xymatrixrowsep{1pc}\xymatrix{
M^2_{\type {BD}}\colon&1\ar[r]\ar[d] &3\ar[r]\ar[d] &4\ar[r]\ar[d] & \cdots\ar[r]&n-1\ar[r]\ar[d] &n\ar[d] & \\
&1\ar[r] &3\ar[r] &4\ar[r] & \cdots\ar[r]&n-1\ar[r] &n\ar[r] & n+1\\} $$

and

$$\xymatrixcolsep{1.5pc}\xymatrixrowsep{1pc}
\xymatrix{ M^3_{\type {BD}}\colon &
2\ar[r]\ar[d] &3\ar[r]\ar[d] &4\ar[r]\ar[d] & \cdots\ar[r]&n-1\ar[r]\ar[d] &n\ar[d] & \\
&2\ar[r] &3\ar[r] &4\ar[r] & \cdots\ar[r]&n-1\ar[r] &n\ar[r] & n+1.\\}$$
%Moreover, the tube is good.  
\end{prop}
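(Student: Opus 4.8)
The plan is to exhibit $M^2_{\type{BD}}$ and $M^3_{\type{BD}}$ as two rigid $\tau$-locally free modules forming a single $\tau$-orbit of length two, sitting at the mouth of a rank $2$ tube. I first record that both are locally free, with rank vectors $\rankv M^2_{\type{BD}}=(1,0,1,\dots,1,1)$ and $\rankv M^3_{\type{BD}}=(0,1,1,\dots,1,1)$, so that their sum is $(1,1,2,\dots,2,2)=\delta$. Inspecting the generators, each module is cyclic with simple top ($S_1$ for $M^2_{\type{BD}}$, $S_2$ for $M^3_{\type{BD}}$), and the only relation imposed on the generator, $a_n\cdots a_1\cdot(\text{generator})=0$, forces an endomorphism to kill the $\varepsilon$-component; hence $\End M^2_{\type{BD}}\cong K\cong\End M^3_{\type{BD}}$. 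Note that the diagram automorphism exchanging vertices $1$ and $2$ (legitimate since $d_1=d_2$ and $\Omega$ is symmetric in $1,2$) carries $M^2_{\type{BD}}$ to $M^3_{\type{BD}}$, so it suffices to treat $M^2_{\type{BD}}$.

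Next I establish rigidity. Evaluating the form (\ref{eq:bil}) gives $\langle M^2_{\type{BD}},M^2_{\type{BD}}\rangle=1$; together with $\dim\End M^2_{\type{BD}}=1$ this forces $\Ext^1(M^2_{\type{BD}},M^2_{\type{BD}})=0$. Being indecomposable, locally free and rigid, $M^2_{\type{BD}}$ is $\tau$-locally free and all its $\tau$-translates are rigid, by Proposition \ref{lem2.1}; by Theorem \ref{thm:Schurroot} its rank vector is a real Schur root and $M^2_{\type{BD}}$ is the unique rigid $\tau$-locally free module carrying it, and similarly for $M^3_{\type{BD}}$.

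The key step is to identify $\tau M^2_{\type{BD}}\cong M^3_{\type{BD}}$. As in Proposition \ref{typeB}, I would start from the projective resolution
\[0\longrightarrow P_{n+1}\xrightarrow{\,r_\rho\,}P_1\longrightarrow M^2_{\type{BD}}\longrightarrow 0,\qquad \rho=a_na_{n-1}\cdots a_1,\]
and apply Proposition \ref{Prop:tau}, which presents $\tau M^2_{\type{BD}}$ as $\ker\!\big(l_\rho^*\colon I_{n+1}\to I_1\big)$; a direct computation with (\ref{eqn:1}) shows this kernel is $M^3_{\type{BD}}$. Equivalently, and with less bookkeeping, one computes the Coxeter action along the $+$-admissible sequence $(n+1,n,\dots,3,1,2)$ and finds $c(\rankv M^2_{\type{BD}})=\rankv M^3_{\type{BD}}$ and $c(\rankv M^3_{\type{BD}})=\rankv M^2_{\type{BD}}$. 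In particular $\rankv M^2_{\type{BD}}$ is $c$-periodic, so $M^2_{\type{BD}}$ is regular by Proposition \ref{rem1}; then Proposition \ref{Lem:tauc} gives $\rankv \tau M^2_{\type{BD}}=\rankv M^3_{\type{BD}}$, and the isomorphism $\tau M^2_{\type{BD}}\cong M^3_{\type{BD}}$ follows from the uniqueness in Theorem \ref{thm:Schurroot}.

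Finally I would assemble the tube. Since $\tau^2 M^2_{\type{BD}}\cong M^2_{\type{BD}}$ while $M^2_{\type{BD}}\not\cong M^3_{\type{BD}}$, the module $M^2_{\type{BD}}$ is regular and $\tau$-periodic of period $2$, so by Proposition \ref{cor} it lies in a tube $\mathcal{T}$ of rank $2$. If $M^2_{\type{BD}}$ had quasi-length $\ell\geq 2$ in $\mathcal{T}$, then, since the modules of $\mathcal{T}$ are locally free and rank is additive along quasi-composition series, $\rankv M^2_{\type{BD}}$ would dominate the sum of the two mouth rank vectors; that sum is $c$-invariant, hence a positive multiple of $\delta$, contradicting $\rankv M^2_{\type{BD}}=(1,0,1,\dots,1,1)\not\geq\delta$ (compare the second coordinate). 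Thus $M^2_{\type{BD}}$, and with it $M^3_{\type{BD}}=\tau M^2_{\type{BD}}$, has quasi-length $1$, so these are exactly the two mouth modules of $\mathcal{T}$; being rigid, $\mathcal{T}$ is good. The main obstacle is the computational core of the third paragraph: the projective-resolution route needs care with the direction of the connecting map $l_\rho^*\colon I_{n+1}\to I_1$, while the Coxeter route replaces this by a routine but lengthy reflection calculation.
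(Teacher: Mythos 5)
Your argument is correct, and its core coincides with the paper's: both compute $\dim\End M^2_{\type{BD}}=1$ and $\langle \rankv M^2_{\type{BD}},\rankv M^2_{\type{BD}}\rangle=1$ to get rigidity via Proposition \ref{lem2.2}, deduce $\tau$-local freeness from Proposition \ref{lem2.1}, and obtain $\tau M^2_{\type{BD}}\cong M^3_{\type{BD}}$ from $c(\rankv M^2_{\type{BD}})=\rankv M^3_{\type{BD}}$ combined with the uniqueness in Theorem \ref{thm:Schurroot} (the projective-resolution route you also sketch is consistent but redundant once this is done; note the path should be $\rho=a_n\cdots a_3a_1$, as $a_2a_1$ is not composable). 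The only genuine divergence is the mouth-module step. The paper argues as in Lemma \ref{lem0.2}: a proper quasi-simple submodule would be supported off vertex $2$, hence by Lemma \ref{lem0.1} would carry a real Schur root of $c$-period $2$ strictly smaller than $(1,0,1,\dots,1)$, and one checks no such root exists. You instead use additivity of rank vectors along the quasi-composition series: quasi-length $\geq 2$ in a rank-$2$ tube forces $\rankv M^2_{\type{BD}}$ to dominate the sum of the two mouth rank vectors, which is positive and $c$-fixed, hence a positive multiple of $\delta$, contradicting $(\rankv M^2_{\type{BD}})_2=0$. This is cleaner and avoids the case analysis of Lemma \ref{lem0.2}, but it silently invokes the fact that the fixed space of the Coxeter transformation on $\mathbb{Q}^{n+1}$ is exactly $\mathbb{Q}\delta$; this is true (the eigenvalue $1$ of $c$ has algebraic multiplicity $2$ with a single Jordan block, which is what makes the defect formula (\ref{eq:c}) nontrivial), but it is nowhere stated in the paper, so you should either cite it or substitute the paper's submodule argument at that point. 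Everything else, including the reduction to $M^2_{\type{BD}}$ via the automorphism exchanging vertices $1$ and $2$, is sound.
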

\begin{proof}
By observation, $\dim \End M^2_{\type {BD}}=1$ and $\rankv M^2_{\type {BD}}=(1, 0, 1, \dots, 1)$.
Compute the bilinear form,
\[
\langle\rankv M^2_{\type {BD}}, \rankv M^2_{\type {BD}} \rangle
=n\cdot 2 \cdot 1\cdot 1 +1-(n-1)  2 \cdot 1\cdot 1-2=1.
\]
Therefore, by Proposition \ref{lem2.2}, $M^2_{\type {BD}}$ is rigid. Similarly, $M^3_{\type {BD}}$
is rigid as well.
Therefore,  by Proposition \ref{lem2.1},  the two modules are $\tau$-locally free.
Note that (cf \cite[Section 6]{[DR]}),
\[c(\rankv M^2_{\type {BD}})=\rankv M^3_{\type {BD}} \text{ and } c(\rankv M^3_{\type {BD}})=\rankv M^2_{\type {BD}},\]
and so  by Theorem \ref{thm:Schurroot},
\[\tau M^2_{\type {BD}}\cong M^3_{\type {BD}} \text{ and } \tau M^3_{\type {BD}}\cong M^2_{\type {BD}},\]
that is, the two modules form a $\tau$-orbit in a  tube of rank 2.
As $(M^2_{\type {BD}})_2=0$ and  $(M^3_{\type {BD}})_1=0$,
by similar arguments as the part of proof in Lemma \ref{lem0.2} that the module $M'$
is a mouth module, the two modules are mouth modules. This completes the proof.
\end{proof}

\subsection{Type $\type{CD}_n$}\label{typeCD}
In this case,  $$C=\left(
                         \begin{array}{cccccccc}
                           2 & 0 & -1 & 0 & \cdots & 0 & 0 & 0 \\
                           0 & 2 & -1 & 0 & \cdots &0 &0 & 0 \\
                           -1 & -1 & 2 & -1& \cdots &0 & 0& 0\\
                           \vdots & \vdots &\vdots &\vdots & & \vdots & \vdots & \vdots\\
                            0 & 0 &0 &0 &\cdots &2 &-1 &0 \\
                            0 & 0 &0 & 0&\cdots &-1 &2 & -2\\
                           0 & 0 & 0 &0 &\cdots& 0 &-1 & 2\\
                         \end{array}
                       \right),$$
$D=\text{diag}(1,\,1,\,1,\, \cdots,\,1,\,1,\,2)$ and $\Omega=\{(3,1), ~ (3,2),~ (4,3),  ~ \cdots, ~(n+1,n)\}.$
Then  $H=H(C,D,\Omega)$ is given by the quiver
\[\begin{tikzpicture}[scale=1.2]
%\node (-20) at (-1.5,0) {$Q:$};
\node (-20) at (-1,0.5) {$1$};
\node (-20) at (-1,-0.5) {$2$};
\node (-20) at (0,0) {$3$};
\node (-20) at (1.5,0) {$4$};
\node (-20) at (3,0) {$\cdots$};
\node (-20) at (4.5,0) {$n$};
\node (-20) at (6.3,0) {$n+1$};

\node (-20) at (-0.4,0.5) {$a_{1}$};
\node (-20) at (-0.4,-0.5) {$a_{2}$};
\node (-20) at (0.75,-0.2) {$a_{3}$};
\node (-20) at (2.25,-0.2) {$a_{4}$};
\node (-20) at (3.75,-0.2) {$a_{n-1}$};
\node (-20) at (5.3,-0.2) {$a_{n}$};

\draw [->] (-0.85,0.5) -- (-0.15,0.1);
\draw [->] (-0.88,-0.5) -- (-0.15,-0.1);
\draw [->] (0.25,0) -- (1.25,0);
\draw [->] (1.75, 0) -- (2.75,0);
\draw [->] (3.25,0) -- (4.25,0);
\draw [->] (4.75, 0) -- (5.75,0);

%\draw[-latex] (-0.2,0.1) .. controls (-0.5,0.7) and (0.5,0.7) .. (0.2,0.05);
\draw[-latex] (6.1,0.1) .. controls (5.8,0.7) and (6.8,0.7) .. (6.5,0.05);
%\draw[-latex] (4.3,0.1) .. controls (4,0.7) and (5,0.7) .. (4.7,0.05);

%\node (-20) at (1.5,0.8) {$\varepsilon_2$};
\node (-20) at (6.3,0.8) {$\varepsilon_{n+1}$};
%\node (-20) at (4.5,0.8) {$\varepsilon_4$};
\end{tikzpicture}\]
with relation $\varepsilon_{n+1}^2$.

\begin{prop}\label{typeCD1}
The AR-quiver  $\Gamma_H$ has a good tube %$\mathcal{T}^1_{\type {CD}}$
 of rank $n-1$ with the mouth modules,
$$
\xymatrixcolsep{1pc}\xymatrixrowsep{1pc}
\xymatrix{
1\ar[rd]&&&&&&&&&&\\
&3\ar[r]&4\ar[r]&\cdots\ar[r]&n\ar[r]&n+1\ar[d]& E_n &\cdots& E_4  & E_3. \\
2\ar[ru]&&&&&n+1&&&&.\\
}$$
%Moreover, the tube is good.  
\end{prop}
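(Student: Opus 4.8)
The plan is to deduce the proposition from Lemma~\ref{lem0} applied to the generalised simple module $E_3$, in the same spirit as the proofs of Propositions~\ref{typeB} and~\ref{typeBC}. Writing $M$ for the first mouth module displayed in the statement, the whole assertion reduces to two things: determining the $c$-period of $\alpha_3$, and identifying explicitly the modules in the $\tau$-orbit of $E_3$. I expect the orbit of $\alpha_3$ under $c$ to be $\{\alpha_3,\alpha_4,\dots,\alpha_n,\rankv M\}$ with $\rankv M=(1,1,\dots,1)$, a set of $n-1$ pairwise distinct vectors.

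First I would read the Coxeter action off the list for type $\type{CD}_n$ in \cite[Section 6]{[DR]}, obtaining $c\alpha_3=\alpha_4,\ \dots,\ c\alpha_{n-1}=\alpha_n$, then $c\alpha_n=(1,\dots,1)$ and $c^{n-1}\alpha_3=\alpha_3$; since the listed vectors are pairwise distinct, the $c$-period of $\alpha_3$ is exactly $n-1$. By Example~\ref{ex:gensimple} and Proposition~\ref{lem2.1}, each $E_i$ and each $\tau^kE_i$ is rigid and $\tau$-locally free, and Proposition~\ref{Lem:tauc} gives $\rankv\tau^kE_3=c^k\alpha_3$. Hence for $1\le k\le n-3$ the module $\tau^kE_3$ is rigid $\tau$-locally free with rank vector $\alpha_{3+k}=\rankv E_{3+k}$, so Theorem~\ref{thm:Schurroot} forces $\tau^kE_3\cong E_{3+k}$; in particular $\tau^{n-3}E_3\cong E_n$.

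The key step is to compute $\tau^{n-2}E_3\cong\tau E_n$ and to check that it equals $M$. As $d_n=1$, $E_n=S_n$ is the ordinary simple at $n$, and, using $\varepsilon_n=0$ and $\varepsilon_{n+1}^2a_n=0$, its minimal projective resolution is $0\to P_{n+1}\xrightarrow{r_{a_n}}P_n\to E_n\to 0$. By Proposition~\ref{Prop:tau} this gives the injective copresentation $0\to\tau E_n\to I_{n+1}\xrightarrow{l^*_{a_n}}I_n\to 0$, so $\tau E_n\cong\ker l^*_{a_n}$. Since $I_{n+1}$ has dimension vector $(2,2,\dots,2)$ and $I_n$ has dimension vector $(1,\dots,1,0)$, the kernel has dimension vector $(1,\dots,1,2)$, matching $M$; a direct computation of $l^*_{a_n}$ via (\ref{eqn:1}) then identifies $\ker l^*_{a_n}$ with $M$ and yields $\rankv M=(1,\dots,1)$. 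This explicit kernel identification is the step I expect to be the main obstacle, being the exact analogue of the computation suppressed in the proof of Proposition~\ref{typeBC}.

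Finally, since the $c$-period of $\alpha_3$ is $n-1$, Lemma~\ref{lem0} shows that $\tau^{n-2}E_3,\dots,\tau E_3,E_3$ are rigid $\tau$-locally free and are the mouth modules of an inhomogeneous tube of rank $n-1$. By the identifications above these modules are exactly $M,E_n,\dots,E_4,E_3$, so the tube is good and has the asserted mouth modules, which completes the plan.
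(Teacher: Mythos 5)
Your proposal is correct and follows essentially the same route as the paper, which proves Proposition~\ref{typeCD1} by the method of Propositions~\ref{typeB} and~\ref{typeBC}: read the $c$-orbit of $\alpha_3$ off the Dlab--Ringel tables, invoke Lemma~\ref{lem0} (with Theorem~\ref{thm:Schurroot} to identify $\tau^kE_3\cong E_{3+k}$), and reduce everything to the key computation $\tau E_n\cong M$ via Proposition~\ref{Prop:tau} and (\ref{eqn:1}) --- a computation the paper explicitly skips and which you carry out correctly.
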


\begin{prop}\label{typeCD2}
The AR-quiver $\Gamma_H$ has a good tube %$\mathcal{T}^2_{\type {CD}}$ 
of rank $2$ with the two mouth modules,
$$\xymatrixcolsep{1.5pc}\xymatrixrowsep{1pc}\xymatrix{
2\ar[r] &3\ar[r] &4\ar[r] & \cdots\ar[r] &n\ar[r] & n+1\ar[d]\\
2\ar[r] &3\ar[r] &4\ar[r] & \cdots\ar[r] &n\ar[r] & n+1\\} \ \ \
\xymatrix{
1\ar[r] &3\ar[r] &4\ar[r] & \cdots\ar[r] &n\ar[r] & n+1\ar[d]\\
1\ar[r] &3\ar[r] &4\ar[r] & \cdots\ar[r] &n\ar[r] & n+1.\\}$$  
\end{prop}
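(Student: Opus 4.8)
The plan is to follow the proof of Proposition~\ref{typeBD2} almost verbatim, adjusting the numerics to type $\type{CD}_n$. Denote the two displayed modules by $N$ (supported off vertex $1$) and $N'$ (supported off vertex $2$). First I would read their rank vectors off the pictures, $\rankv N=(0,2,2,\dots,2,1)$ and $\rankv N'=(2,0,2,\dots,2,1)$, the final entry being $1$ because $d_{n+1}=2$ and $N_{n+1}$, $N'_{n+1}$ are free of rank one. The crucial difference from Proposition~\ref{typeBD2} is that these are \emph{not} bricks: reading each module as two parallel chains glued along the nilpotent $\varepsilon_{n+1}$, a direct inspection forces every component of an endomorphism to agree along the chains and to commute with $\varepsilon_{n+1}$, giving $\End_H N\cong K[x]/(x^2)$, hence $\dim\End_H N=\dim\End_H N'=2$.

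Next I would substitute into the bilinear form of Proposition~\ref{lem2.2}. A one-line computation gives $\langle\rankv N,\rankv N\rangle=\bigl(4(n-1)+2\bigr)-4(n-1)=2$, and the same for $N'$. Together with $\dim\End_H N=2$ this forces $\dim\Ext^1_H(N,N)=0$, so $N$ is rigid, and likewise $N'$; being indecomposable, locally free and rigid, both are $\tau$-locally free by Proposition~\ref{lem2.1}. I would then record the Coxeter action: as in Proposition~\ref{typeBD2} one checks, either against the tables of \cite[Section~6]{[DR]} or by applying the reflections $s_i$ directly, that $c(\rankv N)=\rankv N'$ and $c(\rankv N')=\rankv N$. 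Since $\tau N$ is again rigid and $\tau$-locally free with $\rankv\tau N=c(\rankv N)=\rankv N'$ by Propositions~\ref{Lem:tauc} and \ref{lem2.1}, the bijection of Theorem~\ref{thm:Schurroot} yields $\tau N\cong N'$ and $\tau N'\cong N$. Thus $N,N'$ form a $\tau$-orbit of period exactly $2$, and being $\tau$-periodic they are regular, so by Proposition~\ref{cor} they lie in a tube $\mathcal{T}$ of rank $2$.

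The remaining and, I expect, hardest point is that $N$ and $N'$ actually sit at the mouth of $\mathcal{T}$. Because $N_1=0$ and $N'_2=0$, each is a module over the Dynkin quotient obtained by deleting the corresponding (boundary) fork vertex, so by Lemma~\ref{lem0.1} the rank vector of any subrepresentation is a root. I would then run the mouth-module argument from the proof of Lemma~\ref{lem0.2}: were $N$ not a mouth module it would contain a proper regular $\tau$-locally free submodule whose rank vector is a real Schur root $\beta<\rankv N$ of $c$-period $2$, and one rules this out by enumerating the real Schur roots below $(0,2,\dots,2,1)$ and verifying that none of $c$-period $2$ is realised by such a submodule. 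The only subtle instance is $\type{CD}_3$, where the rank-$(n-1)$ tube of Proposition~\ref{typeCD1} is also of rank $2$ and must be compared directly; for every $n>3$ the two inhomogeneous tubes have different ranks, so no competing root exists and the conclusion is immediate. Once $N,N'$ are seen to be the mouth modules, $\mathcal{T}$ is good because they are rigid, which completes the proof.
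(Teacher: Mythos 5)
Your proposal is correct and follows essentially the same route as the paper, which simply notes that both endomorphism rings are $2$-dimensional, deduces rigidity from Proposition \ref{lem2.2}, and then defers to the argument of Proposition \ref{typeBD2} (Coxeter action swapping the two rank vectors, Theorem \ref{thm:Schurroot} giving $\tau N\cong N'$, and the mouth-module argument of Lemma \ref{lem0.2} using that $N_1=0$ and $N'_2=0$). Your numerical checks ($\rankv N=(0,2,\dots,2,1)$, $\langle\rankv N,\rankv N\rangle=2$, $\End_HN\cong K[x]/(x^2)$) and your flagging of $\type{CD}_3$ as the only case where two inhomogeneous tubes share rank $2$ match the details the paper leaves implicit.
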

Observe that the endomorphism rings of both modules are 2-dimensional and so they are
rigid, by Proposition \ref{lem2.2}. Now the proposition follows from similar arguments as in the proof of Proposition \ref{typeBD2}. We skip the details. Similarly, we have Propositions \ref{typeF1} to \ref{typeG2}. We skip the detailed proofs, but note only the
dimensions of the endomorphism rings of the modules constructed in each case.

\subsection{Type $\type{F}_{41}$}\label{type F1}
 In this case, $$C=\left(
                         \begin{array}{ccccc}
                           2 & -1 & 0 & 0 & 0 \\
                           -1 & 2 & -1 & 0 & 0 \\
                           0 & -1 & 2 &-2 & 0\\
                            0 & 0 &-1 &2 & -1\\
                           0 & 0 &0 &-1 & 2\\
                         \end{array}
                       \right),$$
 $D=\text{diag}(1,1,1,2,2)$ and $\Omega=\{(2,1),(3,2),(4,3),(4,5)\}$. Then  $H=H(C,D,\Omega)$ is given by the quiver \[\begin{tikzpicture}[scale=1.1]
%\node (-20) at (-1,0) {$Q:$};
\node (-20) at (0,0) {$1$};
\node (-20) at (1.5,0) {$2$};
\node (-20) at (3,0) {$3$};
\node (-20) at (4.5,0) {$4$};
\node (-20) at (6,0) {$5$};

\node (-20) at (0.75,-0.2) {$a_{1}$};
\node (-20) at (2.25,-0.2) {$a_{2}$};
\node (-20) at (3.75,-0.2) {$a_{3}$};
\node (-20) at (5.25,-0.2) {$a_{4}$};

\draw [->] (0.25,0) -- (1.25,0);
\draw [->] (1.75, 0) -- (2.75,0);
\draw [->] (3.25,0) -- (4.25,0);
\draw [->] (5.75, 0) -- (4.75,0);

%\draw[-latex] (-0.2,0.1) .. controls (-0.5,0.7) and (0.5,0.7) .. (0.2,0.05);
\draw[-latex] (5.8,0.1) .. controls (5.5,0.7) and (6.5,0.7) .. (6.2,0.05);
\draw[-latex] (4.3,0.1) .. controls (4,0.7) and (5,0.7) .. (4.7,0.05);

%\node (-20) at (1.5,0.8) {$\varepsilon_2$};
\node (-20) at (6,0.8) {$\varepsilon_5$};
\node (-20) at (4.5,0.8) {$\varepsilon_4$};
\end{tikzpicture}\]
with relations $\varepsilon_i^2=0$ for $i=4,5$ and $\varepsilon_4a_{4}=a_{4}\varepsilon_5$.

\begin{prop}\label{typeF1}
The AR-quiver $\Gamma_H$ has two good tubes 
 of rank $2$, $3$, respectively.
\begin{itemize}
\item[(1)] The mouth modules of the rank 2 tube are
$$\xymatrixcolsep{1.5pc}\xymatrixrowsep{1pc}\xymatrix{
1\ar[r]&2\ar[r]&3\ar[r]&4\ar[d]\\
&&3\ar[r]&4\\
} \ \ \ \xymatrix{
2\ar[r]&3\ar[r]& 4\ar[d]&5\ar[l]\ar[d]\\
&&4&5.\ar[l]\\
}$$
%whose rank vectors are $(0,1,1,1,1)$ and $(1,1,2,1,0)$.

\item[(2)] The mouth modules of the rank 3 tube are

$$\xymatrixcolsep{1.5pc}\xymatrixrowsep{1pc}
\xymatrix{
2\ar[r]& 3\ar[r]&4\ar[d]\\
2\ar[r]&3\ar[r]&4\\
} \ \ \
 \xymatrix{
1\ar[r]&2\ar[r]&3\ar[r]&4\ar[d]&\\
&&&4&\\
1\ar[r]&2\ar[r]&3\ar[ru]\ar[rd]&\\
&&&4\ar[d]&5\ar[l]\ar[d]\\
&&&4&5\ar[l]\\
} \ \ \ \xymatrix{
3\ar[r]& 4\ar[d]&5\ar[l]\ar[d]\\
3\ar[r]&4&5.\ar[l]\\
}$$
%whose rank vectors are $(0,0,2,1,1),(2,2,2,2,1)$ and $(0,2,2,1,0)$.
\end{itemize}
%Moreover, the tubes are good.  
\end{prop}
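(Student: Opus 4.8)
The plan is to run, for each of the five displayed modules, the same three-step scheme used in the proof of Proposition \ref{typeBD2} and summarised in the remark following Proposition \ref{typeCD2}: prove rigidity via the bilinear form, identify the $\tau$-orbits via the Coxeter transformation together with Theorem \ref{thm:Schurroot}, and finally check the mouth property. First I would read the rank vectors off the diagrams. The two mouth modules of the rank-$2$ tube have rank vectors $(1,1,2,1,0)$ and $(0,1,1,1,1)$, and the three mouth modules of the rank-$3$ tube have rank vectors $(0,2,2,1,0)$, $(2,2,2,2,1)$ and $(0,0,2,1,1)$, where throughout $d=(1,1,1,2,2)$.

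Next I would verify rigidity. By direct inspection the two rank-$2$ modules are bricks, so $\dim\End=1$, while each of the three rank-$3$ modules has $\dim\End=2$. Substituting the rank vectors into (\ref{eq:bil}) of Proposition \ref{lem2.2} gives $\langle M,M\rangle=1$ in the first case and $\langle M,M\rangle=2$ in the second, so in every case $\langle M,M\rangle=\dim\End M$. Since $\langle M,M\rangle=\dim\End M-\dim\Ext^1_H(M,M)$, this forces $\Ext^1_H(M,M)=0$, hence each of the five modules is rigid and, by Proposition \ref{lem2.1}, $\tau$-locally free.

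Then I would identify the two $\tau$-orbits. Reading the action of $c$ on rank vectors off the $\type{F}_{41}$ list in \cite[Section 6]{[DR]}, one checks that $c$ permutes the two rank-$2$ rank vectors cyclically (so $c^{2}$ fixes each) and the three rank-$3$ rank vectors cyclically (so $c^{3}$ fixes each). By Proposition \ref{Lem:tauc} and Theorem \ref{thm:Schurroot}, a rigid $\tau$-locally free module is determined up to isomorphism by its rank vector, so $\tau$ cyclically permutes the modules within each family and each module is $\tau$-periodic of the stated period. By Proposition \ref{cor}, each family therefore lies in a single $\tau$-orbit inside a tube of rank $2$ (respectively $3$); once the mouth property is established, these tubes are good because all their mouth modules are rigid.

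Finally I would prove the mouth property as in the last paragraph of the proof of Proposition \ref{typeBD2}. Since mouth modules form complete $\tau$-orbits in a tube, it suffices to treat a single module in each family. In the rank-$2$ family the module $(1,1,2,1,0)$ vanishes at the boundary vertex $5$, and in the rank-$3$ family the module $(0,2,2,1,0)$ vanishes at the boundary vertex $1$; if either failed to be a mouth module it would contain a proper $\tau$-locally free regular submodule $N$ in its tube, necessarily vanishing at the same boundary vertex, so by Lemma \ref{lem0.1} $\rankv N$ would be a real Schur root strictly smaller than $\rankv M$ of the same $c$-period. Because the two inhomogeneous tubes of $\type{F}_{41}$ have distinct ranks $2$ and $3$, the argument of Lemma \ref{lem0.2} shows no such smaller real Schur root of the correct period exists, so these two modules are at the mouth and hence so are all five. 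The main obstacle is the full-support module $(2,2,2,2,1)$, for which the vanishing-component argument is unavailable directly; the device that removes it is exactly the observation that it lies in the same $\tau$-orbit as $(0,2,2,1,0)$ and $(0,0,2,1,1)$, both of which do vanish at a boundary vertex, whence it too is a mouth module. The rest of the work, namely confirming the five endomorphism-ring dimensions and evaluating the bilinear form, is routine.
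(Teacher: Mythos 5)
Your proposal is correct and follows essentially the same route as the paper, which for this proposition simply records the endomorphism-ring dimensions ($1$ for the two modules in (1), $2$ for the three in (2)) and refers back to the rigidity-via-bilinear-form, Coxeter-orbit and mouth-module arguments of Proposition \ref{typeBD2}; your rank vectors, bilinear-form values and $c$-orbit computations all check out. Your explicit handling of the full-support module $(2,2,2,2,1)$ via the fact that mouth modules form a complete $\tau$-orbit is a detail the paper leaves implicit, but it is the intended argument.
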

Observe that the endomorphism rings of the  two modules in (1) are 1-dimensional and those of the
three modules in (2) are 2-dimensional. %So by Proposition \ref{lem2.2}, they are all rigid.

\subsection{Type $\type{F}_{42}$}\label{typeF2}
 In this case, $$C=\left(
                         \begin{array}{ccccc}
                           2 & -1 & 0 & 0 & 0 \\
                           -1 & 2 & -1 & 0 & 0 \\
                           0 & -1 & 2 &-1 & 0\\
                            0 & 0 &-2 &2 & -1\\
                           0 & 0 &0 &-1 & 2\\
                         \end{array}
                       \right),
$$ $D=\text{diag}(2,2,2,1,1)$ and $\Omega=\{(2,1),(3,2),(3,4),(4,5)\}$. Then  $H=H(C,D,\Omega)$ is given by the quiver \[\begin{tikzpicture}[scale=1.1]
%\node (-20) at (-1,0) {$Q:$};
\node (-20) at (0,0) {$1$};
\node (-20) at (1.5,0) {$2$};
\node (-20) at (3,0) {$3$};
\node (-20) at (4.5,0) {$4$};
\node (-20) at (6,0) {$5$};

\node (-20) at (0.75,-0.2) {$a_{1}$};
\node (-20) at (2.25,-0.2) {$a_{2}$};
\node (-20) at (3.75,-0.2) {$a_{3}$};
\node (-20) at (5.25,-0.2) {$a_{4}$};

\draw [->] (0.25,0) -- (1.25,0);
\draw [->] (1.75, 0) -- (2.75,0);
\draw [->] (4.25,0) -- (3.25,0);
\draw [->] (5.75, 0) -- (4.75,0);

\draw[-latex] (-0.2,0.1) .. controls (-0.5,0.7) and (0.5,0.7) .. (0.2,0.05);
\draw[-latex] (1.3,0.1) .. controls (1,0.7) and (2,0.7) .. (1.7,0.05);
\draw[-latex] (2.8,0.1) .. controls (2.5,0.7) and (3.5,0.7) .. (3.2,0.05);

\node (-20) at (0,0.8) {$\varepsilon_1$};
\node (-20) at (1.5,0.8) {$\varepsilon_2$};
\node (-20) at (3,0.8) {$\varepsilon_3$};
\end{tikzpicture}\]
with relation $\varepsilon_i^2=0$ for $1\leq i\leq 3$ and $\varepsilon_{i+1}a_{i}=a_{i}\varepsilon_i$ for $i=1,2$.

\begin{prop}\label{typeF22}
The AR-quiver $\Gamma_H$ has two good tubes of rank $2$, $3$, respectively.
\begin{itemize}
\item[(1)]  The mouth modules of the rank 2 tube %$\mathcal{T}^3_{\type {F}}$ 
 are
$$\xymatrixcolsep{1.5pc}\xymatrixrowsep{1pc}
\xymatrix{
&&3\ar[d]&4\ar[l]&5\ar[l]\\
&&3&\\
1\ar[r]\ar[d]&2\ar[r]\ar[d]\ar[ru]&3\ar[d]&4\ar[l]&5\ar[l]\\
1\ar[r]&2\ar[r]&3\\
} \ \
\xymatrix{
2\ar[r]\ar[d]&3\ar[d]& 4\ar[l]\\
2\ar[r]&3&4.\ar[l]\\
}$$
%whose rank vectors are $(0,1,1,2,0)$ and $(1,1,2,2,2)$.

\item[(2)]  The mouth modules of the rank 3 tube %$\mathcal{T}^4_{\type {F}}$ 
 are
$$\xymatrixcolsep{01.5pc}\xymatrixrowsep{1pc}\xymatrix{
1\ar[r]\ar[d]&2\ar[r]\ar[d]&3\ar[d]& 4\ar[l]\\
1\ar[r]&2\ar[r]&3
} \ \  \xymatrix{
3\ar[d]& 4\ar[l]& 5\ar[l]\\
3&4\ar[l]&\\
} \ \ \ \xymatrix{
2\ar[r]\ar[d]&3\ar[d]& 4\ar[l]&5\ar[l]\\
2\ar[r]&3.&&\\&\\
}$$
%whose rank vectors are $(0,1,1,1,1),(0,0,1,2,1)$ and $(1,1,1,1,0)$.
\end{itemize}
%Moreover, the tubes are good.  
\end{prop}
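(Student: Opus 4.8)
The plan is to follow the template of the proof of Proposition~\ref{typeBD2}. First I would read off the rank vectors of the five modules from their Loewy diagrams via $\rank M_i=(\dim_K M_i)/d_i$ with $D=\mathrm{diag}(2,2,2,1,1)$, after checking that the diagrams define genuine $H$-modules (the relations $\varepsilon_i^2=0$ and $\varepsilon_{i+1}a_i=a_i\varepsilon_i$ hold by construction). This gives, for part~(1), the rank vectors $(1,1,2,2,2)$ and $(0,1,1,2,0)$, and for part~(2), the rank vectors $(1,1,1,1,0)$, $(0,0,1,2,1)$ and $(0,1,1,1,1)$; in each case the sum over the orbit equals the minimal imaginary root $\delta=(1,2,3,4,2)$.

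Next I would establish rigidity. By inspection $\dim\End M=2$ for the two modules of~(1) and $\dim\End M=1$ for the three modules of~(2). Substituting the rank vectors into the bilinear form~(\ref{eq:bil}) of Proposition~\ref{lem2.2} I would verify that $\langle\rankv M,\rankv M\rangle$ equals $2$ in the first case and $1$ in the second; as these coincide with $\dim\End M$, we get $\Ext^1_H(M,M)=0$, so all five modules are rigid, and by Proposition~\ref{lem2.1} they are $\tau$-locally free. To place them in tubes, I would compute the Coxeter transformation $c$ for the $+$-admissible sequence $\mathbf{i}=(3,2,4,1,5)$ and check that $c$ cyclically permutes the two rank vectors of~(1) and, separately, the three rank vectors of~(2). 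Since each module is rigid $\tau$-locally free, Theorem~\ref{thm:Schurroot} promotes these equalities of rank vectors (via Proposition~\ref{Lem:tauc}) to isomorphisms $\tau M\cong M'$; thus the modules of~(1) form one $\tau$-orbit of size $2$ and those of~(2) one $\tau$-orbit of size $3$, each $\tau$-periodic, so by Proposition~\ref{cor} they lie in tubes of ranks $2$ and $3$ respectively.

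The remaining and most delicate step is that each $\tau$-orbit is the mouth of its tube. As in Proposition~\ref{typeBD2} I would use that every orbit contains a module vanishing at a boundary vertex: $(0,1,1,2,0)$ vanishes at the boundary vertices $1$ and $5$, while in the orbit of~(2) the module $(0,0,1,2,1)$ vanishes at the boundary vertex $1$. By Lemma~\ref{lem0.1} such a module restricts to a $\tau$-locally free module over a Dynkin quotient, so a proper regular submodule in the same tube would have rank vector a real Schur root strictly below the given one and of $c$-period equal to the tube rank. Since the only inhomogeneous tubes in type $\type{F}_{42}$ have the distinct ranks $2$ and $3$, such a submodule would have to lie in the tube under consideration; a finite check against the list in \cite[Section~6]{[DR]} that no real Schur root strictly below the relevant rank vector has the required period then rules this out. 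Hence these modules are mouth modules, and applying $\tau$ shows each whole orbit consists of mouth modules, completing the proof. The main obstacle is precisely this last finite verification that no smaller competing periodic real Schur root exists.
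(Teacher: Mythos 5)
Your proposal is correct and takes essentially the same route as the paper: the paper's own proof of this proposition consists of recording the endomorphism-ring dimensions ($2$ for the two modules in (1), $1$ for the three in (2)) and referring to the argument of Proposition~\ref{typeBD2} --- rigidity via the bilinear form of Proposition~\ref{lem2.2}, $\tau$-periodicity of the orbits via the Coxeter transformation together with Theorem~\ref{thm:Schurroot} and Proposition~\ref{cor}, and the mouth-module step via vanishing at a boundary vertex, Lemma~\ref{lem0.1} and the finite check against the list in \cite[Section 6]{[DR]} as in Lemma~\ref{lem0.2} --- which is exactly the argument you reconstruct. Your rank vectors, the value $\delta=(1,2,3,4,2)$, the Euler-form computations and the $c$-orbit structure all check out.
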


The endomorphism rings of the  two modules in (1) are 2-dimensional and those for the
three modules in (2) are 1-dimensional.

\subsection{Type $\type{G}_{21}$} \label{type g1}
 In this case, $C=\left(
                         \begin{array}{ccc}
                           2 & -1 & 0 \\
                           -1 & 2 & -3 \\
                           0 & -1 & 2 \\
                         \end{array}
                       \right)
$,  $D=\text{diag}(1,1,3)$ and $\Omega=\{(2,1),(3,2)\}$. Then  $H=H(C,D,\Omega)$ is given by the quiver \[\begin{tikzpicture}[scale=1.1]
%\node (-20) at (-1,0) {$Q:$};
\node (-20) at (0,0) {$1$};
\node (-20) at (1.5,0) {$2$};
\node (-20) at (3,0) {$3$};

\node (-20) at (0.75,-0.2) {$a_{1}$};
\node (-20) at (2.25,-0.2) {$a_{2}$};

\draw [->] (0.25,0) -- (1.25,0);
\draw [->] (1.75, 0) -- (2.75,0);

\draw[-latex] (2.8,0.1) .. controls (2.5,0.7) and (3.5,0.7) .. (3.2,0.05);

\node (-20) at (3.0,0.8) {$\varepsilon_3$};
\end{tikzpicture}\]
with relation $\varepsilon_3^3=0$.

\begin{prop}\label{typeG1}
The AR-quiver  $\Gamma_H$ has a good  tube of rank $2$ with the mouth modules
$$\xymatrixcolsep{1.5pc}\xymatrixrowsep{1pc}\xymatrix{
1\ar[r]&2\ar[r]&3\ar[d]\\
&&3\ar[d]\\
1\ar[r]&2\ar[ru]\ar[rd]&3\\
&&3\ar[d]\\
1\ar[r]&2\ar[r]&3\ar[d]\\
&&3\\
} \ \  \xymatrix{
2\ar[r]&3\ar[d]\\
2\ar[r]&3\ar[d]\\
2\ar[r]&3.\\
}$$
\end{prop}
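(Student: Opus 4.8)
The plan is to follow the template used for Proposition \ref{typeBD2}. Write $M$ for the first (larger) module displayed and $M'$ for the second. Reading off the pictures, $M$ is locally free with $M_3$ free of rank $2$ over $H_3=K[x]/(x^3)$ (two Jordan strings of length $3$) and $M_1,M_2$ free over $K$ of rank $3$, so $\rankv M=(3,3,2)$; likewise $M'$ has $M'_1=0$, $M'_2=K^3$ and $M'_3$ free of rank $1$, so $\rankv M'=(0,3,1)$. The four things to establish are: (i) both modules are rigid and hence $\tau$-locally free; (ii) $\tau$ interchanges them, so each is $\tau$-periodic of period $2$; (iii) their common component is a tube; and (iv) $M,M'$ sit at its mouth.

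For (i) I would first compute the endomorphism rings by inspection. For $M'$ the arrow $a_2$ is an isomorphism onto $M'_3$, so an endomorphism is determined by its action on $M'_3$, which must commute with the regular nilpotent $\varepsilon_3$; hence $\End M'\cong K[x]/(x^3)$ and $\dim\End M'=3$. For $M$ the module $M_3$ is free of rank $2$, and the same bookkeeping (commuting with $\varepsilon_3$ on the two Jordan strings and with $a_1,a_2$) leaves exactly three free parameters, so $\dim\End M=3$ and $\End M=K\oplus\operatorname{rad}$ is local; in particular both modules are indecomposable. Next, by Proposition \ref{lem2.2}, using $D=\operatorname{diag}(1,1,3)$ and $\Omega=\{(2,1),(3,2)\}$ with $|c_{12}|=1$ and $|c_{23}|=3$,
\[
\langle(3,3,2),(3,3,2)\rangle=\langle(0,3,1),(0,3,1)\rangle=3.
\]
Since $\langle X,X\rangle=\dim\End X-\dim\Ext^1_H(X,X)$, this forces $\Ext^1_H(M,M)=\Ext^1_H(M',M')=0$, so both are rigid, and by Proposition \ref{lem2.1} both are $\tau$-locally free.

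For (ii), with the $+$-admissible sequence $\mathbf{i}=(3,2,1)$ one has $c=s_1s_2s_3$, and a direct computation (cf. \cite[Section 6]{[DR]}) gives
\[
c(0,3,1)=(3,3,2),\qquad c(3,3,2)=(0,3,1).
\]
By Proposition \ref{Lem:tauc} the rank vectors of $\tau M'$ and $\tau M$ are therefore $(3,3,2)$ and $(0,3,1)$, and since rigid $\tau$-locally free modules are determined by their rank vectors (Theorem \ref{thm:Schurroot}), we get $\tau M'\cong M$ and $\tau M\cong M'$. As $\rankv M\neq\rankv M'$, the modules are non-isomorphic, so each has $\tau$-period exactly $2$; in particular each is regular and, by Proposition \ref{cor}, lies in a tube $\mathcal{T}$.

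Finally, for (iii)--(iv) I would invoke the structure of stable tubes recalled in Theorem \ref{lem1}: in a stable tube of rank $r$ every indecomposable has $\tau$-period exactly $r$, so $\mathcal{T}$ has rank $2$; and a rigid indecomposable in a rank-$2$ tube has quasi-length $1$, i.e. is a mouth module. (Alternatively, one argues as in Lemma \ref{lem0.2}, using $(M')_1=0$ together with Lemma \ref{lem0.1} to exclude any smaller regular real Schur root below $(0,3,1)$ as the rank vector of a proper regular submodule; since the mouth is $\tau$-stable, $M=\tau M'$ is then a mouth module as well.) Hence $M,M'$ are the mouth modules of a good tube of rank $2$, as claimed. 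The only genuinely computational step is confirming that $\dim\End M=3$ for the larger module --- that commuting with $\varepsilon_3$, $a_1$ and $a_2$ leaves precisely three parameters --- and this is the step I would carry out most carefully; everything else is routine once the quoted results are in hand.
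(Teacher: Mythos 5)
Your proposal is correct and follows essentially the same route as the paper, which likewise computes that both endomorphism rings are $3$-dimensional, compares with $\langle\,\cdot\,,\cdot\,\rangle=3$ to get rigidity, uses the Coxeter transformation together with Theorem \ref{thm:Schurroot} to see that $\tau$ swaps the two modules, and then argues as in Proposition \ref{typeBD2}. For the mouth-module step, prefer your parenthetical alternative (the Lemma \ref{lem0.2}/Lemma \ref{lem0.1} exclusion argument using $(M')_1=0$): that is the argument the paper actually relies on, whereas the shortcut ``rigid in a rank-$2$ tube implies quasi-length $1$'' is not among the results quoted in this generality.
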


The endomorphism rings of the  two modules are 3-dimensional.

\subsection{Type $\type{G}_{22}$}\label{typeG2}
 In this case,
 $C=\left(
                         \begin{array}{ccc}
                           2 & -1 & 0 \\
                           -1 & 2 & -1 \\
                           0 & -3 & 2 \\
                         \end{array}
                       \right)
$, $D=\text{diag}(3,3,1)$ and $\Omega=\{(2,1),(2,3)\}$. Then  $H=H(C,D,\Omega)$ is given by the quiver \[\begin{tikzpicture}[scale=1.1]
%\node (-20) at (-1,0) {$Q:$};
\node (-20) at (0,0) {$1$};
\node (-20) at (1.5,0) {$2$};
\node (-20) at (3,0) {$3$};

\node (-20) at (0.75,-0.2) {$a_{1}$};
\node (-20) at (2.25,-0.2) {$a_{2}$};

\draw [->] (0.25,0) -- (1.25,0);
\draw [->] (2.75, 0) -- (1.75,0);

\draw[-latex] (-0.2,0.1) .. controls (-0.5,0.7) and (0.5,0.7) .. (0.2,0.05);
\draw[-latex] (1.3,0.1) .. controls (1,0.7) and (2,0.7) .. (1.7,0.05);
%\draw[-latex] (2.8,0.1) .. controls (2.5,0.7) and (3.5,0.7) .. (3.2,0.05);

\node (-20) at (1.5,0.8) {$\varepsilon_2$};
\node (-20) at (0.0,0.8) {$\varepsilon_1$};
%\node (-20) at (3.0,0.8) {$\varepsilon_3$};
\end{tikzpicture}\]
with relations $\varepsilon_i^3=0$ for $i=1,2$ and $\varepsilon_2a_{1}=a_{1}\varepsilon_1$.

\begin{prop}\label{typeG2}
The AR-quiver  $\Gamma_H$ has a good tube of rank $2$ with the mouth modules
$$\xymatrixcolsep{1.5pc}\xymatrixrowsep{1pc}\xymatrix{
1\ar[r]\ar[d]&2\ar[d]&3\ar[l]\\
1\ar[r]\ar[d]&2\ar[d]&\\
1\ar[r]&2&\\
} \ \  \xymatrix{
2\ar[d]&3\ar[l]\\
2\ar[d]&3\ar[l]\\
2.&\\
}$$
%whose rank vectors are (0,1,2) and (1,1,1).
%Moreover, the tube is good.  
\end{prop}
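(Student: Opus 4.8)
The plan is to mimic the proof of Proposition \ref{typeBD2}. Write $M$ and $N$ for the first and second modules displayed in the statement. Reading off the multiplicity at each vertex and dividing by the corresponding $d_i$ (here $D=\mathrm{diag}(3,3,1)$), one finds $\rankv M=(1,1,1)$ and $\rankv N=(0,1,2)$; note that $\rankv M+\rankv N=(1,2,3)=\delta$, as one expects for the two mouth modules of a rank $2$ tube. First I would prove that $M$ and $N$ are rigid and $\tau$-locally free, then that they constitute a single $\tau$-orbit, and finally that they sit at the mouth of the resulting tube.

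For rigidity, a direct inspection of the two representations shows that every endomorphism is a scalar: an endomorphism must commute with $\varepsilon_2$ and be compatible with the arrow $a_2$ at vertex $3$, and these two conditions kill all the $\varepsilon_2$-degree-raising components, forcing $\End M\cong K$ and $\End N\cong K$. On the other hand, Proposition \ref{lem2.2} together with $\Omega=\{(2,1),(2,3)\}$, $|c_{12}|=1$ and $|c_{32}|=3$ gives
\[
\langle\rankv M,\rankv M\rangle=(3+3+1)-(3+3)=1,\qquad \langle\rankv N,\rankv N\rangle=(0+3+4)-(0+6)=1.
\]
Hence $\Ext^1_H(M,M)=0=\Ext^1_H(N,N)$, so both modules are rigid, and by Proposition \ref{lem2.1} they are $\tau$-locally free.

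Next I would compute the Coxeter action. With the $+$-admissible sequence $\mathbf{i}=(2,1,3)$ one has $c=s_3s_1s_2$, and a direct calculation gives $c(1,1,1)=(0,1,2)$ and $c(0,1,2)=(1,1,1)$. By Proposition \ref{Lem:tauc}, $\rankv\tau M=c(\rankv M)=\rankv N$ and $\rankv\tau N=\rankv M$. Since $\tau M$ and $\tau N$ are again indecomposable, locally free and rigid, hence $\tau$-locally free by Proposition \ref{lem2.1}, Theorem \ref{thm:Schurroot} forces $\tau M\cong N$ and $\tau N\cong M$. Thus $M$ and $N$ form a $\tau$-orbit of period $2$, and by Proposition \ref{cor} (via Theorem \ref{lem1}) they lie in a tube of rank $2$.

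The step I expect to be the main obstacle is showing that $M$ and $N$ are genuinely mouth modules rather than higher up in the tube, exactly as in Proposition \ref{typeBD2}. If $M$ (resp.\ $N$) were not at the mouth, it would contain a proper $\tau$-locally free regular submodule sitting at the mouth of the same tube, whose rank vector would then be a regular real Schur root $\beta$ strictly below $(1,1,1)$ (resp.\ $(0,1,2)$), necessarily of $c$-period $2$. I would rule this out by inspecting the root system: using Proposition \ref{prop:roots}(3) and the tables in \cite[Section 6]{[DR]}, every positive root strictly below $(1,1,1)$ or below $(0,1,2)$ is preprojective or preinjective, hence not $c$-periodic, so no such $\beta$ exists. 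Since $(N)_1=0$ and vertex $1$ is a boundary vertex, one may alternatively restrict such a submodule to the Dynkin subquiver $Q^0\backslash\{1\}$ via Lemma \ref{lem0.1}, as in Proposition \ref{typeBD2}. Therefore $M$ and $N$ are the two mouth modules, the tube is good, and their rank vectors coincide with the prescribed vectors, completing the proof.
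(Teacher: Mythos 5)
Your proposal is correct and follows essentially the same route as the paper, which proves this proposition by deferring to the argument of Proposition \ref{typeBD2} (rigidity via $\dim\End=1$ and $\langle\rankv,\rankv\rangle=1$, the Coxeter computation combined with Theorem \ref{thm:Schurroot} to get the $\tau$-orbit of period $2$, and the root-system/boundary-vertex argument for the mouth property), recording only that the endomorphism rings are $1$-dimensional. All of your numerical checks ($\rankv M=(1,1,1)$, $\rankv N=(0,1,2)$, the bilinear-form values, and $c$ interchanging the two rank vectors) agree with the paper.
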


The endomorphism rings of the  two modules are 1-dimensional.

\section{Inhomogeneous  tubes with non-rigid mouth modules and counter examples}\label{sec5}
In this section, we construct inhomogeneous tubes with non-rigid mouth modules. In particular, we  obtain some $\tau$-locally free modules, whose rank vectors are not roots.
Consequently, Conjecture \ref{Conj:gls} fails in general.

Let $\epsilon$ be the following short exact sequence in $\rep(H)$, 
\[
\begin{tikzpicture}[scale=0.5,
 arr/.style={black, -angle 60}]
 \path (2, 1) node (c0) {$0$};
\path (5,1) node (c1) {$X$};
\path (8,1) node (c2) {$Y$};
\path (11,1) node (c3) {$Z$};
\path (14, 1) node (c4) {$0$.};

\draw[->] (c1) edge  (c2);
\draw[->] (c2) edge  (c3);
\draw[->] (c0) edge (c1);
\draw[->] (c3) edge (c4);
\end{tikzpicture}
\]

\begin{lem}\label{lem5.1}
Suppose that $X, ~Y$ and $Z$ are locally free $H$-modules.
\begin{itemize}
\item[(1)] If $\textup{Hom}_H(X,H)=0$, then $\tau \epsilon$ is also a short exact sequence.
%$0\longrightarrow \tau X\longrightarrow \tau Y\longrightarrow \tau Z\longrightarrow 0$ is exact.

\item[(2)] If $\textup{Hom}_H(DH,Z)=0$, then $\tau^{-1} \epsilon$ is also a short exact sequence.
%$0\longrightarrow \tau^{-1} X\longrightarrow \tau^{-1} Y\longrightarrow \tau^{-1}Z\longrightarrow 0$ is exact.
\end{itemize}
\end{lem}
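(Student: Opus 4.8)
The plan is to produce a single six-term exact sequence linking the Auslander--Reiten translate with the Nakayama functor $\nu=D\Hom_H(-,H)$, and then to annihilate the Nakayama terms using the stated $\Hom$-vanishing. Since $X,Y,Z$ are locally free they have projective dimension at most $1$, so each admits a two-term projective resolution $0\to P_1^M\xrightarrow{r^M}P_0^M\to M\to 0$ for $M\in\{X,Y,Z\}$. First I would fix such resolutions for $X$ and $Z$; the horseshoe lemma then yields one for $Y$ with $P_j^Y=P_j^X\oplus P_j^Z$, and places the three into a short exact sequence of two-term complexes of projectives that is split in each degree. Applying the additive functor $\nu$ (which preserves degreewise-split exactness and sends $P_i\mapsto I_i$) produces a short exact sequence of two-term complexes of injectives $0\to[\nu P^X_\bullet]\to[\nu P^Y_\bullet]\to[\nu P^Z_\bullet]\to 0$.

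Next I would compute cohomology. By Proposition \ref{Prop:tau} the complex $\nu P_1^M\xrightarrow{\nu(r^M)}\nu P_0^M$ has degree-$0$ cohomology $\ker\nu(r^M)=\tau M$. Dualizing the identity $\ker\big(\Hom_H(P_0^M,H)\to\Hom_H(P_1^M,H)\big)=\Hom_H(M,H)$ (equivalently, a direct computation via (\ref{eqn:1})) identifies the degree-$1$ cohomology as $\operatorname{coker}\nu(r^M)=D\Hom_H(M,H)=\nu M$. The long exact cohomology sequence of the short exact sequence of complexes above then reads
\[
0\to\tau X\to\tau Y\to\tau Z\to\nu X\to\nu Y\to\nu Z\to 0.
\]
For part (1), the hypothesis $\Hom_H(X,H)=0$ gives $\nu X=0$, so the connecting map $\tau Z\to\nu X$ vanishes and $0\to\tau X\to\tau Y\to\tau Z\to 0$ is exact; this is exactly the statement that $\tau\epsilon$ is a short exact sequence.

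Part (2) I would obtain as the exact dual, working with injective coresolutions $0\to M\to I^0_M\to I^1_M\to 0$ (available because locally free modules also have injective dimension at most $1$) and the inverse Nakayama functor $\nu^{-}=\Hom_H(DH,-)$. The dual of Proposition \ref{Prop:tau} identifies the cohomology of $\nu^{-}I^0_M\to\nu^{-}I^1_M$ as $\nu^{-}M$ in degree $0$ and $\tau^{-1}M$ in degree $1$, and the dual horseshoe construction yields the six-term exact sequence
\[
0\to\nu^{-}X\to\nu^{-}Y\to\nu^{-}Z\to\tau^{-1}X\to\tau^{-1}Y\to\tau^{-1}Z\to 0.
\]
The hypothesis $\Hom_H(DH,Z)=\nu^{-}Z=0$ then kills the image of the connecting map into $\tau^{-1}X$, forcing $0\to\tau^{-1}X\to\tau^{-1}Y\to\tau^{-1}Z\to 0$ to be exact, i.e. $\tau^{-1}\epsilon$ is a short exact sequence.

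The work here is conceptual rather than computational: the crucial recognition is that the obstruction to $\tau$ preserving $\epsilon$ is precisely the Nakayama term $\nu X$ (dually $\nu^{-}Z$), so that the stated vanishing is exactly the right hypothesis. The steps that require genuine care are the identification $\operatorname{coker}\nu(r^M)=\nu M$ and the verification that the horseshoe sequence is split in each degree so that the additive functor $\nu$ preserves its exactness. I expect the main (minor) obstacle to be bookkeeping about minimality: minimality of the resolution of $Y$ is \emph{not} needed, since any extra contractible summands $[Q\xrightarrow{\operatorname{id}}Q]$ contribute neither to kernels nor to cokernels and hence leave both $\tau M$ and $\nu M$ unchanged; one should state this explicitly to justify applying Proposition \ref{Prop:tau} to the non-minimal horseshoe resolution of $Y$.
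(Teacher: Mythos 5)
Your proof is correct and follows essentially the same route as the paper: the paper's proof simply quotes the two six-term exact sequences $0\to\tau X\to\tau Y\to\tau Z\to\nu X\to\nu Y\to\nu Z\to 0$ and its dual (available because locally free modules have projective and injective dimension at most one) and observes that the hypotheses kill the Nakayama terms, which is exactly what you do after deriving those sequences explicitly via the horseshoe lemma and the long exact cohomology sequence. Your extra care about non-minimality of the horseshoe resolution is sound but ultimately unnecessary, since the kernel and cokernel of $\nu(r)$ compute $\tau M$ and $\nu M$ for any two-term projective resolution.
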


\begin{proof}
%By snake lemma, one has the following two exact sequences:
Since the projective and injective dimensions of a locally free $H$-module are at most one, we have the following two
exact sequences,
\[
\begin{tikzpicture}[scale=0.6,
 arr/.style={black, -angle 60}]
 \path (2, 1) node (c0) {$0$};
\path (5,1) node (c1) {$\tau X$};
\path (8,1) node (c2) {$\tau Y$};
\path (11,1) node (c3) {$\tau Z$};

\path (14,1) node (c4) {$\nu X$};
\path (17,1) node (c5) {$\nu Y$};
\path (20,1) node (c6) {$\nu Z$};

\path (23, 1) node (c7) {$0$,};

\draw[->] (c0) edge (c1);
\draw[->] (c1) edge (c2);
\draw[->] (c2) edge (c3);
\draw[->] (c3) edge (c4);
\draw[->] (c4) edge (c5);
\draw[->] (c5) edge (c6);
\draw[->] (c6) edge (c7);
\end{tikzpicture}
\]
and
\[
\begin{tikzpicture}[scale=0.6,
 arr/.style={black, -angle 60}]
 \path (2, 1) node (c0) {$0$};
\path (5,1) node (c1) {$\nu^{-1}  X$};
\path (8,1) node (c2) {$\nu^{-1}  Y$};
\path (11,1) node (c3) {$\nu^{-1} Z$};

\path (14,1) node (c4) {$\tau^{-1}  X$};
\path (17,1) node (c5) {$\tau^{-1}  Y$};
\path (20,1) node (c6) {$\tau^{-1} Z$};

\path (23, 1) node (c7) {$0$.};

\draw[->] (c0) edge (c1);
\draw[->] (c1) edge (c2);
\draw[->] (c2) edge (c3);
\draw[->] (c3) edge (c4);
\draw[->] (c4) edge (c5);
\draw[->] (c5) edge (c6);
\draw[->] (c6) edge (c7);
\end{tikzpicture}
\]
%$$0\longrightarrow \nu^{-1}X\longrightarrow \nu^{-1}Y\longrightarrow \nu^{-1}Z\longrightarrow \tau^{-1}X
%\longrightarrow \tau^{-1}Y\longrightarrow \tau^{-1}Z\longrightarrow0.$$
Now the result follows since $\nu X=D\text{Hom}_H(X,H)$ and $\nu^{-1}Z=\text{Hom}_H(DH,Z)$.
\end{proof}

\begin{lem}\label{lem5.2}
If the two end terms $X$ and $Z$ in $\epsilon$ are $\tau$-locally free and regular, then so is any indecomposable summand of the middle $Y$.
\end{lem}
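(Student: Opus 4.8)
The plan is to show that $Y$ and every Auslander--Reiten translate $\tau^k Y$ are locally free and that no indecomposable summand of $Y$ is annihilated by any power of $\tau$; the hypotheses that $X$ and $Z$ are regular $\tau$-locally free will feed exactly into the vanishing conditions needed to keep applying Lemma~\ref{lem5.1}. Note that, being $\tau$-locally free, $X$ and $Z$ are in particular indecomposable.

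First I would record that $Y$ itself is locally free. At each vertex $i$ the sequence $\epsilon$ restricts to a short exact sequence $0\to X_i\to Y_i\to Z_i\to 0$ of $H_i$-modules, and since $H_i\cong K[x]/(x^{d_i})$ is self-injective, the free (hence projective) module $Z_i$ makes this sequence split, so $Y_i\cong X_i\oplus Z_i$ is free. Next I would establish the two Hom-vanishing statements $\Hom_H(X,H)=0$ and $\Hom_H(DH,Z)=0$. The second is immediate from \cite[Lemma 11.7]{[GLS1]}, since $DH=\bigoplus_i I_i$ and any map from an injective to a $\tau$-locally free regular module vanishes. The first is the dual statement: applying the duality $D=\Hom_K(-,K)$ one has $\Hom_H(X,P_i)\cong\Hom_{H^{\mathrm{op}}}(DP_i,DX)$, where $H^{\mathrm{op}}=H(C,D,\Omega^{\mathrm{op}})$ is again of the same type, $DP_i$ is injective and $DX$ is regular $\tau$-locally free, so \cite[Lemma 11.7]{[GLS1]} applied over $H^{\mathrm{op}}$ gives the vanishing.

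With these in hand I would run an induction on $k$. Because $X$ and $Z$ are regular $\tau$-locally free and indecomposable, so is every $\tau^k X$ and $\tau^k Z$, whence $\Hom_H(\tau^kX,H)=0$ and $\Hom_H(DH,\tau^kZ)=0$ for all $k$. Lemma~\ref{lem5.1}(1) then shows inductively that $\tau^k\epsilon$ is a short exact sequence for $k\ge 0$ and Lemma~\ref{lem5.1}(2) that it is exact for $k\le 0$, with regular $\tau$-locally free end terms; as in the first step the middle term $\tau^kY$ is therefore locally free. It remains to see that no indecomposable summand of any $\tau^kY$ is projective or injective. If $P$ were an indecomposable projective summand of $\tau^kY$, then composing the inclusion $\tau^kX\to\tau^kY$ with the projection onto $P$ gives a map in $\Hom_H(\tau^kX,P)=0$, so $\tau^kX$ maps into the complement and $P$ splits off as a summand of $\tau^kZ$, contradicting that $\tau^kZ$ is indecomposable and regular. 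Dually, using $\Hom_H(DH,\tau^kZ)=0$, no injective summand occurs.

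Finally I would assemble the conclusion. Writing $Y=\bigoplus_j Y_j$ into indecomposables, the absence of projective and injective summands at every stage guarantees that $\tau^kY=\bigoplus_j\tau^kY_j$ with each $\tau^kY_j$ nonzero and indecomposable, and local freeness of $\tau^kY$ forces each $\tau^kY_j$ to be locally free. Thus every $Y_j$ satisfies $\tau^kY_j\neq 0$ and $\tau^kY_j$ locally free for all $k\in\mathbb{Z}$, i.e.\ $Y_j$ is regular and $\tau$-locally free, as claimed. I expect the point requiring most care to be the identification of the two vanishing conditions, and in particular deriving $\Hom_H(X,H)=0$ through the opposite algebra, since everything else is a clean induction fed by Lemma~\ref{lem5.1} together with the standard fact that $\tau$ and $\tau^{-1}$ detect projective and injective summands.
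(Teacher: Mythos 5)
Your proof is correct and follows essentially the same route as the paper: both hinge on the vanishing of $\Hom_H(\tau^kX,H)$ and $\Hom_H(DH,\tau^kZ)$ for the regular $\tau$-locally free end terms, fed into Lemma~\ref{lem5.1} to keep $\tau^k\epsilon$ exact for all $k\in\mathbb{Z}$, after which local freeness and regularity of each summand of $Y$ follow. The only differences are cosmetic — you derive $\Hom_H(X,H)=0$ from \cite[Lemma 11.7]{[GLS1]} via the opposite algebra where the paper cites \cite[Corollary 11.3]{[GLS1]} directly, and you rule out projective and injective summands of $\tau^kY$ explicitly where the paper concludes with a terse dimension comparison; if anything your final step is the more carefully justified of the two.
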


\begin{proof} First note that $Y$ is locally free, since it is an extension of locally free modules.
As $X$ and $Z$ are $\tau$-locally free and regular, \[\tau^kX\neq0,~ \tau^kZ\neq0,\]
 and by \cite[Corollary 11.3 ]{[GLS1]},  \[\text{Hom}_H(\tau^kX, H)=\text{Hom}_H(DH, \tau^kZ)=0,\]
 for all $k\in\mathbb{Z}$.
So by Lemma \ref{lem5.1}, the  sequence
\[
\begin{tikzpicture}[scale=0.6,
 arr/.style={black, -angle 60}]
\path (0, 1) node (c-1) {$\tau^k\epsilon$ :};
\path (2, 1) node (c0) {$0$};
\path (5,1) node (c1) {$\tau^k X$};
\path (8,1) node (c2) {$\tau^k Y$};
\path (11,1) node (c3) {$\tau^k Z$};
\path (14, 1) node (c4) {$0$};

\draw[->] (c1) edge  (c2);
\draw[->] (c2) edge  (c3);
\draw[->] (c0) edge (c1);
\draw[->] (c3) edge (c4);
\end{tikzpicture}
\]
is exact. Consequently, by comparing the dimensions, for any indecomposable summand $Y_i$ of $Y$,
$\tau^kY_i\neq 0$ and is locally free for all $k\in\mathbb{Z}$.
Hence $Y_i$ is $\tau$-locally free and regular.
\end{proof}

 \begin{thm} \label{main2}
Let $C$ be a Cartan matrix of type $\type{B}_n$, $\type{CD}_n$, $\type{F}_{41}$ or $\type{G}_{21}$ and $D$ a minimal symmetriser.
\begin{itemize}
\item[(1)] The AR-quiver $\Gamma_H$ has an inhomogeneous tube of $\tau$-locally free modules,
whose mouth modules are not rigid and have $\delta$ as their rank vectors.

\item[(2)] There exist  $\tau$-locally free $H$-modules such that  their rank vectors are not  roots. Consequently, Conjecture 1 fails in these four types.
 \end{itemize}
 \end{thm}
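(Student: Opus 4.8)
The plan is to prove both parts with the same two tools: explicit representations whose Auslander--Reiten translate I compute via Proposition~\ref{Prop:tau}, and the extension-stability of Lemma~\ref{lem5.2}. For (1) I would, for each of the four types, write down an indecomposable locally free module $M$ with $\rankv M=\delta$ as an explicit representation together with a minimal projective presentation, exactly in the spirit of \S\ref{Sec:homogtubes}. Applying Proposition~\ref{Prop:tau} I obtain $\tau M$ as the kernel of the dualised map $\nu(r)=(l^*_{\rho_{st}})$; after a change of basis I identify it with a second explicit module and iterate, until $\tau^{r}M\cong M$ for the least $r\ge 2$. Because $c\delta=\delta$, every module in this $\tau$-orbit has rank vector $\delta$, and since $\rankv M$ is $c$-periodic, $M$ is regular by Proposition~\ref{rem1}(3); being regular and $\tau$-periodic it lies in a tube by Proposition~\ref{cor}, which is inhomogeneous as $r\ge 2$. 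Non-rigidity is then automatic: by Proposition~\ref{lem2.2} the Euler form gives $\langle\delta,\delta\rangle=\tfrac12(\delta,\delta)_C=0$, so $\dim\Ext^1_H(M,M)=\dim\End_H(M)>0$; equivalently $\delta$ is imaginary and Theorem~\ref{thm:Schurroot} rules out a rigid module of rank vector $\delta$.

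The subtle step in (1) is to verify that the modules in the $\tau$-orbit are the \emph{mouth} modules rather than sitting higher in the tube. I would show that $M$ has no proper nonzero $\tau$-locally free regular submodule $U$: such a $U$ would be locally free with $0<\rankv U<\delta$, so $\rankv U$ would be a regular \emph{real} root strictly below the minimal imaginary root $\delta$, and inspecting the explicit shape of $M$ against the regular roots listed in Proposition~\ref{prop:roots}(3) excludes every candidate, just as in the mouth-module arguments of Lemma~\ref{lem0.2} and Proposition~\ref{typeBD2}. Equivalently, one checks directly that the middle term of the Auslander--Reiten sequence ending at $M$ is indecomposable.

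For (2) the point is that these anomalous $\delta$-tubes coexist with the good tubes of \S\ref{sec4}, and that extensions across the two families leave the root system. I would take $X$ a $\delta$-mouth module from (1) and $Z$ a mouth module of a good tube, so that $Z$ is rigid with $\rankv Z=\beta$ a real Schur root and both $X,Z$ are $\tau$-locally free and regular; a non-split extension $0\to X\to Y\to Z\to 0$ then has $\rankv Y=\delta+\beta$, and by Lemma~\ref{lem5.2} every indecomposable summand of $Y$ is again $\tau$-locally free and regular. For a suitable choice of $\beta$ in each of the four types the vector $\delta+\beta$ is not a root --- one computes $(\delta+\beta,\delta+\beta)_C=(\beta,\beta)_C$, and the Dlab--Ringel tables in \cite[Section~6]{[DR]} show that adding a single $\delta$ to this $\beta$ lands outside $\Delta$ --- so once $Y$ is known to be indecomposable we obtain a $\tau$-locally free module whose rank vector is not a root, contradicting the forward direction of Conjecture~\ref{Conj:gls}.

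The main obstacle, in both parts, is indecomposability. In (1) it is the quasi-simplicity of the $\delta$-modules: this is genuinely new behaviour, since over a hereditary algebra every mouth module of an inhomogeneous tube is rigid, so a non-rigid mouth cannot come from general tube theory and must instead be forced by the explicit submodule analysis above. In (2) it is ensuring that the non-root rank vector $\delta+\beta$ is actually carried by a single indecomposable module: a priori $Y$ could split into summands whose rank vectors are roots adding up to $\delta+\beta$, so rather than argue abstractly I would pin $Y$ down as an explicit representation with local endomorphism ring and a prescribed sub/quotient pair $X,Z$, invoking Lemma~\ref{lem5.2} only to certify that this explicit $Y$ is $\tau$-locally free.
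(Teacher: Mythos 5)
Your proposal follows essentially the same route as the paper: explicit rank-vector-$\delta$ modules whose $\tau$-orbits are computed via Proposition~\ref{Prop:tau}, non-rigidity from $\langle\delta,\delta\rangle=0$, the mouth property from the fact that any proper regular $\tau$-locally free submodule would carry a real Schur root already accounted for (via Theorem~\ref{thm:Schurroot}) in the good tubes of \S\ref{sec4}, and for (2) an explicit indecomposable extension of a $\delta$-module by a rigid mouth module with long-root rank vector, certified $\tau$-locally free by Lemma~\ref{lem5.2} and indecomposable by a direct local-endomorphism-ring computation. The only differences are cosmetic (the orientation of the extension and the phrasing of the long-root/non-root argument), so this matches the paper's proof.
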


We will prove the theorem  by construction in each case. 

\subsection{Type $\type{B}_n$}
In this case, $D=(1, 2, \dots, 2, 1)$ and  $H$ is as defined in \S \ref{sec:homogBn}, where
$m=1$.
Let $Z$ be the following locally free $H$-module,
$$\xymatrixcolsep{1.5pc}\xymatrixrowsep{1pc}\xymatrix{
&&2\ar[d]\ar[r]&3\ar[r]\ar[d]&\cdots\ar[r]&n-1 \ar[r]\ar[d] & n\ar[d]\\
 & 1\ar[r]& 2\ar[r] & 3\ar[r] & \cdots \ar[r] & n-1\ar[r] & n\ar[r] & n+1.\\
 }$$

\begin{prop} \label{prop:proof1} The module
$Z$ is not rigid, and is a  mouth module in a rank $n$ tube of $\tau$-locally free modules.
Moreover, $\rankv Z=\delta$.
\end{prop}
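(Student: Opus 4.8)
The plan is to establish the four assertions in turn, working throughout with the basis $\{b_1,\dots,b_{n+1}\}$ of the bottom row and $\{t_2,\dots,t_n\}$ of the top row displayed in the diagram, where $\varepsilon_i$ sends $t_i\mapsto b_i\mapsto 0$ and the horizontal arrows act as the evident identities. At each vertex $i$ the loop $\varepsilon_i$ acts on $Z_i$ as a single Jordan block, so $Z_i$ is free of rank one over $H_i$; hence $Z$ is locally free, and since $\delta=(1,\dots,1)$ we read off $\rankv Z=\delta$. One checks directly that $\End_H Z\cong K$, so $Z$ is indecomposable. For non-rigidity I would apply Proposition~\ref{lem2.2} to $\langle Z,Z\rangle=\langle\delta,\delta\rangle$: the diagonal term $\sum_i d_i=2n$ is cancelled exactly by the off-diagonal term $\sum_{(j,i)\in\Omega}d_i|c_{ij}|=2n$, so $\langle Z,Z\rangle=0$ (equivalently, $\delta$ spans the radical of $(-,-)_C$). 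As $\End_H Z\neq 0$, this forces $\Ext^1_H(Z,Z)\neq0$, so $Z$ is not rigid.

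Next I would show $Z$ is regular. Being indecomposable, $Z$ is preprojective, preinjective, or regular. A preprojective or preinjective module is $\tau$-locally free by Proposition~\ref{lem4}, so Proposition~\ref{rem1} would apply to it and force $c^{i}(\rankv Z)<0$ or $c^{-i}(\rankv Z)<0$ for $i\gg0$; but the minimal imaginary root $\delta$ is fixed by every $s_i$, so $c^i(\rankv Z)=c^i\delta=\delta>0$ for all $i$. Therefore $Z$ is regular, i.e. $\tau^kZ\neq0$ for all $k$.

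The heart of the proof is to place $Z$ in a rank $n$ tube. Here I would compute $\tau Z$ explicitly: $Z$ is generated by $b_1$ and $t_2$, giving a projective resolution with $P^0=P_1\oplus P_2$, to which Proposition~\ref{Prop:tau} and \eqref{eqn:1} apply to identify $\tau Z$ as an explicit indecomposable locally free module $Z_2$ with $\rankv Z_2=\delta$ (the rank being forced by Proposition~\ref{Lem:tauc} and $c\delta=\delta$). Iterating produces a chain $Z=Z_1,\;Z_2=\tau Z_1,\;\dots,\;Z_n$ of pairwise non-isomorphic indecomposable locally free modules with $\tau Z_n\cong Z_1$, the orbit closing after exactly $n$ steps. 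As all the $Z_k$ are locally free and the $\tau$-orbit of $Z$ is finite, $Z$ is $\tau$-locally free; being $\tau$-periodic and regular, its AR-component is a stable tube by Proposition~\ref{cor} (and Theorem~\ref{lem1}), of rank equal to its $\tau$-period $n$. I expect the verification that the period is exactly $n$, rather than a proper divisor, to be the main obstacle, as it requires tracking the isomorphism type of $\tau^kZ$ uniformly in $n$.

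Finally, the mouth property follows once $Z$ has no proper nonzero locally free regular submodule. If $N\subsetneq Z$ is locally free and nonzero, then $N_1=0$: otherwise $N_1=Z_1$, and chasing $a_1,a_2,\dots$ together with the fact that local freeness forces $N_i=Z_i$ as soon as $b_i\in N_i$ (a rank-one free $H_i$-module is all of $Z_i$) yields $N=Z$. So $N$ is supported on $\{2,\dots,n+1\}$; letting $q$ be its least support vertex, local freeness gives $N_q=Z_q$, and since $a_q(t_q)=t_{q+1}$ for $q<n$ and $a_n(b_n)=b_{n+1}$, the support of $N$ must reach the vertex $n+1$. Thus $\rankv N$ has a nonzero $(n+1)$-st entry, whereas every regular root $\leq\delta$ other than $\delta$ is supported on $\{2,\dots,n\}$ (these are the good-tube roots $\sum_{i=p}^{q}\alpha_i$ with $2\leq p\leq q\leq n$ of Proposition~\ref{typeB} and \cite[Section 6]{[DR]}, the remaining regular root $\leq\delta$ being only $\delta$ itself). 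Hence $\rankv N$ is not a regular root and $N$ is not regular, so $Z$ sits at quasi-length one and is a mouth module of its rank $n$ tube. The non-rigidity shown above certifies that this tube is new, distinct from the good tube of Proposition~\ref{typeB} and from the homogeneous tubes of Proposition~\ref{Prop:homog}.
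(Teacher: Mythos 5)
Your proposal is correct and follows the paper's strategy in all essentials: indecomposability from $\End_H Z\cong K$, non-rigidity from $\langle\delta,\delta\rangle=0$ via Proposition \ref{lem2.2}, the $\tau$-orbit computed from a projective resolution with projective cover $P_1\oplus P_2$ via Proposition \ref{Prop:tau} and (\ref{eqn:1}), and the mouth property deduced from the fact that a proper nonzero locally free submodule must vanish at the boundary vertex $1$. Two points of comparison. First, the step you flag as the main obstacle --- that the $\tau$-period is exactly $n$ --- is precisely where the paper does its work: starting from $0\to P_2\oplus P_{n+1}\to P_1\oplus P_2\to Z\to 0$ it lists $\tau Z,\tau^2Z,\dots,\tau^{n-1}Z$ explicitly (at each application of $\tau$ the top row of length $n-1$ is shortened by one on the left while a new row grows below the bottom one), so the $n$ modules in the orbit are visibly pairwise non-isomorphic and $\tau^nZ\cong Z$; your method is the right one and the computation does close up after exactly $n$ steps, but as written your argument asserts the period rather than establishing it. Second, your mouth argument ends differently: you show any candidate regular submodule $N$ has $\rankv N=\sum_{j=q}^{n+1}\alpha_j$, which is not a $c$-periodic root, whereas the paper notes that $\rankv N$ would be a real Schur root and uses Theorem \ref{thm:Schurroot} to locate the unique rigid module with that rank vector in the good tube of Proposition \ref{typeB}. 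Your variant avoids Theorem \ref{thm:Schurroot} at the cost of a direct check against \cite[Section 6]{[DR]} that no regular root $<\delta$ is supported at vertex $n+1$; both checks are routine and both arguments are sound. Your preliminary regularity argument via Propositions \ref{lem4} and \ref{rem1} is valid but becomes redundant once $\tau^nZ\cong Z$ is established.
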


\begin{proof} Observe that $\End Z\cong K$ and so $Z$ is indecomposable. 
$Z$ has the following projective resolution,
\[
\begin{tikzpicture}[scale=0.7,
 arr/.style={black, -angle 60}]
 \path (-1.5, 1) node (c0) {$0$};
\path (1.5,1) node (c1) {$P_2\oplus P_{n+1}$};
\path (7,1) node (c2) {$P_1\oplus P_{2}$};
\path (11,1) node (c3) {$Z$};
\path (14, 1) node (c4) {$0$,};

\draw[->] (c1) edge node[auto]{$\left(
                  \begin{smallmatrix}
                    r_{a_{1}}& 0\\
                    -r_{\varepsilon_2}& r_{\rho}\\
                  \end{smallmatrix}\right)$} (c2);
\draw[->] (c2) edge node[auto]{}  (c3);
\draw[->] (c0) edge (c1);
\draw[->] (c3) edge (c4);
\end{tikzpicture}
\]
where $\rho=a_{n}\cdots a_{3}a_{2}$.
Computing $\ker  \begin{pmatrix}  l^*_{a_{1}} & 0\\
                    -l^*_{\varepsilon_2} & l^*_{\rho}\\
                  \end{pmatrix}$
gives
$$\xymatrixcolsep{1.5pc}\xymatrixrowsep{1pc}\xymatrix{
&& &3\ar[r]\ar[d]&4\ar[r]\ar[d]&\cdots\ar[r]&n-1 \ar[r]\ar[d] & n\ar[d]\\
\tau Z\colon & 1\ar[r]& 2\ar[r]\ar[d] & 3\ar[r] & 4\ar[r] & \cdots \ar[r] & n-1\ar[r] & n\ar[r] & n+1.\\
&&2&&&\\
 }$$
Repeating the computation, we have
 $$\xymatrixcolsep{1.5pc}\xymatrixrowsep{1pc}\xymatrix{
&& & &4\ar[r]\ar[d]&5\ar[r]\ar[d]&\cdots\ar[r]&n-1 \ar[r]\ar[d] & n\ar[d]\\
\tau^2 Z\colon & 1\ar[r]& 2\ar[r]\ar[d] & 3\ar[r]\ar[d] & 4\ar[r] &5\ar[r] & \cdots \ar[r] & n-1\ar[r] & n\ar[r] & n+1,\\
&&2\ar[r]&3&&\\
 }$$
and so on until
 $$\xymatrixcolsep{1.5pc}\xymatrixrowsep{1pc}\xymatrix{
&& & && && n\ar[d]\\
\tau^{n-2} Z\colon & 1\ar[r]& 2\ar[r]\ar[d] & 3\ar[r]\ar[d] &  \cdots \ar[r]& n-2 \ar[r]\ar[d] & n-1\ar[r]\ar[d] & n\ar[r] & n+1\\
&&2\ar[r]&3\ar[r]&\cdots \ar[r] & n-2\ar[r] & n-1, & &\\
 }$$
$$\xymatrixcolsep{1.5pc}\xymatrixrowsep{1pc}\xymatrix{
\tau^{n-1} Z\colon & 1\ar[r]& 2\ar[r]\ar[d] & 3\ar[r]\ar[d] &  \cdots \ar[r] & n-1\ar[r]\ar[d] & n\ar[r]\ar[d] & n+1&&\\
&&2\ar[r]&3\ar[r]&\cdots \ar[r]  & n-1\ar[r] &n, & &&\\
 }$$
and finally, $\tau^n Z\cong Z.$

Note that any proper locally free submodule of $Z$ is supported at a Dynkin subquiver, although
the support of $Z$ itself is $Q$. Therefore
if $Z$ were not a mouth module, then it would have a regular $\tau$-locally free submodule $X$ with
$\rankv X$ a real Schur root (see Lemma \ref{lem0.1}). However, such regular real Schur roots all appear
 in the inhomogeneous tube constructed in Proposition \ref{typeB}. Moreover, by Theorem \ref{thm:Schurroot},
 they do not appear in any other components of the AR-quiver $\Gamma_H$.
 Therefore, $Z$ must be a mouth module.

 Note that 
$\langle Z,Z\rangle= \text{dim End}Z-\text{dim Ext}^1_H(Z,Z)=0$ since $\rankv Z=\delta$.
Therefore,  $Z$ is not rigid. This completes the proof.
\end{proof}

Let $Y$ be the following extension of $Z$ by the generalised simple module $E_2$,
$$\xymatrixcolsep{1.5pc}\xymatrixrowsep{1pc}\xymatrix{
&&2\ar[r]\ar[d]&3\ar[r]\ar[d]&\cdots\ar[r]&n-1 \ar[r]\ar[d] & n\ar[d]\\
Y= & 1\ar[r]\ar[rd]& 2\ar[r] & 3\ar[r] & \cdots \ar[r] & n-1\ar[r] & n\ar[r] & n+1\\
&&2\ar[d] &&&&\\
&&2 }$$
We have a short exact sequence
$0\longrightarrow E_2\longrightarrow Y\longrightarrow Z\longrightarrow 0$.

\begin{prop} The module $Y$ is $\tau$-locally free, but
\[\rankv Y=\delta+\alpha_2\] is not a root.
Consequently, $\rankv \tau^iY$ is not a root for any 
$i\in \mathbb{Z}$.
\end{prop}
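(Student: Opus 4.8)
The plan is to verify, in order, that $Y$ is indecomposable and $\tau$-locally free, that $\rankv Y=\delta+\alpha_2$, and that $\delta+\alpha_2$ is not a root; the statement about $\tau^iY$ is then a formal consequence. For the first part I would start from the short exact sequence $0\to E_2\to Y\to Z\to 0$ and note that both end terms are $\tau$-locally free \emph{and regular}: the module $Z$ by Proposition~\ref{prop:proof1}, and $E_2$ because it occurs as a mouth module of the tube of $\tau$-locally free regular modules built in Proposition~\ref{typeB}. Lemma~\ref{lem5.2} then guarantees that every indecomposable summand of $Y$ is $\tau$-locally free and regular, so it remains only to check that $Y$ is indecomposable, which I would do by inspecting the representation and verifying that $\End Y$ is local (in fact $\End Y\cong K$). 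Since rank is additive on short exact sequences of locally free modules, $\rankv Y=\rankv E_2+\rankv Z=\alpha_2+\delta=(1,2,1,\dots,1)$, as can also be read off directly from the displayed module.

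The heart of the argument is showing that $\delta+\alpha_2$ is not a root, and here I would emphasise that the bilinear form is inconclusive: since $\delta$ lies in the radical of $(-,-)_C$, one has $(\delta+\alpha_2,\delta+\alpha_2)_C=(\alpha_2,\alpha_2)_C=4$, which is exactly the norm of a long simple root, so no real root is excluded on length grounds. Instead I would apply the simple reflection $s_2\in W$. As $C\delta=0$ we have $s_2(\delta)=\delta$, while $s_2(\alpha_2)=-\alpha_2$, whence
\[
s_2(\delta+\alpha_2)=\delta-\alpha_2=(1,0,1,1,\dots,1).
\]
This vector is supported on every vertex except the interior cut vertex $2$, so its support $\{1\}\sqcup\{3,4,\dots,n+1\}$ is disconnected. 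Because the support of any root is connected — equivalently, a root of the finite-type, reducible subsystem attached to a proper disconnected subdiagram must be concentrated on a single connected component — the vector $\delta-\alpha_2$ is not a root. Since $W$ permutes $\Delta$ and $s_2\in W$, it follows that $\delta+\alpha_2=s_2(\delta-\alpha_2)$ is not a root either. I expect this reflection-to-disconnected-support step to be the main obstacle, precisely because the obvious norm computation does not settle the question.

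For the final assertion, $Y$ is regular, so $\tau^iY\neq 0$ for all $i\in\mathbb{Z}$ and Proposition~\ref{Lem:tauc} yields $\rankv\tau^iY=c^i(\rankv Y)=c^i(\delta+\alpha_2)$. The Coxeter transformation $c=c_{\mathbf i}$ is a product of the simple reflections $s_{i_1},\dots,s_{i_n}$, hence $c^i\in W$; as $W$ preserves $\Delta$, the vector $c^i(\delta+\alpha_2)$ is a root if and only if $\delta+\alpha_2$ is. By the previous paragraph it is not, so $\rankv\tau^iY$ is not a root for any $i\in\mathbb{Z}$.
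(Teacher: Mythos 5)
Your proposal is correct in substance but reaches the key conclusion by a genuinely different route. The paper disposes of the statement that $\delta+\alpha_2$ is not a root in one line, by appealing to the explicit classification of regular affine roots in Proposition~\ref{prop:roots}(3) (Dlab--Ringel's Table~6): since $\alpha_2$ is a long root for this type, the only roots congruent to it modulo $\mathbb{Z}\delta$ are $\alpha_2+rg\delta$ with $g=2$, so $\alpha_2+\delta$ is excluded. You instead give a self-contained argument: apply $s_2$ (using $C\delta=0$, so $s_2(\delta)=\delta$) to get $\delta-\alpha_2=(1,0,1,\dots,1)$, whose support is disconnected, and invoke the standard Kac--Moody fact that every root has connected support. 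Both arguments are valid; yours has the advantage of not requiring the reader to consult the Dlab--Ringel tables, at the cost of importing the connected-support lemma, which the paper never states and which you should cite (e.g.\ Kac's book). Your correct observation that the norm computation $(\delta+\alpha_2,\delta+\alpha_2)_C=4$ is inconclusive is a useful remark the paper omits. The remaining steps (regularity of $E_2$ and $Z$, Lemma~\ref{lem5.2} for $\tau$-local freeness, additivity of rank vectors, and $c^i\in W$ for the final assertion) coincide with the paper's. One genuine error, though inessential: your parenthetical claim that $\End Y\cong K$ is false --- the paper computes $\End Y$ to be a $3$-dimensional local ring, and indeed composing the $2$-dimensional space $\Hom_H(Y,E_2)$ with the inclusion $E_2\hookrightarrow Y$ already produces a $2$-dimensional space of nilpotent endomorphisms. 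Locality, not one-dimensionality, is what the inspection must (and does) establish, so you should actually carry out that computation rather than assert the stronger false statement.
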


\begin{proof} Note that $\alpha_2$ is a long root, by Proposition \ref{prop:roots},
$\rankv Y=\delta+\alpha_2$ is not a root. Direct computation shows that $\End Y$
is a 3-dimensional local ring and so $Y$ is indecomposable.
On the other hand, we know that both $Z$ and $E_2$ are
$\tau$-locally free and regular, and so by Lemma \ref{lem5.2},  $Y$ is $\tau$-locally free.
\end{proof}

\begin{rem} As in the proof of Proposition \ref{prop:proof1}, by repeatedly computing
$\tau^k Y$, we can also see that $Y$ is $\tau$-locally free and regular. Indeed, we have
  $$\xymatrixcolsep{1.5pc}\xymatrixrowsep{1pc}\xymatrix{
&&&3\ar[r]\ar[d]& 4\ar[r]\ar[d]&\cdots\ar[r]&n-1 \ar[r]\ar[d] & n\ar[d]\\
\tau Y\colon & 1\ar[r]& 2\ar[r]\ar[rd]\ar[d] & 3\ar[r] & 4\ar[r] & \cdots \ar[r] & n-1\ar[r] & n\ar[r] & n+1,\\
&&2\ar[rd] &3\ar[d]&&&\\
&& & 3 }$$
$$\xymatrixcolsep{1.5pc}\xymatrixrowsep{1pc}\xymatrix{
&&&&4\ar[r]\ar[d]& 5\ar[r]\ar[d]&\cdots\ar[r]&n-1 \ar[r]\ar[d] & n\ar[d]\\
\tau^2 Y\colon & 1\ar[r]& 2\ar[r]\ar[d] & 3\ar[r]\ar[rd]\ar[d] & 4\ar[r] & 5\ar[r] &\cdots \ar[r] & n-1\ar[r] & n\ar[r] & n+1,\\
&&2\ar[r] &3\ar[rd]& 4\ar[d] &&\\
&& && 4 }$$
$$\xymatrixcolsep{1.5pc}\xymatrixrowsep{1pc}\xymatrix{\vdots& &\vdots & &\vdots& &\vdots &&\vdots&&\vdots&&\vdots}$$
$$\xymatrixcolsep{1.5pc}\xymatrixrowsep{1pc}\xymatrix{
&&&&&& & n\ar[d]&\\
\tau^{n-2} Y\colon & 1\ar[r]& 2\ar[r]\ar[d] & 3\ar[r]\ar[d] & \cdots\ar[r] & n-2\ar[r]\ar[d] & n-1\ar[r]\ar[d]\ar[rd] &n \ar[r] & n+1,\\
&&2\ar[r] &3\ar[r]& \cdots\ar[r]& n-2\ar[r] &n-1\ar[rd]& n\ar[d]& &\\
&& && &&&n & &\\ \\
}$$
$$\xymatrixcolsep{1.5pc}\xymatrixrowsep{1pc}\xymatrix{
&1\ar[r]& 2\ar[r]\ar[d] & 3\ar[r]\ar[d] & \cdots\ar[r] & n-1\ar[r]\ar[d] & n\ar[d]\ar[r]&n+1\\
\tau^{n-1} Y\colon & 1\ar[r]\ar[rd]& 2\ar[r] & 3\ar[r] & \cdots\ar[r] &  n-1\ar[r] &n \ar[r] & n+1\\
&&2\ar[r] \ar[d] &3\ar[r]\ar[d]& \cdots\ar[r]& n-1\ar[r]\ar[d]& n\ar[d]&&& & \\
&1\ar[r]& 2\ar[r]&3\ar[r]& \cdots\ar[r] &n-1 \ar[r]&n\ar[r] & n+1&&&\\
}$$
and finally, $\tau^n Y\cong Y$.
We remark that each $\tau^k Y$ is an extension of $\tau^k Z$ by $\tau^k E_2$.
\end{rem}

\subsection{Type $\type{CD}_n$}
Use the same symmetriser and orientation as \S \ref{typeCD}.
Consider the following locally free $H$-module,
$$
\xymatrixcolsep{1pc}\xymatrixrowsep{1pc}\xymatrix{
Z:&1\ar[r]&3\ar[r]&4\ar[r]&\cdots\ar[r]&n\ar[r]&n+1\ar[d]\\
&2\ar[r]&3\ar[r]&4\ar[r]&\cdots\ar[r]&n\ar[r]&n+1.\\
}$$

\begin{prop}
The module $Z$ is not rigid, and is a mouth module in a rank 2 tube of $\tau$-locally free modules. Moreover, $\rankv Z=\delta$.
\end{prop}

\begin{proof} Observe that $\End Z\cong K$ and so $Z$ is indecomposable.
 $Z$ has the following projective resolution,
 \[
\begin{tikzpicture}[scale=0.7,
 arr/.style={black, -angle 60}]
 \path (0, 1) node (c0) {$0$};
\path (3,1) node (c1) {$P_{n+1}$};
\path (7,1) node (c2) {$P_1\oplus P_{2}$};
\path (11,1) node (c3) {$Z$};
\path (14, 1) node (c4) {$0$,};

\draw[->] (c1) edge node[auto]{$\left(
                  \begin{smallmatrix}
                    r_{\rho}\\
                    r_{\rho'}\\
                  \end{smallmatrix}\right)$} (c2);
\draw[->] (c2) edge node[auto]{}  (c3);
\draw[->] (c0) edge (c1);
\draw[->] (c3) edge (c4);
\end{tikzpicture}
\]
where $\rho=\varepsilon_{n+1}a_n\dots a_3a_1$  and $\rho'=a_n\dots a_3a_2$.
We have $\tau Z=\ker   \begin{pmatrix} l^*_{\rho}\\  l^*_{\rho'}\\ \end{pmatrix} $ is as follows,
 $$
\xymatrixcolsep{1pc}\xymatrixrowsep{1pc}\xymatrix{
&2\ar[r]&3\ar[r]&4\ar[r]&\cdots\ar[r]&n\ar[r]&n+1\ar[d]\\
&1\ar[r]&3\ar[r]&4\ar[r]&\cdots\ar[r]&n\ar[r]&n+1,\\
}$$
and similarly, $\tau^2Z=Z$. Now by similar arguments as in the proof of Proposition \ref{prop:proof1},
$Z$ is a mouth module in a rank 2 tube of $\tau$-locally free modules.
\end{proof}

Recall  the second mouth module in Proposition \ref{typeCD2},
\[
\xymatrixcolsep{1pc}\xymatrixrowsep{1pc}\xymatrix{
&1\ar[r]&3\ar[r]&4\ar[r]&\cdots\ar[r]&n\ar[r]&n+1\ar[d]\\
&1\ar[r]&3\ar[r]&4\ar[r]&\cdots\ar[r]&n\ar[r]&n+1,\\
}\]
which we denote by $X$.
Consider the following extension $Y$ of $Z$ by $X$,
$$
\xymatrixcolsep{1pc}\xymatrixrowsep{1pc}\xymatrix{
&1\ar[r]&3\ar[r]&4\ar[r]&\cdots\ar[r]&n\ar[r]&n+1\ar[d]\\
&2\ar[r]\ar[rd]&3\ar[r]&4\ar[r]&\cdots\ar[r]&n\ar[r]&n+1\\
&1\ar[r]&3\ar[r]&4\ar[r]&\cdots\ar[r]&n\ar[r]&n+1\ar[d]\\
&1\ar[r]&3\ar[r]&4\ar[r]&\cdots\ar[r]&n\ar[r]&n+1.\\
}$$

\begin{prop}
The module $Y$ is  $\tau$-locally free, but
\[\rankv Y=\delta+\rankv X\] is not a root.
Consequently, $\rankv \tau^iY$ is not a root for any $i\in \mathbb{Z}$.
\end{prop}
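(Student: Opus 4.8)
The plan is to follow the blueprint used for type $\type{B}_n$: establish in turn that $Y$ is indecomposable, that it is $\tau$-locally free and regular, and finally that $\rankv Y$ lies outside the root system, after which the statement about $\rankv\tau^iY$ follows formally. First I would record the relevant rank vectors explicitly. Here $\delta=(1,1,2,\dots,2,1)$, the rigid mouth module $X$ from Proposition \ref{typeCD2} has $\rankv X=(2,0,2,\dots,2,1)$, and reading off the displayed diagram of $Y$ gives $\rankv Y=\delta+\rankv X=(3,1,4,\dots,4,2)$, in agreement with the short exact sequence $0\to X\to Y\to Z\to 0$.

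To see $Y$ is $\tau$-locally free I would invoke Lemma \ref{lem5.2}: the two end terms of this sequence are $\tau$-locally free regular modules, since $X$ is a rigid mouth module (hence $\tau$-locally free and lying in a tube) and $Z$ was just shown to be a mouth module of a rank $2$ tube. Thus every indecomposable summand of $Y$ is $\tau$-locally free and regular. A direct computation of $\End Y$ (as in the type $\type{B}_n$ case) shows it is local, so $Y$ is indecomposable; combining these, $Y$ itself is $\tau$-locally free and regular.

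The crux, and the step I expect to be the main obstacle, is to show $\rankv Y$ is not a root, since this is where the actual affine root data enters rather than formal manipulation. Because $Y$ is regular, $\rankv Y$ is $c$-periodic, and the preprojective and preinjective roots of Proposition \ref{prop:roots}(1),(2) are not $c$-periodic (Proposition \ref{rem1}); moreover $\rankv Y$ is positive and is not a multiple of $\delta$, as its first two entries $3,1$ differ whereas $r\delta$ has equal first two entries. Hence, were $\rankv Y$ a root, it would be a regular root of the form $x_0+rg\delta$ with $x_0\le g\delta$ a positive root, in the sense of Proposition \ref{prop:roots}(3). I would rule this out by comparison with the explicit list in \cite[Section 6]{[DR]}: writing $\rankv X$ itself as the base root $x_0\le g\delta$ of its tube (with the relevant constant $g=2$), the shift $\delta+\rankv X$ is neither $\le g\delta$ nor of the form $(\text{positive root})+g\delta$, since $\rankv Y-g\delta=(1,-1,0,\dots,0)$ has a negative entry while $\rankv Y\not\le g\delta$. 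This is precisely the analogue of the ``$\alpha_2$ is a long root'' observation used in type $\type{B}_n$.

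Finally, for the concluding clause I would argue uniformly. For every $i\in\mathbb{Z}$ regularity gives $\tau^iY\neq 0$, so $\rankv\tau^iY=c^i(\rankv Y)$ by Proposition \ref{Lem:tauc}. Since $c=c_{\mathbf{i}}$ lies in the Weyl group $W$ and $W$ permutes $\Delta$ bijectively (it fixes $\delta$ and preserves both $\Delta_{\mathrm{re}}=\cup_i W(\alpha_i)$ and $\Delta_{\mathrm{im}}=\mathbb{Z}\delta$), the image $c^i(\rankv Y)$ is a root if and only if $\rankv Y$ is. As the latter fails, $\rankv\tau^iY$ is not a root for any $i$, which completes the proof.
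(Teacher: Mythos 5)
Your proof is correct and follows essentially the same route as the paper's: indecomposability via a local endomorphism ring, $\tau$-local freeness via Lemma \ref{lem5.2} applied to the extension $0\to X\to Y\to Z\to 0$ of two regular $\tau$-locally free modules, and exclusion from the root system via Proposition \ref{prop:roots}. The only difference is that where the paper disposes of the root question with the one-line remark that $\rankv X$ is a long root, you spell the same fact out by checking that $\rankv Y=(3,1,4,\dots,4,2)$ fits neither alternative of the parametrisation $x_0+rg\delta$ (and your closing Weyl-group argument for the statement about $\rankv\tau^iY$, which the paper leaves implicit, is also fine).
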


\begin{proof}
Note that $\rankv X$ is a long root, so by Proposition \ref{prop:roots}, $\rankv Y$ is not a root.
Observe that  %$\text{End}Y\cong K[x]/(x^2)$ 
 $\text{End}Y$ is a 3-dimensional local ring and so $Y$ is indecomposable.
As both $X$ and $Z$ are $\tau$-locally free and regular, by Lemma \ref{lem5.2}, $Y$ is $\tau$-locally free.
\end{proof}

\begin{rem} By computing $\tau^k Y$, we see that $\tau^2 Y\cong Y$. Therefore, $Y$ is contained in
a tube of $\tau$-locally free modules of rank 2 with $\tau Y$  as follows,

$$
\xymatrixcolsep{1pc}\xymatrixrowsep{1pc}\xymatrix{
&2\ar[r]&3\ar[r]&4\ar[r]&\cdots\ar[r]&n\ar[r]&n+1\ar[d]\\
 &1\ar[r]\ar[rd]&3\ar[r]&4\ar[r]&\cdots\ar[r]&n\ar[r]&n+1\\
&2\ar[r]&3\ar[r]&4\ar[r]&\cdots\ar[r]&n\ar[r]&n+1\ar[d]\\
&2\ar[r]&3\ar[r]&4\ar[r]&\cdots\ar[r]&n\ar[r]&n+1.\\
}$$
\end{rem}

\subsection{Type $\type{F}_{41}$}
Use the same symmetriser and orientation as \S \ref{type F1}.
Let $Z$ be the following locally free module,
$$\xymatrixcolsep{1.5pc}\xymatrixrowsep{1pc}\xymatrix{
& 1\ar[r] &2\ar[r] &3\ar[r]& 4\ar[d]&5\ar[l]\ar[d]\ar[lddd]\\
&&&&4&5\ar[l]\\
&&2\ar[r]&3\ar[r]&4\ar[d]&\\
&&&3\ar[r]&4.
}$$
By change of basis, we see that $Z$ is isomorphic to the following module

$$\xymatrixcolsep{1.5pc}\xymatrixrowsep{1pc}\xymatrix{
 & &2\ar[r] &3\ar[r]& 4\ar[d]&\\
Z':&&&3\ar[r]&4&\\
&1\ar[r]&2\ar[r]\ar[ru]&3\ar[r]&4\ar[d]&5\ar[l]\ar[d]\\
&&&&4&5.\ar[l]
}$$

\begin{prop} The module
$Z$ is not rigid, and is a mouth module in a rank 3 tube of $\tau$-locally free modules. Moreover, $\rankv Z=\delta$.
\end{prop}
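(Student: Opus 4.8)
The plan is to closely follow the template of Proposition~\ref{prop:proof1}, adapted to the rank~$3$ situation. First I would verify that $Z$ is indecomposable by computing its endomorphism ring directly from the presentation (the isomorphic form $Z'$ makes the bookkeeping cleaner): checking which linear maps at each vertex are compatible with $\varepsilon_4^2=\varepsilon_5^2=0$ and $\varepsilon_4 a_4=a_4\varepsilon_5$ yields $\End Z\cong K$, so $Z$ is indecomposable with $\dim\End Z=1$. Simultaneously I would record that $\rankv Z=(1,2,3,2,1)$, which is exactly the null vector $v$ of $C$ (the unique positive solution of $Cv=0$), so $\rankv Z=\delta$.

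Next I would produce a minimal projective resolution $0\to P\to Q\to Z\to 0$; such a resolution with $P,Q$ projective exists because every locally free $H$-module has projective dimension at most one (as used in the proof of Lemma~\ref{lem5.1}), and the kernel of the projective cover $Q\twoheadrightarrow Z$ is again locally free, hence projective. With the matrix $r=(r_{\rho})$ of this resolution in hand, Proposition~\ref{Prop:tau} identifies $\tau Z$ with $\ker\nu(r)=\ker(l^*_\rho)$, which I compute explicitly using~(\ref{eqn:1}). Repeating this computation produces $\tau^2 Z$, and a change of basis then exhibits $\tau^3 Z\cong Z$, exactly as in the $\type{B}_n$ and $\type{CD}_n$ cases. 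By Proposition~\ref{Lem:tauc} the rank vector of $Z$ is $c$-periodic, so $Z$ is regular; being also $\tau$-periodic of period~$3$, it lies in a tube of rank~$3$ by Proposition~\ref{cor}.

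To show $Z$ is a mouth module I argue as in Proposition~\ref{prop:proof1}: any proper nonzero locally free submodule of $Z$ is supported on a proper, hence Dynkin, subquiver, so by Lemma~\ref{lem0.1} it is rigid and its rank vector is a real Schur root; by Theorem~\ref{thm:Schurroot} such a rigid module is unique and, together with all the regular real Schur roots of this type, already appears in the good tubes constructed in Proposition~\ref{typeF1}, so it cannot sit inside the tube of the non-rigid module $Z$. Hence $Z$ has no proper regular $\tau$-locally free submodule in its own tube and must be a mouth module. Finally, for non-rigidity I invoke Proposition~\ref{lem2.2}: since $\rankv Z=\delta$ lies in the radical of the symmetrized form, $\langle Z,Z\rangle=0$, whence $\dim\Ext^1_H(Z,Z)=\dim\End Z=1\neq 0$ and $Z$ is not rigid.

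The main obstacle is the explicit $\tau$-computation: the two loops $\varepsilon_4,\varepsilon_5$ together with the relation $\varepsilon_4 a_4=a_4\varepsilon_5$ make the dualized maps $l^*_\rho$ and the resulting kernels more intricate than in the $\type{CD}_n$ case, so identifying the correct projective resolution and verifying the change of basis that witnesses $\tau^3 Z\cong Z$ is where the real care is needed; everything else is routine once those computations are in place.
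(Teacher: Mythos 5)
Your route is essentially the paper's: compute $\tau Z$, $\tau^2 Z$ and $\tau^3 Z\cong Z$ via projective resolutions and Proposition \ref{Prop:tau}, place $Z$ in a rank-$3$ tube via Proposition \ref{cor}, and get non-rigidity from $\langle Z,Z\rangle=0$ together with $\dim\End Z=1$. Those parts are sound (one quibble: ``locally free, hence projective'' for the syzygy is a non sequitur --- what you actually need, and already cite, is that locally free modules have projective dimension at most one). The genuine gap is in the mouth-module step. You transplant from Proposition \ref{prop:proof1} the claim that \emph{every} proper nonzero locally free submodule of $Z$ has support in a proper (hence Dynkin) subquiver. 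That claim holds in type $\type{B}_n$ only because $d_2=\dots=d_n=2$ forces the dimension of a locally free submodule meeting $Z_1$ to be even, hence equal to $\dim Z_i$, at each inner vertex; in type $\type{F}_{41}$ we have $d_1=d_2=d_3=1$ and the claim is false. Concretely, the submodule of $Z$ generated by $Z_1$, by the copy of $2$ in the third row of the displayed picture, and by $Z_5$ is a proper locally free submodule of dimension vector $(1,2,2,4,2)$ supported on all of $Q_0$. So you cannot conclude that a hypothetical proper tube-submodule of $Z$ has Dynkin support, and the contradiction you aim for does not yet materialise.

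The repair is the one the paper uses: since $(\rankv Z)_1=1$ and $d_1=1$, we have $\dim Z_1=1$, so for any proper regular $\tau$-locally free submodule $X$ of $Z$ lying in its tube, at least one of $X$ and $Z/X$ vanishes at vertex $1$. Both lie in the tube of $Z$; whichever is not supported at $1$ has Dynkin support, so by Lemma \ref{lem0.1} and Theorem \ref{thm:Schurroot} it is rigid with a real Schur root as rank vector, and therefore sits in one of the good tubes of Proposition \ref{typeF1} rather than in the tube of the non-rigid $Z$ --- the desired contradiction. With ``submodule'' weakened to ``submodule or factor module'' in this way, your argument closes and coincides with the paper's.
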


\begin{proof}
We compute $\tau^k Z$, and obtain the following,
$$\xymatrixcolsep{1.5pc}\xymatrixrowsep{1pc}\xymatrix{
&&3\ar[r] &4\ar[d]&5\ar[d]\ar[l]\\
\tau Z\colon\ \ 1\ar[r]&2\ar[ru]\ar[rd] && 4&5\ar[l]\\
&&3\ar[r]&4\ar[d]&\\
&2\ar[r]&3\ar[r]&4
}\ \ \
\xymatrix{
&1\ar[r]&2\ar[r]&3\ar[r] &4\ar[d]&\\
&\tau^2 Z\colon \ \ &2\ar[r] &3\ar[r]\ar[rd]& 4&\\
&&& &4\ar[d]&5\ar[d]\ar[l]\\
&&&3\ar[r]&4 & 5\ar[l]
}$$
and $\tau^3Z=Z'\cong Z$.

Since the component $(\rankv Z)_1$ of $\rankv Z$ at vertex 1 is $1$,
if $Z$ were not a mouth module, then it would have a $\tau$-locally free submodule
or factor module,  that is not supported at 1.
%Moreover, $\rankv X$ is a regular real Schur root.
However, this is not possible,
by similar arguments as in the proof of Proposition \ref{prop:proof1}. Therefore,
$Z$ is a mouth module.
\end{proof}

Recall the first mouth module in Proposition \ref{typeF1} (2), which we denote by $X$,
$$\xymatrixcolsep{1.5pc}\xymatrixrowsep{1pc}\xymatrix{
 &2\ar[r] &3\ar[r]& 4\ar[d]&\\
&2\ar[r]&3\ar[r]&4.\\
}$$
Let $Y$ be the extension of $Z$ by $X$ as follows,
$$\xymatrixcolsep{1.5pc}\xymatrixrowsep{1pc}\xymatrix{
& 1\ar[r] &2\ar[r] &3\ar[r]& 4\ar[d]&5\ar[l]\ar[d]\ar[lddd]\ar[ldddd]\\
&&&&4&5\ar[l]\ar[ldddd]\\
&&2\ar[r]&3\ar[r]&4\ar[d]&\\
&&&3\ar[r]&4\\
&&2\ar[r]&3\ar[r]&4\ar[d]&\\
&&2\ar[r]&3\ar[r]&4\\
}$$
%Then one has a short exact sequence $0\rightarrow X\rightarrow Y\rightarrow Z\rightarrow 0$.
\begin{prop}\label{typeF2}
The module $Y$ is $\tau$-locally free, but $$\rankv Y=\delta+\rankv X $$ is not a  positive root. Consequently, $\rankv \tau^iY$ is not a root for any $i\in \mathbb{Z}$.
\end{prop}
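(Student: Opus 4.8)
The plan is to mirror the arguments already carried out for types $\type{B}_n$ and $\type{CD}_n$: verify in turn that $\rankv Y$ is not a root, that $Y$ is indecomposable, and that $Y$ is $\tau$-locally free and regular, and then deduce the statement about the whole $\tau$-orbit from the fact that the Coxeter transformation permutes the root system.

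First I would record $\rankv X$ from the explicit first rank-$3$ mouth module of Proposition \ref{typeF1}(2) and note that it is a long root, read off from the list in \cite[Section 6]{[DR]} together with Proposition \ref{prop:roots}(3). Since $Z$ is the mouth module with $\rankv Z=\delta$ from the preceding proposition, additivity of rank vectors along the defining exact sequence $0\to X\to Y\to Z\to 0$ gives $\rankv Y=\rankv X+\delta$. I would then appeal to Proposition \ref{prop:roots}(3), which describes the regular positive roots as the vectors $x_0+rg\delta$: because $\rankv X$ is long, adding a single copy of $\delta$ yields a vector matching no admissible $x_0+rg\delta$, so $\rankv X+\delta$ is not a root. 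This is exactly the mechanism used for $\alpha_2$ in type $\type{B}_n$ and for $\rankv X$ in type $\type{CD}_n$.

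Next I would compute $\End Y$ directly from the displayed coefficient quiver of $Y$ and confirm that it is a $3$-dimensional local ring; locality gives the indecomposability of $Y$, which is what makes the phrase ``$Y$ is $\tau$-locally free'' meaningful. With indecomposability in hand I would invoke Lemma \ref{lem5.2}: both end terms of $0\to X\to Y\to Z\to 0$ are $\tau$-locally free and regular — $Z$ by the preceding proposition and $X$ because it is a (rigid) mouth module of an inhomogeneous tube, hence $\tau$-locally free by Proposition \ref{lem2.1} — so every indecomposable summand of $Y$, and therefore $Y$ itself, is $\tau$-locally free and regular. Finally, regularity of $Y$ gives $\tau^iY\neq 0$ and $\rankv\tau^iY=c^i(\rankv Y)$ for all $i\in\mathbb{Z}$ by Proposition \ref{Lem:tauc}; since $c$ is a product of simple reflections it lies in the Weyl group $W$ and permutes $\Delta$, so if $c^i(\rankv Y)$ were a root then $\rankv Y=c^{-i}(c^i(\rankv Y))$ would be one too, contradicting the previous paragraph.

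The step I expect to be the main obstacle is confirming that $\rankv X+\delta$ is genuinely not a root: this is the only point where the specific arithmetic of the $\type{F}_{41}$ root system (via Table 6 in \cite{[DR]}) enters rather than general machinery, and one must exclude $\rankv X+\delta$ from \emph{every} family $x_0+rg\delta$ in Proposition \ref{prop:roots}(3), not merely the one through $\rankv X$. The endomorphism-ring computation is routine but still requires care to establish both the value $3$ and the locality.
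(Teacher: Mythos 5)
Your proposal is correct and follows essentially the same route as the paper: indecomposability via the explicit computation that $\End Y$ is a $3$-dimensional local ring, $\tau$-local freeness via Lemma \ref{lem5.2} applied to the extension $0\to X\to Y\to Z\to 0$ of regular $\tau$-locally free modules, and the non-root claim from $\rankv X$ being a long root together with Proposition \ref{prop:roots}(3). The only addition is that you make explicit the Weyl-group argument for the final ``consequently'' clause, which the paper leaves implicit.
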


\begin{proof}
As the module structure of $Y$ is slightly more complicated, we  give some more details on its endomorphism ring
to confirm that $Y$ is indecomposable.
 The module $Y$ is a representation as follows:
\[\begin{tikzpicture}[scale=2]
%\node (-20) at (-1,0) {$Q:$};
\node (-20) at (0,0) {$k$};
\node (-20) at (1.5,0) {$k^4$};
\node (-20) at (3,0) {$k^5$};
\node (-20) at (4.5,0) {$k^6$};
\node (-20) at (6,0) {$k^2$};

\node (-20) at (0.75,0.35) {$\left(
                              \begin{smallmatrix}
                                1 \\
                                0 \\
                                0 \\
                                0 \\
                              \end{smallmatrix}
                            \right)
$};
\node (-20) at (2.25,0.4) {$\left(
                              \begin{smallmatrix}
                                1&0&0&0\\
                                0&1&0&0 \\
                                0&0&0&0 \\
                                0&0&1&0 \\
                                0&0&0&1 \\
                              \end{smallmatrix}
                            \right)
$};

\node (-20) at (3.75,0.4) {$\left(
                              \begin{smallmatrix}
                                1&0&0&0&0 \\
                                0&0&0&0&0 \\
                                0&1&0&0&0 \\
                                0&0&1&0&0 \\
                                0&0&0&1&0  \\
                                0&0&0&0&1 \\
                              \end{smallmatrix}
                            \right)
$};

\node (-20) at (4.5,0.5){$f$};

\node (-20) at (5.35,0.4) {$\left(
                              \begin{smallmatrix}
                                1&0\\
                                0&1 \\
                                0&0 \\
                                1&0 \\
                                1&0\\
                                0&1 \\
                              \end{smallmatrix}
                            \right)
$};

\draw [->] (0.25,0) -- (1.25,0);
\draw [->] (1.75, 0) -- (2.75,0);
\draw [->] (3.25, 0) -- (4.25,0);
\draw [->] (5.75, 0) -- (4.75,0);

\path (6.1, 0.45) node {$g$};

\draw[-latex] (5.9, 0.1) .. controls (5.7, 0.4) and (6.3,0.4) .. (6.1, 0.05);
\draw[-latex] (4.4, 0.1) .. controls (4.2,0.4) and (4.8,0.4) .. (4.6,0.05);
\end{tikzpicture}\]
where $f$ is a  block-wise $6\times 6$ matrix with the diagonal blocks
$\begin{pmatrix} 0&0\\ 1&0\end{pmatrix}$ and the others zero, and
$g=\begin{pmatrix} 0&0\\ 1&0\end{pmatrix}$.
Let $\varphi=(\varphi_1,\varphi_2,\varphi_3,\varphi_4,\varphi_5)\in \text{End}Y$,
computing the commutative relations defining a homomorphism of representations gives
$$\varphi=\left(a,\left(
                              \begin{smallmatrix}
                                a&0&0&0 \\
                                0&a&0&0 \\
                                0&b&a&0 \\
                                0&c&-b&a  \\
                              \end{smallmatrix}
                            \right), \left(
                              \begin{smallmatrix}
                                a&0&0&0&0 \\
                                0&a&0&0&0 \\
                                0&0&a&0&0 \\
                                0&b&0&a&0 \\
                                0&c&b&-b&a  \\
                              \end{smallmatrix}
                            \right), \left(
                              \begin{smallmatrix}
                                a&0&0&0&0&0 \\
                                0&a&0&0&0&0 \\
                                0&0&a&0&0&0 \\
                                0&0&0&a&0&0 \\
                                0&0&b&0&a&0 \\
                                0&0&c&b&-b&a  \\
                              \end{smallmatrix}
                            \right),\left(
                              \begin{smallmatrix}
                                a&0 \\
                                0&a \\
                              \end{smallmatrix}
                            \right)\right).$$
%If $\varphi^2=\varphi$, then it is not hard to see that either $\varphi=0$ or $\varphi=id_Y$.
Therefore, $\text{End}Y$ is a local ring, and so  $Y$ is indecomposable.

Next, by Lemma \ref{lem5.2}, $Y$ is $\tau$-locally free, since it is an extension of two regular $\tau$-locally free modules.
Finally,  $\rankv Y=\delta+\rankv X$ is not a root, since $\rankv X$ is a long root. So $Y$ is a module as claimed.
\end{proof}

\subsection{Type $\type{G}_{21}$} Use the same symmetriser and orientation as \S \ref{type g1}.
Let $Z$ be the following locally free $H$-module
$$\xymatrixcolsep{1.5pc}\xymatrixrowsep{1pc}\xymatrix{
&&2\ar[r]&3\ar[d]\\
&1\ar[r]&2\ar[r]&3\ar[d]\\
&&&3.\\
}$$

\begin{prop}
The module $Z$ is not rigid, and is a mouth module in a rank 2 tube of $\tau$-locally free modules. Moreover, $\rankv Z=\delta$.
\end{prop}

\begin{proof}
By computing $\tau^kZ$, we have $\tau^2Z=Z$ and $\tau Z$ is as follows,
$$\xymatrixcolsep{1.5pc}\xymatrixrowsep{1pc}\xymatrix{
&1\ar[r] & 2\ar[r] &3\ar[d]\\
& &&3\ar[d]\\
&&2\ar[r]&3.}$$
%and $\tau^2Z=Z$.

By similar arguments as in the proof of Proposition \ref{prop:proof1},
$Z$ is a mouth module.
\end{proof}

Recall the second mouth module from Proposition \ref{typeG1}, which we denote by $X$,
 $$\xymatrixcolsep{1.5pc}\xymatrixrowsep{1pc}\xymatrix{
&2\ar[r]&3\ar[d]\\
&2\ar[r]&3\ar[d]\\
&2\ar[r]&3.\\
}$$
Let $Y$ be the following extension of $Z$ by $X$,
$$\xymatrixcolsep{1.5pc}\xymatrixrowsep{1pc}\xymatrix{
&&2\ar[r]&3\ar[d]\\
&1\ar[r]\ar[rdd]&2\ar[r]&3\ar[d]&\\
&&&3\\
&&2\ar[r]&3\ar[d]\\
&&2\ar[r]&3\ar[d]\\
&&2\ar[r]&3.\\
}$$

\begin{prop}
The module $Y$ is $\tau$-locally free, but $$\rankv Y=\delta+\rankv X$$ is not a root. Consequently, $\rankv \tau^iY$ is not a root for any $i\in \mathbb{Z}$.
\end{prop}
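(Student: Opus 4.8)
The plan is to follow exactly the template already used for the types $\type{B}_n$, $\type{CD}_n$ and $\type{F}_{41}$, realising $Y$ as the middle term of the short exact sequence
\[
0 \longrightarrow X \longrightarrow Y \longrightarrow Z \longrightarrow 0,
\]
where $X$ is the second mouth module of Proposition \ref{typeG1} and $Z$ is the mouth module constructed immediately above. The first thing I would record is that both end terms are $\tau$-locally free and regular: this is Proposition \ref{typeG1} for $X$ and the preceding proposition for $Z$. Granting that $Y$ is indecomposable, Lemma \ref{lem5.2} applied to this sequence then gives at once that $Y$ is $\tau$-locally free and regular.

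To establish indecomposability I would write $Y$ out explicitly as a representation of $Q$, reading the spaces $Y_1 = K$, $Y_2 = K^5$, $Y_3 = K^6$ and the structure maps $Y(a_1), Y(a_2), Y(\varepsilon_3)$ off the diagram, and then solve the commutativity relations for a general endomorphism $\varphi = (\varphi_1, \varphi_2, \varphi_3)$, exactly as was done explicitly for type $\type{F}_{41}$ earlier in this section. I expect the general solution to consist of a single diagonal scalar together with strictly nilpotent off-diagonal parameters, so that $\End Y$ is a local ring and $Y$ is indecomposable. This endomorphism-ring computation is the only genuinely laborious step, and it is the main obstacle: the relation $\varepsilon_3^3 = 0$ and the triple arrow $a_2$ make the bookkeeping heavier than in the other three types, though the shape of the answer should be the same.

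For the rank vector I would read off $\rankv Y = (1,5,2)$ using $D = \operatorname{diag}(1,1,3)$, and verify $\rankv Y = \delta + \rankv X$ with $\delta = (1,2,1)$ and $\rankv X = (0,3,1)$. To see $\rankv Y$ is not a root I would reduce it by simple reflections: $s_2(1,5,2) = (1,2,2)$, then $s_3(1,2,2) = (1,2,0)$, then $s_2(1,2,0) = (1,-1,0)$. Since reflections permute $\Delta$, it suffices to observe that $(1,-1,0)$ is not a root: it lies in the span of $\alpha_1, \alpha_2$, which generate a subsystem of type $A_2$ all of whose roots have square length $2$, yet $(1,-1,0)$ has square length $6$. (Equivalently, in the style of the other cases, $\rankv X$ is a long root, so $\delta + \rankv X$ is not a root by Proposition \ref{prop:roots}.)

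Finally, for the last assertion, $Y$ being regular gives $\tau^i Y \neq 0$ for all $i$ and $\rankv \tau^i Y = c^i(\rankv Y)$ by Proposition \ref{Lem:tauc}. As $c$ is a Coxeter transformation it lies in the Weyl group $W$, and $W$ permutes the root system $\Delta$ (it fixes $\delta$, since $C\delta = 0$, and it preserves $\Delta_{\mathrm{re}}$); hence $c^i(\rankv Y)$ is a root if and only if $\rankv Y$ is. Since $\rankv Y$ is not a root, $\rankv \tau^i Y$ is not a root for any $i \in \mathbb{Z}$, which yields the $\type{G}_{21}$ instance of the failure of Conjecture \ref{Conj:gls}.
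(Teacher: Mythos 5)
Your proposal is correct and follows essentially the same route as the paper: indecomposability via an explicit computation showing $\End Y$ is local, $\tau$-local freeness via Lemma \ref{lem5.2} applied to the extension $0\to X\to Y\to Z\to 0$ of regular $\tau$-locally free modules, and the non-root claim from $\rankv X$ being a long root (your supplementary reflection computation reducing $(1,5,2)$ to $(1,-1,0)$ is a correct independent check). The only step you leave as an expectation rather than carry out is the endomorphism-ring calculation, and your predicted shape (a single scalar plus nilpotent parameters) matches exactly what the paper obtains.
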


\begin{proof}
As in the proof of Proposition \ref{typeF22}, we compute endomorphisms $\varphi=(\varphi_1,\varphi_2,\varphi_3)$
of $Y$. The module $Y$ is a representation as follows,
\[\begin{tikzpicture}[scale=2]
%\node (-20) at (-1,0) {$Q:$};
\node (-20) at (0,0) {$k$};
\node (-20) at (1.5,0) {$k^5$};
\node (-20) at (3,0) {$k^6$,};

\node (-20) at (0.75,0.35) {$\left(
                              \begin{smallmatrix}
                                0 \\
                                1 \\
                                1 \\
                                0 \\
                                0 \\
                              \end{smallmatrix}
                            \right)$};
\node (-20) at (2.25,0.4) {$\left(
                              \begin{smallmatrix}
                                1&0&0&0&0 \\
                                0&1&0&0&0 \\
                                0&0&0&0&0 \\
                                0&0&1&0&0 \\
                                0&0&0&1&0  \\
                                0&0&0&0&1  \\
                              \end{smallmatrix}
                            \right)$};
\node (-20) at (3,0.45) {$f$};
\draw [->] (0.25,0) -- (1.25,0);
\draw [->] (1.75, 0) -- (2.75,0);
\draw[-latex] (2.9,0.1) .. controls (2.7,0.4) and (3.3,0.4) .. (3.1,0.05);
\end{tikzpicture}\]
where $f$ is the blockwise $6\times 6$ matrix with 2 diagonal blocks of the form
$\left(\begin{smallmatrix} 0&0&0\\  1&0&0\\  0&1&0\\\end{smallmatrix}\right)$
and the others zero.
%
%Let $\varphi=(\varphi_1,\varphi_2,\varphi_3)\in \text{End}_HY$.
Computing the commutative  relations
defining a homomorphism of representations gives,
$$\varphi=\left(a,\left(
                              \begin{smallmatrix}
                                a&0&0&0&0 \\
                                0&a&0&0&0 \\
                                b&0&a&0&0 \\
                                c&b&-b&a&0 \\
                                d&c&-c&-b&a  \\
                              \end{smallmatrix}
                            \right), \left(
                              \begin{smallmatrix}
                                a&0&0&0&0&0 \\
                                0&a&0&0&0&0 \\
                                0&0&a&0&0&0 \\
                                b&0&0&a&0&0 \\
                                c&b&0&-b&a&0 \\
                                d&c&b&-c&-b&a  \\
                              \end{smallmatrix}
                            \right)\right).$$
Therefore, $\text{End}Y$ is a local ring, and so  $Y$ is indecomposable.
Now by similar arguments as in Proposition \ref{prop:proof1}, we conclude that $Y$ is
a module as claimed.
\end{proof}

\subsection{A remark}
%\begin{rem}\qquad
\begin{itemize}\item[(1)]
By computing $\tau^k Y$, we know that $Y$ is $\tau$-periodic. So any $Y$ constructed in this section is contained in a  tube. The tubes that contain those $Y$ do not have rigid 
mouth modules,
%together with those containing the Z's are not good 
%in the sense that the mouth modules are not rigid, 
since rigid $\tau$-locally free modules are uniquely determined by their rank vectors and appear
in the inhomogeneous tubes discussed in the proofs of Proposition \ref{Prop:typeA} and Theorem  \ref{thm3.1}.

\item[(2)] Following Proposition \ref{cor}, classifying $\tau$-locally free $H$-modules when $C$ is affine amounts to classifying components of
the form $\mathbb{Z}\mathbb{A}_{\infty}$ and tubes with non-rigid mouth modules. We believe that more such tubes exist, but we don't have any example of components of the form $\mathbb{Z}\mathbb{A}_{\infty}$.
\end{itemize}
%\end{rem}

\vspace{2mm}\noindent {\bf Acknowledgements}  The  authors would like to thank Professor Shiping Liu for
the helpful discussions on regular components of AR-quivers.

\end{document}